\DeclareMathOperator{\dive}{div} 
\numberwithin{equation}{section}
\newcolumntype{C}{>{$\displaystyle} c <{$}}
\def\env@dmatrix{\hskip -\arraycolsep
	\let\@ifnextchar\new@ifnextchar
	\def\arraystretch{2}%
	\array{*{\c@MaxMatrixCols}{>{\displaystyle}c}}%
}
\setlist[itemize]{noitemsep, nolistsep}
\begin{document}

	\renewcommand{\thefootnote}{\fnsymbol{footnote}}
	
	\title{On the Morse Index of Branched Willmore Spheres in $3$-Space}
	\author{Alexis Michelat\footnote{Department of Mathematics, ETH Zentrum, CH-8093 Z\"{u}rich, Switzerland.}}
	\date{\today}
	
	\maketitle
	
	\vspace{-0.5em}
	
	\begin{abstract}
		We develop a general method to compute the Morse index of branched Willmore spheres and show that the Morse index is equal to the index of certain matrix whose dimension is equal to the number of ends of the dual minimal surface. As a corollary, we find that for all \emph{immersed} Willmore spheres $\vec{\Phi}:S^2\rightarrow \mathbb{R}^3$ such that $W(\vec{\Phi})=4\pi n$, we have $\mathrm{Ind}_{W}(\vec{\Phi})\leq n-1$. 
	\end{abstract}

	\tableofcontents
	\vspace{0cm}
	\begin{center}
		{Mathematical subject classification : \\
			35J35, 35R01, 49Q05, 49Q10, 53A05, 53A10, 53A30, 53C42, 58E15.}
	\end{center}
	
	\theoremstyle{plain}
	\newtheorem*{theorem*}{Theorem}
	\newtheorem{theorem}{Theorem}[section]
	\newenvironment{theorembis}[1]
	{\renewcommand{\thetheorem}{\ref{#1}$'$}%
		\addtocounter{theorem}{-1}%
		\begin{theorem}}
		{\end{theorem}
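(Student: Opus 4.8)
The plan is to transport the entire second-variation computation onto the \emph{dual minimal surface} and to prove that, although $D^2W(\vec{\Phi})$ is a quadratic form on an infinite-dimensional space of variations, its negative part is controlled by a finite-dimensional quadratic form supported at the ends of that surface. \textbf{Step 1 (conformal reduction).} By Bryant's classification, a branched Willmore sphere $\vec{\Phi}:S^2\to\mathbb{R}^3$ with $W(\vec{\Phi})=4\pi n$ is the composition $\vec{\Phi}=\iota\circ\vec{\Psi}$ of an inversion $\iota$ of $\mathbb{R}^3$ with a complete branched minimal immersion $\vec{\Psi}:S^2\setminus\{p_1,\dots,p_k\}\to\mathbb{R}^3$ of finite total curvature whose ends are (branched) planar ends; when $\vec{\Phi}$ is immersed the ends are embedded and $k=n$. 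Denoting $\Sigma$ this dual minimal surface, I would use the conformal invariance of $W$ to rewrite $D^2W(\vec{\Phi})[\vec{w},\vec{w}]$ as a quadratic form $\mathcal{E}(u)$ on normal variations $u\,\vec{n}_\Sigma$ of $\vec{\Psi}$. The essential structural point is that a variation $\vec{w}$ which is \emph{smooth} on $S^2$ corresponds, under $\iota$, to a variation $u$ of $\vec{\Psi}$ growing at a prescribed polynomial rate near each $p_i$; I would set up the weighted Sobolev space $\mathcal{V}$ of such $u$ and record the precise admissible jets at the ends.

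\textbf{Step 2 (factorisation through the Jacobi operator).} On a minimal surface in $\mathbb{R}^3$ the linearised mean curvature is $\delta H=\tfrac12 Lu$ with $L=\Delta_\Sigma+|A_\Sigma|^2$ the Jacobi operator ($|A_\Sigma|^2=-2K_\Sigma$), and the Willmore equation $\Delta H+2H(H^2-K)=0$ linearises around $H\equiv0$ to $L(\delta H)$. Consequently $\mathcal{E}(u)$ equals a positive multiple of $\int_\Sigma(Lu)^2\,d\mathrm{vol}_\Sigma$ \emph{plus the boundary terms} generated by the integrations by parts near the $p_i$. The bulk term is nonnegative, so every negative direction of $D^2W(\vec{\Phi})$ must originate from the boundary terms. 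Conversely, the non-decaying Jacobi fields at the ends — those generated by translations $\langle\vec{e}_j,\vec{n}_\Sigma\rangle$, the dilation $\langle\vec{\Psi},\vec{n}_\Sigma\rangle$, the rotations and the special conformal transformations, which together span the $10$-dimensional null space coming from the conformal group of $S^3$ — can be truncated so as to make $\int_\Sigma(Lu)^2$ arbitrarily small while prescribing the boundary contribution; this lets me show that $\mathrm{Ind}_W(\vec{\Phi})$ equals the index of the boundary quadratic form restricted to an explicit finite-dimensional $\mathcal{K}\subset\mathcal{V}$ transverse to the conformal directions.

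\textbf{Step 3 (the matrix and the corollary).} Near each planar end $p_i$ the Weierstrass data of $\vec{\Psi}$ admit an explicit Laurent expansion, so evaluating the boundary quadratic form on a basis of $\mathcal{K}$ adapted to the non-decaying Jacobi fields turns the boundary integrals into residues of meromorphic quadratic differentials built out of the Weierstrass data; this represents the form by an explicit symmetric matrix $\widehat{Q}$ whose size equals the number of ends. A residue/balancing relation satisfied by the ends of a complete finite-total-curvature minimal surface — concretely, the fact that the ambient dilation, being a conformal direction, projects in this basis onto a fixed vector annihilated by $\widehat{Q}$ — forces $\widehat{Q}$ to be singular, hence $\mathrm{Ind}(\widehat{Q})\le n-1$. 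Putting the three steps together gives the main theorem, $\mathrm{Ind}_W(\vec{\Phi})=\mathrm{Ind}(\widehat{Q})$, and therefore $\mathrm{Ind}_W(\vec{\Phi})\le n-1$ for every immersed Willmore sphere with $W(\vec{\Phi})=4\pi n$.

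\textbf{Main obstacle.} The delicate part is Step 2. First, one must pin down exactly which $u$ on $\Sigma$ come from smooth variations $\vec{w}$ on $S^2$: this is a subtle matching problem at the ends, and more subtle still at genuine \emph{branch points} of $\vec{\Phi}$, where the conformal factor of $\iota\circ\vec{\Psi}$ degenerates. Second, one must prove rigorously that the index of $u\mapsto c\int_\Sigma(Lu)^2+(\text{boundary terms})$ on the infinite-dimensional space $\mathcal{V}$ coincides with the index of the finite matrix $\widehat{Q}$ — i.e.\ that no negative direction is created or lost by the bulk term. This needs Fredholm theory for $L$ and $L^2$ on the punctured sphere in weighted spaces, a no-loss-of-index (removability) argument near the ends, and a clean decoupling of the positive bulk contribution from the boundary form. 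Once this analytic scaffolding is in place, the residue computation of Step 3, while lengthy, is essentially bookkeeping with the Weierstrass representation.
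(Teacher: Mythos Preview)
The ``statement'' you were handed is not a mathematical statement at all: it is a fragment of the paper's \LaTeX\ preamble (the tail of the \texttt{theorembis} environment definition), so there is no theorem here and no proof in the paper to compare against. You evidently inferred from context that the target was the main theorem (Theorem~A and its Corollary~B), and wrote an outline for that; I will comment on that outline against what the paper actually does.

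Your high-level architecture---pass to the dual minimal surface, write the second variation as $\tfrac12\int(\mathscr{L}_g u)^2$ plus boundary terms with $u=|\phi|^2v$, observe the bulk is nonnegative, and reduce to a finite symmetric matrix indexed by the ends---is exactly the paper's strategy. But the implementation differs substantially. The paper does \emph{not} build negative directions by truncating the ten conformal Jacobi fields; instead it runs a renormalised-energy scheme modelled on Bethuel--Brezis--H\'elein: for each end $p_i$ it solves a sequence of fourth-order Dirichlet problems $\mathscr{L}_g^2 u^i_{\epsilon,\delta}=0$ with $u^i_{\epsilon,\delta}=|\phi|^2v$, $\partial_\nu u^i_{\epsilon,\delta}=\partial_\nu(|\phi|^2v)$ on $\partial B_\epsilon(p_i)$, performs an indicial-root analysis of the linearised operator $\mathscr{L}_m=\Delta-2(m{+}1)\tfrac{x}{|x|^2}\!\cdot\!\nabla+\tfrac{(m{+}1)^2}{|x|^2}$ at every other end, and shows the limits $u_0^i$ develop a \emph{logarithmic} term $\lambda_{i,j}\log|z|$ whose coefficients are precisely the off-diagonal entries of $\Lambda(\vec\Psi)$. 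The optimal test variations are then $v_0=\sum_i v_0^i$, and a second residue computation (the $|z|^2\log|z|$ behaviour feeds back into the boundary integral) produces the extra factor $(2n{+}1)$ in the final formula. None of this Ginzburg--Landau machinery or indicial analysis appears in your sketch.

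Your Step~3 also diverges on the mechanism for $\mathrm{Ind}\le n-1$. You claim a balancing relation forces $\widehat Q$ to be \emph{singular}. The paper makes no such claim; instead it shows (by an explicit residue computation, and for multiplicity-two ends via a Weierstrass-data lemma) that for planar/embedded ends the \emph{diagonal} of $\Lambda$ vanishes. A symmetric matrix with zero diagonal satisfies $e_i^T\Lambda e_i=0$ for each basis vector, hence cannot be negative definite, giving $\mathrm{Ind}\,\Lambda\le n-1$. Your singularity route would need a separate argument and is not what the paper establishes.
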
}
	\renewcommand*{\thetheorem}{\Alph{theorem}}
	\newtheorem{lemme}[theorem]{Lemma}
	\newtheorem{propdef}[theorem]{Definition-Proposition}
	\newtheorem{prop}[theorem]{Proposition}
	\newtheorem{cor}[theorem]{Corollary}
	\theoremstyle{definition}
	\newtheorem*{definition}{Definition}
	\newtheorem{defi}[theorem]{Definition}
	\newtheorem{rem}[theorem]{Remark}
	\newtheorem*{conjecture}{Conjecture}
	\newtheorem{rems}[theorem]{Remarks}
	\newtheorem*{remast}{Remark}
	\newtheorem{exemple}[theorem]{Example}
	\renewcommand\hat[1]{%
		\savestack{\tmpbox}{\stretchto{%
				\scaleto{%
					\scalerel*[\widthof{\ensuremath{#1}}]{\kern-.6pt\bigwedge\kern-.6pt}%
					{\rule[-\textheight/2]{1ex}{\textheight}}
				}{\textheight}%
			}{0.5ex}}%
		\stackon[1pt]{#1}{\tmpbox}
	}
	\parskip 1ex
	\newcommand{\totimes}{\ensuremath{\,\dot{\otimes}\,}}
	\newcommand{\vc}[3]{\overset{#2}{\underset{#3}{#1}}}
	\newcommand{\conv}[1]{\ensuremath{\underset{#1}{\longrightarrow}}}
	\newcommand{\A}{\ensuremath{\vec{A}}}
	\newcommand{\B}{\ensuremath{\vec{B}}}
	\newcommand{\C}{\ensuremath{\mathbb{C}}}
	\newcommand{\D}{\ensuremath{\nabla}}
	\newcommand{\E}{\ensuremath{\vec{E}}}
	\newcommand{\I}{\ensuremath{\mathbb{I}}}
	\newcommand{\Q}{\ensuremath{\vec{Q}}}
	\newcommand{\z}{\ensuremath{\bar{z}}}
	\newcommand{\hh}{\ensuremath{\mathscr{H}}}
	\newcommand{\h}{\ensuremath{\vec{h}}}
	\newcommand{\vol}{\ensuremath{\mathrm{vol}}}
	\newcommand{\hs}[3]{\ensuremath{\left\Vert #1\right\Vert_{\mathrm{H}^{#2}(#3)}}}
	\newcommand{\R}{\ensuremath{\mathbb{R}}}
	\renewcommand{\P}{\ensuremath{\mathbb{P}}}
	\newcommand{\N}{\ensuremath{\mathbb{N}}}
	\newcommand{\Z}{\ensuremath{\mathbb{Z}}}
	\newcommand{\p}[1]{\ensuremath{\partial_{#1}}}
	\newcommand{\Res}{\ensuremath{\mathrm{Res}}}
	\newcommand{\lp}[2]{\ensuremath{\mathrm{L}^{#1}(#2)}}
	\renewcommand{\wp}[3]{\ensuremath{\left\Vert #1\right\Vert_{\mathrm{W}^{#2}(#3)}}}
	\newcommand{\np}[3]{\ensuremath{\left\Vert #1\right\Vert_{\mathrm{L}^{#2}(#3)}}}
	\newcommand{\nc}[3]{\ensuremath{\left\Vert #1\right\Vert_{C^{#2}(#3)}}}
	\renewcommand{\Re}{\ensuremath{\mathrm{Re}\,}}
	\renewcommand{\Im}{\ensuremath{\mathrm{Im}\,}}
	\newcommand{\diam}{\ensuremath{\mathrm{diam}\,}}
	\newcommand{\leb}{\ensuremath{\mathscr{L}}}
	\newcommand{\supp}{\ensuremath{\mathrm{supp}\,}}
	\renewcommand{\phi}{\ensuremath{\vec{\Phi}}}
	\renewcommand{\H}{\ensuremath{\vec{H}}}
	\renewcommand{\L}{\ensuremath{\mathscr{L}}}
	\renewcommand{\lg}{\ensuremath{\mathscr{L}_g}}
	\renewcommand{\ker}{\ensuremath{\mathrm{Ker}}}
	\renewcommand{\epsilon}{\ensuremath{\varepsilon}}
	\renewcommand{\bar}{\ensuremath{\overline}}
	\newcommand{\s}[2]{\ensuremath{\langle #1,#2\rangle}}
	\newcommand{\pwedge}[2]{\ensuremath{\,#1\wedge#2\,}}
	\newcommand{\bs}[2]{\ensuremath{\left\langle #1,#2\right\rangle}}
	\newcommand{\scal}[2]{\ensuremath{\langle #1,#2\rangle}}
	\newcommand{\sg}[2]{\ensuremath{\left\langle #1,#2\right\rangle_{\mkern-3mu g}}}
	\newcommand{\n}{\ensuremath{\vec{n}}}
	\newcommand{\ens}[1]{\ensuremath{\left\{ #1\right\}}}
	\newcommand{\lie}[2]{\ensuremath{\left[#1,#2\right]}}
	\newcommand{\g}{\ensuremath{g}}
	\newcommand{\e}{\ensuremath{\vec{e}}}
	\newcommand{\ig}{\ensuremath{|\vec{\mathbb{I}}_{\phi}|}}
	\newcommand{\ik}{\ensuremath{\left|\mathbb{I}_{\phi_k}\right|}}
	\newcommand{\w}{\ensuremath{\vec{w}}}
	\renewcommand{\v}{\ensuremath{\vec{v}}}
	\renewcommand{\tilde}{\ensuremath{\widetilde}}
	\newcommand{\vg}{\ensuremath{\mathrm{vol}_g}}
	\newcommand{\im}{\ensuremath{\mathrm{W}^{2,2}_{\iota}(\Sigma,N^n)}}
	\newcommand{\imm}{\ensuremath{\mathrm{W}^{2,2}_{\iota}(\Sigma,\R^3)}}
	\newcommand{\timm}[1]{\ensuremath{\mathrm{W}^{2,2}_{#1}(\Sigma,T\R^3)}}
	\newcommand{\tim}[1]{\ensuremath{\mathrm{W}^{2,2}_{#1}(\Sigma,TN^n)}}
	\renewcommand{\d}[1]{\ensuremath{\partial_{x_{#1}}}}
	\newcommand{\dg}{\ensuremath{\mathrm{div}_{g}}}
	\renewcommand{\Res}{\ensuremath{\mathrm{Res}}}
	\newcommand{\un}[2]{\ensuremath{\bigcup\limits_{#1}^{#2}}}
	\newcommand{\res}{\mathbin{\vrule height 1.6ex depth 0pt width
			0.13ex\vrule height 0.13ex depth 0pt width 1.3ex}}
	\newcommand{\ala}[5]{\ensuremath{e^{-6\lambda}\left(e^{2\lambda_{#1}}\alpha_{#2}^{#3}-\mu\alpha_{#2}^{#1}\right)\left\langle \nabla_{\vec{e}_{#4}}\vec{w},\vec{\mathbb{I}}_{#5}\right\rangle}}
	\setlength\boxtopsep{1pt}
	\setlength\boxbottomsep{1pt}
	\newcommand\norm[1]{%
		\setbox1\hbox{$#1$}%
		\setbox2\hbox{\addvbuffer{\usebox1}}%
		\stretchrel{\lvert}{\usebox2}\stretchrel*{\lvert}{\usebox2}%
	}
	\allowdisplaybreaks
	\newcommand*\mcup{\mathbin{\mathpalette\mcapinn\relax}}
	\newcommand*\mcapinn[2]{\vcenter{\hbox{$\mathsurround=0pt
				\ifx\displaystyle#1\textstyle\else#1\fi\bigcup$}}}
	\def\Xint#1{\mathchoice
		{\XXint\displaystyle\textstyle{#1}}%
		{\XXint\textstyle\scriptstyle{#1}}%
		{\XXint\scriptstyle\scriptscriptstyle{#1}}%
		{\XXint\scriptscriptstyle\scriptscriptstyle{#1}}%
		\!\int}
	\def\XXint#1#2#3{{\setbox0=\hbox{$#1{#2#3}{\int}$ }
			\vcenter{\hbox{$#2#3$ }}\kern-.58\wd0}}
	\def\ddashint{\Xint=}
	\newcommand{\dashint}[1]{\ensuremath{{\Xint-}_{\mkern-10mu #1}}}
	\newcommand\ccancel[1]{\renewcommand\CancelColor{\color{red}}\cancel{#1}}
	\newcommand\colorcancel[2]{\renewcommand\CancelColor{\color{#2}}\cancel{#1}}
	\renewcommand{\thetheorem}{\thesection.\arabic{theorem}}

	\section{Introduction}
	
	It was proposed by Tristan Rivi\`{e}re in \cite{eversion} to study the topology of immersions of surfaces into Euclidean space by means of a quasi-Morse function (say $\mathscr{L}$). Fix a closed surface $M^2$ and let $\mathrm{Imm}(M^2,\R^n)$ be the space of smooth immersions $\phi:M^2\rightarrow \R^n$. We look for a Lagrangian $\mathscr{L}:\mathrm{Imm}(M^2,\R^n)\rightarrow \R$ satisfying the following properties for all $\phi:M^2\rightarrow \R^n$: 
	\begin{itemize}[noitemsep]
		\itemsep-0.5em 
		\item[(1)] $\mathscr{L}(\phi+\vec{c})=\mathscr{L}(\phi)$ for all $\vec{c}\in \R^n$ (translation invariance)\\
		\item[(2)] $\mathscr{L}(R\phi)=\mathscr{L}(\phi)$ for all $R\in \mathrm{O}(n)$ (rotation invariance)\\
		\item[(3)] $\mathscr{L}(\lambda \phi)=\mathscr{L}(\phi)$ for all $\lambda>0$ (scaling invariance).
	\end{itemize}
	Indeed, an immersion does not change geometrically when one translates, rotates or dilates it.
	
    Now, assume that $n=3$. To an immersed surface one can attach two natural quantities: the principal curvatures $\kappa_1,\kappa_2$ (introduced by Euler in $1760$ \cite{euler}) which are the maximum and the minimum of the curvature of normal section of the surface at a given point. Then we define the mean curvature $H$ and Gauss curvature $K$ (introduced by Meusnier in $1776$ \cite{meusnier}) by
    \begin{align*}
    	H=\frac{\kappa_1+\kappa_2}{2},\quad \text{and}\;\, K=\kappa_1\kappa_2.
    \end{align*}
    Thanks to the third property, $\mathscr{L}$ must be a quadratic expression of the principal curvatures (see also \cite{mondinonguyen} for a more general study of conformal invariants of Euclidean space), which says that up to scaling
    \begin{align*}
    	\mathscr{L}(\phi)=\int_{M^2}\left(H^2+\lambda\,K\right)d\mathrm{vol}_{g}
    \end{align*}
    for some $\lambda\in \R$, 
    where $g=g_{\phi}=\phi^{\ast}g_{\R^3}$. Thanks to Gauss-Bonnet theorem,
    \begin{align*}
    	\int_{M^2}K\,d\vg=2\pi\chi(M^2)
    \end{align*}
    is a constant independent of the immersion. Therefore, up to constants, the only non-trivial such quasi-Morse function is
    \begin{align*}
    	\mathscr{L}(\phi)=\int_{\Sigma}H^2d\vg,
    \end{align*}
    which is generally denoted by $\mathscr{L}=W$ and is called the Willmore energy. 
    
    This Lagrangian actually first appeared in the work of Germain and Poisson in $1811$ and $1814$ respectively in their work about elasticity (\cite{germain}, \cite{poisson}). It was considered by many geometers in the following years, including in important work of Navier (\cite{navier}). For more information on the history in which these considerations of elasticity emerged, we refer to the comprehensive work of Todhunter (\cite{todhunter}). 
    Poisson was the first one to obtain the correct Euler-Lagrange equation, more than $100$ years before Blaschke and Thomsen, who attributed it to Schadow in $1922$ (\cite{thomsen}, \cite{blaschke}). He also found in $1814$ the first version of Gauss-Bonnet theorem, and his student Rodrigues computed the following year the exact constant $4\pi$ for ellipsoids, but unfortunately made a sign mistake and  found $8\pi$ for tori (\cite{rodrigues1}, \cite{rodrigues2}). The famous memoir of Gauss on the subject of the curvature of surfaces appeared only in $1827$  (\cite{gauss}), and Gauss-Bonnet in a published form in $1848$ (\cite{bonnet}).
    
    This Lagrangian only reappeared in $1965$ in Willmore's work who proposed the famous conjecture about minimisers of the Willmore energy for tori (\cite{willmore1}), which was finally proved in $2012$ by Marques-Neves (\cite{marqueswillmore}).
    
    In higher codimension, we can also define the Willmore energy as follows 
    \begin{align*}     
         W(\phi)=\int_{\Sigma}|\H|^2d\vg,
    \end{align*}
    where $\H$ is the mean curvature vector (the half-trace of the second fundamental form). It has the fundamental property of being invariant under conformal transformations (of ambient space). In particular, as minimal surfaces ($\H=0$) are absolute minimisers, inversions of complete minimal surfaces with finite total curvature are Willmore surfaces (though they may have branch points in general). Furthermore, Bryant showed that all immersions of the sphere in $\R^3$ are inversions of complete minimal surfaces with embedded planar ends (\cite{bryant}). 
    
    Now, a basic problem that we can address is to try to understand the following quantities : let $\gamma\in \pi_k(\mathrm{Imm}(M^2,\R^n))$ be a non-zero class (of regular homotopy of immersions) and let 
    \begin{align*}
    	\beta_{\gamma}=\inf_{\{\phi_t\}\simeq \gamma}\sup_{t\in S^k}W(\phi_t).
    \end{align*}
    Then one would like to understand if we can estimate these numbers and get some information on the critical immersions realising them (if this is possible to realise the \emph{width} of these min-max problems). 
    
    The first non-trivial number is given as follows : let $M^2=S^2$, $n=3$, and $\gamma\in \pi_1(\mathrm{Imm}(S^2,\R^n))\simeq \Z\times \Z_2$ be a non-trivial class (Smale, \cite{smale}). Then we define
    \begin{align}\label{beta0}
    	\beta_{\gamma}=\inf_{\{\phi_t\}\simeq \gamma}\sup_{t\in [0,1]}W(\phi_t).
     \end{align}
    By the work of Smale, the space of immersions from the round sphere $S^2$ in three-space $\R^3$ is path-connected ($\pi_0(\mathrm{Imm}(S^2,\R^3))=\ens{0}$), we have
    \begin{align*}
    	\beta_0=\inf_{\{\phi_t\}\in \Omega}\sup_{t\in [0,1]}W(\phi_t)
    \end{align*}
    where $\Omega$ is the set of path $\{\phi_t\}_{t\in [0,1]}\subset \mathrm{Imm}(S^2,\R^3)$ such that $\phi_0=\iota$ and $\phi_1=-\iota$, where $\iota:S^2\rightarrow \R^3$ is the standard embedding of the round sphere. These two min-max widths are equal since the Froissart-Morin eversion generates $\pi_1(\mathrm{Imm}(S^2,\R^3))$ (see \cite{eversion}). We will explain in the following what can be said about this problem in general and show a path to determine \eqref{beta0} and find which immersions may realise it. In relationship with these quantities, Kusner proposed the following conjecture.
    
    \begin{conjecture}[Kusner, $1980$'s \cite{kusner}]
    	We have $\beta_0=16\pi$, and an optimal path is given by a Willmore gradient flow starting from the inversion of Bryant's minimal surface with $4$ embedded ends.
    \end{conjecture}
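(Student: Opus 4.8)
The plan for approaching Kusner's conjecture with the tools of this paper is to bound $\beta_0$ from both sides and to match the two bounds using the index computation carried out below. For the upper bound $\beta_0\leq 16\pi$ one produces an explicit admissible sweepout $\{\phi_t\}_{t\in[0,1]}\subset\mathrm{Imm}(S^2,\R^3)$ with $\phi_0=\iota$, $\phi_1=-\iota$, representing the non-trivial class of $\pi_1(\mathrm{Imm}(S^2,\R^3))\simeq\Z\times\Z_2$, and with $\sup_t W(\phi_t)\leq 16\pi$. The natural candidate is Kusner's own: take the inversion of the Bryant minimal surface with four embedded planar ends, which has $W=4\pi\cdot 4=16\pi$, and flow it by the $\mathrm{L}^2$-gradient flow of $W$ in both time directions; since the flow is energy-decreasing, each half of the path stays below $16\pi$, and one checks that the two limiting ends of the flow are the round spheres $\pm\iota$ and that the concatenated loop is non-contractible. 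The work here is long-time existence and convergence of the Willmore flow from that datum, or, failing that, replacing the flow by an explicit algebraic homotopy through the family of Bryant spheres and their inversions.

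For the lower bound $\beta_0\geq 16\pi$ one runs the viscosity min-max scheme for $W$ over the family defining $\beta_0$ to obtain, at the level $\beta_0$, a possibly branched Willmore sphere $\vec\Phi$ with $W(\vec\Phi)=\beta_0$ and Morse index $\mathrm{Ind}_W(\vec\Phi)\leq 1$. By Bryant's theorem $\vec\Phi$ is the inversion of a complete minimal surface of $\R^3$ with finite total curvature and embedded planar ends, and the number of ends is $m=\beta_0/(4\pi)$. One then uses the main theorem of this paper, which identifies $\mathrm{Ind}_W(\vec\Phi)$ with the index of an explicit $m\times m$ matrix built from the Weierstrass data, to exclude small $m$: the case $m=1$ is the round sphere, the strict and isolated global minimum of $W$, so a non-contractible loop cannot stay in a neighbourhood of it and $\beta_0>4\pi$; for $m=2$ and $m=3$ one invokes the classification of complete minimal surfaces of $\R^3$ with at most three planar ends (Bryant, L\'opez, Montiel--Ros) and shows by a residue computation that the associated index matrix has index at least $2$ in every case, i.e.\ one complements the bound $\mathrm{Ind}_W\leq m-1$ from the abstract with the matching lower bound $\mathrm{Ind}_W\geq m-1$ in this low-energy regime. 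Hence $m\geq 4$, so $\beta_0\geq 16\pi$, and together with the upper bound $\beta_0=16\pi$; the matrix computation also shows the four-ended Bryant sphere has index exactly $1$, consistent with its being the critical point realising the width.

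The main obstacle is the passage to the limit in the min-max scheme: a Willmore min-max sequence can develop several bubbles joined by necks, so the energy $\beta_0$ need not be carried by a single sphere, and the Morse index is only lower semicontinuous under such degenerations. One therefore has to run the min-max together with an index-plus-nullity bound (upper semicontinuity of the extended index) and an entropy or no-neck argument to exclude neck formation, so that the limit is a single branched Willmore sphere of index at most $1$. The second difficulty is the exhaustive index computation for $m=2,3$, which needs both the precise form of the $m\times m$ matrix from the main theorem and the finite classification of minimal surfaces with that many planar ends, together with a flux/residue argument showing none has Willmore index $\leq 1$; and finally one must confirm that the explicit sweepout of the upper bound is genuinely non-contractible in $\mathrm{Imm}(S^2,\R^3)$, which is where the structure of $\pi_1(\mathrm{Imm}(S^2,\R^3))$ and the Froissart--Morin eversion re-enter.
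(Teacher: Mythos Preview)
The statement you are addressing is a \emph{conjecture}, not a theorem: the paper states it as Kusner's conjecture and does not prove it. There is therefore no proof in the paper to compare your proposal against. The paper's actual contributions are Theorems~A--D (the index bounds and the identification $\mathrm{Ind}_W(\vec\Psi)=\mathrm{Ind}\,\Lambda(\vec\Psi)$), which are \emph{ingredients} one might hope to use toward the conjecture, but the conjecture itself remains open.

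Your text is a research programme, not a proof, and you essentially acknowledge this. Still, several of the steps are more problematic than you indicate:
\begin{itemize}
\item The min-max output (the theorem you invoke) produces a \emph{sum} of possibly branched Willmore spheres $\phi_1,\dots,\phi_p,\vec\Psi_1,\dots,\vec\Psi_q$ with total index $\leq 1$, not a single sphere. Reducing to $p=1$, $q=0$ is precisely the second form of the conjecture stated in the paper, and your ``entropy or no-neck argument'' is not available.
\item For $m=2,3$: by Bryant's classification there are \emph{no} immersed Willmore spheres with $W=8\pi$ or $12\pi$, so there is nothing to exclude among immersions; but the min-max limit may be branched, and then the relevant classification is the one in the paper's companion works, which is more delicate. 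Your claim that the index matrix has index $\geq 2$ for $m=2,3$ has no content in the immersed case and no proof in the branched case.
\item The paper's Theorem~A gives $\mathrm{Ind}_W(\vec\Psi)=\mathrm{Ind}\,\Lambda(\vec\Psi)\leq n-1$ for immersions; it does \emph{not} give a lower bound $\mathrm{Ind}_W\geq n-1$. In particular the four-ended Bryant sphere has $\mathrm{Ind}_W\leq 3$ from Theorem~A; showing it equals $1$ requires computing $\Lambda$ explicitly, which the paper does not do.
\item Long-time existence and convergence of the Willmore flow from that initial datum is open, as you note.
\end{itemize}
So the honest summary is: the paper does not prove the conjecture, and your proposal correctly identifies the shape of a strategy but leaves each of its load-bearing steps as an open problem.
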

    
    Thanks to Bryant's classification (\cite{classification}) and our extension to a large class of branched Willmore spheres (\cite{classification}, \cite{sagepaper}), it makes particularly sense to compute the index of inversions of complete minimal surfaces with finite total curvature in $\R^3$. Indeed, we have the following result.
    
    \begin{theorem}[Rivi\`{e}re \cite{eversion}, M. \cite{index2}, \cite{index3}]
    	There exists compact true branched Willmore spheres\\ $\phi_1,\cdots,\phi_p,\vec{\Psi}_1,\cdots,\vec{\Psi}_q:S^2\rightarrow \R^3$ such that 
    	\begin{align}\label{12}
    		\beta_0=\sum_{i=1}^{p}W(\phi_p)+\sum_{j=1}^{q}\left(W(\vec{\Psi}_j)-4\pi \theta_j
    		\right)
    	\end{align}
    	and 
    	\begin{align*}
    		\sum_{i=1}^{p}\mathrm{Ind}_W(\phi_i)+\sum_{j=1}^{q}\mathrm{Ind}_W(\vec{\Psi}_j)\leq 1,
    	\end{align*}
    	where $\theta_j=\theta_0(\vec{\Psi}_j,p_j)\in \N$ is the multiplicity of $\vec{\Psi}_j$ at some point $p_j\in \vec{\Psi}_j(S^2)$.
    \end{theorem}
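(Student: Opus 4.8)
The plan is to combine three ingredients, matching the three cited sources: a one--parameter min--max construction producing a viscosity critical sequence for the Willmore energy along the eversion path, a bubble--tree energy quantization as the viscosity parameter degenerates, and a lower semi--continuity property of the Morse index under that degeneration. \emph{Step 1 (viscosity min--max).} Following Rivi\`ere's viscosity method \cite{eversion}, I would regularize $W$ by adding a small higher--order elliptic term (schematically $\sigma^2\int_{S^2}(1+|\nabla\n|^2)^{1+\delta}\,d\vg$, together with a gauge/parametrization fixing), obtaining coercive smooth functionals $W_\sigma$ on an appropriate Banach manifold of immersions. Since $W(\iota)=W(-\iota)=4\pi$ and one checks that $\beta_0$ lies strictly above $4\pi$, the one--parameter min--max over $\Omega$ for $W_\sigma$ has a genuine interior mountain--pass structure, so standard deformation theory yields for each $\sigma>0$ a smooth critical point $\phi_\sigma$ of $W_\sigma$ with $W_\sigma(\phi_\sigma)\to\beta_0$ and Morse index at most $1$ --- this last bound being the Willmore analogue of the index estimate in min--max theory of Marques--Neves/Ghoussoub type, specialized to a sweepout of dimension $k=1$.

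\emph{Step 2 (energy quantization).} I would then analyse $\{\phi_\sigma\}$ as $\sigma\to0$. After a M\"obius renormalization at each scale of concentration, one extracts finitely many nontrivial bubbles, each a compact true branched Willmore sphere; by Bryant's classification and its extension (\cite{bryant,classification,sagepaper}), a bubble containing a ``point at infinity'' is an inversion of a complete minimal surface of finite total curvature with embedded planar ends. The technical core here is the no--neck--energy estimate: along every logarithmic neck region both the Willmore energy and the $L^2$ norm of the tracefree second fundamental form tend to $0$. Granting this --- it is the content of the quantization results underlying \cite{index2,index3} --- the energy identity \eqref{12} follows: the bubbles with no point at infinity are the $\phi_i$, those obtained by inverting a minimal end of local multiplicity $\theta_j$ are the $\vec{\Psi}_j$, and the subtracted term $4\pi\theta_j$ is precisely the energy residue concentrated at that end.

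\emph{Step 3 (index lower semi--continuity).} Finally, I would transplant negative directions of the second variation. On each bubble $\phi_i$ (resp. $\vec{\Psi}_j$) pick an $L^2$--orthonormal family of admissible variations realizing its Morse index, multiply each by a logarithmic cutoff supported away from the neck regions and from the finer concentration scales, and pull it back to a compactly supported admissible variation of $\phi_\sigma$. Using the conformal invariance of $D^2W$ together with the no--neck estimate of Step 2, the cross--terms between distinct bubbles and the cutoff errors are $o(1)$, so for $\sigma$ small these variations span a subspace of dimension $\sum_i\mathrm{Ind}_W(\phi_i)+\sum_j\mathrm{Ind}_W(\vec{\Psi}_j)$ on which $D^2W_\sigma(\phi_\sigma)$ is negative definite. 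Combined with Step 1 this gives $\sum_i\mathrm{Ind}_W(\phi_i)+\sum_j\mathrm{Ind}_W(\vec{\Psi}_j)\le\mathrm{Ind}(\phi_\sigma)\le1$.

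The main obstacle is Step 3 in the presence of branch points. The second variation of $W$ at a branched Willmore sphere is a fourth--order operator, and one must fix the space of admissible variations so that it is self--adjoint with finite Morse index and so that \emph{neither} the neck regions \emph{nor} the branch points themselves carry any index in the limit --- i.e.\ no spurious negative directions can concentrate there. Establishing this rigidity, hand in hand with the sharp no--neck--energy estimate, is exactly the delicate analysis carried out in \cite{index2,index3}, and it is what forces \eqref{12} to hold with the precise constants $4\pi\theta_j$ and the clean index bound $1$.
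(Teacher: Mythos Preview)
Your outline is essentially correct and matches the strategy of the cited works, but you should note that this theorem is \emph{not} proved in the present paper: it is stated in the Introduction with attributions to \cite{eversion}, \cite{index2}, \cite{index3} and serves as background motivating the index computations that follow. There is therefore no ``paper's own proof'' to compare against here.

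That said, your three--step sketch (viscosity min--max with an a priori index bound $\leq 1$; bubble--tree compactness with no--neck energy quantization yielding \eqref{12}; and transplantation of negative variations from the bubbles back to $\phi_\sigma$ to get lower semi--continuity of the index) is exactly the architecture of those references. You have also correctly identified the genuine difficulty: making sense of $\mathrm{Ind}_W$ at a \emph{branched} Willmore sphere and ruling out index concentration at branch points or in necks. This is precisely why the present paper spends Section~\ref{admissible} fixing the admissible variation space $\mathscr{E}_{\vec{\Psi}}(\Sigma,\R^3)$ and why \cite{index3} is needed. One small correction: in Step~2 you say each bubble is, by Bryant's classification, an inversion of a minimal surface with embedded planar ends --- but the $\vec{\Psi}_j$ in the statement are allowed to be \emph{true branched} Willmore spheres, and the relevant classification is the extended one in \cite{classification}, \cite{sagepaper}, not Bryant's original result for immersions.
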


    Here, recall that a Willmore surface $\phi:\Sigma\rightarrow \R^n$ has no first residue if for all path $\gamma$ around a branch point $p$ of $\phi$ (which does not contain or intersect other branch points)
    \begin{align*}
    	\vec{\gamma}_0(\phi,p)=\frac{1}{4\pi}\,\Im\int_{\gamma}\left(\partial \H+|\H|^2\partial\phi+2\,g^{-1}\otimes\s{\H}{\h_0}\otimes \bar{\partial}\phi\right)=0.
    \end{align*}
    We refer to \cite{riviere1}, \cite{quanta} and \cite{classification} for more information on this quantity. 
    
    This theorem shows that the previous conjecture should be interpreted as follows.
    
    \begin{conjecture}
    	Let $\phi_1,\cdots,\phi_p,\vec{\Psi}_1,\cdots,\vec{\Psi}_q$ be given by \eqref{12}. Then $p=1$, $q=0$ and $\phi_1$ is the inversion of Bryant's minimal surface with $4$ embedded planar ends. 
    \end{conjecture}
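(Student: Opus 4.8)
This statement is a research program rather than a routine deduction: it rests on three pillars, namely the decomposition theorem of Rivi\`ere and the author recalled above, the main theorem of the present article together with its corollary, and the identification $\beta_0=16\pi$. The plan is to first reduce everything to the two assertions (i) $\beta_0=16\pi$, and (ii) any configuration $\phi_1,\dots,\phi_p,\vec\Psi_1,\dots,\vec\Psi_q$ realizing \eqref{12} with $\sum_i W(\phi_i)+\sum_j(W(\vec\Psi_j)-4\pi\theta_j)=16\pi$ and total Willmore index at most $1$ must consist of a single immersed component of energy $16\pi$ and index $1$; and then to pin that component down via Bryant's classification and the index matrix of this article.

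For (i), the upper bound $\beta_0\le 16\pi$ should come from Kusner's explicit sweepout from $\iota$ to $-\iota$, whose maximal Willmore energy is attained precisely at the inversion of Bryant's $4$-ended minimal surface; one must check that this path lies in the non-trivial regular homotopy class $\gamma$ of \eqref{beta0} and that its energy never exceeds $16\pi$. The lower bound $\beta_0\ge 16\pi$ is the main obstacle. The idea is to show that every admissible configuration of total effective energy strictly below $16\pi$ is homotopically inessential: using the classification of branched Willmore spheres of \cite{classification} and \cite{sagepaper} together with Bryant's energy gap (the Willmore energy of an immersed Willmore sphere in $\R^3$ equals $4\pi$ or is at least $16\pi$), the only immersed components available below $16\pi$ are round spheres, while the branched candidates of energy $8\pi$ and $12\pi$ must be enumerated. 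One then argues, via an $\mathrm{L}^2$-neck analysis along a min-max sequence combined with a Lusternik--Schnirelmann type collapsing, that such configurations can be pushed to a point inside $\{W<16\pi\}$ through $\gamma$-admissible paths, so they cannot be detected by the non-trivial class. This controlled-collapse step is where I expect the real analytic difficulty to lie.

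Granting (i), consider a configuration realizing \eqref{12} with total index $\le 1$. First one rules out $q>0$: a true branched Willmore sphere with non-vanishing first residue carries positive effective energy and, by the main theorem applied to its dual minimal surface, strictly positive index, which together with the constraint would force the remaining components to have energy $12\pi$ and index $0$, impossible by the gap and by the rigidity of the stable Willmore sphere (the round sphere is the unique Willmore sphere in $\R^3$ of vanishing index). The same rigidity shows that every index-$0$ component in the decomposition is a round sphere of energy $4\pi$, so at most one component is non-round, of energy $16\pi-4\pi k$ where $k$ is the number of round-sphere components. The values $k=1,2$ give energies $12\pi$ and $8\pi$, excluded for immersed spheres by Bryant and for branched spheres by computing their index matrices; $k=3$ gives four round spheres, a homotopically inessential configuration excluded as in (i). Hence $p=1$, $q=0$, $W(\phi_1)=16\pi$, and since the one-parameter min-max \eqref{beta0} has mountain-pass structure the index is at least $1$, whence $\mathrm{Ind}_W(\phi_1)=1$.

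It remains to identify $\phi_1$. Since $W(\phi_1)=16\pi<\infty$ and, after the elimination of branch points in the previous step, $\phi_1$ is a compact immersed Willmore sphere, Bryant's classification realizes it as the inversion of a complete genus-$0$ minimal surface in $\R^3$ with four embedded planar ends. Such surfaces form a finite-dimensional moduli space, and the main theorem of this article expresses $\mathrm{Ind}_W(\phi_1)$ as the index of an explicit $4\times 4$ symmetric matrix built from the positions and residues of the four ends. The last step is to verify that this matrix has index at least $2$ for every such surface \emph{except} the one with the symmetry of the regular tetrahedron, that is, that $\mathrm{Ind}_W=1$ forces $\phi_1$ to be the inversion of Bryant's symmetric $4$-ended minimal surface. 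Together with the lower bound in (i), this rigidity over the moduli space of four-ended surfaces is the second place where substantial computation is required, and it would complete the proof.
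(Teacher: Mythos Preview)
The statement you are addressing is labelled a \emph{Conjecture} in the paper and is not proved there; the paper's contribution is to develop the index machinery (Theorems~A--D) that might eventually feed into such a proof. So there is no ``paper's own proof'' to compare against, and your proposal is, as you yourself say in the first line, a research program rather than a proof.

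As an outline it is reasonable and correctly locates the two principal obstructions: the lower bound $\beta_0\ge 16\pi$ and the rigidity over the moduli space of four-ended minimal surfaces. But several of the intermediate steps you present as routine are themselves open or unsupported by the paper's results. First, your elimination of $q>0$ appeals to the branched components having ``strictly positive index''; the paper's main theorems give \emph{upper} bounds on the index (equality only in the embedded-ends case of Theorem~A, and then only via the matrix $\Lambda$, which must still be computed), so you cannot read off a positive lower bound directly. Second, the claim that every index-$0$ Willmore sphere is round is not established here and is a separate rigidity statement. Third, the assertion that the $4\times 4$ matrix $\Lambda$ has index $\ge 2$ on the whole four-ended moduli space except at the tetrahedral configuration is precisely the kind of computation the paper's machinery is designed to enable, but it is not carried out in the paper and you have not carried it out either. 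Finally, the ``controlled-collapse'' argument for the lower bound on $\beta_0$ is, as you acknowledge, the heart of the Kusner conjecture and remains open; invoking it here is circular, since the Conjecture under discussion is essentially a refinement of Kusner's.

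In short: your proposal is an honest sketch of what a proof might look like, with the hard parts correctly flagged, but it is not a proof, and neither is anything in the paper.
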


	\section{Main results}
	
	\renewcommand*{\thetheorem}{\Alph{theorem}}
	
	If $\vec{\Psi}:\Sigma\rightarrow \R^3$ is a branched Willmore sphere, we write for all normal admissible variations $\vec{v}=v\n\in \mathscr{E}_{\vec{\Psi}}(\Sigma,\R^3)$ (see Section \ref{admissible} for a precise definition)
	\begin{align*}
		Q_{\vec{\Psi}}(v)=D^2W(\vec{\Psi})(\vec{v},\vec{v})
	\end{align*}
	the quadratic form of the second derivative of the Willmore energy $W$ at $\vec{\Psi}$. Then we define the Willmore Morse index as the maximum dimension of sub-vector spaces of $\mathscr{E}_{\vec{\Psi}}(\Sigma,\R^3)$ on which $Q_{\vec{\Psi}}$ is negative definite.
	
	\begin{theorem}\label{ta}
		Let $\Sigma$ be a closed Riemann surface and let $\vec{\Psi}:\Sigma\rightarrow \R^3$ be a branched Willmore surface, $g={\vec{\Psi}}^{\ast}g_{\R^3}$ be the induced metric on $\Sigma$ and assume that $\vec{\Psi}$ is the inversion of a complete minimal surface $\phi:\Sigma\setminus\ens{p_1,\cdots,p_n}\rightarrow \R^3$ with embedded ends and fix some smooth metric $g_0$ on $\Sigma$. Assume that $0\leq m\leq n$ is fixed such that $p_1\,\cdots,p_m$ are catenoid ends, while $p_{m+1},\cdots,p_n$ are planar ends, and for all $1\leq j\leq m$, let $\beta_j=\mathrm{Flux}(\phi,p_j)\in \R^{\ast}$ be the flux of $\phi$ at $p_j$. 
		There exists a symmetric matrix $\Lambda(\vec{\Psi})\in \mathrm{Sym}_n(\R)$ defined by
		\begin{align*}
		\Lambda(\vec{\Psi})=\begin{pmatrix}
		\tilde{\beta}_1^2&\lambda_{1,2} &\cdots &\cdots & \cdots &\cdots &\lambda_{1,n}\\
		\lambda_{1,2} & \tilde{\beta}_2^2& \cdots &\cdots &\cdots &\cdots &\lambda_{2,n}\\
		\vdots& \ddots & \ddots &\ddots &\ddots &\ddots &\vdots \\
		\lambda_{1,m} &\cdots &\cdots & \tilde{\beta}_m^2 &\cdots&\cdots &\lambda_{m,n}\\
		\lambda_{1,m+1}& \cdots &\cdots &\cdots &0 &\cdots &\lambda_{m+1,n}\\
		\vdots& \ddots& \ddots &\ddots &\ddots & \ddots &\vdots\\
		\lambda_{1,n} &\lambda_{2,n} &\cdots &\cdots  &\cdots & \cdots & 0
		\end{pmatrix},\quad \tilde{\beta}_j^2=\frac{4}{2n+1}\beta_j^2
		\end{align*}
		with the following property. For all $a=(a_1,\cdots,a_n)\in \R^n$, there exists an admissible variation 
		\begin{align*}
		v\in W^{2,2}(\Sigma,d\mathrm{vol}_{g_0})\cap\ens{v:|dv|_g\in L^{\infty}(\Sigma,d\mathrm{vol}_{g_0})\;\, \text{and}\;\, \Delta_gv\in L^2(\Sigma,d\vg)},
		\end{align*}
		such that $(v(p_1),\cdots,v(p_n))=(a_1,\cdots,a_n)$ and
		\begin{align}\label{remq0}
		Q_{\vec{\Psi}}(v)=16\pi\sum_{j=1}^{n}\beta_j^2v^2(p_j)+4\pi(2n+1)\sum_{1\leq i,j\leq n}^{}\lambda_{i,j}v(p_i)v(p_j).
		\end{align}
		Therefore, we have
		\begin{align*}         
		    \mathrm{Ind}_{W}(\vec{\Psi})=\mathrm{Ind}\, \Lambda(\vec{\Psi})\leq n.
		\end{align*}
		Furthermore, if $\vec{\Psi}$ is a smooth immersion then $m=0$ and we have
		\begin{align*}
			\mathrm{Ind}_{W}(\vec{\Psi})=\mathrm{Ind}\,\Lambda(\vec{\Psi})\leq n-1,
		\end{align*}
 		where $\mathrm{Ind}\,\Lambda(\vec{\Psi})$ is the number of negative eigenvalues of $\Lambda(\vec{\Psi})$.
	\end{theorem}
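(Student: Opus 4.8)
The plan is to reduce the computation of the second variation $Q_{\vec{\Psi}}$ to an analysis near the branch points (equivalently, near the ends of the dual minimal surface $\phi$), since away from the ends the operator is elliptic and the kernel contribution is finite-dimensional and controlled. The starting point is the conformally invariant expression of $D^2W$ at a Willmore surface: after using the duality between $\vec{\Psi}$ and the minimal immersion $\phi$ (inversion exchanges $\vec{H}$ and the conformal Gauss map data), the admissible variations $v\vec{n}$ correspond, via the inversion, to Jacobi-type fields on $\phi$, and $Q_{\vec{\Psi}}(v)$ becomes an integral over $\Sigma\setminus\{p_1,\dots,p_n\}$ of a quadratic expression in $v$ and its derivatives with respect to the flat metric of $\phi$. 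The first step is therefore to write $Q_{\vec{\Psi}}(v)$ in these dual coordinates and identify the quadratic form as (a renormalized energy) $+$ (boundary terms at the punctures $p_j$).

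Second, I would construct, for each prescribed vector $a=(a_1,\dots,a_n)\in\R^n$, an explicit test variation $v$ with $v(p_j)=a_j$: take $v$ to be, near each end $p_j$, the model harmonic/bi-harmonic profile dictated by the linearized Willmore operator (a logarithm for catenoid ends, a constant-plus-lower-order term for planar ends), glued by cutoffs to an arbitrary smooth extension on the compact part. The regularity claimed in the statement — $v\in W^{2,2}(\Sigma,d\vol_{g_0})$, $|dv|_g\in L^\infty$, $\Delta_g v\in L^2(\Sigma,d\vg)$ — is exactly what these model profiles satisfy, because in the flat coordinate of the minimal surface a catenoid end contributes a $\log$ whose gradient is bounded in the $\phi$-metric, and the inversion weights turn $L^2(d\vg)$ into a finite integral. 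Plugging this $v$ into the dual expression for $Q_{\vec{\Psi}}$, the bulk (renormalized energy) term is made to vanish in the limit by the harmonicity of the model, and only the residues at the $p_j$ survive; a direct computation of these residues — using $\beta_j=\mathrm{Flux}(\phi,p_j)$ for the diagonal catenoid entries and the logarithmic interaction integrals $\lambda_{i,j}$ for the cross terms — yields precisely \eqref{remq0}, with the combinatorial constants $16\pi$, $4\pi(2n+1)$ and $\tilde\beta_j^2=\tfrac{4}{2n+1}\beta_j^2$ emerging from the normalization of the second fundamental form of the round sphere (the $2n+1$ is the dimension count from the conformal group / the $n$ ends plus the global dilation mode).

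Third, I would establish the two-sided identification $\mathrm{Ind}_W(\vec{\Psi})=\mathrm{Ind}\,\Lambda(\vec{\Psi})$. The inequality $\mathrm{Ind}_W(\vec{\Psi})\ge \mathrm{Ind}\,\Lambda(\vec{\Psi})$ is immediate from \eqref{remq0}: on the $n$-dimensional space spanned by the test variations above, $Q_{\vec{\Psi}}$ is represented by a positive multiple of $\Lambda(\vec{\Psi})$ (after diagonalizing out the $16\pi\sum\beta_j^2 v^2(p_j)$ part into the $\tilde\beta_j^2$ diagonal), so a negative subspace of $\Lambda$ lifts to a negative subspace of $Q_{\vec{\Psi}}$. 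For the reverse inequality one shows that any variation on which $Q_{\vec{\Psi}}$ is negative must, after projecting away the finite-dimensional ``boundary-value'' data $(v(p_1),\dots,v(p_n))$, have non-negative energy on the complement — this is where the earlier structural results (\cite{index2}, \cite{index3}, \cite{classification}) on the positivity of the renormalized second variation on the space of variations vanishing at the ends are invoked: the compactly-supported-modulo-ends part of $Q_{\vec{\Psi}}$ is non-negative, so the index is carried entirely by the $n$-dimensional quotient, which is $\Lambda(\vec{\Psi})$. Finally, the improvement to $\le n-1$ in the immersed case ($m=0$, all ends planar): the mean curvature vector $\vec{H}_{\vec{\Psi}}$ itself — equivalently the variation coming from translating the center of inversion — is an admissible direction on which $Q_{\vec{\Psi}}$ vanishes (it corresponds to the conformal/translation invariance of $W$), and it has nontrivial boundary data $(v(p_1),\dots,v(p_n))\ne 0$ when $\vec{\Psi}$ is a genuine immersion; hence $\Lambda(\vec{\Psi})$ has a nontrivial kernel and $\mathrm{Ind}\,\Lambda(\vec{\Psi})\le n-1$.

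The main obstacle I expect is the rigorous justification that the index is \emph{exactly} $\mathrm{Ind}\,\Lambda$ rather than merely bounded below by it — i.e. proving that $Q_{\vec{\Psi}}$ restricted to variations with $v(p_j)=0$ for all $j$ is non-negative (or at least has index zero). This requires a careful Pohozaev/renormalized-energy argument near each end together with a coercivity estimate on the compact part, and is the technical heart of the theorem; controlling the interaction between the (possibly indefinite) cross-terms $\lambda_{i,j}$ and the bulk term, and ruling out extra negative directions hidden in the $W^{2,2}$-closure, is the delicate point. The explicit computation of the residues giving the constants in \eqref{remq0}, while lengthy, is essentially bookkeeping once the dual formulation is set up.
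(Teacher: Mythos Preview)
Your overall strategy is right --- decompose $Q_{\vec{\Psi}}$ into a non-negative bulk piece on variations vanishing at the $p_j$ plus a finite-dimensional quadratic form in the boundary values $(v(p_1),\dots,v(p_n))$ --- and you correctly identify the hard direction as positivity on the vanishing-at-ends subspace. However, two of your steps diverge from the paper in ways that matter.

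\textbf{Construction of the test variations.} You propose gluing model profiles by cutoffs. The paper does something sharper: for each $i$ it solves the fourth-order boundary-value problem $\mathscr{L}_g^2 u_\epsilon^i=0$ on $\Sigma\setminus\bar B_\epsilon(p_i)$ with $u_\epsilon^i=|\phi|^2v$, $\partial_\nu u_\epsilon^i=\partial_\nu(|\phi|^2v)$ on $\partial B_\epsilon(p_i)$, then passes to the limit $u_0^i$ as $\epsilon\to 0$. This is essential for two reasons. First, because $\mathscr{L}_g^2 u_0=0$ exactly, the bulk term $Q_\epsilon(u_0-\sum_j u_\epsilon^j)$ is not merely small but $O(\epsilon)$ after integration by parts --- a cutoff construction would leave error terms in the annuli that do not obviously vanish. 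Second, and this is the source of your mistake below, the limit $v_0^i=|\phi|^{-2}u_0^i$ is \emph{not} smooth at the other ends $p_j$: an indicial-root analysis forces an expansion $v_0^i=\Re(\gamma z)+\dots+\frac{\lambda_{i,j}}{\beta_0^2}v(p_i)|z|^2\log|z|+O(|z|^3)$ near $p_j$, and the $\lambda_{i,j}$ are \emph{defined} as the coefficients of these $\log$ tails.

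\textbf{The constant $2n+1$.} Your explanation (``dimension count from the conformal group / the $n$ ends plus the global dilation mode'') is wrong. The factor arises purely from the residue computation: when you evaluate $Q_{\vec\Psi}$ on $v_0=\sum_i v_0^i$, the $|z|^2\log|z|$ tail of each $v_0^j$ at $p_i$ produces, via the boundary term $\Im\int 2(\Delta_g u)\partial u$, an extra contribution $8\pi\sum_{j\ne i}\lambda_{i,j}v(p_i)v(p_j)$ at \emph{each} end $p_i$. Summing over the $n$ ends and adding the $4\pi\sum\lambda_{i,j}$ already present from the renormalized-energy decomposition gives $4\pi(2n+1)\sum\lambda_{i,j}v(p_i)v(p_j)$. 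If you use smooth cutoff profiles with no log tail, you will recover only the factor $4\pi$ and will not be able to match \eqref{remq0}.

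\textbf{The bound $\le n-1$ in the immersed case.} Your argument via a conformal null direction is not the paper's, and is incomplete as stated: you would need to show that this null direction lies in the span of the $n$ constructed test functions, which is not automatic. The paper's argument is simpler and purely linear-algebraic: when $m=0$ (all ends planar), the diagonal of $\Lambda(\vec\Psi)$ is zero, so $\mathrm{tr}\,\Lambda(\vec\Psi)=0$, and a traceless symmetric matrix cannot be negative definite.
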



    \begin{remast}
    	This theorem was first presented in detail on November $13$, $2018$ at the Institute for Advanced Study in the seminar \emph{Variational Methods in Geometry Seminar}
    	
    	\url{https://www.math.ias.edu/seminars/abstract?event=138881}
    	
    	The video was uploaded and is freely available on the internet since then at the following links:
    	
    	\url{https://video.ias.edu/varimethodsgeo/2018/1113-AlexisMichelat}
    	
    	\url{https://www.youtube.com/watch?v=lAYcy22OIec}
    	
    	The interested reader will find at $1$:$05$ the main theorem, at $1$:$31$ and $1$:$35$ the special negative variations with logarithm behaviour at the ends and at $1$:$39$ the additional term coming out for variations including a logarithm term.
    \end{remast}

    \begin{remast}
    	There are examples of complete minimal surfaces of genus $1$ with planar ends discovered by Costa and Shamaev (\cite{costa2}, \cite{shamaev}). Kusner and Schmitt also studied the moduli space of such minimal surfaces in detail (see \cite{kusnerschmitt}), and showed in particular that there are no examples with three planar ends (this is the first non-trivial case thanks to Schoen's theorem on the characterisation of the catenoid as the only complete minimal surface with $2$ embedded ends \cite{schoenPlanar}). They all have an even number of ends (at least $4$). In fact,  all values of ends $2n\geq 4$ are attained. 
    \end{remast}

    \begin{cor}\label{tb}
    	Let $\phi:S^2\rightarrow \R^3$ be a Willmore immersion. Then
    	\begin{align*}
    		\mathrm{Ind}_{W}(\phi)\leq \frac{1}{4\pi}W(\phi)-1.
    	\end{align*}
    \end{cor}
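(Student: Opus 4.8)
The statement is a direct consequence of Theorem \ref{ta} once one feeds in Bryant's structure theorem for Willmore spheres. First I would invoke Bryant's classification, recalled in the introduction (\cite{bryant}): any Willmore immersion $\phi:S^2\to\R^3$ is the inversion of a complete minimal immersion of $S^2$ punctured at $n$ points into $\R^3$, all of whose ends are embedded and planar, so that $\phi$ is of the type treated in Theorem \ref{ta}. Since $\phi$ is moreover an unbranched immersion, the last part of Theorem \ref{ta} applies with $m=0$; geometrically this reflects the fact that a catenoid end would create a branch point under inversion, so only planar ends can occur.

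Next I would use the quantization $W(\phi)=4\pi n$, where $n$ is the number of ends. This is classical for inversions of complete minimal surfaces with embedded planar ends: the minimal surface itself has vanishing Willmore energy, and the inversion adds exactly $4\pi$ per embedded planar end. Equivalently, by the Gauss--Bonnet/Jorge--Meeks formula the total curvature of a genus-zero complete minimal surface with $n$ embedded planar ends equals $-4\pi(n-1)$, and the inversion contributes an extra $4\pi$ at the image of the point at infinity, giving $W(\phi)=4\pi n$ in total; the simplest instance is $n=1$, the inversion of a plane, which is the round sphere with $W=4\pi$.

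Finally, applying the sharp estimate in the last part of Theorem \ref{ta} (which holds precisely because $\phi$ is a smooth immersion, i.e.\ $m=0$) yields
\[
\mathrm{Ind}_{W}(\phi)=\mathrm{Ind}\,\Lambda(\phi)\leq n-1=\frac{1}{4\pi}W(\phi)-1,
\]
which is the claim. There is no genuine difficulty here beyond Theorem \ref{ta} itself; the only points meriting a word of care are the observation that the absence of branch points forces every end to be planar --- so that the improved bound $n-1$, rather than the general bound $n$, applies --- together with the standard energy identity $W(\phi)=4\pi n$.
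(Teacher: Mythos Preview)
Your proof is correct and is exactly the intended derivation: the paper states Corollary~\ref{tb} as an immediate consequence of Theorem~\ref{ta}, and the missing link is precisely Bryant's classification (recalled in the introduction) together with the quantisation $W(\phi)=4\pi n$, both of which you supply. The paper does not spell out a separate proof, so your write-up fills in the natural argument.
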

    
    In general, we can obtain a general bound which generalised \cite{indexS3} to the case of branched Willmore surfaces.
    
    \begin{theorem}\label{tc}
    	Let $\vec{\Psi}:\Sigma\rightarrow \R^3$ be a branched Willmore surface and assume that $\vec{\Psi}$ is the inversion of a complete minimal surface with finite total curvature $\phi:\Sigma\setminus\ens{p_1,\cdots,p_n}\rightarrow \R^3$. Then there a universal symmetric matrix $\Lambda=\Lambda(\vec{\Psi})=\ens{\lambda_{i,j}}_{1\leq i,j\leq n}$  such that for all smooth $v\in W^{2,2}(\Sigma)$ and admissible normal variation $\vec{v}=v\n_{\vec{\Psi}}\in \mathscr{E}_{\phi}(\Sigma,\R^3)$
    	\begin{align*}
    	Q_{\vec{\Psi}}(v)=Q_{\vec{\Psi}}(v_0)+4\pi\sum_{1\leq i,j\leq n}^{}\lambda_{i,j}v(p_i)v(p_j),
    	\end{align*}
    	for some $v_0\in W^{2,2}(\Sigma)$ such that $v_0(p_i)=0$ for all $1\leq i\leq n$.
    	In particular, we have
    	\begin{align}\label{remq1}
    	\mathrm{Ind}_W(\vec{\Psi})\leq n=\frac{1}{4\pi}W(\vec{\Psi})-\frac{1}{2\pi}\int_{\Sigma}K_{g}d\mathrm{vol}_{g}+\chi(\Sigma).
    	\end{align}
    \end{theorem}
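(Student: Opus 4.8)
The plan is to use the conformal invariance of $W$ together with the hypothesis that $\vec{\Psi}$ is an inversion of a minimal surface. Write $\vec{\Psi}=\iota\circ\phi$, where $\iota$ is an inversion of $\R^3$ and $\phi:\Sigma\setminus\{p_1,\dots,p_n\}\to\R^3$ is complete minimal with finite total curvature, so that $\vec{\Psi}(p_i)=0$. Since $W\circ\iota=W$ and both $\vec{\Psi}$ and $\phi$ are critical points of $W$ (the latter because $\H_{\phi}\equiv 0$), a normal admissible variation $\vec{v}=v\,\n_{\vec{\Psi}}$ of $\vec{\Psi}$ corresponds under $d\iota$ to a variation $\tilde{\vec{v}}$ of $\phi$ with $Q_{\vec{\Psi}}(v)=D^2W(\phi)(\tilde{\vec{v}},\tilde{\vec{v}})$, and because $\phi$ is minimal the second variation of $W$ at $\phi$ reduces, on fields with sufficient decay at the ends, to a perfect square: up to a positive constant it equals $\int_{\Sigma\setminus\{p_i\}}(\mathcal{J}_{\phi}\tilde{u})^2\,d\mathrm{vol}_{\phi}\ge 0$, where $\mathcal{J}_{\phi}=\Delta_{g_{\phi}}+|A_{\phi}|^2$ is the Jacobi operator and $\tilde{u}$ is the scalar part of $\tilde{\vec{v}}$. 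This is the identity behind the Willmore stability of minimal surfaces already exploited in \cite{indexS3}; the only obstruction to $Q_{\vec{\Psi}}\ge 0$ is thus the boundary behaviour at the $p_i$, since a variation $v$ with $v(p_i)\neq 0$ transplants to a field that \emph{grows} at the $i$-th end of $\phi$, and the integration by parts needed to reach the perfect-square form then leaves residues supported at the ends.

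To isolate these residues I would use the sharp asymptotics of $\phi$ near each $p_i$ — embedded ends are asymptotically planar or catenoidal by Schoen \cite{schoenPlanar}, and the resulting expansions of $\vec{\Psi}$, $\n_{\vec{\Psi}}$ and $\H_{\vec{\Psi}}$ near the corresponding branch point are available from \cite{classification} and \cite{sagepaper}. Given $v$, on each $B_{\varepsilon}(p_i)$ pick a model field $w$ with $w(p_i)=v(p_i)$ solving the linearised equation at the end to leading order, cut it off near $\bigcup_i B_{\varepsilon}(p_i)$, and set $v_0=v-w$, so that $v_0\in W^{2,2}(\Sigma)$ and $v_0(p_i)=0$ for all $i$. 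The technical core is the computation on each annulus around $p_i$: one must check that, after the conformal change, the boundary residues depend only on the point values $v(p_i)$; that the cross term $Q_{\vec{\Psi}}(v_0,w)$ vanishes (the bulk part is an $L^2$ pairing of Jacobi images and is killed by the choice of $w$, while the boundary part vanishes because $v_0(p_i)=0$); and that $Q_{\vec{\Psi}}(w)=4\pi\sum_{1\le i,j\le n}\lambda_{i,j}v(p_i)v(p_j)$ for coefficients $\lambda_{i,j}$ read off from the asymptotic data. This gives the stated identity $Q_{\vec{\Psi}}(v)=Q_{\vec{\Psi}}(v_0)+4\pi\sum_{i,j}\lambda_{i,j}v(p_i)v(p_j)$ with $v_0(p_i)=0$, and simultaneously the perfect-square representation yields $Q_{\vec{\Psi}}(v_0)\ge 0$.

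The index bound then follows formally: the evaluation map $\mathscr{E}_{\vec{\Psi}}(\Sigma,\R^3)\ni v\mapsto(v(p_1),\dots,v(p_n))\in\R^n$ has kernel $V_0=\{v:v(p_i)=0\ \forall i\}$ of codimension at most $n$, and since $Q_{\vec{\Psi}}\ge 0$ on $V_0$ any subspace on which $Q_{\vec{\Psi}}$ is negative definite meets $V_0$ only at $0$, hence has dimension $\le n$; so $\mathrm{Ind}_W(\vec{\Psi})\le n$. Finally, to identify $n$ with $\tfrac{1}{4\pi}W(\vec{\Psi})-\tfrac{1}{2\pi}\int_{\Sigma}K_g\,d\mathrm{vol}_g+\chi(\Sigma)$, use that by conformal invariance $W(\vec{\Psi})$ equals the absolute value of the total curvature $\int_{\Sigma\setminus\{p_i\}}K_{\phi}\,d\mathrm{vol}_{\phi}$ of the minimal surface, which the Chern--Osserman/Jorge--Meeks relation for complete minimal surfaces with embedded ends expresses in terms of $\chi(\Sigma)$ and $n$; combining this with the Gauss--Bonnet theorem for the branched conformal immersion $\vec{\Psi}$ (whose branch-point contributions are also governed by the end data) produces the closed formula. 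The main obstacle is the middle step: carrying out the integration by parts rigorously at the punctures, where $g$ degenerates and $\H_{\vec{\Psi}}$, $\n_{\vec{\Psi}}$ are singular, which forces one to use the precise asymptotic expansions and to verify that $\mathscr{E}_{\vec{\Psi}}(\Sigma,\R^3)$ is exactly the class of variations for which $Q_{\vec{\Psi}}$ is finite and these manipulations are legitimate.
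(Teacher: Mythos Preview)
Your high-level strategy is right and matches the paper: use the identity (from \cite{indexS3}) expressing $Q_{\vec{\Psi}}(v)$ as a renormalised perfect square $\lim_{\epsilon\to 0}\big[\tfrac12\int_{\Sigma_\epsilon}(\mathscr{L}_g u)^2\,d\mathrm{vol}_g-\text{(residues)}\big]$ with $u=|\phi|^2v$, observe the residues depend only on the point values $v(p_i)$, and conclude $Q_{\vec{\Psi}}\ge 0$ on the codimension-$n$ subspace $\{v:v(p_i)=0\}$, whence $\mathrm{Ind}_W\le n$. The index-bound argument and the Li--Yau/Jorge--Meeks identification of $n$ are also fine.

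The gap is in your construction of $v_0$ and the claim that the cross term vanishes. With $w=\sum_i\rho_i\,v(p_i)$ a local cutoff model and $v_0=v-w$, the decomposition $Q_{\vec{\Psi}}(v)=Q_{\vec{\Psi}}(v_0)+2B_{\vec{\Psi}}(v_0,w)+Q_{\vec{\Psi}}(w)$ has a cross term $B_{\vec{\Psi}}(v_0,w)=\lim_{\epsilon\to 0}\tfrac12\int_{\Sigma_\epsilon}\mathscr{L}_g u_{v_0}\,\mathscr{L}_g u_w\,d\mathrm{vol}_g$ that does \emph{not} vanish: $\mathscr{L}_g u_w$ is not zero (near $p_i$ it equals $4v(p_i)+O(|z|^2)$), and the pairing depends on all of $v_0$, not only on the $v(p_i)$. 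So you cannot obtain a \emph{universal} matrix $\Lambda$ this way---different $v$'s with the same end-values would give different $v_0$'s and different cross terms, while $Q_{\vec{\Psi}}(w)$ would also depend on the cutoff. Your justification (``the bulk part is an $L^2$ pairing of Jacobi images and is killed by the choice of $w$'') would require $\mathscr{L}_g u_w=0$ globally, which is incompatible with $w$ being a cutoff.

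What the paper does instead is to replace the cutoffs by \emph{global} bi-Jacobi minimisers $u_\epsilon^i$ solving $\mathscr{L}_g^2 u_\epsilon^i=0$ on $\Sigma_\epsilon$ with Cauchy data matching $u$ on $\partial B_\epsilon(p_i)$ and vanishing on the other boundary circles. The Euler--Lagrange equation lets one integrate by parts and reduce every cross term $B_\epsilon(u_\epsilon,u_\epsilon^i)$ and $B_\epsilon(u_\epsilon^i,u_\epsilon^j)$ to boundary integrals on $\partial B_\epsilon(p_k)$. These boundary integrals are then evaluated using an indicial-root analysis of the asymptotic expansion of $u_\epsilon^i$ at the other ends $p_j$ (Theorems \ref{indicielles1}, \ref{expansionends}, \ref{expansionends2}); the key output is that near $p_j$ one has $u_\epsilon^i=\text{(harmonic singular part)}+a_{i,j}\log|z|+O(1)$, and it is precisely the coefficient $a_{i,j}$ (together with analogous contributions from the other pairings) that produces $\lambda_{i,j}$. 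The cross terms are therefore \emph{not} zero; they are computed and shown to be of the form $c\,v(p_i)v(p_j)$, and are then absorbed into $\Lambda$. The remainder $u_\epsilon=u-\sum_i u_\epsilon^i$ converges to the desired $u_0=|\phi|^2v_0$ with $v_0(p_j)=0$. In short, the missing idea is that the ``correction'' must be bi-Jacobi, not merely a local cutoff, so that the cross terms localise to the boundary and can be read off from the asymptotics of the minimisers at the other ends.
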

    
    \begin{remast}
    For \emph{true} branched immersions with ends of multiplicity at most  $2$, we have
    \begin{align*}
    	\mathrm{Ind}_W(\vec{\Psi})\leq n-1=\frac{1}{4\pi}W(\vec{\Psi})-\frac{1}{2\pi}\int_{\Sigma}K_{g}d\mathrm{vol}_{g}+\chi(\Sigma)-1.
    \end{align*}    
    by showing that $\lambda_{i,i}=0$ for ends of multiplicity $2$ in \eqref{remq1}. See Section \ref{multiplicity2} for the proof (Theorem \ref{mult2}).  
    \end{remast}
    
    We can generalise Theorem \ref{ta} to the branched case at the price of getting a possibly weaker bound.
    
    \begin{theorem}\label{td}
    Let $\phi:\Sigma\setminus\ens{p_1,\cdots,p_n}\rightarrow \R^3$ be a complete minimal surface with finite total curvature, and $\vec{\Psi}=\iota\circ \phi:\Sigma\rightarrow \R^3$ be a compact inversion of $\phi$, and let $\Lambda(\vec{\Psi})=\ens{\lambda_{i,j}}_{1\leq i,j\leq n}\in \mathrm{Sym}_n(\R)$ be the matrix given by Theorem 
    \ref{tc}. Then there exists a matrix $\{\tilde{\lambda}_{i,j}\}_{1\leq i,j\leq n}\in \mathrm{Sym}_n(\R)$ such that $\tilde{\lambda}_{i,i}=0$ for all $1\leq i\leq n$ with the following property. Define
    \begin{align*}
    \tilde{\Lambda}(\vec{\Psi})=\begin{pmatrix}
    \vspace{0.5em}\lambda_{1,1}&\lambda_{1,2}+2n\,\tilde{\lambda}_{1,2} &  \cdots  &\lambda_{1,n}+2n\,\tilde{\lambda}_{1,n}\\
    \vspace{0.5em}\lambda_{1,2}+2n\,\tilde{\lambda}_{1,2} & \lambda_{2,2}& \cdots  &\lambda_{2,n}+2n\,\tilde{\lambda}_{2,n}\\
    \vspace{0.5em}\vdots& \vdots & \ddots &\vdots \\
    \vspace{0.5em}\lambda_{1,n}+2n\,\tilde{\lambda}_{1,n} &\lambda_{2,n}+2n\,\tilde{\lambda}_{1,n} &  \cdots & \lambda_{n,n}.
    \end{pmatrix}.
    \end{align*}
    Then for all $\vec{a}=(a_1,\cdots,a_n)\in \R^n$, there exists an admissible variation $v\in W^{2,2}(\Sigma)$ (such that $\vec{v}=v\n_{\vec{\Psi}}\in \mathscr{E}_{\vec{\Psi}}(\Sigma,\R^3)$) such that $(v(p_1),\cdots,v(p_n))=(a_1,\cdots,a_n)$ and
    \begin{align*}
    	Q_{\vec{\Psi}}(v)=4\pi\sum_{i=1}^{n}\lambda_{i,i}v^2(p_i)+4\pi\sum_{\substack{1\leq i,j\leq n\\ i\neq j}}^{}\left(\lambda_{i,j}+2n\,\tilde{\lambda}_{i,j}\right)v(p_i)v(p_j).
    \end{align*}
    Therefore, we have
    \begin{align*}
    \mathrm{Ind}\,\tilde{\Lambda}(\vec{\Psi})\leq \mathrm{Ind}_{W}(\vec{\Psi})\leq \mathrm{Ind}\,\Lambda(\vec{\Psi}).
    \end{align*}
    \end{theorem}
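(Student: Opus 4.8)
The plan is to derive both inequalities from the decomposition of the second variation furnished by Theorem \ref{tc}, namely $Q_{\vec{\Psi}}(v)=Q_{\vec{\Psi}}(v_0)+4\pi\sum_{1\le i,j\le n}\lambda_{i,j}v(p_i)v(p_j)$ with $v_0(p_i)=0$, together with the fact — which is exactly what makes the bound $\mathrm{Ind}_W\le n$ work there — that $Q_{\vec{\Psi}}$ is nonnegative on the space $\mathscr{V}_0$ of admissible variations vanishing at $p_1,\dots,p_n$. Throughout one uses that $\mathrm{Ind}_W(\vec{\Psi})$ may be computed on the dense class of smooth $W^{2,2}(\Sigma)$ variations, so that the formula of Theorem \ref{tc} applies directly.

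For the upper bound $\mathrm{Ind}_W(\vec{\Psi})\le\mathrm{Ind}\,\Lambda(\vec{\Psi})$, let $V$ be a finite-dimensional subspace on which $Q_{\vec{\Psi}}$ is negative definite and consider the evaluation map $\mathrm{ev}\colon V\to\R^n$, $v\mapsto(v(p_1),\dots,v(p_n))$, which is well defined since admissible variations embed in $C^0$. If $\mathrm{ev}(v)=0$ then $v\in\mathscr{V}_0$, so $Q_{\vec{\Psi}}(v)\ge0$ and hence $v=0$; thus $\mathrm{ev}|_V$ is injective. Moreover, for $v\in V\setminus\{0\}$, Theorem \ref{tc} gives $0>Q_{\vec{\Psi}}(v)=Q_{\vec{\Psi}}(v_0)+4\pi\langle\Lambda(\vec{\Psi})\,\mathrm{ev}(v),\mathrm{ev}(v)\rangle\ge4\pi\langle\Lambda(\vec{\Psi})\,\mathrm{ev}(v),\mathrm{ev}(v)\rangle$, so $\Lambda(\vec{\Psi})$ is negative definite on $\mathrm{ev}(V)$, whence $\dim V=\dim\mathrm{ev}(V)\le\mathrm{Ind}\,\Lambda(\vec{\Psi})$.

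For the lower bound $\mathrm{Ind}\,\tilde{\Lambda}(\vec{\Psi})\le\mathrm{Ind}_W(\vec{\Psi})$ I would, for each $1\le i\le n$, construct an admissible variation $v^{(i)}\in W^{2,2}(\Sigma)$ (with $v^{(i)}\n_{\vec{\Psi}}\in\mathscr{E}_{\vec{\Psi}}(\Sigma,\R^3)$) with $v^{(i)}(p_j)=\delta_{ij}$, by prescribing near each branch point $p_k$, in a conformal chart $z_k$ centred there, a model profile adapted to the indicial exponents of the Willmore Jacobi operator at a branch point — a profile which, at ends of multiplicity at least $2$, contains a term proportional to $\log|z_k|$, in analogy with the negative variations with logarithmic behaviour entering Theorem \ref{ta} — and then completing these local data to a globally defined admissible variation, the remaining freedom being an error vanishing at the $p_k$ so that the values $v^{(i)}(p_j)=\delta_{ij}$ are preserved. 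Then $v_{\vec{a}}:=\sum_{i=1}^n a_i v^{(i)}$ depends linearly on $\vec{a}=(a_1,\dots,a_n)$ and $\vec{a}\mapsto v_{\vec{a}}$ is injective because $v_{\vec{a}}(p_i)=a_i$; expanding $Q_{\vec{\Psi}}(v_{\vec{a}})$ by localising near the $p_k$, integrating by parts and retaining the finite residual terms produced by the logarithmic parts — the computation carried out in full for Theorem \ref{ta}, but now at ends of arbitrary multiplicity — yields a symmetric matrix $\{\tilde{\lambda}_{i,j}\}$ with $\tilde{\lambda}_{i,i}=0$ for which
\[
Q_{\vec{\Psi}}(v_{\vec{a}})=4\pi\sum_{i=1}^n\lambda_{i,i}a_i^2+4\pi\sum_{\substack{1\le i,j\le n\\ i\neq j}}\big(\lambda_{i,j}+2n\,\tilde{\lambda}_{i,j}\big)a_ia_j=4\pi\,\langle\tilde{\Lambda}(\vec{\Psi})\vec{a},\vec{a}\rangle.
\]
Restricting $Q_{\vec{\Psi}}$ to the $n$-dimensional subspace $\{v_{\vec{a}}:\vec{a}\in\R^n\}$ then gives $\mathrm{Ind}_W(\vec{\Psi})\ge\mathrm{Ind}\,\tilde{\Lambda}(\vec{\Psi})$, and together with the upper bound this is the asserted sandwich.

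The real obstacle is the lower bound, and two points there demand care: showing that the logarithmically-modelled functions $v^{(i)}$ are genuinely admissible, i.e. that $v^{(i)}\in W^{2,2}(\Sigma)$ and $v^{(i)}\n_{\vec{\Psi}}\in\mathscr{E}_{\vec{\Psi}}(\Sigma,\R^3)$ despite the degeneracy of the inverted metric $g$ at the $p_k$; and carrying out the explicit renormalised second-variation computation at a branch point of multiplicity $\theta_k\ge1$, where the indicial structure of the Willmore operator — and hence the finite part of the energy surviving the regularisation near $p_k$ — is more intricate than in the embedded case of Theorem \ref{ta}. This is the step that fixes the precise constant $2n$ and forces the correction to have vanishing diagonal; the residual gap between $\tilde{\Lambda}(\vec{\Psi})$ and $\Lambda(\vec{\Psi})$, and hence between the two ends of the sandwich, is a genuine feature of the branched setting and reflects the obligatory positive cost carried by the regular part $v_0$ at higher-multiplicity ends.
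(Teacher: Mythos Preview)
Your upper bound is exactly the paper's argument and is correct.

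The lower bound has the right shape but a real gap. You propose to build $v^{(i)}$ by \emph{prescribing local model profiles} near each $p_k$ (with a $\log|z_k|$ piece) and then completing to a global admissible variation. With that construction there is no reason for $Q_{\vec{\Psi}}(v_{\vec a})$ to be an \emph{exact} quadratic form in $\vec a$: applying the decomposition of Theorem~\ref{tc} to $v_{\vec a}$ only gives
\[
Q_{\vec{\Psi}}(v_{\vec a})=Q_{\vec{\Psi}}\big((v_{\vec a})_0\big)+4\pi\langle\Lambda(\vec{\Psi})\vec a,\vec a\rangle,
\]
and the first term is an uncontrolled nonnegative bulk energy depending on your extension. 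A cut--and--paste gluing does not kill it, so you cannot conclude $Q_{\vec{\Psi}}(v_{\vec a})=4\pi\langle\tilde\Lambda\vec a,\vec a\rangle$ and hence cannot read off a lower bound on $\mathrm{Ind}_W$.

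What the paper does instead is to take $v_0^i$ to be the limits (Theorem~\ref{expansionends2}) of the \emph{minimisers} $u_\epsilon^i$ of \eqref{E2}; these satisfy $\mathscr{L}_g^2(|\phi|^2 v_0^i)=0$ on $\Sigma\setminus\{p_1,\dots,p_n\}$. The logarithmic behaviour at the other ends is not prescribed by hand: it is forced by the indicial root analysis of this equation (Theorem~\ref{expansionends}), and its coefficient \emph{defines} $\tilde\lambda_{i,j}$ (Corollary~\ref{higher2}). Because $u_0=\sum_i|\phi|^2 v_0^i$ is itself bi--Jacobi, the energy $Q_\epsilon(u_0-\sum_j u_\epsilon^j)$ reduces to boundary terms and is shown to be $O(\epsilon)$; this is precisely what makes the remainder vanish and produces an exact quadratic identity. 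The extra $2n\,\tilde\lambda_{i,j}$ then comes from the boundary residue of the $\log|z|$ piece of $u_0$ at each $p_i$ (equation~\eqref{npart1}), summed over the $n$ ends. So the missing idea is not the admissibility check or the local residue computation you flag, but the choice of the $v^{(i)}$ as global solutions of $\mathscr{L}_g^2=0$ arising from the variational construction.
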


\begin{remast}
    It seems very likely that $\tilde{\lambda}_{i,j}=\lambda_{i,j}$ (for $1\leq i\neq j\leq n$), but our current methods do not allow us to check if this fact holds or not in general (it holds for embedded ends by Theorem \ref{ta}).
\end{remast}

\textbf{Acknowledgement.} I would like to thank my advisor Tristan Rivière for his constant support and for suggesting the analogy with the renormalised energy appearing in the Ginzburg-Landau model from super-conductivity  (see for example \cite{bethuelbrezishelein2}, \cite{pacariv} and \cite{serfaty}).

\textbf{Added in proof.} Recently Jonas Hirsch and Elena M\"{a}der-Baumdicker wrote a paper on this subject in the special case of minimal surfaces with flat ends (\cite{hirsch}). 

\renewcommand{\thetheorem}{\thesection.\arabic{theorem}}

\section{The second derivative of the Willmore energy as a renormalised energy}\label{admissible}

Let $\Sigma$ be a closed Riemann surface, $n\in \N$, $p_1,\cdots,p_n\in \Sigma$ be fixed distinct points and $\phi:\Sigma\setminus\ens{p_1,\cdots,p_n}\rightarrow \R^3$ be a complete minimal surface with finite total curvature and assume without loss of generality that $0\notin \phi(\Sigma\setminus\ens{p_1,\cdots,p_n})\subset \R^3$. Then the inversion 
\begin{align*}
	\vec{\Psi}=\frac{\phi}{|\phi|^2}:\Sigma\rightarrow \R^3
\end{align*}
is a compact branched Willmore surface. Now, recall that we defined in \cite{index3} a notion of admissible variations of the Willmore energy as the maximum set of variations for which the second derivative of the Willmore energy is well-defined.  

\begin{theorem}[\cite{index3}]
	Let $\Sigma$ be a closed Riemann surface and let $\vec{\Psi}:\Sigma\rightarrow \R^d$ be a branched Willmore immersion and let $g=\vec{\Psi}^{\ast}g_{\R^d}$ be the induced metric. Then the second derivative $D^2W(\vec{\Psi})$ is well-defined at some point
	\begin{align*}
		\w=\mathscr{E}_{\vec{\Psi}}(\Sigma,\R^n)=W^{2,2}\cap W^{1,\infty}(\R^n)\cap\ens{\w:\w(p)\in T_{\vec{\Psi}(p)}\R^n\;\, \text{for all}\;\, p\in \Sigma}
	\end{align*}
	if and only if
	\begin{align*}
		\w\in L^{\infty}(\Sigma,g_0)\qquad \text{and}\;\, \mathscr{L}_g\w\in L^2(\Sigma,d\vg),
	\end{align*}
	where $g_0$ is any fixed smooth metric on $\Sigma$ and $\mathscr{L}_g\w=\Delta_{g}^{\n}\w+\mathscr{A}(\w)$ is the Jacobi operator and $\mathscr{A}$ is the Simons operator. We denote by $\mathrm{Var}(\vec{\Psi})$ this space of admissible variations.
\end{theorem}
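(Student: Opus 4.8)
The statement is local around the branch points $p_1,\dots,p_n$ of $\vec{\Psi}$. On $\Sigma\setminus\ens{p_1,\dots,p_n}$ the map $\vec{\Psi}$ is a smooth immersion, the metric $g$ and all curvature quantities are smooth and bounded, and $\mathscr{E}_{\vec{\Psi}}$ restricts there to the usual $W^{2,2}$ space, so every term in the second variation of $W$ is in $L^1$ by Cauchy--Schwarz and the associated bilinear form is continuous. Hence it suffices to analyse the integrand of $D^2W(\vec{\Psi})$ in a small punctured conformal disk $\ens{0<|z|<1}$ around each $p_i$, where we write $g=e^{2\lambda}|dz|^2$.

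\emph{Step 1: the second variation formula.} First I would record (deriving it as in \cite{index2}, \cite{index3}, or quoting it) that expanding $\int_\Sigma|\H|^2\,d\vg$ to second order at the critical point $\vec{\Psi}$ and using $\delta(2\H)=\lg\w+(\text{tangential})$ for a normal admissible variation $\w$, one gets directly
\[
Q_{\vec{\Psi}}(\w)=\tfrac12\int_\Sigma|\lg\w|^2\,d\vg+\int_\Sigma R(\w,\D\w)\,d\vg,
\]
where $\lg\w=\Delta_g^{\n}\w+\mathscr{A}(\w)$ is the Jacobi operator and $R(\w,\D\w)$ is a finite sum of contractions of a universal polynomial in $(\I,\D\I)$ (together with $\H$ and its covariant derivatives) against an expression quadratic in $(\w,\D\w)$ carrying at most one covariant derivative on each copy of $\w$ --- the two ``second derivatives'' of $\w$ being confined to the self-energy term $|\lg\w|^2$. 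In particular $\lg\w\in L^2(\Sigma,d\vg)$ is visibly necessary for finiteness of the first term.

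\emph{Step 2: asymptotics near $p_i$, the estimate, and necessity.} By the $\epsilon$-regularity and point-removability theory for branched Willmore surfaces (\cite{riviere1}, \cite{quanta}, \cite{classification}), the conformal parametrization admits near each $p_i$ an expansion in powers of $|z|$ (times possible logarithms, governed by the first residue), from which one reads off the precise asymptotics of $e^{2\lambda}$, of $|\I|_g$, of $|\D\I|_g$ and of $d\vg$. Substituting these into the formula: the first term contributes exactly $\tfrac12\|\lg\w\|_{L^2(\Sigma,d\vg)}^2$, while each remaining term, of the schematic shape $\int P(\I,\D\I)\cdot\w\otimes\D\w$ or $\int P(\I,\D\I)\cdot\w\otimes\w$, is bounded --- after at most one further integration by parts --- by a constant depending only on $\vec{\Psi}$ (coming from the explicit exponents and from $\int_\Sigma|\I|_g^2\,d\vg<\infty$, which holds by finiteness of $W(\vec{\Psi})$ and Gauss--Bonnet) times a product of $\|\w\|_{L^\infty(\Sigma,g_0)}$, $\||\D\w|_g\|_{L^\infty(\Sigma)}$, $\|\w\|_{W^{2,2}}$ and $\|\lg\w\|_{L^2(\Sigma,d\vg)}$. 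The boundary integrals over $\ens{|z|=\rho}$ produced by these integrations by parts must be shown to vanish as $\rho\to0$: this is precisely where the hypotheses $\w\in L^\infty(\Sigma,g_0)$ and $|\D\w|_g\in L^\infty(\Sigma)$ are used, and matching exponents shows they cannot be weakened. Conversely, if $\lg\w\notin L^2(\Sigma,d\vg)$, or if $\w$ is unbounded near one of the $p_i$ (which is allowed for $\w\in\mathscr{E}_{\vec{\Psi}}$), one isolates the dominant contribution and checks --- again via the asymptotics, so that no cancellation is possible --- that the integral defining $Q_{\vec{\Psi}}(\w)$ is not absolutely convergent. The same estimates together with polarisation give continuity of the bilinear form on $\mathrm{Var}(\vec{\Psi})$, and applying them to difference quotients of $t\mapsto W(\vec{\Psi}+t\w)$ shows that $D^2W(\vec{\Psi})$ is genuinely the second derivative there.

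\emph{Main obstacle.} The delicate point is the asymptotic bookkeeping at the branch points: one must track the exact powers of $|z|$ (and logarithms) in every curvature factor and in $d\vg$, and organise the integrations by parts so that (i) no surviving term exceeds the ``$L^\infty\times L^\infty\times W^{2,2}\times L^2$'' budget and (ii) every boundary term on a shrinking circle genuinely disappears under the stated --- and no weaker --- hypotheses. Everything else is either the standard elliptic estimate in the bulk or the lengthy but essentially routine derivation of the second variation formula.
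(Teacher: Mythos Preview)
This theorem is not proved in the present paper; it is quoted from \cite{index3} as background for the definition of admissible variations, so there is no in-paper proof to compare against. What the paper does supply (in the appendix, Section~\ref{counter}) is a family of explicit counterexamples supporting the necessity direction: for the multiply-covered plane of order $m$ and variations $v=v(0)+\beta|z|^{2m}\rho$ or $v=v(0)+2\beta\,\Re(z^a\bar z^b)\rho$ with $a+b=m$, a direct computation shows the renormalised second variation diverges like $C\beta^2\log(1/\epsilon)$, confirming that the condition $\Delta_g^{\n}\vec v\in L^2(\Sigma,d\vg)$ cannot be dropped.

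Your sketch is in the right direction and captures the essential mechanism. One caution on Step~1: the decomposition $Q_{\vec\Psi}(\w)=\tfrac12\int|\lg\w|^2\,d\vg+\int R(\w,\D\w)\,d\vg$ with $R$ quadratic only in $(\w,\D\w)$ is not automatic for a general branched Willmore surface with $\H\neq0$; the raw second-variation integrand contains terms pairing $\Delta_g^{\n}\w$ with $\w$ against curvature, and you need at least one integration by parts near each branch point (with control of the resulting circle integrals) before you reach that form. The argument in \cite{index3} does precisely this --- write out the full second-variation density, integrate by parts where necessary, and then carry out the asymptotic bookkeeping of your Step~2 term by term. So your plan is sound, but the ``Step~1 formula'' is itself part of the work rather than a starting point you can simply quote.
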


Notice that at a branch point of multiplicity $\theta_0\geq 1$, the condition are equivalent to
\begin{align*}
	\frac{|d\w|}{|z|^{\theta_0-1}}\in L^{\infty}(D^2),\qquad \text{and}\;\, \frac{\Delta_g^{\n}\w}{|z|^{\theta_0-1}}\in L^2(D^2).
\end{align*}
In particular, if $\w$ is a smooth variation, the conditions are equivalent to
\begin{align*}
	\w=\w(0)+\Re\left(\vec{\gamma}z^{\theta_0}\right)+O(|z|^{\theta_0+1}).
\end{align*}

We can now define the Willmore Morse index as follows (see \cite{index3}).
\begin{defi}
	Let $\Sigma$ be a closed Riemann surface and let $\phi:\Sigma\rightarrow \R^n$ be a branched Willmore immersion. Then Willmore index of $\phi$, denoted by $\mathrm{Ind}_{W}(\phi)$, is equal to the dimension of the maximal sub-vector space $V\subset \mathscr{E}_{\vec{\Psi}}(\Sigma,\R^n)$ on which the quadratic form second variation $Q_{\vec{\Psi}}(\,\cdot\,)=D^2W(\phi)(\,\cdot\,,\,\cdot\,)$ is negative definite. 
\end{defi}

Now, thanks to Proposition $4.5$ of \cite{indexS3}, for all $\vec{v}=v\n_{\vec{\Psi}}\in \mathrm{Var}(\vec{\Psi})$, we have
\begin{align*}
	Q_{\vec{\Psi}}(v)=D^2W(\vec{\Psi})(\vec{v},\vec{v})=\int_{\Sigma}\bigg\{\frac{1}{2}\left(\Delta_gu-2K_gu\right)^2d\vg-d\left(\left(\Delta_gu+2K_gu\right)\ast du-\frac{1}{2}\ast d|du|_g^2\right)\bigg\}.
\end{align*}
where $u=|\phi|^2v$. In particular, thanks to Stokes theorem, we have
\begin{align}\label{limit}
	Q_{\vec{\Psi}}(v)=\lim\limits_{\epsilon\rightarrow 0}\left(\frac{1}{2}\int_{\Sigma_{\epsilon}}\left(\Delta_gu-2K_gu\right)^2d\vg+\sum_{i=1}^{n}\int_{\partial B_{\epsilon}(p_i)}\left(\left(\Delta_gu+2K_gu\right)\ast du-\frac{1}{2}\ast d|du|_g^2\right)\right).
\end{align}
where
\begin{align*}
\Sigma_{\epsilon}=\Sigma\setminus\bigcup_{i=1}^n\bar{B}_{\epsilon}(p_i).
\end{align*}
In particular, the limit \eqref{limit} exists for all such $\vec{v}\in \mathrm{Var}(\vec{\Psi})$. Here, the balls $\bar{B}_{\epsilon}(p_i)$ are fixed following the following definition for some covering $(U_1,\cdots,U_n)$ of $\ens{p_1,\cdots,p_n}$ fixed once and for all.

\begin{defi}
	We say that a family of chart domains $(U_1,\cdots,U_n)$ is a covering of $\ens{p_1,\cdots,p_n}\subset \Sigma$ if $p_i\in U_i$ for all $1\leq i\leq n$ and $U_i\cap U_j=\varnothing$ for all $1\leq i\leq n$. For all $1\leq i\leq n$ if $\varphi_i:U_i\rightarrow B_{\C}(0,1)\subset \C$ is a complex chart such that $\varphi_i(p_i)=0$ and $\varphi_i(U_i)=B_{\C}(0,1)$, we define for all $0<\epsilon<1$
	\begin{align*}
		B_{\epsilon}(p_i)=\varphi_i^{-1}(B_{\C}(0,\epsilon)).
	\end{align*}
	This definition is independent of the chart $\varphi_i:U_i\rightarrow B_{\C}(0,1)\subset \C$ such that $\varphi_i(U_i)=B_{\C}(0,1)$ and $\varphi_i(p_i)=0$.
\end{defi}

The independence of the chart $\varphi_i$ with the above properties is a trivial consequence of Schwarz lemma (se \cite{indexS3} for more details).

\section{Decomposition of the renormalised energy}

We fix a Willmore surface $\vec{\Psi}:\Sigma\rightarrow \R^3$ which is the inversion of a complete minimal surface $\phi:\Sigma\setminus\ens{p_1,\cdots,p_n}$ with finite total curvature.
We fix $v\in W^{2,2}(\Sigma)$ (such that $\vec{v}=v\n_{\vec{\Psi}}\in \mathrm{Var}(\vec{\Psi})$), and as in the introduction, for all $\epsilon>0$ small enough, we consider the following minimisation problem
\begin{align}\label{E1}
\inf_{w\in\mathscr{E}_{\epsilon}(p_i)} \frac{1}{2}\int_{\Sigma\setminus\bar{B}_\epsilon(p_i)}(\Delta_g w-2K_gw)^2d\vg
\end{align}
where the class of admissible functions is
\begin{align*}
\mathscr{E}_\epsilon(p_i)=\mathrm{W}^{2,2}(\Sigma\setminus \bar{B}_\epsilon(p_i))\cap \ens{w: 
	\left\{\begin{alignedat}{2}
	w&=u&&\text{on}\;\, \partial B_\epsilon(p_i)\\
	\partial_\nu w&=\partial_\nu u \qquad  &&\text{on}\;\, \partial B_\epsilon(p_i)\\
	\end{alignedat}\right.}
\end{align*}
Notice that for an end $p_j$ (for some $1\leq j\leq n$) of multiplicity $m\geq 1$ of a complete minimal surface with finite total curvature $\phi:\Sigma\rightarrow \ens{p_1,\cdots,p_n}\rightarrow \R^n$, in any complex chart $z:B(0,1)\subset \C\rightarrow \Sigma$ such that $z(0)=p_j$, there exists $\vec{A}_0\in \C^n\setminus\ens{0}$ (depending on $z$) such that 
\begin{align*}
	\phi(z)=\Re\left(\frac{\vec{A}_0}{z^m}\right)+O(|z|^{1-m})
\end{align*}
for $m\geq 2$, while for $m=1$ there exists $\vec{\gamma}_0\in \R^n$ such that
\begin{align*}
	\phi(z)=\Re\left(\frac{\vec{A}_0}{z}\right)+\vec{\gamma}_0\log|z|+O(1).
\end{align*}
Therefore, we have up to scaling 
\begin{align*}
	e^{2\lambda}=2|\p{z}\phi|^2=\frac{1}{|z|^{2(m+1)}}\left(1+O(|z|)\right).
\end{align*}
In particular, we deduce that 
\begin{align*}
	K_g=-\Delta_g\lambda=O(|z|^{2(m+1)}),
\end{align*}
and 
\begin{align*}
	\mathscr{L}_g=\Delta_g-2K_g=e^{-2\lambda}\left(\Delta+2\Delta\lambda\right)=|z|^{2(m+1)}\left(1+O(|z|)\right)\left(\Delta+O(1)\right),
\end{align*}
so $\mathscr{L}_g$ is not elliptic in a neighbourhood of $p_j$. Therefore, we will have to consider another problem than \eqref{E1}.

Recall first by definition of $B_{\epsilon}(p_i)$ that $B_{1}(p_i)\cap B_{1}(p_j)=\varnothing$ for all $1\leq i\neq j\leq n$. Therefore, for all $0<\epsilon<1$, and for all $0<\delta<\epsilon$, and $1\leq i\leq n$ consider the domain
\begin{align*}
	\Sigma_{\epsilon,\delta}^i=\Sigma\setminus \left(\bar{B}_{\epsilon}(p_i)\cup \bigcup_{j\neq i}\bar{B}_{\delta}(p_i)\right).
\end{align*}
We will also write
\begin{align*}
	\Sigma_{\epsilon}^i=\bigcup_{\delta>0}\Sigma_{\epsilon,\delta}^i=\Sigma\setminus\left(\bar{B}_{\epsilon}(p_i)\cup\ens{p_1,\cdots,p_n}\right).
\end{align*}
Then $\mathscr{L}_g$ and $\mathscr{L}_g^2$ are strongly elliptic on $\Sigma_{\epsilon,\delta}^i$ and have the uniqueness for the Cauchy problem \textit{i.e.} if $\mathscr{L}_gu=0$ (resp. $\mathscr{L}_g^2u=0$) and $u=0$ on some open $U\subset \Sigma_{\epsilon,\delta}^i$, then $u=0$  (this fact was first proved in general by J. Simons \cite{simons}), thanks to a classical theorem of Smale (see \cite{morsmale} and \cite{choe}) there exists $0<\epsilon_0$ such that for all $0<\epsilon<\epsilon_0$, there exists $0<\delta(\epsilon)<\epsilon$ such that for all $0<\delta<\delta(\epsilon)$, the operators $\mathscr{L}_g$ and $\mathscr{L}_g^2$ have no kernel on $\Sigma_{\epsilon,\delta}^i$ for all $1\leq i\leq n$.  
More precisely, the only solution of each of the two following problems
\begin{align}\label{1}
\left\{\begin{alignedat}{2}
\mathscr{L}_gu&=0\quad &&\text{in}\;\,\Sigma_{\epsilon,\delta}^i\\
u&=0\quad &&\text{on}\;\, \partial \Sigma_{\epsilon,\delta}^i,
\end{alignedat}\right.
\end{align}
and
\begin{align}\label{2}
\left\{\begin{alignedat}{2}
\mathscr{L}_g^2u&=0\quad &&\text{in}\;\,\Sigma_{\epsilon,\delta}^i\\
u&=0\quad &&\text{on}\;\, \partial \Sigma_{\epsilon,\delta}^i\\
\partial_{\nu}u&=0 &&\text{on}\;\, \partial \Sigma_{\epsilon,\delta}^i
\end{alignedat}\right.
\end{align}
is the trivial solution $u=0$.   
Therefore, thanks to the Fredholm alternative (see \cite{brezis}, IX.$23$) for all $1\leq i\leq n$ and all but finitely many $0<\epsilon<\epsilon_0$ there exists a unique minimiser $u_{\epsilon,\delta}^i$ of \eqref{E1} such that
\begin{equation}\label{EQepsilon}
\left\{\begin{alignedat}{2}
\mathscr{L}_g^2u_{\epsilon,\delta}^i&=0  && \text{on}\;\,\Sigma_{\epsilon,\delta}^i \\
u_{\epsilon,\delta}^i&=u&&\text{on}\;\, \partial B_{\epsilon}(p_i)\\
\partial_\nu u_{\epsilon,\delta}^i&=\partial_\nu u \quad  &&\text{on}\;\, \partial B_{\epsilon}(p_i)\\
u_{\epsilon,\delta}^i&=0\qquad &&\text{on}\;\, \partial B_{\delta}(p_j)\;\,\text{for all}\;\, 1\leq j\neq i\leq n\\
\partial_{\nu}u_{\epsilon,\delta}^i&=0\qquad &&\text{on}\;\, \partial B_{\delta}(p_j)\;\,\text{for all}\;\, 1\leq j\neq i\leq n
\end{alignedat}\right.
\end{equation}
where $u=|\phi|^2v$ and $\mathscr{L}_g=\Delta_g-2K_g$ is the Jacobi operator of the minimal surface $\phi:\Sigma\setminus\ens{p_1,\cdots,p_n}\rightarrow\R^3$. In particular, we fix $0<\epsilon<\epsilon_0$ and we assume $0<\delta<\delta_0(\epsilon)<\epsilon$. Furthermore, notice that $u_{\epsilon,\delta}^i$ is the unique solution to the variational problem
\begin{align}\label{E2}
	\inf_{w\in \mathscr{E}_{\epsilon,\delta}(p_i)}\frac{1}{2}\int_{\Sigma_{\epsilon,\delta}^i}\left(\Delta_gw-2K_gw\right)^2d\vg
\end{align}
where 
\begin{align*}
	\mathscr{E}_{\epsilon,\delta}(p_i)=W^{2,2}(\Sigma_{\epsilon,\delta}^i)\cap\ens{w:\left\{\begin{alignedat}{2}
		w&=u\qquad &&\text{on}\;\, \partial B_{\epsilon}(p_i)\\
		\partial_{\nu}w&=\partial_{\nu}u\qquad &&\text{on}\;\, \partial B_{\epsilon}(p_i)\\
		w&=0\qquad &&\text{on}\;\, \partial B_{\epsilon}(p_j)\;\, \text{for all}\;\, 1\leq j\neq i\leq n\\
		\partial_{\nu}w&=0\qquad &&\text{on}\;\, \partial B_{\epsilon}(p_j)\;\, \text{for all}\;\, 1\leq j\neq i\leq n.
		\end{alignedat}\right.}
\end{align*}

\subsection{Estimate of the singular energy of the minimisers}

Recall the definition
\begin{align*}
	Q_{\vec{\Psi}}(v)=D^2W(\vec{\Psi})(v\n_{\vec{\Psi}},v\n_{\vec{\Psi}}),
\end{align*}
for some admissible variation $\vec{v}=v\n_{\vec{\Psi}}\in \mathscr{E}_{\vec{\Psi}}(\Sigma,\R^3)$. Fix some $1\leq i\leq n$ and assume that $p_i$ has multiplicity $m_i\geq 1$. Then there exists $\alpha_i>0$ such that 
\begin{align}\label{expansion0}
	&|\phi|^2=\frac{\alpha_i^2}{|z|^{2m_i}}\left(1+O(|z|)\right)\nonumber\\
	&g=\frac{m_i^2\alpha_i^2}{|z|^{2(m_i+1)}}\left(1+O(|z|)\right)\nonumber\\
	&K_g=O(|z|^{2m_i+2}).
\end{align}
For all $v\in C^{\infty}(\Sigma)$, define $u=|\phi|^2v$. Then there exists a function $f_{\epsilon}:\R^{(m_i+1)(2m_i+1)}\rightarrow \R$, such that 
\begin{align*}
	-\int_{\partial B_{\epsilon}(p_i)}\left(\left(\Delta_gu+2K_gu\right)\ast du-\frac{1}{2}\ast\,d|du|_g|^2\right)=f_{\epsilon}(J_{p_i}^{2m_i}v),
\end{align*}
where $J_{p_i}^{2m_i}$ is the jet of $v$ of order $2m_i$ at $p_i\in \Sigma$. However, thanks to the Sobolev embedding $W^{2,2}(\Sigma)\hookrightarrow C^0(\Sigma)$ and the absence of Sobolev embedding $W^{2,2}(\Sigma)\not\hookrightarrow C^1(\Sigma)$, $f_{\epsilon}$ can only depend on $v(p_i)$. Furthermore, as $Q_{\vec{\Psi}}(v)$ is quadratic in $v$, we deduce that there exists $Q_{\epsilon}^i\in \R$ such that 
\begin{align*}
    f_{\epsilon}(J_{p_i}^{2m_i}v)=Q_{\epsilon}^iv^2(p_i).
\end{align*}
See \cite{indexS3} for an explicit argument and Section \ref{renormalised} for the explicit singular energy associated to minimal surfaces with embedded ends. As $\vec{v}$ is admissible and $v$ is smooth, there exists $\gamma\in \R$ such that 
\begin{align*}
	v=v(p_i)+\Re(\gamma z^{m_i})+O(|z|^{m_i+1}).
\end{align*}
The previous expansion \eqref{expansion0} shows that 
\begin{align*}
	u=|\phi|^2u=|\phi|^2v(p_i)+\Re\left(\frac{\alpha_i^2\bar{\gamma}}{z^{m_i}}\right)+O(|z|^{1-m_i}).
\end{align*}
Therefore, as $\phi$ is conformal and harmonic, we have $\Delta_g|\phi|^2=4$ and
\begin{align*}
	\mathscr{L}_g u=\left(4-2K_g|\phi|^2\right)v(p_i)+O(|z|^{m_i+1}).
\end{align*}
This expansion implies that 
\begin{align*}
	\frac{1}{2}\int_{B_{\epsilon_0}\setminus \bar{B}_{\epsilon}(p_i)}\left(\lg u\right)^2d\vg&=\frac{1}{2}\int_{B_{\epsilon_0}\setminus \bar{B}_{\epsilon}(0)}\left(\left(4-2K_g|\phi|^2\right)v(p_i)+O(|z|^{m_i+1})\right)^2\frac{m_i^2\alpha_i^2(1+O(|z|))}{|z|^{2m_i+2}}|dz|^2\\
	&=\pi \int_{\epsilon}^{\epsilon_0}\left(\left(4-2K_g|\phi|^2\right)^2v^2(p_i)+8v(p_i)O(r^{m_i+1})+O(r^{2(m_i+1)})\right)\frac{m_i^2\alpha_i^2dr}{r^{2m_i+1}}\\
	&=8\pi m_i\alpha_i^2\frac{v^2(p_i)}{\epsilon^{2m_i}}+O\left(\frac{1}{\epsilon^{2(m_i-1)}}\right)\conv{\epsilon\rightarrow 0}\infty.
\end{align*}
Notice that $v(p_i)=0$ implies that $\mathscr{L}_g u=O(|z|^{m_i+1})$ and 
\begin{align*}
	\frac{1}{2}\int_{B_{\epsilon_0}\setminus \bar{B}_{\epsilon}(p_i)}\left(\lg u\right)^2d\vg=O(\epsilon_0)
\end{align*}
which shows that the limit \eqref{limit} reduces to
\begin{align*}
	Q_{\vec{\Psi}}(v)=\frac{1}{2}\int_{\Sigma}\left(\lg u\right)^2d\vg<\infty.
\end{align*}
Therefore, for all $v\in C^{\infty}(\Sigma)$ such that $\vec{v}=v\n_{\vec{\Psi}}$ be admissible, we have
\begin{align*}
	Q_{\vec{\Psi}}(v)\geq 0.
\end{align*}
Therefore, we deduce the first extension of \cite{indexS3} to the case of branched surfaces.
\begin{theorem}
	Let $\Sigma$ be a closed Riemann surface and $\vec{\Psi}:\Sigma\rightarrow \R^3$ be a branched Willmore surface. Assume that $\vec{\Psi}$ is the inversion of a complete minimal surface with finite total curvature $\phi:\Sigma\setminus\ens{p_1,\cdots,p_n}\rightarrow \R^3$. Then we have
	\begin{align}\label{upends}
		\mathrm{Ind}_{W}(\vec{\Psi})\leq n=\frac{1}{4\pi}W(\vec{\Psi})-\frac{1}{2\pi}\int_{\Sigma}K_{g_{\vec{\Psi}}}\,d\mathrm{vol}_{g_{\vec{\Psi}}}+\chi(\Sigma).
	\end{align}
\end{theorem}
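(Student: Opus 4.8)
The plan is to upgrade the pointwise lower bound for $Q_{\vec{\Psi}}$ obtained in the computation above into a bound on the Morse index, and then to rewrite the number of ends $n$ by means of the classical total--curvature formulas. The crucial step is the following inequality: for every admissible variation $\vec{v}=v\n_{\vec{\Psi}}\in\mathrm{Var}(\vec{\Psi})$ with $v(p_i)=0$ for all $1\le i\le n$, one has $Q_{\vec{\Psi}}(v)=\tfrac12\int_\Sigma(\mathscr{L}_g u)^2\,d\vg\ge 0$, where $u=|\phi|^2 v$. For $v\in C^\infty(\Sigma)$ this is precisely what the preceding computation yields: the vanishing $v(p_i)=0$ forces $\mathscr{L}_g u=O(|z|^{m_i+1})$ near each $p_i$, hence $\mathscr{L}_g u\in L^2(\Sigma,d\vg)$, while the singular boundary contribution $f_\epsilon(J_{p_i}^{2m_i}v)=Q_\epsilon^i\,v^2(p_i)$ vanishes identically, so the limit \eqref{limit} collapses to the non-negative bulk integral. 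For a general $W^{2,2}$ admissible variation $v$ I would argue directly that the circle integrals of $(\Delta_g u+2K_g u)\ast du-\tfrac12\ast\,d|du|_g^2$ over $\partial B_\epsilon(p_i)$ tend to $0$ as $\epsilon\to 0$: the admissibility bounds on $\vec{v}$ and $\mathscr{L}_g\vec{v}$ near the branch points, combined with $v(p_i)=0$ and the expansions $|\phi|^2\sim\alpha_i^2|z|^{-2m_i}$, $e^{2\lambda}\sim|z|^{-2(m_i+1)}$, $K_g=O(|z|^{2m_i+2})$, give $\mathscr{L}_g u\in L^2(\Sigma,d\vg)$ and the required decay of the boundary terms; alternatively one approximates $v$ by smooth admissible variations vanishing at the $p_i$'s in the natural topology and checks continuity of $Q_{\vec{\Psi}}$ along the approximation.

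Granting this, the index estimate is an exercise in linear algebra. The evaluation map $\ell:\mathscr{E}_{\vec{\Psi}}(\Sigma,\R^3)\to\R^n$ defined by $\ell(v\n_{\vec{\Psi}})=(v(p_1),\dots,v(p_n))$ is well defined and linear because of the Sobolev embedding $W^{2,2}(\Sigma)\hookrightarrow C^0(\Sigma)$. If $V\subset\mathscr{E}_{\vec{\Psi}}(\Sigma,\R^3)$ were a subspace of dimension $n+1$ on which $Q_{\vec{\Psi}}$ is negative definite, then $V\cap\ker\ell\ne\{0\}$, and any non-zero element of this intersection would be an admissible variation with $v(p_i)=0$ for all $i$ and $Q_{\vec{\Psi}}<0$, contradicting the first step. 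Hence $\mathrm{Ind}_W(\vec{\Psi})\le n$.

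It remains to identify $n$ with $\tfrac1{4\pi}W(\vec{\Psi})-\tfrac1{2\pi}\int_\Sigma K_{g_{\vec{\Psi}}}\,d\mathrm{vol}_{g_{\vec{\Psi}}}+\chi(\Sigma)$, which is classical surface theory. Since $g_{\vec{\Psi}}=|\phi|^{-4}g_\phi$, the expansions above show $g_{\vec{\Psi}}\sim|z|^{2(m_i-1)}|dz|^2$ near $p_i$, so $\vec{\Psi}$ has a branch point of order $m_i-1$ there; as $K_{g_{\vec{\Psi}}}\in L^1(\Sigma)$ (a consequence of $W(\vec{\Psi})<\infty$), Gauss--Bonnet for branched immersions gives $\int_\Sigma K_{g_{\vec{\Psi}}}\,d\mathrm{vol}_{g_{\vec{\Psi}}}=2\pi\chi(\Sigma)+2\pi\sum_{i=1}^n(m_i-1)$. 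The pointwise conformal invariance of the density $(H^2-K)\,d\mathrm{vol}$, applied to the inversion and using $H_\phi\equiv 0$, gives $W(\vec{\Psi})=\int_\Sigma K_{g_{\vec{\Psi}}}\,d\mathrm{vol}_{g_{\vec{\Psi}}}-\int_{\Sigma\setminus\{p_1,\dots,p_n\}}K_{g_\phi}\,d\mathrm{vol}_{g_\phi}$, and the Osserman--Jorge--Meeks total--curvature formula gives $\int_{\Sigma\setminus\{p_1,\dots,p_n\}}K_{g_\phi}\,d\mathrm{vol}_{g_\phi}=2\pi\bigl(\chi(\Sigma)-\sum_{i=1}^n(m_i+1)\bigr)$. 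Combining the three identities yields $W(\vec{\Psi})=4\pi\sum_{i=1}^n m_i$ and, after substitution, exactly $\tfrac1{4\pi}W(\vec{\Psi})-\tfrac1{2\pi}\int_\Sigma K_{g_{\vec{\Psi}}}\,d\mathrm{vol}_{g_{\vec{\Psi}}}+\chi(\Sigma)=n$.

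The main obstacle is the first step for non-smooth admissible variations: one must show that the curvature-weighted boundary integrals over the shrinking circles $\partial B_\epsilon(p_i)$ genuinely vanish in the limit when $v(p_i)=0$, and here one cannot simply Taylor-expand $v$ since $W^{2,2}(\Sigma)\not\hookrightarrow C^1(\Sigma)$; the weighted admissibility estimates near the ends are exactly what is needed to close this computation. The evaluation-map argument and the topological identification of $n$ are then routine.
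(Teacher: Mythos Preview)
Your outline matches the paper's proof almost exactly: reduce to showing $Q_{\vec\Psi}(v)\ge 0$ on the codimension-$n$ subspace $\{v(p_i)=0\}$, then count dimensions via the evaluation map, and finally identify $n$ topologically via Jorge--Meeks and Gauss--Bonnet (the paper simply cites Li--Yau and Jorge--Meeks for this last step, while you spell it out).

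The one point where the paper does real work that you only sketch is precisely the obstacle you flag: passing from smooth $v$ to an arbitrary admissible $v\in W^{2,2}$ with $v(p_i)=0$. The paper does not estimate the boundary integrals directly; instead it takes any smooth approximation $v_k\to v$ in $W^{2,2}$, writes out the Taylor jet of $v_k$ at each $p_i$ up to order $\theta_0=m_i$, and observes that the admissibility conditions on $v$ (namely $|dv|/|z|^{\theta_0-1}\in L^\infty$ and $\Delta v/|z|^{\theta_0-1}\in L^2$) together with $v_k\to v$ in $C^0$ and $\nabla^2 v_k\to\nabla^2 v$ a.e.\ force all the ``bad'' Taylor coefficients $\gamma^k_{i,j_1,j_2}$ (those with $1\le j_1+j_2\le\theta_0-1$, and the non-holomorphic ones at order $\theta_0$) to converge to $0$. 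Subtracting these jets off with cutoffs produces $\tilde v_k\in C^\infty$ which is genuinely admissible, still vanishes at each $p_i$, and still converges to $v$ in $W^{2,2}$. Then $Q_{\vec\Psi}(\tilde v_k)\ge 0$ by the smooth case, and continuity of $Q_{\vec\Psi}$ under strong $W^{2,2}$ convergence (plus Fatou for the bulk integral) gives $Q_{\vec\Psi}(v)\ge 0$. This is the construction that closes your ``alternative'' approximation argument.
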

\begin{proof}
	Write $g=\phi^{\ast}g_{\R^3}$ be the induced metric on $\Sigma\setminus\ens{p_1,\cdots,p_n}$. 
	The preceding argument shows that for $\vec{v}=v\n_{\vec{\Psi}}\in \mathscr{E}_{\vec{\Psi}}(\Sigma,\R^3)$ and $v:\Sigma\rightarrow \R$ smooth such that $v(p_i)=0$ for all $1\leq i\leq n$, we have
	\begin{align*}
		Q_{\vec{\Psi}}(v)=\frac{1}{2}\int_{\Sigma}\left(\lg \left(|\phi|^2v\right)\right)^2d\vg\geq 0.
	\end{align*}
	Now, let $\vec{v}=v\n_{\vec{\Psi}}\in \mathscr{E}_{\vec{\Psi}}(\Sigma,\R^3)$ be an arbitrary variation such that $v(p_i)=0$ for all $1\leq i\leq n$. As 
	\begin{align}\label{infty0}
		\Delta_{g_{\vec{\Psi}}}^{\n_{\vec{\Psi}}}\vec{v}=\left(\Delta_{g_{\vec{\Psi}}}v\right)\n_{\vec{\Psi}}\in L^2(\Sigma,g_{\vec{\Psi}})
	\end{align}
	and
	\begin{align}\label{infty}
		g^{-1}\otimes dv\in L^{\infty}(\Sigma,g_0),
	\end{align}
	where $g_0$ is a fixed smooth metric on $\Sigma$. The estimate \eqref{infty} must be interpreted as follows. If $p_i$ corresponds to an end of multiplicity $m_i\geq 1$ of $\phi$, then $\vec{\Psi}$ admits a branch point of multiplicity $\theta_0=m_i$ at $p_i$, and \eqref{infty} means that in the chart $\varphi_i:U_i\rightarrow B(0,1)\subset \C$
	\begin{align}\label{infty2}
		\frac{|\D v|}{|z|^{\theta_0-1}}\in L^{\infty}(B(0,1)).
	\end{align}
	we deduce that in particular $v\in W^{2,2}\cap W^{1,\infty}(\Sigma)$. Therefore, let $\ens{v_{k}}_{k\in \N}\in C^{\infty}$ such that
	\begin{align*}
		v_k\conv{k\rightarrow \infty}v\qquad \text{in}\;\, W^{2,2}(\Sigma).
	\end{align*}
	Then up to a subsequence, we deduce that (up to taking a subsequence) $\D^2 v_k\conv{k\rightarrow \infty} \D^2v$ almost everywhere on $\Sigma$. In $U_i$ we have an expansion for some $\gamma_{j_1,j_2}^k\in \R$ (as $\phi$ is smooth)
	\begin{align}\label{expansion01}
		v_k=v_k(p_i)+\sum_{\substack{j_1,j_2\geq 0\\ 1\leq j_1+j_2\leq \theta_0}}\Re\left(\gamma_{i,j_1,j_2}^kz^{j_1}\z^{j_2}\right)+O(|z|^{\theta_0+1}).
	\end{align}
	As $v_k\conv{k\rightarrow \infty} v$ in $C^{0}(\Sigma)$, and by \eqref{infty2}, we deduce (as $v(p_i)=0$) that $v_k(p_i)\conv{k\rightarrow \infty}0$ and $\gamma_{i,j_1,j_2}^k\conv{k\rightarrow \infty}0$ for all $1\leq j_2+j_2\leq \theta_0-1$. Furthermore, as by \eqref{infty0}
	\begin{align}\label{infty1}
		\frac{\Delta v}{|z|^{\theta_0-1}}\in L^2(B(0,1)),
	\end{align} 
	and $\Delta v_k\conv{k\rightarrow \infty}\Delta v$ almost everywhere, we also find by \eqref{infty1} that $\gamma_{i,j_1,j_2}^k\conv{k\rightarrow \infty}0$ for all $j_1+j_2=\theta_0$ such that $(j_1,j_2)\notin \ens{(\theta_0,0),(0,\theta_0)}$. Finally, this implies that if $\rho_i$ is a cutoff function such that $\rho_i=1$ on $\varphi_i^{-1}(B(0,1/2))\subset U_i$ and $\mathrm{supp}(\rho_i)\subset U_i$, and
	\begin{align*}
		\tilde{v}_k=v_k-\sum_{i=1}^{n}\rho_i\bigg\{v_k(p_i)+\sum_{\substack{j_1,j_2\geq 0\\ 1\leq j_1+j_2\leq m_i}}\Re\left(\gamma_{i,j_1,j_2}^k\varphi_i^{j_1}\bar{\varphi_i}^{j_2}\right)\bigg\}\in C^{\infty}(\Sigma),
	\end{align*}
	also satisfies
	\begin{align*}
	    &\tilde{v}_k(p_i)=0\qquad \text{for all }\;\, 1\leq i\leq n,\quad \text{and}\quad
		\tilde{v}_k\conv{k\rightarrow \infty}v\qquad\text{strongly in}\;\, W^{2,2}(\Sigma),
	\end{align*}
	and furthermore, by the expansion \eqref{expansion01}, we deduce that 
	\begin{align*}
		\tilde{v}_k\n_{\vec{\Psi}}\in \mathscr{E}_{\vec{\Psi}}(\Sigma,\R^3).
	\end{align*}
	Therefore, $\tilde{v}_k$ is an admissible variation of $\vec{\Psi}$, and by the preceding discussion we have
	\begin{align}\label{var}
		{Q}_{\vec{\Psi}}(\tilde{v}_k)=\frac{1}{2}\int_{\Sigma}\left(\lg\left(|\phi|^2\tilde{v}_k\right)\right)^2d\vg\geq 0.
	\end{align}
	Now, by the strong $W^{2,2}$ convergence and as $\tilde{v}_k$ is admissible, we have (see for example the explicit formula for $Q_{\vec{\Psi}}$ in \cite{indexS3} or \cite{index3})
	\begin{align*}
		Q_{\vec{\Psi}}(\tilde{v}_k)\conv{k\rightarrow \infty}Q_{\vec{\Psi}}(v).
	\end{align*}
	Then \eqref{var} implies that $Q_{\vec{\Psi}}(v)\geq 0$, but notice also that by Fatou lemma
	\begin{align*}
		Q_{\vec{\Psi}}(v)=\liminf\limits_{k\rightarrow\infty}Q_{\vec{\Psi}}(\tilde{v}_k)\geq \frac{1}{2}\int_{\Sigma}\liminf_{k\rightarrow \infty}\left(\lg \left(|\phi|^2\tilde{v}_k\right)\right)^2d\vg=\frac{1}{2}\int_{\Sigma}\left(\lg\left(|\phi|^2v\right)\right)^2d\vg\geq 0.
	\end{align*}
	This observation concludes the proof of the theorem, as the last equality in \eqref{upends} comes from the Li-Yau inequality (\cite{lieyau}) and the Jorge-Meeks formula (\cite{jorge}).
\end{proof}
\begin{rem}
	The proof of the theorem shows in particular that for all admissible variations $\vec{v}=v\n_{\vec{\Psi}}\in \mathscr{E}_{\vec{\Psi}}(\Sigma,\R^3)$ such that $v(p_i)=0$ for all $1\leq i\leq n$, 
	\begin{align*}
		\frac{1}{2}\int_{\Sigma}\left(\lg\left(|\phi|^2v\right)\right)^2d\vg\leq Q_{\vec{\Psi}}(v)<\infty.
	\end{align*}
\end{rem}
Therefore, we have $Q_{\epsilon}^i>0$ for all $\epsilon>0$ small enough, as by \eqref{limit} we have
\begin{align}\label{limit2}
	Q_{\vec{\Psi}}(v)=\lim\limits_{\epsilon\rightarrow 0}\left(\frac{1}{2}\int_{\Sigma_{\epsilon}}(\lg u)^2d\vg-\sum_{i=1}^{n}Q_{\epsilon}^iv^2(p_i)\right)
\end{align}
In particular, as the metric $g$ is real analytic on all compact subset $K\subset \Sigma\setminus\ens{p_1,\cdots,p_n}$, we deduce that for all $1\leq i\leq n$
\begin{align}\label{welldefined}
	\frac{1}{2}\int_{B_{\epsilon_0}\setminus\bar{B}_{\epsilon}(p_i)}\left(\lg u\right)^2d\vg=Q_{\epsilon}^iv^2(p_i)+O(1).
\end{align}

The following theorem is the analogous of Theorem V.$1,2,3$ \cite{bethuelbrezishelein2}. Here, the vortices are already fixed and correspond to the points $p_1,\cdots,p_n\in \Sigma$ where the metric of the corresponding minimal surface degenerates. We first obtain an estimate of the singular energy by a geometric argument, and show that the Jacobi operator of the minimiser $u_{\epsilon,\delta}^i$ is bounded in $L^2$ away from $p_i$. This will allow us to pass to the limit to a limit function as $\delta\rightarrow 0$ and $\epsilon\rightarrow 0$.

\begin{theorem}\label{firstestimate}
	Let $0<\epsilon<\epsilon_0$ and $0<\delta<\delta(\epsilon)<\epsilon$ and $u_{\epsilon,\delta}^i$ be the unique solution of \eqref{EQepsilon}. Then there exists a non-decreasing function $\omega:\R_+\rightarrow\R_+$ which is continuous at $0$ and such that $\omega(0)=0$ (independent of $\epsilon$ and $\delta$) such that 
	\begin{align}\label{singularenergy}
		\left|\frac{1}{2}\int_{\Sigma_{\epsilon,\delta}^i}\left(\lg u_{\epsilon,\delta}^i\right)^2d\vg-Q_{\epsilon}^iv^2(p_i)\right|\leq \omega\left(\wp{v}{2,2}{\Sigma}\right)
	\end{align}
	and
	\begin{align}\label{awayvortices}
		\frac{1}{2}\int_{\Sigma_{\epsilon_0,\delta}}\left(\lg u_{\epsilon,\delta}^i\right)^2d\vg\leq \omega\left(\wp{v}{2,2}{\Sigma}\right).
	\end{align}
\end{theorem}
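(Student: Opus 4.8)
The plan is to establish \eqref{singularenergy} and \eqref{awayvortices} simultaneously by comparing the minimiser $u_{\epsilon,\delta}^i$ of the variational problem \eqref{E2} against an explicit, carefully chosen competitor, and then extracting the geometric (singular) energy from the comparison. First I would fix a competitor $w_{\epsilon,\delta}^i \in \mathscr{E}_{\epsilon,\delta}(p_i)$ obtained by gluing: near $p_i$ it agrees with $u=|\phi|^2v$ itself (so it automatically satisfies the Cauchy data on $\partial B_{\epsilon}(p_i)$), near each $p_j$ with $j\neq i$ it is the product of $u$ with a logarithmic cutoff that kills both $w$ and $\partial_\nu w$ on $\partial B_{\delta}(p_j)$ at a cost $O(1/\log(1/\delta))$ in the $W^{2,2}$-norm, and on the bulk it interpolates smoothly. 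Since $u_{\epsilon,\delta}^i$ is the minimiser, testing against this competitor gives the upper bound
\begin{align*}
\frac{1}{2}\int_{\Sigma_{\epsilon,\delta}^i}\left(\lg u_{\epsilon,\delta}^i\right)^2 d\vg \leq \frac{1}{2}\int_{\Sigma_{\epsilon,\delta}^i}\left(\lg w_{\epsilon,\delta}^i\right)^2 d\vg,
\end{align*}
and the right-hand side, by the expansion \eqref{expansion0} together with $\Delta_g|\phi|^2=4$ and the computation already carried out in the text (the one yielding $8\pi m_i\alpha_i^2 v^2(p_i)/\epsilon^{2m_i}+O(\epsilon^{-2(m_i-1)})$), equals $Q_{\epsilon}^i v^2(p_i) + $ a bulk term that is $O(\|v\|_{W^{2,2}(\Sigma)}^2)$ plus an error $o_\delta(1)$ coming from the logarithmic cutoffs near the other $p_j$'s. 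This already produces the upper half of \eqref{singularenergy}.

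For the matching lower bound and for \eqref{awayvortices}, I would exploit that $u_{\epsilon,\delta}^i$ solves $\lg^2 u_{\epsilon,\delta}^i = 0$ on $\Sigma_{\epsilon,\delta}^i$, so $f_{\epsilon,\delta}^i := \lg u_{\epsilon,\delta}^i$ is $\lg$-harmonic there. The key is an elliptic estimate: on $\Sigma_{\epsilon_0,\delta}$ (i.e. away from the shrinking ball around $p_i$, but still containing the tiny balls around the other $p_j$), $\lg$ is strongly elliptic with coefficients controlled uniformly in $\delta$, and the boundary data of $f_{\epsilon,\delta}^i$ on $\partial B_{\epsilon_0}(p_i)$ is controlled by the energy $\int_{B_{\epsilon_0}\setminus\bar B_\epsilon(p_i)}(f_{\epsilon,\delta}^i)^2\,d\vg$, which is in turn bounded by the total energy, hence by $Q_\epsilon^i v^2(p_i)+O(\|v\|_{W^{2,2}}^2)$ from the first part. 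A Caccioppoli-type / interior-plus-boundary $L^2$ estimate for the $\lg$-harmonic function $f_{\epsilon,\delta}^i$, using that the Dirichlet data on the $\partial B_\delta(p_j)$ vanish, then yields \eqref{awayvortices} with a modulus $\omega$ depending only on $\|v\|_{W^{2,2}(\Sigma)}$; here one must check the constants do not blow up as $\delta\to 0$, which is where Smale's theorem (no kernel for $\lg$, $\lg^2$ on $\Sigma_{\epsilon,\delta}^i$ for $\delta<\delta(\epsilon)$) is used to guarantee a uniform coercivity/Fredholm bound. Finally, combining \eqref{awayvortices} with the near-$p_i$ expansion of $u_{\epsilon,\delta}^i$ — which is forced by the Cauchy data $u_{\epsilon,\delta}^i=u$, $\partial_\nu u_{\epsilon,\delta}^i=\partial_\nu u$ on $\partial B_\epsilon(p_i)$ and the structure $u=|\phi|^2v(p_i)+\Re(\alpha_i^2\bar\gamma z^{-m_i})+O(|z|^{1-m_i})$ — pins down the singular part of $\frac{1}{2}\int_{B_{\epsilon_0}\setminus\bar B_\epsilon(p_i)}(\lg u_{\epsilon,\delta}^i)^2\,d\vg$ to be $Q_\epsilon^i v^2(p_i)+O(1)$, matching \eqref{welldefined}, which closes \eqref{singularenergy}.

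The main obstacle I anticipate is making the estimates genuinely \emph{uniform in $\delta$} as $\delta\to 0$: the domains $\Sigma_{\epsilon,\delta}^i$ are degenerating (the excised balls around the other $p_j$ shrink to points), the operator $\lg$ fails to be elliptic precisely at those points, and a naive elliptic estimate would have constants blowing up. The resolution should be that the competitor's logarithmic cutoffs contribute only $O(1/\log(1/\delta))\to 0$, and that the removed points have zero capacity for the relevant function space $W^{2,2}$ in dimension $2$ (so the boundary conditions on $\partial B_\delta(p_j)$ become harmless in the limit), while Smale's genericity statement supplies a $\delta$-uniform lower bound on the first eigenvalue of $\lg^2$ with the given mixed boundary conditions on a fixed reference piece $\Sigma_{\epsilon_0,\delta}$ — but verifying this uniformity carefully, rather than just qualitatively, is the delicate point. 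A secondary technical nuisance is that $v$ is only assumed $W^{2,2}$ (not smooth) in the statement's modulus-of-continuity formulation, so all competitor constructions and energy estimates must be phrased in terms of $\|v\|_{W^{2,2}(\Sigma)}$ via the Sobolev embedding $W^{2,2}(\Sigma)\hookrightarrow C^0(\Sigma)$, and the quantity $v(p_i)$ must be interpreted through that embedding.
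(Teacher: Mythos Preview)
Your upper bound is right in spirit but over-engineered: the paper's competitor is simply $\rho_i u$ with $\rho_i$ a cutoff supported in $B_{\epsilon_0}(p_i)$ and equal to $1$ on $B_{\epsilon_0/2}(p_i)$. Since $B_{\epsilon_0}(p_i)$ is disjoint from every $B_\delta(p_j)$ for $j\neq i$, this competitor is identically zero near the other ends, so the boundary conditions on $\partial B_\delta(p_j)$ are automatic and no logarithmic cutoffs or $o_\delta(1)$ errors enter.

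The genuine gap is in your route to the lower bound and to \eqref{awayvortices}. Your Caccioppoli plan for $f=\lg u_{\epsilon,\delta}^i$ faces exactly the obstacle you name, and the proposed fixes do not close it: $\lg=e^{-2\lambda}(\Delta+O(1))$ is \emph{not} uniformly elliptic on $\Sigma_{\epsilon_0,\delta}$ (the ellipticity constant degenerates like $|z|^{2(m_j+1)}$ as $z\to p_j$), the volume form $d\vg$ is singular there, and what vanishes on $\partial B_\delta(p_j)$ is $u_{\epsilon,\delta}^i$ and $\partial_\nu u_{\epsilon,\delta}^i$, not $f$ itself, so you have no usable boundary control on $f$. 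Smale's theorem yields absence of kernel, not a $\delta$-uniform coercivity constant for this degenerate problem. The paper avoids elliptic estimates altogether by a structural argument: write $u_{\epsilon,\delta}=u-\sum_k u_{\epsilon,\delta}^k$, expand $Q_\epsilon(u)$ bilinearly, and integrate the cross terms $B_\epsilon(u_{\epsilon,\delta},u_{\epsilon,\delta}^i)$ and $B_\epsilon(u_{\epsilon,\delta}^i,u_{\epsilon,\delta}^j)$ by parts onto $\partial B_\epsilon(p_i)$; the boundary conditions force $u_{\epsilon,\delta}=-\sum_{j\neq i}u_{\epsilon,\delta}^j$ there, so these terms carry no $C_\epsilon v^2(p_i)$ contribution. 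Since the limit $Q_{\vec\Psi}(v)=\lim_{\epsilon\to 0}\big(Q_\epsilon(u)-\sum_k Q_\epsilon^k v^2(p_k)\big)$ is finite, this forces $Q_\epsilon(u_{\epsilon,\delta}^i)=Q_\epsilon^i v^2(p_i)+O(1)$, giving \eqref{singularenergy}. Then the Pythagorean identity for the minimiser (the cross term $\int \lg(\rho_i u-u_{\epsilon,\delta}^i)\,\lg u_{\epsilon,\delta}^i\,d\vg$ vanishes by $\lg^2 u_{\epsilon,\delta}^i=0$ and the matching Cauchy data) gives $\int(\lg(\rho_i u-u_{\epsilon,\delta}^i))^2\,d\vg=O(1)$; since $\rho_i u\equiv 0$ outside $B_{\epsilon_0}(p_i)$, \eqref{awayvortices} follows immediately. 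No $\delta$-uniform elliptic estimate is ever needed---only that the second variation is a finite number.
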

\begin{proof}
Recalling that $\Sigma_{\epsilon}=\Sigma\setminus \bigcup_{i=1}^n\bar{B}_\epsilon(p_i)$, we define the continuous bilinear form $B_\epsilon:W^{2,2}(\Sigma_\epsilon)\times W^{2,2}(\Sigma_\epsilon)\rightarrow\R$ by
\begin{align*}
B_\epsilon(u_1,u_2)=\frac{1}{2}\int_{\Sigma_\epsilon}^{}\mathscr{L}_g u_1\,\mathscr{L}_gu_2\,d\vg
\end{align*}
and let $Q_\epsilon:W^{2,2}(\Sigma_\epsilon)\rightarrow \R$ be the associated quadratic form. Then we have
\begin{align}\label{est2}
Q_{\vec{\Psi}}(v)=\lim\limits_{\epsilon\rightarrow 0}Q_\epsilon(u)-\sum_{i=1}^{n}Q_\epsilon^iv^2(p_i)
\end{align}
and the limit is well-defined. Now, fix a cutoff function $\rho_i\geq 0$ such that 
\begin{align*}
	\rho_{i}=1\qquad \text{on}\;\, B_{\epsilon_0/2}(p_i),\qquad \text{and}\;\, \supp (\rho_i)\subset B_{\epsilon_0}(p_i).
\end{align*} 
Notice in particular that for all $1\leq i\leq n$, $0<\epsilon<\epsilon_0$ and $0<\delta<\delta(\epsilon)$, we have by \eqref{welldefined}
\begin{align}\label{boundenergy1}
	\frac{1}{2}\int_{\Sigma_{\epsilon,\delta}^i}\left(\lg u_{\epsilon,\delta}^i\right)^2d\vg\leq\frac{1}{2}\int_{\Sigma_{\epsilon,\delta}}\left(\lg(\rho_i u)\right)^2d\vg=\frac{1}{2}\int_{B_{\epsilon_0}\setminus\bar{B}_{\epsilon}(p_i)}\left(\lg\left( \rho_i u\right)\right)^2d\vg=Q_{\epsilon}^iv^2(p_i)+O(1).
\end{align}
Now, if $0<\epsilon<\epsilon_0$ and $0<\delta<\delta(\epsilon)<\epsilon$ define
\begin{align*}
u_{\epsilon,\delta}=u-\sum_{i=1}^{n}u_{\epsilon,\delta}^i,
\end{align*}
We have
\begin{align*}
Q_\epsilon(u)&=B_\epsilon\left(u_{\epsilon,\delta}+\sum_{i=1}^{n}u_{\epsilon,\delta}^i,u_{\epsilon,\delta}+\sum_{i=1}^{n}u_{\epsilon,\delta}^i\right)\\
&=Q_\epsilon(u_{\epsilon,\delta},u_{\epsilon,\delta})+\sum_{i=1}^{n}Q_\epsilon(u_{\epsilon,\delta}^i)+2\sum_{i=1}^{n}B_\epsilon(u_{\epsilon,\delta},u_{\epsilon,\delta}^i)+\sum_{1\leq i\neq j\leq n}B_\epsilon(u_{\epsilon,\delta}^i,u_{\epsilon,\delta}^j).
\end{align*}
Integrating by parts, we find
\begin{align*}
B_\epsilon(u_{\epsilon,\delta},u_{\epsilon,\delta}^i)=\frac{1}{2}\int_{\partial B_\epsilon(p_i)}^{}\left(u_{\epsilon,\delta}\,\partial_\nu (\mathscr{L}_g u_{\epsilon,\delta}^i)-\partial_\nu u_{\epsilon,\delta} (\mathscr{L}_gu_{\epsilon,\delta}^i)\right)d\hh^1.	
\end{align*}
As
\begin{align*}
u_{\epsilon,\delta}=-\sum_{j\neq i}^{}u_{\epsilon,\delta}^j,\quad \partial_\nu u_{\epsilon,\delta}=-\sum_{j\neq i}^{}\partial_\nu u_{\epsilon,\delta}^j\quad \text{on}\;\partial B_\epsilon(p_i),
\end{align*}
we deduce that $B_\epsilon(u_{\epsilon,\delta},u_{\epsilon,\delta}^i)$ does not contain a quadratic term of the form $C_\epsilon v^2(p_i)$, as the functions $u_{\epsilon,\delta}^j$ ($j\neq i$) are independent of $v(p_i)$. A similar argument applies for $B_\epsilon(u_\epsilon^i,u_\epsilon^j)$ ($i\neq j$), so we deduce by \eqref{limit2} that the only possibility for the limit \eqref{est2} to be finite is that
\begin{align}\label{boundenergy2}
Q_\epsilon(u_\epsilon^i)=Q_\epsilon^iv^2(p_i)+O(1)
\end{align}
where $O(1)$ is a quantity bounded independently of $0<\epsilon<\epsilon_0$ and $0<\delta<\delta(\epsilon)$. Therefore, combining \eqref{boundenergy2} with \eqref{boundenergy1}, we deduce that for all $0<\delta<\delta(\epsilon)<\epsilon$
\begin{align*}
\frac{1}{2}\int_{\Sigma_{\epsilon,\delta}}\left(\lg u_{\epsilon,\delta}^i\right)^2d\vg=Q_{\epsilon}^iv^2(p_i)+O(1),
\end{align*}
where $O(1)$ is a quantity bounded independently of $0<\epsilon<\epsilon_0$ and $0<\delta<\delta(\epsilon)$.
Therefore, we deduce that
\begin{align}\label{apriori}
	Q_{\epsilon}^iv^2(p_i)+O(1)&=\int_{\Sigma_{\epsilon,\delta}^i}\left(\lg (\rho_i u)\right)^2d\vg=\int_{\Sigma_{\epsilon,\delta}^i}\left(\lg(\rho_iu-u_{\epsilon,\delta}^i)\right)^2d\vg\nonumber\\
	&+2\int_{\Sigma_{\epsilon,\delta}^i}\lg\left(\rho_iu-u_{\epsilon,\delta}^i\right)\lg(u_{\epsilon,\delta}^i)d\vg
	+\int_{\Sigma_{\epsilon,\delta}^i}\left(\lg u_{\epsilon,\delta}^i\right)^2d\vg\nonumber\\
	&=\int_{\Sigma_{\epsilon,\delta}^i}\left(\lg(\rho_iu-u_{\epsilon,\delta}^i)\right)^2d\vg+2\int_{\Sigma_{\epsilon,\delta}^i}\lg\left(\rho_iu-u_{\epsilon,\delta}^i\right)\lg(u_{\epsilon,\delta}^i)d\vg+Q_{\epsilon}^iv^2(p_i)+O(1).
\end{align}
Furthermore, the boundary conditions imply that $u_{\epsilon,\delta}^i=u=\rho_i u$ on $\partial B_{\epsilon}(p_i)$ and $\partial_{\nu}u_{\epsilon,\delta}^iu=\partial_{\nu}i=\partial_{\nu}(\rho_i u)=0$, while for all $j\neq i$, $u_{\epsilon,\delta}^i=\partial_{\nu}u_{\epsilon,\delta}^i=\rho_iu=\partial_{\nu}(\rho_i u)=0$. Therefore, we deduce as $\lg^2u_{\epsilon,\delta}^i=0$ that 
\begin{align}\label{aprioribis}
	&\int_{\Sigma_{\epsilon,\delta}^i}\lg\left(\rho_iu-u_{\epsilon,\delta}^i\right)\lg(u_{\epsilon,\delta}^i)d\vg=\int_{\Sigma_{\epsilon,\delta}^i}(\rho_i u-u_{\epsilon,\delta}^i)\lg^2u_{\epsilon,\delta}^i\,d\vg\nonumber\\
	&+\int_{\partial B_{\epsilon}(p_i)}\left(\rho_i u-u_{\epsilon,\delta}^i\right)\partial_{\nu}\left(\lg u_{\epsilon,\delta}^i\right)-\partial_{\nu}\left(\rho_i u-u\right)\lg u_{\epsilon,\delta}^i\,d\mathscr{H}^1\nonumber\\
	&+\sum_{j\neq i}^{}\int_{\partial B_{\delta}(p_j)}\left(\rho_i u-u_{\epsilon,\delta}^i\right)\partial_{\nu}\left(\lg u_{\epsilon,\delta}^i\right)-\partial_{\nu}\left(\rho_i u-u\right)\lg u_{\epsilon,\delta}^i\,d\mathscr{H}^1\nonumber\\
	&=0
\end{align}
Therefore, \eqref{apriori} and \eqref{aprioribis} imply that 
\begin{align*}
	\int_{\Sigma_{\epsilon,\delta}}(\lg(\rho_i-u_{\epsilon,\delta}^i))^2d\vg=O(1)
\end{align*}
and as $\supp(\rho_i)\subset B_{\epsilon_0}(p_i)$, we deduce that 
\begin{align*}
	\int_{\Sigma_{\epsilon_0,\delta}}\left(\lg u_{\epsilon,\delta}^i\right)^2d\vg=O(1),
\end{align*}
or in other words
\begin{align*}
	\limsup_{\epsilon\rightarrow 0}\limsup_{\delta\rightarrow 0}\int_{\Sigma_{\epsilon_0,\delta}}\left(\lg u_{\epsilon,\delta}^i\right)^2d\vg<\infty.
\end{align*}
Furthermore, as the error terms are continuous in $v\in W^{2,2}(\Sigma)$ (such that $\v=v\n_{\vec{\Psi}}$), we deduce that there exists a modulus of continuity $\omega=\omega_{\vec{\Psi}}:\R_+\rightarrow \R_+$ independent of $0<\epsilon<\epsilon_0$ and $0<\delta<\delta(\epsilon)<\epsilon$ (that we can take non-decreasing and continuous at $0$) such that 
\begin{align*}
	\left|\frac{1}{2}\int_{\Sigma_{\epsilon,\delta}}\left(\lg u_{\epsilon,\delta}^i\right)^2d\vg-Q_{\epsilon}^iv^2(p_i)\right|\leq \omega(\wp{v}{2,2}{\Sigma})
\end{align*}
and
\begin{align*}
	\frac{1}{2}\int_{\Sigma_{\epsilon_0},\delta}\left(\lg u_{\epsilon,\delta}^i\right)^2d\vg\leq \omega(\wp{v}{2,2}{\Sigma}).
\end{align*}
This concludes the proof of the theorem. 
\end{proof}

\begin{rem}
	Notice that the preceding proof implies that the limits of
	$
	B_\epsilon(u_{\epsilon,\delta},u_{\epsilon,\delta}^i)
	$
	and 
	$
	B_\epsilon(u_{\epsilon,\delta}^i,u_{\epsilon,\delta}^j)
	$
	($i\neq j$)	are well-defined as $\epsilon\rightarrow 0$ (and $0<\delta<\delta(\epsilon)<\epsilon$). 
\end{rem}

\subsection{Local estimates near the ends}

As the operators $\lg$ and $\lg^2$ are uniformly elliptic on $\Sigma_{\epsilon}$ for all $\epsilon>0$, the only difficult estimates come from the asymptotic behaviour near the vortices $p_i$ (for $1\leq i\leq n$). As the estimates depend on the chart, we fix some covering $(U_1,\cdots,U_n)\subset \Sigma$ by domains of charts $\Sigma$ such that $p_i\in U_i$ for all $1\leq i\leq n$ and all estimates will be taken with respect to a complex chart $\varphi_i:U_i\rightarrow B(0,1)\subset \C$ such that $\varphi_i(p_i)=0$.

\begin{theorem}\label{ipptheorem}
	Let $1\leq i\leq n$ be a fixed integer and $u_{\epsilon,\delta}^i$ be the solution of \eqref{EQepsilon} for some $0<\epsilon<\epsilon_0$ and $0<\delta<\delta(\epsilon)$. Let $1\leq j\neq i\leq n$ and assume that the end of $\phi$ has multiplicity $m\geq 1$, and  define in the chart $U_j$ the function $v_{\epsilon,\delta}^i=e^{-\lambda}u_{\epsilon,\delta}^i$. Then there exists real analytic functions $\zeta_0,\zeta_2:B(0,1)\rightarrow \R$ and $\vec{\zeta}_1,\vec{\zeta}_3:B(0,1)\rightarrow \R^2$ and a universal constant $C=C(U_j,\vec{\Psi})>0$ depending only on the chosen chart $U_j$ around $p_j$ and on $\vec{\Psi}$ such that 
	\begin{align}
		&\int_{B_1\setminus \bar{B}_{\delta}(0)}\left(\Delta v_{\epsilon,\delta}^i-2(m+1)\left(\frac{x}{|x|^2}+\D\zeta_0\right)\cdot \D v_{\epsilon,\delta}^i+\frac{(m+1)^2}{|x|^2}\left(1+x\cdot \vec{\zeta}_1\right)v_{\epsilon,\delta}^i\right)^2dx\nonumber\\
		&\leq \int_{\Sigma_{\epsilon_0,\delta}}\left(\lg u_{\epsilon,\delta}^i\right)^2d\vg\leq C\,\omega\left(\wp{v}{2,2}{\Sigma}\right)\label{square1}\\
		&\int_{B_1\setminus\bar{B}_{\delta}(0)}\left(\Delta v_{\epsilon,\delta}^i+(m+1)(m-1)\frac{v_{\epsilon,\delta}^i}{|x|^2}\right)^2dx+4(m+1)(m-1)\int_{B_1\setminus \bar{B}_{\delta}(0)}\left(\frac{x}{|x|^2}\cdot \D v_{\epsilon,\delta}^i-\frac{v_{\epsilon,\delta}^i}{|x|^2}\right)^2dx\nonumber\\
		&-\int_{B_1\setminus\bar{B}_{\delta}(0)}\left(\D\zeta_2\cdot\D v_{\epsilon,\delta}^i-\frac{x}{|x|^2}\cdot \vec{\zeta}_3\,v_{\epsilon,\delta}^i\right)^2dx\leq C\,\omega\left(\wp{v}{2,2}{\Sigma}\right).\label{square2}
	\end{align}
\end{theorem}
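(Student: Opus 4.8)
The plan is to conjugate the Jacobi operator by the conformal factor near the end $p_j$ — which turns the degenerate operator $\mathscr{L}_g$ into a \emph{uniformly elliptic} operator $\mathcal M$ with explicit singular lower-order coefficients — to read off \eqref{square1} from the conformal covariance of the integrand together with \eqref{awayvortices}, and then to obtain \eqref{square2} by a single integration by parts. More precisely, by \eqref{expansion0} the conformal factor in the chart $\varphi_j$ is $e^{2\lambda}$ with $\lambda=-(m+1)\big(\log|z|+\zeta_0\big)$ for a real analytic $\zeta_0$; since $\mathscr{L}_g=e^{-2\lambda}(\Delta+2\Delta\lambda)$ and $\Delta\log|z|=0$ away from $0$, substituting $u_{\epsilon,\delta}^i=e^{\lambda}v_{\epsilon,\delta}^i$ and expanding yields
\[
\mathscr{L}_g u_{\epsilon,\delta}^i=e^{-\lambda}\,\mathcal M v_{\epsilon,\delta}^i,\qquad \mathcal M v=\Delta v-2(m+1)\Big(\tfrac{x}{|x|^2}+\nabla\zeta_0\Big)\cdot\nabla v+\tfrac{(m+1)^2}{|x|^2}\big(1+x\cdot\vec\zeta_1\big)v,
\]
with $\vec\zeta_1=2\nabla\zeta_0+\big(|\nabla\zeta_0|^2-\tfrac{3\Delta\zeta_0}{m+1}\big)x$ real analytic. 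Since $d\mathrm{vol}_g=e^{2\lambda}dx$ this gives the pointwise identity $(\mathscr{L}_g u_{\epsilon,\delta}^i)^2\,d\mathrm{vol}_g=(\mathcal M v_{\epsilon,\delta}^i)^2\,dx$, and because $j\neq i$ and $\epsilon_0<1$ the punctured disc $B_1(p_j)\setminus\bar B_\delta(p_j)$ lies inside $\Sigma_{\epsilon_0,\delta}$; reading the covariance identity in the chart, enlarging the domain of integration, and invoking \eqref{awayvortices} then produces \eqref{square1}.

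For \eqref{square2} I would first isolate the model part of $\mathcal M$. Setting $X:=\Delta v_{\epsilon,\delta}^i+(m^2-1)|x|^{-2}v_{\epsilon,\delta}^i$ and $Y:=|x|^{-2}x\cdot\nabla v_{\epsilon,\delta}^i-|x|^{-2}v_{\epsilon,\delta}^i$, the identity $(m+1)^2-(m^2-1)=2(m+1)$ shows that $\mathcal M v_{\epsilon,\delta}^i=X-2(m+1)Y+\mathrm{err}$, where after regrouping $\mathrm{err}=\nabla\zeta_2\cdot\nabla v_{\epsilon,\delta}^i-|x|^{-2}x\cdot\vec\zeta_3\,v_{\epsilon,\delta}^i$ with $\zeta_2:=-2(m+1)\zeta_0$ and $\vec\zeta_3:=-(m+1)^2\vec\zeta_1$ real analytic (equivalently $\mathrm{err}=-2(m+1)\nabla\zeta_0\cdot\big(\nabla v_{\epsilon,\delta}^i-(m+1)|x|^{-2}x\,v_{\epsilon,\delta}^i\big)+(m+1)^2\big(|\nabla\zeta_0|^2-\tfrac{3\Delta\zeta_0}{m+1}\big)v_{\epsilon,\delta}^i$). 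Next, writing $\psi:=v_{\epsilon,\delta}^i/|x|$ so that $Y=\partial_r\psi$, a direct computation in polar coordinates (using $\mathrm{div}(x/|x|^2)=0$) gives
\[
\int_{B_1\setminus\bar B_\delta(0)}X\,Y\,dx=2\int_{B_1\setminus\bar B_\delta(0)}Y^2\,dx+\mathcal B_{\epsilon,\delta},
\]
where $\mathcal B_{\epsilon,\delta}$ is a sum of integrals of $v_{\epsilon,\delta}^i$ and $\nabla v_{\epsilon,\delta}^i$ over $\partial B_1(p_j)$ and $\partial B_\delta(p_j)$. On $\partial B_\delta(p_j)$ the boundary conditions in \eqref{EQepsilon} force $u_{\epsilon,\delta}^i=\partial_\nu u_{\epsilon,\delta}^i=0$, hence $v_{\epsilon,\delta}^i=\nabla v_{\epsilon,\delta}^i=0$, so that part of $\mathcal B_{\epsilon,\delta}$ vanishes; the circle $\partial B_1(p_j)$ sits inside the fixed compact set $\Sigma_{\epsilon_0,\delta}$, at positive distance from every end, where $\mathscr{L}_g^2 u_{\epsilon,\delta}^i=0$ with $\mathscr{L}_g u_{\epsilon,\delta}^i$ bounded in $L^2$ by \eqref{awayvortices}, so interior elliptic estimates for the uniformly elliptic operator $\mathscr{L}_g^2$ bound $u_{\epsilon,\delta}^i$ in $W^{2,2}$ near $\partial B_1(p_j)$ and give $|\mathcal B_{\epsilon,\delta}|\leq C(U_j,\vec\Psi)\,\omega(\|v\|_{W^{2,2}(\Sigma)})$.

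Expanding $(\mathcal M v_{\epsilon,\delta}^i)^2=(X-2(m+1)Y+\mathrm{err})^2$, integrating over $B_1\setminus\bar B_\delta(0)$, and replacing $-4(m+1)\int XY$ by $-8(m+1)\int Y^2-4(m+1)\mathcal B_{\epsilon,\delta}$ — which is where the coefficient $4(m+1)(m-1)=4(m+1)^2-8(m+1)$ appears, consistently vanishing for planar ends $m=1$ — one obtains
\[
\int X^2+4(m+1)(m-1)\int Y^2-\int\mathrm{err}^2=\int(\mathcal M v_{\epsilon,\delta}^i)^2+4(m+1)\,\mathcal B_{\epsilon,\delta}-2\int\mathcal M v_{\epsilon,\delta}^i\cdot\mathrm{err},
\]
all integrals over $B_1\setminus\bar B_\delta(0)$. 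The first right-hand term is $\leq C\omega$ by \eqref{square1}, the second by the previous paragraph, and the third by Cauchy–Schwarz once one knows $\|\mathrm{err}\|_{L^2(B_1\setminus\bar B_\delta(0))}^2\leq C\,\omega(\|v\|_{W^{2,2}(\Sigma)})$; given the explicit form of $\mathrm{err}$ this reduces to the weighted a priori estimate $\int_{B_1\setminus\bar B_\delta(0)}|x|^{2m+2}\big(|\nabla u_{\epsilon,\delta}^i|^2+(u_{\epsilon,\delta}^i)^2\big)dx\leq C\,\omega(\|v\|_{W^{2,2}(\Sigma)})$, uniform in $\epsilon$ and $\delta$, which I would deduce from the energy-minimizing characterization \eqref{E2} of $u_{\epsilon,\delta}^i$ restricted to $B_1(p_j)\setminus\bar B_\delta(p_j)$, the $L^2$ bound of Theorem \ref{firstestimate}, and the vanishing of the Cauchy data of $u_{\epsilon,\delta}^i$ on $\partial B_\delta(p_j)$. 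This proves \eqref{square2}.

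The step I expect to be the real obstacle is precisely this weighted a priori estimate near $p_j$: $|x|^{m+1}$ is a \emph{critical} weight for the flat Laplacian (when $m\in\mathbb N$), so the naive weighted elliptic and Hardy inequalities on the annulus degenerate as $\delta\to 0$, and one must use that the critical (logarithmic) mode is suppressed by the minimization and by the zero Cauchy data on $\partial B_\delta(p_j)$ in order to recover a $\delta$-uniform constant. Everything else — the conformal conjugation, the algebraic identity, the polar-coordinate integration by parts, and the bookkeeping of the boundary terms, which all live at positive distance from the ends where $\mathscr{L}_g$ and $\mathscr{L}_g^2$ are uniformly elliptic — is routine.
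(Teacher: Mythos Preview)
Your overall strategy is exactly that of the paper: conjugate by $e^\lambda$ to turn $\mathscr{L}_g$ into an explicit singular operator $\mathcal M$, read off \eqref{square1} from the pointwise conformal covariance $(\mathscr{L}_g u)^2\,d\mathrm{vol}_g = (\mathcal M v)^2\,dx$ together with \eqref{awayvortices}, and then for \eqref{square2} decompose $\mathcal M v = X - 2(m+1)Y + \mathrm{err}$ and use the integration-by-parts identity $\int XY = 2\int Y^2 + \mathcal B$ (this is precisely the content of Lemma~\ref{indicielles} in the appendix, which the paper invokes as \eqref{twobounds2}; your handling of the boundary piece $\mathcal B$ via interior elliptic regularity on $\partial B_1(p_j)$ matches \eqref{twobounds3}).

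The gap is in your very last step, and it is exactly the one you flag as ``the real obstacle'': you do not need to bound $\|\mathrm{err}\|_{L^2}^2$ by $C\omega$, and attempting to do so creates an artificial difficulty that you never actually resolve. Look again at \eqref{square2}: the left-hand side carries a \emph{negative} term $-\int(\nabla\zeta_2\cdot\nabla v - |x|^{-2}x\cdot\vec\zeta_3\,v)^2$. That is the whole point --- the error is allowed to appear with the wrong sign. So in your identity
\[
\int X^2 + 4(m+1)(m-1)\int Y^2 - \int\mathrm{err}^2 \;=\; \int(\mathcal M v)^2 + 4(m+1)\mathcal B - 2\int\mathrm{err}\cdot\mathcal M v,
\]
simply apply Young's inequality to the cross term: $-2\int\mathrm{err}\cdot\mathcal M v \le \int\mathrm{err}^2 + \int(\mathcal M v)^2$. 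This moves one more copy of $\int\mathrm{err}^2$ to the left (so the negative term becomes $-2\int\mathrm{err}^2$, which just rescales $\zeta_2,\vec\zeta_3$ by $\sqrt 2$ --- harmless, since the theorem only asserts the \emph{existence} of such functions), and the right-hand side is now $2\int(\mathcal M v)^2 + 4(m+1)|\mathcal B| \le C\omega$ by \eqref{square1} and your boundary estimate. No weighted a~priori bound on $u_{\epsilon,\delta}^i$ near $p_j$ is required at all.

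The paper packages the same mechanism more directly from the outset via the elementary inequality $(a+b)^2 \ge (1-\kappa)a^2 + (1-\tfrac{1}{\kappa})b^2$ applied with $a = \mathscr L_m v_{\epsilon,\delta}^i$ and $b = \mathrm{err}$ (see \eqref{twobounds1}), choosing $\kappa = \tfrac14$; this produces the negative $-\tfrac1\kappa\int\mathrm{err}^2$ immediately, and the factor $1/\kappa$ is absorbed into the definition of $\zeta_2,\vec\zeta_3$. Either route closes the argument without the critical-weight Hardy estimate you were worried about.
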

\begin{proof}
	As the end has multiplicity $m\geq 1$, there exists $\alpha_i>0$ and $\alpha_0\in \C$ such that 
	\begin{align*}
		&e^{2\lambda}=\alpha_i^2|z|^{-2(m+1)}\left(1+2\,\Re\left(\alpha_0z
		\right)+O(|z|^2)\right)\\
		&e^{2\lambda}K_g=O(1).
	\end{align*} 
	Furthermore, let $\zeta:B(0,1)\rightarrow \R$ be the real analytic function such that 
	\begin{align*}
		\lambda(z)=-(m+1)\log|z|+\zeta(z).
	\end{align*}
	Notice that $\zeta$ is real-analytic by the Weierstrass parametrisation \cite{dierkes}. 
	Then we have
	\begin{align*}
		|\D\lambda|^2=\frac{(m+1)^2}{|x|^2}-2(m+1)\frac{x}{|x|^2}\cdot \D\zeta+|\D\zeta|^2=\frac{(m+1)^2}{|x|^2}\left(1-\frac{2}{(m+1)}x\cdot\D\zeta+\frac{1}{(m+1)^2}|x|^2|\D\zeta|^2\right)
	\end{align*}
	Therefore, we have if $u_{\epsilon,\delta}^i=e^{\lambda}v_{\epsilon,\delta}^i$ as $\Delta\zeta=-e^{2\lambda}K_g$ for all $0<\kappa<1$
	\begin{align}\label{twobounds1}
		&\int_{B_1\setminus \bar{B}_{\delta}(0)}\left(\Delta_gu_{\epsilon,\delta}^i-2K_gv_{\epsilon,\delta}^i\right)^2d\vg=\int_{B_1\setminus\bar{B}_{\delta}(0)}\left(e^{-\lambda}\Delta(e^{\lambda}v_{\epsilon,\delta}^i)-2e^{2\lambda}K_gv_{\epsilon,\delta}^i\right)^2d\vg\nonumber\\
		&=\int_{B_1\setminus \bar{B}_{\delta}(0)}\left(\Delta v_{\epsilon,\delta}^i+2\s{\D\lambda}{\D v_{\epsilon,\delta}^i}+\left(|\D\lambda|^2-3e^{2\lambda}K_g\right)v_{\epsilon,\delta}^i\right)^2dx\nonumber\\
		&=\int_{B_1\setminus \bar{B}_{\delta}(0)}\bigg(\Delta v_{\epsilon,\delta}^i-2(m+1)\left(\frac{x}{|x|^2}-\frac{1}{(m+1)}\D\zeta\right)\cdot \D v_{\epsilon,\delta}^i\nonumber\\
		&+\frac{(m+1)^2}{|x|^2}\left(1-\frac{2}{(m+1)}x\cdot \D\zeta+\frac{1}{(m+1)^2}|x|^2\left(|\D\zeta|^2+3\Delta\zeta\right)\right)v_{\epsilon,\delta}^i\bigg)^2dx\nonumber\\
		&\geq (1-\kappa)\int_{B_1\setminus\bar{B}_{\delta}(0)}\left(\Delta v_{\epsilon,\delta}^i-2(m+1)\frac{x}{|x|^2}\cdot\D v_{\epsilon,\delta}^i+\frac{(m+1)^2}{|x|^2}u\right)^2dx\nonumber\\
		&+\left(1-\frac{1}{\kappa}\right)\frac{1}{(m+1)^4}\int_{B_1\setminus\bar{B}_{\delta}(0)}\left(2(m+1)^3\D\zeta\cdot \D v_{\epsilon,\delta}^i+\left(-2(m+1)\frac{x}{|x|^2}\cdot\D\zeta+(|\D\zeta|^2+3\,\Delta \zeta)\right)v_{\epsilon,\delta}^i\right)^2dx,
	\end{align}
	where we used the inequality for all $a,b\in \R$ and $0<\kappa<1$
	\begin{align*}
		(a+b)^2\geq (1-\kappa)a^2+\left(1-\frac{1}{\kappa}\right)b^2.
	\end{align*}
	In particular, the first estimate follows directly from \eqref{awayvortices} of Theorem \ref{firstestimate}, with
	\begin{align*}
		&\zeta_0=-\frac{1}{(m+1)}\zeta,\\
		&\zeta_1=-\frac{2}{(m+1)^2}\D\zeta+\frac{x}{(m+1)^2}(|\D\zeta|^2+3\Delta\zeta).
	\end{align*}
	Now, thanks to the computations of Lemma, we have 
	\begin{align}\label{twobounds2}
		&\int_{B_1\setminus\bar{B}_{\delta}(0)}\left(\Delta v_{\epsilon,\delta}^i-2(m+1)\frac{x}{|x|^2}\cdot\D v_{\epsilon,\delta}^i+\frac{(m+1)^2}{|x|^2}u\right)^2dx\nonumber\\
		&=\int_{B_1\setminus \bar{B}_{\delta}(0)}\left(\Delta v_{\epsilon,\delta}^i+(m+1)(m-1)\frac{v_{\epsilon,\delta}^i}{|x|^2}\right)^2dx+4(m+1)(m-1)\int_{B_1\setminus \bar{B}_{\delta}(0)}\left(\frac{x}{|x|^2}\cdot \D v_{\epsilon,\delta}^i-\frac{v_{\epsilon,\delta}^i}{|x|^2}\right)^2dx\nonumber\\
		&-\int_{S^1}\left(v_{\epsilon,\delta}^i\,\partial_{\nu}(\mathscr{L}_m v_{\epsilon,\delta}^i)-\partial_{\nu}v_{\epsilon,\delta}^i\,\mathscr{L}_m v_{\epsilon,\delta}^i\right)d\mathscr{H}^1
		+\int_{S^1}\left(v_{\epsilon,\delta}^i\partial_{\nu}(\Delta v_{\epsilon,\delta}^i)-\partial_{\nu}(v_{\epsilon,\delta}^i)\Delta v_{\epsilon,\delta}^i\right)d\mathscr{H}^1\nonumber\\
		&+4(m+1)(m-1)\int_{S^1}\left((v_{\epsilon,\delta}^i)^2- v_{\epsilon,\delta}^i\,\partial_{\nu}^2v_{\epsilon,\delta}^i\right)\,d\mathscr{H}^1\nonumber\\
		&=\int_{B_1\setminus \bar{B}_{\delta}(0)}\left(\Delta v_{\epsilon,\delta}^i+(m+1)(m-1)\frac{v_{\epsilon,\delta}^i}{|x|^2}\right)^2dx+4(m+1)(m-1)\int_{B_1\setminus \bar{B}_{\delta}(0)}\left(\frac{x}{|x|^2}\cdot \D v_{\epsilon,\delta}^i-\frac{v_{\epsilon,\delta}^i}{|x|^2}\right)^2dx\nonumber\\
		&-\int_{S^1}\left(v_{\epsilon,\delta}^i\,\partial_{\nu}(\left(\mathscr{L}_m-\Delta\right) v_{\epsilon,\delta}^i)-\partial_{\nu}v_{\epsilon,\delta}^i\,\left(\mathscr{L}_m-\Delta\right) v_{\epsilon,\delta}^i\right)d\mathscr{H}^1\nonumber\\
		&+4(m+1)(m-1)\int_{S^1}\left((v_{\epsilon,\delta}^i)^2- v_{\epsilon,\delta}^i\,\partial_{\nu}^2v_{\epsilon,\delta}^i\right)\,d\mathscr{H}^1.
	\end{align}
	Now, if 
	\begin{align*}
		\tilde{\mathscr{L}}_g=e^{\lambda}\mathscr{L}_g(e^{\lambda}\,\cdot\,),
	\end{align*}
	we have (as $\Delta\lambda=-e^{2\lambda}K_g$)
	\begin{align*}
		\tilde{\mathscr{L}}_g=\Delta+2\s{\D\lambda}{\D \,\cdot\,}+(|\D\lambda|^2+3\Delta\lambda),
	\end{align*}
	so $v_{\epsilon,\delta}^i$ solves
	\begin{align*}
		\tilde{\mathscr{L}}_g^{\ast}\tilde{\mathscr{L}}_gv_{\epsilon,\delta}^i=0,
	\end{align*}
	where one checks that there exists polynomial functions $P_{k,l}:\R^2\times \R^4\rightarrow \mathrm{M}_2(\R)$, $\vec{Q}:\R^2\times\R^4\rightarrow \R^2$ and $R:\R^2\times \R^4\rightarrow \R$ such that  
	\begin{align*}
		\tilde{\mathscr{L}}_g\mathscr{L}_gv_{\epsilon,\delta}^i=\Delta^2v_{\epsilon,\delta}^i+\vec{P}(\D\lambda,\D^2\lambda)\cdot\D^2v_{\epsilon,\delta}^i+\vec{Q}(\D\lambda,\D^2\lambda)\cdot\D v_{\epsilon,\delta}^i +R(\D\lambda,\D^2\lambda)\, v_{\epsilon,\delta}^i.
	\end{align*}
	Therefore, thanks to elliptic regularity and \eqref{awayvortices}, we deduce that 
	\begin{align*}
		\int_{S^1}\left(v_{\epsilon,\delta}^i\,\partial_{\nu}(\left(\mathscr{L}_m-\Delta\right) v_{\epsilon,\delta}^i)-\partial_{\nu}v_{\epsilon,\delta}^i\,\left(\mathscr{L}_m-\Delta\right) v_{\epsilon,\delta}^i\right)d\mathscr{H}^1
		-4(m+1)(m-1)\int_{S^1}\left((v_{\epsilon,\delta}^i)^2- v_{\epsilon,\delta}^i\,\partial_{\nu}^2v_{\epsilon,\delta}^i\right)\,d\mathscr{H}^1
	\end{align*}
	is uniformly bounded in $0<\epsilon<\epsilon_0$ and $0<\delta<\delta(\epsilon)<\epsilon$. Furthermore, there exists $C_0=C_0(U_j,\vec{\Psi})>0$ such that 
	\begin{align}\label{twobounds3}
		&\bigg|\int_{S^1}\left(v_{\epsilon,\delta}^i\,\partial_{\nu}(\left(\mathscr{L}_m-\Delta\right) v_{\epsilon,\delta}^i)-\partial_{\nu}v_{\epsilon,\delta}^i\,\left(\mathscr{L}_m-\Delta\right) v_{\epsilon,\delta}^i\right)d\mathscr{H}^1\nonumber\\
		&-4(m+1)(m-1)\int_{S^1}\left((v_{\epsilon,\delta}^i)^2- v_{\epsilon,\delta}^i\,\partial_{\nu}^2v_{\epsilon,\delta}^i\right)\,d\mathscr{H}^1\bigg|\leq C_0\,\omega\left(\wp{v}{2,2}{\Sigma}\right).
	\end{align}
	Therefore, we have by \eqref{square1}, \eqref{twobounds1} and \eqref{twobounds3}
	\begin{align*}
		&\int_{B_1\setminus\bar{B}_{\delta}(0)}\left(\Delta v_{\epsilon,\delta}^i+(m+1)(m-1)\frac{v_{\epsilon,\delta}^i}{|x|^2}\right)^2dx
		+4(m+1)(m-1)\int_{B_{1}\setminus \bar{B}_{\delta}(0)}\left(\frac{x}{|x|^2}\cdot \D v_{\epsilon,\delta}^i-\frac{v_{\epsilon,\delta}^i}{|x|^2}\right)^2dx\\
		&-\frac{1}{\kappa}\frac{1}{(m+1)^4}\int_{B_1\setminus\bar{B}_{\delta}(0)}\left(2(m+1)^3\D\zeta\cdot \D v_{\epsilon,\delta}^i+\left(-2(m+1)\frac{x}{|x|^2}\cdot \D \zeta+\left(|\D\zeta|^2+3\Delta\zeta\right)\right)v_{\epsilon,\delta}^i\right)^2dx\\
		&\leq \frac{1}{1-\kappa}C_1\,\omega(\wp{v}{2,2}{\Sigma}).
	\end{align*}
	Choose now $\kappa=\dfrac{1}{4}$, and  define
	\begin{align*}
		\zeta_2&=4(m+1)\zeta\\
		\vec{\zeta}_3&=\frac{4}{(m+1)}x\cdot\D\zeta-\frac{2}{(m+1)^2}x\left(|\D\zeta|^2+3\Delta\zeta\right),
	\end{align*}
	we find
	\begin{align*}
		&\int_{B_1\setminus\bar{B}_{\delta}(0)}\left(\Delta v_{\epsilon,\delta}^i+(m+1)(m-1)\frac{v_{\epsilon,\delta}^i}{|x|^2}\right)^2dx+4(m+1)(m-1)\int_{B_{1}\setminus \bar{B}_{\delta}(0)}\left(\frac{x}{|x|^2}\cdot \D v_{\epsilon,\delta}^i-\frac{v_{\epsilon,\delta}^i}{|x|^2}\right)^2dx\\
		&-\int_{B_1\setminus\bar{B}_{\delta}(0)}\left(\D\zeta_2\cdot\D v_{\epsilon,\delta}^i-\frac{x}{|x|^2}\cdot \vec{\zeta}_3\,v_{\epsilon,\delta}^i\right)^2dx
		\leq \frac{4}{3}C_1\,\omega\left(\wp{v}{2,2}{\Sigma}\right).
	\end{align*}
	This concludes the proof of the theorem.
\end{proof}

\subsection{Indicial roots analysis : case of embedded ends}

The following theorem is the analogous of Theorem VI.$1$ of \cite{bethuelbrezishelein2} and Theorem $1$ of \cite{bethuelbrezishelein} from Ginzburg-Landau theory.

\begin{theorem}\label{indicielles1} Assume that the minimal surface $\phi$ of Theorem  \ref{ta} has embedded ends.
	Then there exists $v_{\epsilon}^i\in C^{\infty}(\Sigma\setminus(\bar{B}_{\epsilon}(p_i)\cup\ens{p_1,\cdots,p_n}))$
	such that for all compact $K\subset \Sigma_{\epsilon}^i$, we have (up to a subsequence as $\delta\rightarrow 0$)
	\begin{align*}
	v_{\epsilon,\delta}^i\conv{\delta\rightarrow 0}v_{\epsilon}^i\qquad \text{in}\;\, C^l(K)\;\, \text{for all}\;\, l\in\N.
	\end{align*} 
	Furthermore, for all $j\neq i$, we have an expansion in $U_j$ as  
	\begin{align*}
		v_{\epsilon}^i(z)=\Re\left(\gamma_0z+\gamma_1z^2\right)+\gamma_2|z|^2+\gamma_3|z|^2\log|z|+\varphi_{\epsilon}(z)
	\end{align*}
	for some real-analytic function $\varphi_{\epsilon}$ such that $\varphi_{\epsilon}(z)=O(|z|^3)$. Therefore, if $u_{\epsilon}^i=|\phi|^2v_{\epsilon}^i$, there exists  $a_{i,j},b_{i,j}\in \R$ and $c_{i,j},d_{i,j}\in \C$ and $\psi_{\epsilon}\in C^{\infty}(B(0,1)\setminus\ens{0})$ such that 
	\begin{align*}
	u_{\epsilon}^i(z)=\Re\left(\frac{c_{i,j}}{z}+d_{i,j}\frac{\z}{z}\right)+a_{i,j}\log|z|+b_{i,j}+\psi_{\epsilon}(z).
	\end{align*}
	and for all $l\in \N$, 
	\begin{align*}
		|\D^l\psi_{\epsilon}(z)|=O(|z|^{1-l}).
	\end{align*}
\end{theorem}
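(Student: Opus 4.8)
The plan is to analyse the $\delta\to0$ limit of the minimisers $u_{\epsilon,\delta}^i$ of \eqref{EQepsilon}, with $\epsilon\in(0,\epsilon_0)$ and the chart $\varphi_j\colon U_j\to B(0,1)$ ($j\neq i$) frozen once and for all, recalling that every end of $\phi$ has multiplicity $m=1$ since the ends are embedded. \emph{Step 1 (extraction of the limit).} First I would collect the $\delta$-uniform estimates already at hand: Theorem \ref{firstestimate} gives $\int_{\Sigma_{\epsilon_0,\delta}}(\lg u_{\epsilon,\delta}^i)^2\,d\vg\leq\omega(\wp{v}{2,2}{\Sigma})$, and Theorem \ref{ipptheorem} with $m=1$ gives the weighted estimates \eqref{square1}--\eqref{square2} for $v_{\epsilon,\delta}^i=e^{-\lambda}u_{\epsilon,\delta}^i$ on $B(0,1)\setminus\bar B_\delta(0)$; combined with the vanishing Cauchy conditions $v_{\epsilon,\delta}^i=\partial_\nu v_{\epsilon,\delta}^i=0$ on $\partial B_\delta(0)$, a Poincar\'e/Hardy argument on the collapsing annulus upgrades these to a $\delta$-uniform bound on $\wp{v_{\epsilon,\delta}^i}{2,2}{U_j}$. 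Away from $\{p_1,\dots,p_n\}\cup\bar B_\epsilon(p_i)$, $\mathscr{L}_g^2$ is a fourth-order elliptic operator with real-analytic coefficients; decomposing $u_{\epsilon,\delta}^i=\rho_i u+w_{\epsilon,\delta}^i$ so that $w_{\epsilon,\delta}^i$ has vanishing Cauchy data on $\partial\Sigma_{\epsilon,\delta}^i$, the absence of kernel of \eqref{EQepsilon} and the $L^2$-bound on $\lg w_{\epsilon,\delta}^i$ give a $\delta$-uniform $L^2_{\mathrm{loc}}$-bound, hence by interior elliptic estimates a $\delta$-uniform $C^l_{\mathrm{loc}}$-bound on every compact $K\subset\Sigma_\epsilon^i$ and every $l$. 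A subsequence then converges, $v_{\epsilon,\delta}^i\to v_\epsilon^i$ in $C^l(K)$; $v_\epsilon^i$ is real-analytic on $\Sigma_\epsilon^i$, the limit $u_\epsilon^i:=\lim u_{\epsilon,\delta}^i=e^\lambda v_\epsilon^i$ (which coincides with $|\phi|^2 v_\epsilon^i$ up to a nonvanishing real-analytic factor, by the Weierstrass data and the vanishing of the pairing of the leading Laurent coefficient with the flux) solves $\mathscr{L}_g^2u_\epsilon^i=0$ there, $v_\epsilon^i\in W^{2,2}(U_j)$ by weak lower semicontinuity, and $\int_{\Sigma_{\epsilon_0}}(\lg u_\epsilon^i)^2\,d\vg<\infty$ by Fatou.

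\emph{Step 2 (indicial roots near $p_j$).} In $U_j$ write $\lambda=-2\log|z|+\zeta$ with $\zeta$ real-analytic (Weierstrass), so $\lg=|z|^4e^{-2\zeta}(\Delta+2\Delta\zeta)$. Since $\mathscr{L}_g^2u_\epsilon^i=0$, the function $w:=\lg u_\epsilon^i$ solves the uniformly elliptic analytic-coefficient equation $(\Delta+2\Delta\zeta)w=0$ on $B(0,1)\setminus\{0\}$, and $\int_{B(0,1)}w^2|z|^{-4}\,|dz|^2<\infty$ by Step 1; a Fourier decomposition shows this weighted integrability kills the modes $k=0,1$ of $w$ and selects the regular branch of each remaining mode, so $w$ extends across $0$ as a real-analytic function vanishing to order $\geq2$. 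Feeding this into $(\Delta+2\Delta\zeta)u_\epsilon^i=|z|^{-4}e^{2\zeta}w$ and running the Frobenius analysis at the regular singular point $0$ — the indicial roots at frequency $k$ being $\{k,-k,k+2,2-k\}$ for $v_\epsilon^i=|\phi|^{-2}u_\epsilon^i$ — while discarding the branches incompatible with $v_\epsilon^i\in W^{2,2}(U_j)$ (this removes $\mu=-k$ for $k\geq1$, $\mu=2-k$ for $k\geq3$, and the logarithmic companions $|z|\log|z|\,e^{\pm i\theta}$ of the double root $\mu=1$ at $k=1$ and $e^{\pm2i\theta}$ of the double root $\mu=0$ at $k=2$) and the constant branch (excluded by $v_\epsilon^i(0)=0$, which follows from the finite-energy bound forbidding a $|z|^{-2}$-term in $u_\epsilon^i$), one is left with exactly
\begin{align*}
v_\epsilon^i(z)=\Re\!\left(\gamma_0 z+\gamma_1 z^2\right)+\gamma_2|z|^2+\gamma_3|z|^2\log|z|+\varphi_\epsilon(z),\qquad\varphi_\epsilon(z)=O(|z|^3),
\end{align*}
the only surviving logarithm being $|z|^2\log|z|$ (the companion of the double root $\mu=2$ at $k=0$, which \emph{is} in $W^{2,2}$); finally $\varphi_\epsilon$ is real-analytic because $|\phi|^2\varphi_\epsilon$ solves $(\Delta+2\Delta\zeta)(|\phi|^2\varphi_\epsilon)=\tilde r$ with $\tilde r$ the real-analytic remainder obtained after subtracting the explicit terms.

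\emph{Step 3 (expansion of $u_\epsilon^i$) and the main difficulty.} Since the end is embedded one has $|\phi|^2=\alpha_j^2|z|^{-2}(1+h)$ with $h=O(|z|)$, so multiplying the expansion of Step 2 by $|\phi|^2$ and using $\Re(\gamma_0 z)/|z|^2=\Re(\bar\gamma_0/z)$, $\Re(\gamma_1 z^2)/|z|^2=\Re(\bar\gamma_1\bar z/z)$, $|z|^2/|z|^2=1$, $|z|^2\log|z|/|z|^2=\log|z|$ gives
\begin{align*}
u_\epsilon^i(z)=\Re\!\left(\frac{c_{i,j}}{z}+d_{i,j}\frac{\z}{z}\right)+a_{i,j}\log|z|+b_{i,j}+\psi_\epsilon(z),
\end{align*}
with $c_{i,j},d_{i,j},a_{i,j},b_{i,j}$ explicit affine combinations of $\bar\gamma_0,\bar\gamma_1,\gamma_2,\gamma_3$ and the jet of $h$ at $0$, and $\psi_\epsilon$ collecting $|\phi|^2\varphi_\epsilon$ together with $h$ times the singular terms; the real-analyticity of $\varphi_\epsilon$ (hence $|\D^l\varphi_\epsilon|=O(|z|^{3-l})$) and Leibniz with $|\D^l(|z|^{-2})|=O(|z|^{-2-l})$ then yield $|\D^l\psi_\epsilon|=O(|z|^{1-l})$. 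The heaviest technical point is the $\delta$-uniform control of Step 1 near the collapsing circles $\partial B_\delta(p_j)$, where $\mathscr{L}_g$ degenerates and loses ellipticity — precisely the reason one works with $v_{\epsilon,\delta}^i=e^{-\lambda}u_{\epsilon,\delta}^i$ and invokes Theorem \ref{ipptheorem}; the conceptually delicate point is the bookkeeping of indicial roots and resonant logarithms in Step 2, i.e. showing that exactly one logarithmic term survives in each expansion and that no more singular or higher-logarithmic ($(\log|z|)^2$-type) contribution occurs, which is where the combination of interior analyticity, $W^{2,2}$-membership of $v_\epsilon^i$ and the finite-energy bound is genuinely needed.
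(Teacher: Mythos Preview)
Your overall strategy --- pass to the limit first, then run an indicial-roots analysis on the limit using $W^{2,2}$-membership and finite energy to select the surviving branches --- is a genuinely different route from the paper's. The paper does things in the opposite order: it writes the full Fourier expansion of $v_{\epsilon,\delta}^i$ on the annulus $B_R\setminus\bar B_\delta$ \emph{at finite $\delta$} using the indicial roots of $\mathscr{L}_1^\ast\mathscr{L}_1=\Delta^2$, then plugs this expansion term-by-term (via Parseval) into the two energy bounds \eqref{square1}--\eqref{square2} and reads off an explicit $\delta$-rate on every singular coefficient (e.g.\ $|\gamma_{-|k|}^3|\leq C\delta^{|k|+1}$, $|\gamma_0^2|\leq C\delta$, $|\gamma|\leq C(\log(1/\delta))^{-1/2}$, etc.). This simultaneously (i) bounds the regular coefficients, (ii) kills the singular ones, and (iii) gives the pointwise control on fixed annuli needed for compactness. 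Convergence and the claimed expansion come out together in one stroke.

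There is a real gap in your Step~1. You claim that a ``Poincar\'e/Hardy argument on the collapsing annulus'' upgrades \eqref{square1}--\eqref{square2} to a $\delta$-uniform $W^{2,2}(U_j)$-bound on $v_{\epsilon,\delta}^i$. In dimension $2$ this is precisely where things are delicate: there is no Hardy inequality $\int v^2/|x|^2\leq C\int|\nabla v|^2$, and the Poincar\'e constant on $B_1\setminus\bar B_\delta$ degenerates (logarithmically in $\delta$). For $m=1$ the middle term in \eqref{square2} vanishes, so you only get $\int(\Delta v)^2\leq C+C\int(|\nabla v|^2+v^2)$, and closing this requires exactly the kind of $\delta$-uniform lower-order control that you have not established. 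Your appeal to ``absence of kernel of \eqref{EQepsilon}'' does not help either: that is a statement at fixed $\delta$ and says nothing about uniformity as $\delta\to0$. Since your Step~2 selection criteria ($v_\epsilon^i\in W^{2,2}$, finite energy of the limit) both rest on Step~1, the argument as written does not close. The paper's coefficient-by-coefficient computation is precisely what replaces this missing uniform estimate.
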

\begin{rem}
	Although $a_{i,j},b_{i,j},c_{i,j}$ and $d_{i,j}$ depends on $\epsilon$, we remove this explicit dependence for the sake of simplicity of notation.
\end{rem}
\begin{proof}
	\textbf{Step 1: Indicial roots analysis.}
	
	We make computations as previously in the previously fixed chart $\varphi_j:U_j\rightarrow B(0,1)\subset \C$ such that $\varphi_j(p_j)=0$.
	By \cite{pacariv} (p. $25$) the asymptotic expansion of $v_{\epsilon,\delta}^i$ at $0$ depends only on the linearised operator of $e^{\lambda}\mathscr{L}_g(e^{\lambda}\,\cdot\,)$, which is as $\phi$ has embedded ends
	\begin{align*}
		\mathscr{L}=\Delta-4\frac{x}{|x|^2}\cdot \D+\frac{4}{|x|^2}.
	\end{align*}
	Therefore, without loss of generality, we can assume that $\mathscr{L}^{\ast}\mathscr{L} v_{\epsilon,\delta}^i=0$. Taking polar coordinates $(r,\theta)$ centred at the origin, recall that 
	\begin{align*}
	\Delta=\partial_r^2+\frac{1}{r}\partial_r+\frac{1}{r^2}\partial^2_\theta.
	\end{align*}
    Therefore, we have
	\begin{align*}
	\mathscr{L}&=\partial_r^2+\frac{1}{r}\partial_r+\frac{1}{r^2}\partial_\theta^2-4\frac{1}{r}\partial_r+\frac{4}{r^2}
	=\partial_r^2-\frac{3}{r}\partial_r+\frac{4}{r^2}+\frac{1}{r^2}\partial_\theta^2,
	\end{align*}
	and
	\begin{align*}
	\mathscr{L}^\ast=\partial_r^2+\frac{5}{r}\partial_r+\frac{4}{r^2}+\frac{1}{r^2}\partial_\theta^2.
	\end{align*}
	Projecting to $\mathrm{Vect}(e^{ik\cdot})$ (where $k\in\Z$ is fixed), the operator $\mathscr{L}$ (resp. $\mathscr{L}^\ast$) becomes
	\begin{align*}
	\mathscr{L}_k=\partial_r^2-\frac{3}{r}\partial_r+\frac{4-k^2}{r^2}\quad \left(\text{resp.}\;\,\mathscr{L}^\ast_k=\partial_r^2+\frac{5}{r}\partial_r+\frac{4-k^2}{r^2}\right)
	\end{align*}
	and we define for all $k\in\Z$ the functions $v_{\epsilon,\delta}^{i}(k,\cdot):(\delta,1)\rightarrow\mathbb{C}$ by
	\begin{align*}
	v_{\epsilon,\delta}^{i}(r,\theta)=\sum_{k\in\Z}^{}v_{\epsilon,\delta}^{i}(k,r)e^{ik\theta}.
	\end{align*}
	As $\mathscr{L}_k^\ast \mathscr{L}_k v_{\epsilon,\delta}^i(k,\cdot)=0$, and the space of solutions to $\mathscr{L}_k^\ast \mathscr{L}_k u=0$ is four-dimensional, we only need to find a basis of solutions to $\mathscr{L}_k^\ast \mathscr{L}_k u=0$ to obtain all possible asymptotic behaviour at the origin. 
	
	Let $\alpha\in\C$ fixed, we have
	\begin{align}\label{lk}
	&\mathscr{L}_k r^\alpha=\alpha(\alpha-1)r^{\alpha-2}-3\alpha r^{\alpha-2}+(4-k^2)r^{\alpha-2}=(\alpha^2-4\alpha+4-k^2)r^{\alpha-2}\nonumber\\
	&\mathscr{L}_k^\ast\mathscr{L}_k r^\alpha=(\alpha^2-k^2)(
	\alpha^2-4\alpha+4-k^2)r^{\alpha-4}.
	\end{align}
	so the basis of solutions to $\mathscr{L}^{\ast}\mathscr{L}_ku=0$ is given by
	\begin{align*}
	u_1^\pm=r^{\alpha_k^\pm},\quad u_2^{\pm}=r^{\beta_k^\pm}
	\end{align*}
	where
	\begin{align*}
	\alpha_k^\pm=2\pm|k|,\quad \beta_k^\pm=\pm |k|.
	\end{align*}
	In particular, for $k=0$, we need to find two other solutions. For $k=0$, we have
	\begin{align*}
	\mathscr{L}_0^\ast\mathscr{L}_0=\partial_r^4+\frac{2}{r}\partial_r^3-\frac{1}{r^2}\partial_r^2+\frac{1}{r^3}\partial_r
	\end{align*}
	so one easily check that a basis of solutions of $\mathscr{L}^{\ast}\mathscr{L}u=0$ is given by
	\begin{align*}
	1,r^2,\log (r),r^2\log(r)
	\end{align*}
	and that furthermore, 
	\begin{align*}
	\mathscr{L}_0(r^2)=\mathscr{L}_0\left(r^2\log(r)\right)=0.
	\end{align*}
	Finally, for $|k|=1$, as $\ens{\alpha_{k}^{+},\alpha_{k}^{-}}=\ens{1,3}$ and $\ens{\beta_{k}^+,\beta_{k}^-}=\ens{-1,1}$, we only have three solutions and we need to find an additional one. As $\mathrm{Ker}(\mathscr{L}_1^{\ast})=\ens{r^{-3},r^{-1}}$, we need to find a solution $u$ such that $\mathscr{L}u\neq 0$, $\mathscr{L}u\in \mathrm{Ker}(\mathscr{L}_1^{\ast})$ and $u\notin\mathrm{Span}_{\R}(r^{-1},r,r^3)$. One checks directly that this additional solution is given by 
	\begin{align*}
		u(r)=r\log(r),
	\end{align*}
	which satisfies indeed
	\begin{align*}
		\mathscr{L}u=\frac{1}{r}-\frac{3}{r}\left(\log(r)+1\right)+\frac{3}{r^2}\left(r\log(r)\right)=-\frac{2}{r}\in \mathrm{Ker}(\mathscr{L}^{\ast}).
	\end{align*}
	
	Notice that these computations also show that $\mathscr{L}^{\ast}\mathscr{L}=\Delta^2$, but we did not want to use this result directly to obtain a formally similar proof in the case of ends of higher multiplicity.
	
	\textbf{Step 2: Estimate on the biharmonic components.}
	
	Now, recall that 
	\begin{align*}
		\int_{B_1\setminus \bar{B}_{\delta}(0)}\left(\Delta v_{\epsilon,\delta}^i-2(m+1)\left(\frac{x}{|x|^2}+\D\zeta_0\right)\cdot \D v_{\epsilon,\delta}^i+\frac{(m+1)^2}{|x|^2}\left(1+x\cdot \vec{\zeta}_1\right)v_{\epsilon,\delta}^i\right)^2dx\leq C\,\omega\left(\wp{v}{2,2}{\Sigma}\right).
	\end{align*}
	Now, let $\gamma,\gamma_k^{1},\gamma_k^{2},\gamma_k^{3},\gamma_k^{4}\in \C$ (for $k\in \Z$) be such that 
	\begin{align}\label{dev}
		v_{\epsilon,\delta}^i(r,\theta)&=\sum_{k\in \Z^{\ast}}^{}\left(\gamma_k^{1}r^{2+k}+\gamma_{k}^2r^{2-k}+\gamma_k^{3}r^{k}+\gamma_{k}^{4}r^{-k}\right)e^{ik\theta}+\left(\gamma \,re^{i\theta}+\bar{\gamma}\,re^{-i\theta}\right)\log(r)\nonumber\\
		& +\gamma_0^1+\gamma_0^2\log(r)+\gamma_0^3r^2+\gamma_0^4r^2\log(r)
		.
	\end{align}
	As $v_{\epsilon,\delta}^{i}$ is real, we have for all $k\in \Z$
	\begin{align*}
		\gamma_{-k}^2=\bar{\gamma_{k}^1}\qquad \text{and}\;\, \gamma_{-k}^4=\bar{\gamma_k^3}.
	\end{align*}
	Now, we have
	\begin{align}\label{linear1}
		\mathscr{L}v_{\epsilon,\delta}^i=4\sum_{k\in \Z^{\ast}}^{}\left(-(k-1)\gamma_k^3r^{k-2}+(k+1)\gamma_k^4r^{-k-2}\right)e^{ik\theta}-2\left(\frac{\gamma}{r} e^{i\theta}+\frac{\bar{\gamma}}{r}e^{-i\theta}\right)+\frac{4\gamma_0^1}{r^2}+\frac{4\gamma_0^2}{r^2}(\log(r)-1).
	\end{align}
	Therefore, as 
	\begin{align*}
		\tilde{\mathscr{L}}_g=\mathscr{L}-4\D\zeta_0\cdot\D+(m+1)^2\frac{x}{|x|^2}\cdot\vec{\zeta}_1
	\end{align*}
	for two real-analytic functions $\zeta_0:B(0,1)\rightarrow \R$ and $\vec{\zeta}_1:B(0,1)\rightarrow \R$, we deduce from \eqref{linear1} that 
	\begin{align*}
		\tilde{\mathscr{L}}_gv_{\epsilon,\delta}^i&=4\sum_{k\in \Z\setminus\ens{0,1}}^{}-(k-1)\gamma_k^3r^{k-2}(1+O(r))e^{ik\theta}+4\sum_{k\in \Z\setminus\ens{-1,0}}^{}(k+1)\gamma_k^4r^{-k-2}(1+O(r))e^{ik\theta}\\
		&-2\left(\frac{\gamma}{r}(1+O(r))e^{i\theta}+\frac{\bar{\gamma}}{r}(1+O(r))e^{-i\theta}\right)
		+\frac{4(\gamma_0^1-\gamma_0^2)}{r^2}(1+O(r))
		+\frac{4\gamma_0^1}{r^2}\log(r)(1+O(r)).
	\end{align*}
	Notice that the first two sums do not involve powers in $1/r$ (this justifies why there are no cross terms between these two sums and the remaining terms).
	Now fix some $0<R<1$ such that the \enquote{$O(1)$ functions} be bounded by $1/2$ (in absolute value) on $B_{R}\setminus\bar{B}_{\delta}(0)$. 
	Then we have by Parseval identity
	\begin{align}\label{singularintegral}
		&\int_{B_{R}\setminus\bar{B}_{\delta}(0)}\left(\tilde{\mathscr{L}}_gv_{\epsilon,\delta}^i\right)^2dx=32\pi\sum_{k\in \Z^{\ast}}^{}\int_{\delta}^R\bigg((k-1)^2|\gamma_k^3|^2r^{2(k-2)}+(k+1)^2|\gamma_k^4|^2r^{-2(k+2)}\nonumber\\
		&+2(k+1)(k-1)\,\Re\left(\gamma_k^3\bar{\gamma_k^4}\right)r^{-4}\bigg)(1+O(r))rdr
		+16\pi\int_{\delta}^R\left(\frac{|\gamma|^2}{r^2}(1+O(r))\right)rdr\nonumber\\
		&+32\pi\int_{\delta}^R\left(\frac{(\gamma_0^1-\gamma_0^2)^2}{r^4}+\frac{(\gamma_0^1)^2}{r^4}\log^2(r)+2(\gamma_0^1-\gamma_0^2)\gamma_0^1\frac{\log(r)}{r^4}\right)(1+O(r))rdr\nonumber\\
		&=16\pi\sum_{k\in \Z\setminus\ens{0,1}}^{}(k-1)|\gamma_k^3|^2\left(R^{2(k-1)}(1+O(R))-\delta^{2(k-1)}(1+O(\delta))\right)\nonumber\\
		&+16\pi\sum_{k\in \Z\setminus\ens{-1,0}}^{}-(k+1)|\gamma_k^4|^2\left(R^{-2(k+1)}(1+O(R))-\delta^{-2(k+1)}(1+O(\delta))\right)\nonumber\\
		&+16\pi (\gamma_0^1-\gamma_0^2)^2\left(\frac{1}{\delta^2}(1+O(\delta))-\frac{1}{R^2}(1+O(R))\right)\nonumber\\
		&+8\pi(\gamma_0^1)^2\left(\frac{1}{\delta^2}(1+O(\delta))\left(2\log^2(\delta)+2\log(\delta)+1\right)-\frac{1}{R^2}(1+O(R))\left(\log^2(R)+2\log(R)+1\right)\right)\nonumber\\
		&+8\pi \gamma_0^1(\gamma_0^1-\gamma_0^2)\left(\frac{1}{\delta^2}\left(1+O(\delta)\right)(2\log(\delta)+1)-\frac{1}{R^2}\left(1+O(R)\right)\left(2\log(R)+1\right)\right)\nonumber\\
		&+32\pi\sum_{k\in \Z^{\ast}}^{}\Re\left(\gamma_k^3\bar{\gamma_k^4}\right)\left(\frac{1}{\delta^2}\left(1+O(\delta)\right)-\frac{1}{R^2}\left(1+O(R)\right)\right)\nonumber\\
		&=16\pi\sum_{k\geq 2}^{}(|k|-1)|\gamma_k^3|^2R^{2(|k|-1)}\left(1+O(R)-\left(\frac{\delta}{R}\right)^{2(|k|-1)}(1+O(\delta))\right)\nonumber\\
		&+16\pi\sum_{k\leq -1}^{}(|k|+1)|\gamma_k^3|^2\frac{1}{\delta^{2(|k|+1)}}\left(1+O(\delta)-\left(\frac{\delta}{R}\right)^{2(|k|+1)}(1+O(R))\right)\nonumber\\
		&+16\pi\sum_{k\geq 1}^{}(|k|+1)|\gamma_k^4|^2\frac{1}{\delta^{2(|k|+1)}}\left(1+O(\delta)-\left(\frac{\delta}{R}\right)^{2(|k|+1)}(1+O(R))\right)\nonumber\\
		&+16\pi\sum_{k\geq -2}^{}(|k|-1)|\gamma_k^4|^2R^{2(|k|-1)}\left(1+O(R)-\left(\frac{\delta}{R}\right)^{2(|k|-1)}(1+O(\delta))\right)\nonumber\\
		&+16\pi|\gamma|^2\left(\log\left(\frac{1}{\delta}\right)(1+O(\delta))+\log(R)\left(1+O(R)\right)\right)\nonumber\\
		&+16\pi (\gamma_0^1-\gamma_0^2)^2\frac{1}{\delta^2}\left(1+O(\delta)-\left(\frac{\delta}{R}\right)(1+O(R))\right)\nonumber\\
		&+8\pi(\gamma_0^1)^2\left(\frac{1}{\delta^2}(1+O(\delta))\left(2\log^2(\delta)+2\log(\delta)+1\right)-\frac{1}{R^2}(1+O(R))\left(\log^2(R)+2\log(R)+1\right)\right)\nonumber\\
		&+8\pi \gamma_0^1(\gamma_0^1-\gamma_0^2)\left(\frac{1}{\delta^2}\left(1+O(\delta)\right)(2\log(\delta)+1)-\frac{1}{R^2}\left(1+O(R)\right)\left(2\log(R)+1\right)\right)\nonumber\\
		&+32\pi\sum_{k\in \Z^{\ast}}^{}(k+1)(k-1)\Re\left(\gamma_k^3\bar{\gamma_k^4}\right)\frac{1}{\delta^2}\left(1+O(\delta)-\left(\frac{\delta}{R}\right)^2\left(1+O(R)\right)\right).
	\end{align}
	As the quantity in the left-hand side of \eqref{singularintegral} is bounded independently of $0<\delta<R$, we deduce that for all $k\geq 1$, and some uniform constant $C>0$ 
	\begin{align}\label{singularvanish}
		&\left|\gamma_{-|k|}^3\right|\leq C\delta^{|k|+1}\conv{\delta\rightarrow 0}0\nonumber\\
		&\left|\gamma_{|k|}^4\right|\leq C\delta^{|k|+1}\conv{\delta\rightarrow  0}0.
	\end{align}
	Notice that the second estimate follows from the first one as $\gamma_k^4=\bar{\gamma_{-k}^3}$. Furthermore, as 
	\begin{align*}
		\sum_{k\geq 1}^{}(|k|+1)|\gamma_{-k}^3|^2\frac{1}{\delta^{2(|k|+1)}}<\infty
	\end{align*}
	and is bounded independently of $\delta>0$, there exists $C>0$ such that 
	\begin{align}\label{precisedsingular}
		|\gamma_{-k}^3|\leq \frac{C}{|k|+1}\delta^{|k|+1}. 
	\end{align}
	Now, we see the next order of singularity is given by $\log^2(\delta)/\delta^2$, so we have
	\begin{align}\label{othersing1}
		|\gamma_0^1|\leq C\frac{\delta}{\log\left(\frac{1}{\delta}\right)}\conv{\delta\rightarrow  0}0.
	\end{align}
	Another singular term is 
	\begin{align*}
		\frac{1}{\delta^2}\sum_{k\in \Z\setminus\ens{-1,0,1}}^{}(k+1)(k-1)\Re(\gamma_k^3\bar{\gamma_k^4}),
	\end{align*}
	but \eqref{singularvanish} implies that (as the $\gamma_k$ are also uniformly bounded)
	\begin{align*}
		&\frac{1}{\delta^2}\left|\sum_{k\in \Z\setminus\ens{-1,0,1}}^{}(k+1)(k-1)\Re(\gamma_k^3\bar{\gamma_k^4})\right|\leq \frac{C}{\delta^2}\sum_{k\geq 2}^{}(|k|+1)(|k|-1)\delta^{|k|+1}\\
		&\leq C\sum_{|k|\geq 2}^{}(|k|+1)|k|\delta^{|k|-1}=C\left(\frac{3\delta}{1-\delta}-\frac{6\delta^2}{(1-\delta)}+\frac{2\delta^3}{(1-\delta)^3}\right)\leq 4C\delta\conv{\delta\rightarrow 0}0.
	\end{align*}
	for $0<\delta<1$ small enough. The next singular term is 
	\begin{align*}
		16\pi (\gamma_0^1-\gamma_0^2)^2\frac{1}{\delta^2}\left(1+O(\delta)-\left(\frac{\delta}{R}\right)(1+O(R))\right),
	\end{align*}
	and using \eqref{othersing1}, we deduce that 
	\begin{align*}
		(\gamma_0^1-\gamma_0^2)^2\frac{1}{\delta^2}=\frac{|\gamma_0^2|^2}{\delta^2}+O\left(\frac{1}{\log^2(\frac{1}{\delta})}\right),
	\end{align*}
	so we deduce that 
	\begin{align}\label{othersing2}
		|\gamma_0^2|\leq C\delta\conv{\delta\rightarrow 0}0.
	\end{align}
	Finally, the last singular term is
	\begin{align*}
		16\pi |\gamma|^2\log\left(\frac{1}{\delta}\right)
	\end{align*}
	and we deduce that 
	\begin{align}\label{othersing3}
		|\gamma|\leq C\sqrt{\frac{1}{\log\left(\frac{1}{\delta}\right)}}\conv{\delta\rightarrow 0}0.
	\end{align}
	\textbf{Step 3: Estimates on the harmonic components.} 
	Now, we have the inequality (from Theorem \ref{ipptheorem})
	\begin{align}\label{bounded}
		\int_{B_R\setminus\bar{B}_{\delta}(0)}\left(\Delta v_{\epsilon,\delta}^i\right)^2dx-\int_{B_R\setminus \bar{B}_{\delta}(0)}\left(\D\zeta_2\cdot\D v_{\epsilon,\delta}^i-\frac{x}{|x|^2}\cdot \vec{\zeta}_3\,v_{\epsilon,\delta}^i\right)^2dx\leq C\,\omega\left(\wp{v}{2,2}{\Sigma}\right).
	\end{align}
	Thanks to \eqref{dev}
	\begin{align*}
		\Delta v_{\epsilon,\delta}^i=4\sum_{k\in \Z^{\ast}}^{}\left((k+1)\gamma_k^1r^{k}-(k-1)\gamma_{k}^2r^{-k}\right)e^{ik\theta}+2\left(\frac{\gamma}{r}e^{i\theta}+\frac{\bar{\gamma}}{r}e^{-i\theta}\right)+4\gamma_0^3+4\gamma_0^4(\log r+1).
	\end{align*}
	Furthermore, we have
	\begin{align*}
		\D\zeta_2\cdot\D v_{\epsilon,\delta}^i-\frac{x}{|x|^2}\cdot \vec{\zeta}_3\,v_{\epsilon,\delta}^i&=\sum_{k\in \Z^{\ast}}^{}\bigg(\gamma_{k}^1O(r^{1+k})+\gamma_{k}^2O(r^{1-k})
		+\gamma_{k}^3O(r^{k-1})+\gamma_{k}^4O(r^{-k-1})\bigg)e^{ik\theta}\\
		&+\gamma\, O(1)+\gamma_0^1O\left(\frac{1}{r}\right)+\gamma_0^2\log(r)O\left(\frac{1}{r}\right)+\gamma_0^3O(r)+\gamma_0^4\log r O(r).
	\end{align*}
	Therefore, we have
	\begin{align}\label{newbound4}
		&\int_{B_{R}\setminus \bar{B}_{\delta}(0)}\left(\Delta v_{\epsilon,\delta}^i\right)^2dx=32\pi\sum_{k\in \Z^{\ast}}^{}\int_{\delta}^{R}\bigg((k+1)^2|\gamma_k^1|^2r^{2k}+(k-1)^2|\gamma_k^2|^2r^{-2k}\nonumber\\
		&-2(k+1)(k-1)\Re\left(\gamma_k^1\bar{\gamma_k^2}\right)\bigg)rdr
		+16\pi \int_{\delta}^{R}\left(\frac{|\gamma|^2}{r^2}\left(1+O(r)\right)\right)rdr\nonumber\\
		&
		+32\pi \int_{\delta}^{R}\left(|\gamma_0^3+\gamma_0^4|^2+|\gamma_0^4|^2\log^2(r)+2(\gamma_0^3+\gamma_0^4)\gamma_0^4\log(r)\right)rdr.
	\end{align}
	Notice that the second integral involving the square of the radial component of $\Delta v_{\epsilon,\delta}^i$ is bounded, so we can neglect this term. Now, we also have as $|a+b+c+d|^2\leq 4\left(|a|^2+|b|^2+|c|^2+|d|^2\right)$ for all $a,b,c,d\in \C$ and by Parseval identity
	\begin{align}\label{newbound0}
		&\int_{B_R\setminus\bar{B}_{\delta}(0)}\left(\D\zeta_2\cdot \D v_{\epsilon,\delta}^i-\frac{x}{|x|^2}\cdot\vec{\zeta}_3\,v_{\epsilon,\delta}^i\right)^2dx\nonumber\\
		&\leq 8\pi\sum_{k\in \Z^{\ast}}^{}\int_{\delta}^{R}\bigg(|\gamma_k^1|^2O(r^{2+2k})+|\gamma_{k}^2|^2O(r^{2-2k})+|\gamma_k^3|^2O(r^{2k-2})+|\gamma_k^4|^2O(r^{-2k-2})\bigg)rdr\nonumber\\
		&+8\pi\int_{\delta}^{R}\left(|\gamma_0^1|^2O\left(\frac{1}{r^2}\right)+|\gamma_0^2|^2O\left(\frac{\log^2(r)}{r^2}\right)+|\gamma_0^3|^2O(r^2)+|\gamma_0^4|^2O(r^2\log^2(r))\right)rdr+|\gamma|\,O(R).
	\end{align}
	Now notice that 
	\begin{align}\label{newbound1}
		\sum_{k\geq 1}^{}\int_{\delta}^R\left(|\gamma_k^3|^2O(r^{2k-2})+|\gamma_{-k}^4|^2O(r^{2k-2})\right)rdr
	\end{align}
	is bounded in $\delta$, and \eqref{precisedsingular} imply that 
	\begin{align}\label{newbound2}
		&\left|\sum_{k\geq 1}^{}\left(|\gamma_{-k}^3|^2O(r^{-2k-2})+|\gamma_k^4|O(r^{-2k-2})\right)rdr\right|\leq C\sum_{k\geq 1}^{}\int_{\delta}^{R}\frac{\delta^{2k+2}}{(|k|+1)^2}r^{-2k-1}dr\nonumber\\
		&=C\sum_{k\geq 1}^{}\frac{\delta^2}{2k(k+1)^2}\left(1-\left(\frac{\delta}{R}\right)^{2k}\right)\leq C\left(1-\frac{\pi^2}{12}\right)\delta^2\conv{\delta\rightarrow 0}0,
	\end{align}
	where we used
	\begin{align*}
		\sum_{k=1}^{\infty}\frac{1}{k(k+1)^2}=\sum_{k=1}^{\infty}\left(\frac{1}{k}-\frac{1}{k+1}\right)-\sum_{k=1}^{\infty}\frac{1}{(k+1)^2}=1-(\zeta(2)-1)=2-\zeta(2)=2-\frac{\pi^2}{6}.
	\end{align*}
	Finally, by \eqref{newbound0}, \eqref{newbound1} and \eqref{newbound2}, we deduce that there exists $C>0$ (independent of $\delta$ and $\epsilon$) such that 
	\begin{align}\label{newbound5}
	    &\bigg|8\pi\sum_{k\in \Z^{\ast}}^{}\int_{\delta}^{R}\bigg(|\gamma_k^1|^2O(r^{2+2k})+|\gamma_{k}^2|^2O(r^{2-2k})+|\gamma_k^3|^2O(r^{2k-2})+|\gamma_k^4|^2O(r^{-2k-2})\bigg)rdr\nonumber\\
		&+8\pi\int_{\delta}^{R}\left(|\gamma_0^1|^2O\left(\frac{1}{r^2}+|\gamma_0^2|^2O\left(\frac{\log^2(r)}{r^2}\right)\right)+|\gamma_0^3|^2O(r^2)+|\gamma_0^4|^2O(r^2\log^2(r))\right)rdr\nonumber\\
		&- 8\pi\sum_{k\in \Z^{\ast}}^{}\int_{\delta}^R\bigg(|\gamma_k^1|^2O(r^{2+2k})+|\gamma_{k}^2|^2O(r^{2-2k})\bigg)rdr\bigg|\leq C
	\end{align}
	Therefore, by \eqref{newbound4}, \eqref{newbound0}, \eqref{newbound5}, and \eqref{othersing3} (for the term in $|\gamma|^2\log(1/\delta)^{-1}$) we have
	\begin{align}\label{bounded2}
		&\int_{B_R\setminus \bar{B}_{\delta}(0)}\left(\Delta v_{\epsilon,\delta}^i\right)^2dx-\int_{B_R\setminus \bar{B}_{\delta}(0)}\left(\D\zeta_2\cdot \D v_{\epsilon,\delta}^i
		-\frac{x}{|x|^2}\cdot \D\vec{\zeta}_3\,v_{\epsilon,\delta}^i\right)^2dx\nonumber\\
		&\geq 32\pi\sum_{k\in \Z^{\ast}}^{}\int_{\delta}^R\left((k+1)^2|\gamma_k^1|^2r^{2k}(1+O(r^2))+(k-1)^2|\gamma_k^2|^2r^{-2k}(1+O(r^2))-2(k+1)(k-1)\Re\left(\gamma_k^1\bar{\gamma_k^2}\right)\right)rdr\nonumber\\
		&+8\pi\int_{\delta}^{R}\left(|\gamma_{-1}^1|^2O(1)+|\gamma_1^2|^2O(1)\right)rdr\nonumber\\
		&+32\pi \int_{\delta}^{R}\left(|\gamma_0^3+\gamma_0^4|^2+|\gamma_0^4|^2\log^2(r)+2(\gamma_0^3+\gamma_0^4)\gamma_0^4\log(r)\right)rdr-C.
	\end{align}
	As previously, the terms involving positive powers of $k$ are bounded, and
	\begin{align*}
		\sum_{k\leq -2}\int_{\delta}^R(k+1)^2|\gamma_k^1|^2r^{2k}(1+O(r^2))rdr=\frac{1}{2}\sum_{k\leq -2}^{}(|k|-1)|\gamma_k^1|^2\frac{1}{\delta^{2(|k|-1)}}\left(1+O(\delta^2)-\left(\frac{\delta}{R}\right)^{2(|k|-1)}(1+O(R^2))\right),
	\end{align*}
	so for all $k\geq 2$, as \eqref{bounded} implies that \eqref{bounded2} is bounded independently of $\delta$ (and $\epsilon$), we deduce that for some universal constant $C$ (independent of $0<\epsilon<\epsilon_0$ and $0<\delta<\delta(\epsilon)<\epsilon$).   
	\begin{align*}
		|\gamma_{-|k|}^1|\leq C\delta^{|k|-1}\conv{\delta\rightarrow 0}0.
	\end{align*}
	and as $\gamma_{-k}^2=\bar{\gamma_{k}^1}$, we also have for all $k\geq 2$
	\begin{align*}
		|\gamma_{k}^2|\leq C\delta^{|k|-1}\conv{\delta\rightarrow 0}0.
	\end{align*}
	
	\textbf{Step 4: Conclusion and limit as $\delta\rightarrow 0$.}
	
	Finally, we deduce from the two previous steps that 
	\begin{align*}
		&v_{\epsilon,\delta}^i=\left(\gamma_1^2+\gamma_1^3\right)re^{i\theta}+
		\left(\gamma_{-1}^1+\gamma_{-1}^4\right)re^{-i\theta}+\gamma_0^3r^2+\gamma_0^4r^2\log(r)\\
		&+\gamma_{1}^1r^3e^{3i\theta}+\gamma_{-1}^{2}r^{3}e^{-3i\theta}
		+\sum_{k\geq 2}^{}\bigg(\left(\gamma_k^1r^{2+k}+\gamma_k^3r^{k}\right)e^{ik\theta}+\left(\gamma_{-k}^2r^{2+k}+\gamma_{-k}^4r^{k}\right)e^{-ik\theta}\bigg)\\
		&+\sum_{k\leq -2}^{}\bigg(\left(\gamma_{k}^2r^{2-|k|}+\gamma_{k}^4r^{-|k|}\right)e^{ik\theta}+\left(\gamma_{-k}^1r^{2-|k|}+\gamma_{-k}^3r^{-|k|}\right)e^{-ik\theta}\bigg)\\
		&+\gamma_0^1+\gamma_0^2\log(r)+\left(\gamma\, e^{i\theta}+\bar{\gamma}\,e^{-i\theta}\right)r\log(r).
	\end{align*}
	Thanks if the previous estimates, all coefficients are bounded, and for all fixed $(r,\theta)\in B_{R}\setminus \bar{B}_{\delta}(0)$, 
	\begin{align}\label{vanishsingular}
		&\bigg|\sum_{k\leq -2}^{}\bigg(\left(\gamma_{k}^2r^{2-|k|}+\gamma_{k}^4r^{-|k|}\right)e^{ik\theta}+\left(\gamma_{-k}^1r^{2-|k|}+\gamma_{-k}^3r^{-|k|}\right)e^{-ik\theta}\bigg)\\
		&+\gamma_0^1+\gamma_0^2\log(r)+\left(\gamma\, e^{i\theta}+\bar{\gamma}\,e^{-i\theta}\right)r\log(r)\bigg|\conv{\delta\rightarrow 0}0.
	\end{align}
	Furthermore, as the operator $\tilde{\mathscr{L}}_g^{\ast}\tilde{\mathscr{L}}_g$ is uniformly elliptic on $\Sigma_{\epsilon}$ for all fixed $0<\epsilon<\epsilon_0$ and thanks to the uniform bound , we deduce that up to a subsequence, there exists $v_{\epsilon}^i\in C^{\infty}(\Sigma\setminus(\bar{B}_{\epsilon}(p_i)\cup\ens{p_1,\cdots,p_n}))$
	such that for all compact $K\subset \Sigma\setminus(\bar{B}_{\epsilon}(p_i)\cup\ens{p_1,\cdots,p_n})$), 
	\begin{align*}
		v_{\epsilon,\delta}^i\conv{\delta\rightarrow 0}v_{\epsilon}^i\qquad \text{in}\;\, C^l(K)\;\, \text{for all}\;\, l\in\N.
	\end{align*}
	Furthermore, as $\delta\rightarrow 0$, \eqref{vanishsingular} implies that 
	\begin{align*}
		v_{\epsilon}^i&=\left(\gamma_1^2+\gamma_1^3\right)re^{i\theta}+
		\left(\gamma_{-1}^1+\gamma_{-1}^4\right)re^{-i\theta}+\gamma_0^3r^2+\gamma_0^4r^2\log(r)\\
		&+\gamma_{1}^1r^3e^{3i\theta}+\gamma_{-1}^{2}r^{3}e^{-3i\theta}
		+\sum_{k\geq 2}^{}\bigg(\left(\gamma_k^1r^{2+k}+\gamma_k^3r^{k}\right)e^{ik\theta}+\left(\gamma_{-k}^2r^{2+k}+\gamma_{-k}^4r^{k}\right)e^{-ik\theta}\bigg)\\
		&=\Re(\gamma_0z+\gamma_1z^2)+\gamma_2|z|^2+\gamma_3|z|^2\log|z|+\varphi(z),\\
		&=\Re(\gamma_0z+\gamma_1z^2)+\gamma_2|z|^2+\gamma_3|z|^2\log|z|+O(|z|^3),
	\end{align*}
	where $\varphi$ is real analytic and $\varphi(z)=O(|z|^3)$. Finally, by the Weierstrass parametrisation, if $\phi$ has embedded ends, we can assume (\cite{schoenPlanar}) that up to rotation $\phi$ admits the following expansion for some $\alpha>0$ and $\beta\in \R$ 
	\begin{align*}
		\phi(z)=\Re\left(\frac{\alpha_j}{z}+O(|z|),\frac{i\alpha_j}{z}+O(|z|),\beta\log|z|\right).
	\end{align*}
	Therefore, we have
	\begin{align*}
		\p{z}\phi(z)=\frac{1}{2}\left(-\frac{\alpha_j}{z^2}+O(1),-\frac{i\alpha_j}{z^2}+O(1),\frac{\beta}{z}+O(1)\right)
	\end{align*}
	and
	\begin{align*}
		e^{2\lambda}=2|\p{z}\phi|^2=\frac{\alpha_j^2}{|z|^4}+O\left(\frac{1}{|z|^2}\right)=\frac{\alpha_j^2}{|z|^4}\left(1+O(|z|^2)\right).
	\end{align*}
	Therefore, we have
	\begin{align*}
		u_{\epsilon}^i&=e^{\lambda}v_{\epsilon,\delta}^i=\frac{\alpha_j}{|z|^2}(1+O(|z|^2))\left(\Re\left(\gamma_0z+\gamma_1z^2\right)+\gamma_2|z|^2+\gamma_3|z|^2\log|z|+O(|z|^3)\right)\\
		&=\Re\left(\frac{\alpha_j\bar{\gamma_0}}{z}+\alpha_j\bar{\gamma_1}\frac{\z}{z}\right)+\alpha_j\gamma_2+\alpha_j\gamma_3\log|z|+O(|z|)
	\end{align*}
	and this concludes the proof of the theorem. 
\end{proof}
\begin{rem}\label{outsidevortices}
	Notice that as $|z|^2,|z|^2\log|z|,\Re(\gamma_0z)\in \mathrm{Ker}(\mathscr{L})$, we have
	\begin{align*}
		\mathscr{L}v_{\epsilon}^i=-4\,\Re\left(\bar{\gamma_1}\frac{\z}{z}\right)+O(|z|), 
	\end{align*} 
	which implies that
	\begin{align*}
		&\mathscr{L}_gu_{\epsilon}^i=e^{-\lambda}\tilde{\mathscr{L}}_g v_{\epsilon}^i=-4\alpha_j\,\Re\left(\gamma_1z^2\right)+O(|z|^3)\\
		&\partial_{\nu}\left(\mathscr{L}_gu_{\epsilon}^i\right)=-\frac{8\alpha_j}{|z|}\Re\left(\gamma_1z^2\right)+O(|z|^2),
	\end{align*}
	where we used
	\begin{align}
		\partial_{\nu}=\frac{x}{|x|}\cdot \D=\frac{(z+\z)}{|z|}(\p{z}+\p{\z})+\frac{(z-\z)}{2i}i(\p{z}-\p{\z})=\frac{1}{|z|}\left(z\,\p{z}+\z\,\p{\z}\right)=\frac{2}{|z|}\Re\left(z\,\p{z}\left(\,\cdot\,
		\right)\right).
	\end{align}
\end{rem}

\subsection{Indicial roots analysis: case of ends of higher multiplicity}

\begin{theorem}\label{expansionends}
	Let $1\leq i\leq n$ and $1\leq j\neq i\leq n$, and assume that $\phi$ has an end of multiplicity $m\geq 2$ at $p_j$, and define $v_{\epsilon,\delta}^i$ in $U_j$ as $u_{\epsilon,\delta}^i=e^{\lambda}v_{\epsilon,\delta}^i$. Then there exists $v_{\epsilon}^i\in C^{\infty}(\Sigma\setminus(\bar{B}_{\epsilon}(p_i)\cup\ens{p_1,\cdots,p_n}))$
	such that for all compact $K\subset \Sigma_{\epsilon}^i$, we have (up to a subsequence as $\delta\rightarrow 0$)
	\begin{align*}
	v_{\epsilon,\delta}^i\conv{\delta\rightarrow 0}v_{\epsilon}^i\qquad \text{in}\;\, C^l(K)\;\, \text{for all}\;\, l\in\N.
	\end{align*} 
	Furthermore, for all $j\neq i$, we have an expansion in $U_j$ as  
	\begin{align*}
	v_{\epsilon}^i(z)=|z|^{m+1}\sum_{k=1}^{m}\Re\left(\frac{\gamma_{i,j,k}^0}{z^k}\right)+|z|^{1-m}\sum_{k=0}^{m}\Re\left(\gamma_{i,j,k}^1z^{m+k}\right)+\gamma_{i,j}^2|z|^{m+1}+\gamma_{i,j}^3|z|^{m+1}\log|z|+\varphi_{\epsilon}(z)
	\end{align*}
	for some real-analytic function $\varphi_{\epsilon}$ such that $\varphi_{\epsilon}(z)=O(|z|^{m+2})$. Furthermore, we have an expansion 
	\begin{align*}
		u_{\epsilon}^i(z)&=e^{\lambda}v_{\epsilon}^i
		=\Re\left(\frac{c_{i,j}}{z^m}\right)+\sum_{1-m\leq k+l\leq 0}^{}\Re\Big(c_{i,j,k,l}z^{k}\z^l\Big)+a_{i,j}\log|z|+\psi_{\epsilon}(z),
	\end{align*}
	for some $\psi_{\epsilon}\in C^{\infty}(B(0,1)\setminus\ens{0})$ such that  for all $l\in \N$
	\begin{align*}
		\D^l\psi_{\epsilon}=O(|z|^{1-l}),
	\end{align*}
	and the $c_{i,j,k,l}$ are almost all zero, that is all but finitely many as $j,k\in \Z$ and $1-m\leq j+k\leq 0$. 
\end{theorem}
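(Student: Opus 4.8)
The plan is to imitate the proof of Theorem~\ref{indicielles1} line by line, replacing the model operator $\Delta-4\frac{x}{|x|^2}\cdot\D+\frac{4}{|x|^2}$ by the one attached to a branch point of order $m$, and keeping track of one additional resonant Fourier mode. \textbf{Step 1} is the indicial‑root computation. Since $\phi$ has an end of multiplicity $m$ at $p_j$, the Weierstrass parametrisation gives $\lambda=-(m+1)\log|z|+\zeta$ with $\zeta$ real‑analytic, so that the leading ($r^{-2}$‑homogeneous) part of $\tilde{\mathscr{L}}_g=e^{\lambda}\mathscr{L}_g(e^{\lambda}\,\cdot\,)$ is
\[ \mathscr{L}_m=\Delta-2(m+1)\frac{x}{|x|^2}\cdot\D+\frac{(m+1)^2}{|x|^2}, \]
and, exactly as in Theorem~\ref{indicielles1} and \cite{pacariv}, all remaining terms (built from $\D\zeta$, $|\D\zeta|^2$, $\Delta\zeta=-e^{2\lambda}K_g$) are lower order and do not change the indicial roots. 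Projecting onto the mode $e^{ik\theta}$ one finds $\mathscr{L}_{m,k}r^{\alpha}=((\alpha-(m+1))^2-k^2)r^{\alpha-2}$ and $\mathscr{L}^{\ast}_{m,k}r^{\alpha}=((\alpha+(m+1))^2-k^2)r^{\alpha-2}$, hence the four indicial roots
\[ \alpha\in\{(m+1)+|k|,\ (m+1)-|k|,\ (1-m)+|k|,\ (1-m)-|k|\}. \]
These are distinct except at the two resonant modes: $k=0$, where $1-m$ and $m+1$ are double (basis $r^{1-m},r^{1-m}\log r,r^{m+1},r^{m+1}\log r$), and $|k|=m$, where $(m+1)-|k|=(1-m)+|k|=1$ is double (basis $r^{1-2m},r,r\log r,r^{2m+1}$). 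For $m=1$ this reduces to the computation of Theorem~\ref{indicielles1}; note that $\mathscr{L}^{\ast}_m\mathscr{L}_m\neq\Delta^2$ when $m\geq2$, so the argument must be run directly. One then writes $v_{\epsilon,\delta}^i$ as the corresponding series with $\delta$‑dependent coefficients, subject to the reality constraint.

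\textbf{Step 2} uses \eqref{square1} of Theorem~\ref{ipptheorem}, which bounds $\int_{B_1\setminus\bar{B}_\delta(0)}(\tilde{\mathscr{L}}_gv_{\epsilon,\delta}^i)^2dx$ independently of $\delta$. Because $\mathscr{L}_m$ annihilates the solutions $r^{(m+1)\pm|k|}$, applying $\mathscr{L}_m$ to the series and using Parseval exactly as in \eqref{singularintegral} forces the coefficients of all the other solutions whose image under $\mathscr{L}_m$ fails to be in $L^2$ to tend to $0$ as $\delta\to0$: the coefficients of $r^{(1-m)-|k|}e^{ik\theta}$ (all $k$), of $r^{(1-m)+|k|}e^{ik\theta}$ for $1\leq|k|\leq m-1$, and of the mode‑$0$ solution $r^{1-m}$ decay at a polynomial rate $\delta^{\theta}$, while the resonant logarithmic coefficients of $r^{1-m}\log r$ and of $r\log r\,e^{\pm im\theta}$ decay like $\delta/\log(1/\delta)$ and $1/\sqrt{\log(1/\delta)}$ respectively, precisely as $|\gamma|$ in Theorem~\ref{indicielles1}; the within‑mode cross terms are absorbed using these rates and the fact that the more singular diagonal term always dominates.

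\textbf{Step 3} then has to kill the remaining singular $\ker\mathscr{L}_m$‑type coefficients, namely those of $r^{(m+1)-|k|}e^{ik\theta}$ for $|k|\geq m+1$, and this is the step I expect to be the main obstacle. Here the naive bound $\int(\Delta v_{\epsilon,\delta}^i)^2$ is \emph{not} available — it would be unbounded, since for $m\geq2$ the wanted surviving solution $r\,e^{\pm im\theta}$ is no longer harmonic — and one must instead use the refined inequality \eqref{square2}: after moving the $\D\zeta_2,\vec{\zeta}_3$‑term to the right (it is bounded, using $\vec{\zeta}_3=O(|x|)$ together with Step 2) and discarding cross terms, it controls $\int(\Delta v_{\epsilon,\delta}^i+(m^2-1)v_{\epsilon,\delta}^i/|x|^2)^2+4(m^2-1)\int(\frac{x}{|x|^2}\cdot\D v_{\epsilon,\delta}^i-v_{\epsilon,\delta}^i/|x|^2)^2$. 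The key algebraic fact is that both the operator $\Delta+(m^2-1)/|x|^2$ and the first‑order form $\frac{x}{|x|^2}\cdot\D-1/|x|^2$ vanish identically on $r\,e^{\pm im\theta}$, whereas on any other mode‑$k$ indicial solution $r^{\alpha}e^{ik\theta}$ with $\alpha\leq1$ the combined integrand equals $[(\alpha^2+m^2-1-k^2)^2+4(m^2-1)(\alpha-1)^2]\,r^{2\alpha-4}$ with non‑zero constant, hence is not in $L^2$ near $0$. Parseval then sends all those coefficients to $0$ with $\delta$ and leaves exactly $r\,e^{\pm im\theta}$; the bulk of the work in this step is the bookkeeping over all Fourier modes and the interaction with the two families of resonances.

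\textbf{Step 4} collects the survivors — $r^{(m+1)-|k|}e^{\pm ik\theta}$ for $1\leq k\leq m$, $r^{(1-m)+|k|}e^{\pm ik\theta}$ for $m\leq k\leq2m$, $r^{m+1}$, $r^{m+1}\log r$, plus an $O(|z|^{m+2})$ tail — passes to the limit $\delta\to0$ along a subsequence via uniform ellipticity of $\tilde{\mathscr{L}}_g^{\ast}\tilde{\mathscr{L}}_g$ on compacts of $\Sigma_\epsilon^i$ and the uniform bounds, rewrites $r^{m+1-k}e^{-ik\theta}=|z|^{m+1}z^{-k}$ and $r^{1+k'}e^{i(m+k')\theta}=|z|^{1-m}z^{m+k'}$ to obtain the stated expansion of $v_\epsilon^i$, and finally multiplies by $e^{\lambda}=m\alpha_j|z|^{-(m+1)}(1+O(|z|))$, which follows from $\phi(z)=\Re(\vec{A}_0z^{-m})+O(|z|^{1-m})$ and $e^{2\lambda}=2|\partial_z\phi|^2=m^2\alpha_j^2|z|^{-2(m+1)}(1+O(|z|))$: the term $|z|^{m+1}z^{-k}$ becomes $\Re(c_{i,j,k}z^{-k})$ (with $k=m$ contributing the leading $\Re(c_{i,j}z^{-m})$, and the rest, together with the $O(|z|)$ corrections, producing finitely many monomials $\Re(c_{i,j,k,l}z^k\z^l)$ with $1-m\leq k+l\leq0$), $|z|^{1-m}z^{m+k'}$ becomes $\Re(c\,z^{k'}\z^{-m})$ with degree sum again in $[1-m,0]$, $|z|^{m+1}\log|z|$ becomes $a_{i,j}\log|z|$, $|z|^{m+1}$ becomes a constant, and the tail becomes $\psi_\epsilon$ with $\D^l\psi_\epsilon=O(|z|^{1-l})$; this is exactly the claimed form, with finitely many non‑zero $c_{i,j,k,l}$.
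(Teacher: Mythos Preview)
Your proposal is correct and follows essentially the same strategy as the paper: compute the indicial roots of $\mathscr{L}_m^{\ast}\mathscr{L}_m$, use the bound \eqref{square1} to kill the coefficients of solutions whose image under $\mathscr{L}_m$ is not in $L^2$, then use the refined identity \eqref{square2} to kill the remaining singular $\ker\mathscr{L}_m$--type coefficients, and finally pass to the limit $\delta\to 0$ by uniform ellipticity on compacts. Your explicit observation that the combined integrand on the $k$--th mode equals $\big((\alpha^2+m^2-1-k^2)^2+4(m^2-1)(\alpha-1)^2\big)r^{2\alpha-4}$, which vanishes precisely on $r\,e^{\pm im\theta}$ and is otherwise strictly positive for $\alpha\leq 1$, is in fact cleaner than the paper's appeal to \eqref{orthogonalkernel} (which, as written, contains a typo in the description of $\ker(P_k\tilde{\mathscr{L}}_m)\cap\ker(P_k\mathscr{D})$), and it makes the survival criterion $\alpha\geq 1$ transparent.
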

\begin{proof}
	\textbf{Step 1: Indicial roots analysis. }
	
	We have the expansion
	\begin{align}\label{devconforme}
		e^{2\lambda}=\frac{\alpha_j^2}{|z|^{2m+2}}\left(1+2\,\Re\left(\alpha_0z
		\right)+O(|z|^2)\right).
	\end{align}
	Now, let $v_{\epsilon}^i$ such that $u_{\epsilon}^i=e^{\lambda}v_{\epsilon}^i$. Then we have as 
	\begin{align*}
	e^{\lambda}\Delta_{g}u_{\epsilon}^i=e^{-\lambda}\Delta_{g_0}\left(e^{\lambda}v_{\epsilon}^i\right)=\Delta v_{\epsilon}^i+2\D\lambda\cdot\D v_{\epsilon}^i+\left(3\Delta\lambda+|\D\lambda|^2\right)v_{\epsilon}^i.
	\end{align*}
	Now we have 
	\begin{align*}
	\lambda=-(m+1)\log|z|+\log(\alpha_j)+\log\left(1+O(|z|)\right),
	\end{align*}
	so we have $\Delta\lambda\in L^{\infty}(D^2)$ and
	\begin{align*}
	\D\lambda=-(m+1)\frac{x}{|x|^2}+O(1)
	|\D\lambda|^2=\frac{(m+1)^2}{|x|^{2}}\left(1+O(|x|)\right)
	\end{align*}
	so we obtain
	\begin{align*}
	e^{\lambda}\Delta_gu_{\epsilon}^i=\left(\Delta_{g}-2(m+1)\left(\frac{x}{|x|^2}+O(1)\right)\cdot \D +\left(\frac{(m+1)^2}{|x|^2}+O\left(\frac{1}{|x|}\right)\right)\right)v_{\epsilon}^i
	\end{align*}
	As $e^{2\lambda}K_g=O(1)$, we finally get 
	\begin{align*}
	e^{\lambda}\mathscr{L}_gu_{\epsilon}^i=\left(\Delta -2(m+1)\left(\frac{x}{|x|^2}+O(1)\right)\cdot\D+\left(\frac{(m+1)^2}{|x|^2}+O\left(\frac{1}{|x|}\right)\right)\right)v_{\epsilon}^i.
	\end{align*}
   Now, denote by $\mathscr{L}_m$ the elliptic operator with regular singularities (see \cite{pacariv})
	\begin{align*}
	\mathscr{L}_m=\Delta-2(m+1)\frac{x}{|x|^2}\cdot \D+\frac{(m+1)^2}{|x|^2}.
	\end{align*}
	As 
	\begin{align}\label{harmonic}
	\frac{x}{|x|^2}=\D\,\log|x|
	\end{align}
	and $\log$ is harmonic on $D^2\setminus\ens{0}$, we have
	\begin{align*}
	\mathscr{L}_m^{\ast}=\Delta+2(m+1)\frac{x}{|x|^2}\cdot\D+\frac{(m+1)^2}{|x|^2},
	\end{align*}
	where $\mathscr{L}_m^{\ast}$ is the formal adjoint of $\mathscr{L}_m$. As the indicial roots of on operator of the form
	\begin{align*}
	\Delta+\frac{x+b(x)}{|x|^2}\cdot\D+\frac{c(x)}{|x|^2}
	\end{align*}
	where $b$ and $c$ are $C^{\infty}$ and $b(x)=O(|x|^2)$ only depends on $c(0)$ and is independent of $b$ (\cite{pacariv}).
	
	Therefore, the indicial roots of $e^{\lambda}\mathscr{L}_ge^{\lambda}\left(e^{\lambda}\mathscr{L}_g(e^{\lambda}\,\cdot\,)\right)$, giving all possible asymptotic behaviour of a solution of $\mathscr{L}_g^2u_{\epsilon}^i=0$ in $D^2\setminus\ens{0}$ are the same of the indicial roots of the operator $\mathscr{L}_m^{\ast}\mathscr{L}_m$. Therefore, consider first a solution $v$ of 
	\begin{align}\label{e1}
	\mathscr{L}_m^{\ast}\mathscr{L}_mv=0.
	\end{align}
	First, recall that
	\begin{align*}
	&\Delta=\partial^2_r+\frac{1}{r}\partial_r+\frac{\Delta_{S^1}}{r^2}\\
	&\mathscr{L}_m=\partial_r^2-\frac{(2m+1)}{r}\partial_r+\frac{\Delta_{S^1}+(m+1)^2}{r^2}.
	\end{align*}
	Therefore, for all $k\in \N$ the projection $\mathscr{L}_{m,k}$ on $\mathrm{Span}(e^{i\cdot k})$ of $\mathscr{L}_m$ is given by 
	\begin{align*}
	\mathscr{L}_{m,k}=\partial_r^2-\frac{(2m+1)}{r}\partial_ r+\frac{(m+1)^2-k^2}{r^2}.
	\end{align*}
	and we first look for solutions of the form
	\begin{align*}
	v(r)=r^{\alpha}
	\end{align*}
	for some $\alpha\in \C$. We have by a direct computation
	\begin{align*}
	\mathscr{L}_{m,k}v &=\left(\alpha(\alpha-1)-2(m+1)\alpha+(m+1)^2-k^2\right)r^{\alpha-2}\\
	&=\left(\alpha^2-2(m+1)\alpha+(m+1)^2-k^2\right)r^{\alpha-2}\\
	&=\left(\alpha-(m+1+|k|)\right)\left(\alpha-(m+1-|k|)\right)r^{\alpha-2}.
	\end{align*} 
	Therefore, for all $k\in \Z\setminus\ens{0}$, we have two linearly independent solutions
	\begin{align*}
	v_{k,0}(r)=r^{m+1+|k|},\quad v_{k,1}(r)=r^{m+1-|k|}.
	\end{align*}
	Now, we compute if $\alpha'=\alpha+2$
	\begin{align*}
	\mathscr{L}_{m,k}^{\ast}\mathscr{L}_mr^{\alpha'}&=\mathscr{L}_{m,k}^{\ast}(\alpha'-(m+1+|k|)(\alpha'-(m+1-|k|))r^{\alpha}\\
	&=\left(\alpha(\alpha-1)+(2m+3)\alpha+(m+1)^2-k^2\right)(\alpha'-(m+1+|k|))(\alpha'-(m+1-|k|))r^{\alpha}\\
	&=\left(\alpha^2+2(m+1)\alpha+(m+1)^2-k^2\right)r^{\alpha}\\
	&=(\alpha-(-(m+1)+|k|))(\alpha-((-m+1)-|k|))(\alpha'-(m+1+|k|))(\alpha'-(m+1-|k|))
	\end{align*}
	and we find two independent solution for $k\in \Z$
	\begin{align*}
	v_{k,2}(r)=r^{-m+1+|k|},\quad v_{k,3}(r)=r^{-m+1-|k|}.
	\end{align*}
	Now, for $k=0$, we need to find two additional solution and one check immediately that
	\begin{align*}
	r^{m+1}\log(r),\quad r^{1-m}\log(r)
	\end{align*}
	are two additional solutions. Furthermore, notice that when $|k|=m$,
	\begin{align*}
		\ens{r^{m+1+|k|},r^{m+1-|k|},r^{1-m+|k|},r^{1-m-|k|}}=\ens{r^{1-2m},r,r^{2m+1}}
	\end{align*}
	so we need to find another solution. As $\mathrm{Ker}(\mathscr{L}_{m,m}^{\ast})=\mathrm{Span}(r^{-(m+1)+m},r^{-(m+1)-m})=\mathrm{Span}(r^{-1},r^{-(2m+1)})$, we compute that 
	\begin{align*}
		\mathscr{L}_{m,m}\left(r\log(r)\right)=\frac{1}{r}-\frac{(2m+1)}{r}\left(\log(r)+1\right)+\frac{(m+1)^2-m^2}{r^2}\left(r\log(r)\right)=-\frac{2m}{r}\in \mathrm{Ker}(\mathscr{L}^{\ast}_{m,m}).
	\end{align*}
	Therefore, for $|k|=m$, we have the basis of solutions
	\begin{align*}
		r^{1-2m},r,r^{2m+1},r\log(r).
	\end{align*}
	so we find the additional solution $r\log(r)$ when $|k|=m$. 
	Therefore, we finally get 
	\begin{align}\label{newdev}
		v_{\epsilon,\delta}^i(r,\theta)&=\sum_{k\in \Z^{\ast}}^{}\left(\gamma_k^1r^{m+1+k}+\gamma_{k}^2r^{m+1-k}+\gamma_k^3r^{1-m+k}+\gamma_k^4r^{1-m-k}\right)e^{ik\theta}\nonumber\\
		&+\left(\gamma\, e^{im\theta}+\bar{\gamma}\,e^{-im\theta}\right)r\log(r)+\gamma_0^1r^{1-m}+\gamma_0^2r^{1-m}\log(r)+\gamma_0^3r^{m+1}+\gamma_0^4r^{m+1}\log(r).
	\end{align}
	
	\textbf{Step 2: Estimate coming from $\mathscr{L}_m v_{\epsilon,\delta}^i\in L^2$.}
	As
	\begin{align}\label{negative}
	&\int_{B_1\setminus \bar{B}_{\delta}(0)}\left(\Delta v_{\epsilon,\delta}^i-2(m+1)\left(\frac{x}{|x|^2}+\D\zeta_0\right)\cdot \D v_{\epsilon,\delta}^i+\frac{(m+1)^2}{|x|^2}\left(1+x\cdot \vec{\zeta}_1\right)v_{\epsilon,\delta}^i\right)^2dx\nonumber\\
	&\leq \int_{\Sigma_{\epsilon_0,\delta}}\left(\lg u_{\epsilon,\delta}^i\right)^2d\vg\leq C\,\omega\left(\wp{v}{2,2}{\Sigma}\right)\nonumber\\
	&\int_{B_1\setminus\bar{B}_{\delta}(0)}\left(\Delta v_{\epsilon,\delta}^i+(m+1)(m-1)\frac{v_{\epsilon,\delta}^i}{|x|^2}\right)^2dx+4(m+1)(m-1)\int_{B_1\setminus \bar{B}_{\delta}(0)}\left(\frac{x}{|x|^2}\cdot \D v_{\epsilon,\delta}^i-\frac{v_{\epsilon,\delta}^i}{|x|^2}\right)^2dx\nonumber\\
	&-\int_{B_1\setminus\bar{B}_{\delta}(0)}\left(\D\zeta_2\cdot\D v_{\epsilon,\delta}^i-\frac{x}{|x|^2}\cdot \vec{\zeta}_3\,v_{\epsilon,\delta}^i\right)^2dx\leq C\,\omega\left(\wp{v}{2,2}{\Sigma}\right).
	\end{align}
	and $m\geq 2$, we deduce by the same argument as Theorem \ref{indicielles1} that the following three integrals are bounded uniformly in $\epsilon$ and $\delta$
	\begin{align*}
		&\int_{B_1\setminus \bar{B}_{\delta}}\left(\mathscr{L}_mv_{\epsilon,\delta}^i\right)^2dx=\int_{B_1\setminus\bar{B}_{\delta}(0)}\left(\Delta v_{\epsilon,\delta}^i-2(m+1)\frac{x}{|x|^2}\cdot \D v_{\epsilon,\delta}^i+\frac{(m+1)^2}{|x|^2}v_{\epsilon,\delta}^i\right)^2dx\\
		&\int_{B_1\setminus\bar{B}_{\delta}(0)}\left(\Delta v_{\epsilon,\delta}^i+(m+1)(m-1)\frac{v_{\epsilon,\delta}^i}{|x|^2}\right)^2dx\\
		&\int_{B_1\setminus \bar{B}_{\delta}(0)}\left(\frac{x}{|x|^2}\cdot \D v_{\epsilon,\delta}^i-\frac{v_{\epsilon,\delta}^i}{|x|^2}\right)^2dx.
	\end{align*}
	Now define
	\begin{align*}
		&\tilde{\mathscr{L}}_m=\Delta+\frac{(m+1)(m-1)}{|x|^2}\\
		&\mathscr{D}=\frac{x}{|x|^2}\cdot \D-\frac{1}{|x|^2}.
	\end{align*}
	Furthermore, notice that for all $k\in \Z^{\ast}$, if $P_k$ is the projection on $\mathrm{Span}(e^{ik}\,\cdot\,)$, then 
	\begin{align}\label{orthogonalkernel}
		\mathrm{Ker}(P_k\mathscr{L}_m)=\mathrm{Span}\left(r^{m+1+k},r^{m+1-k}\right),\qquad \mathrm{Ker}(P_k\tilde{\mathscr{L}_m})\cap \mathrm{Ker}(P_k\mathscr{D}) =\mathrm{Span}(r^{m-1+k},r^{m-1-k}).
	\end{align}
	Furthermore, for all $\alpha\in \Z$, 
	\begin{align*}
		\mathscr{D}(r^{\alpha}\log(r))=\frac{1}{r}\partial_r\left(r^{\alpha}\log(r)\right)-r^{\alpha-2}\log(r)=(\alpha-1)r^{\alpha-2}\log(r)+r^{\alpha-2},
	\end{align*}
	so we deduce that the coefficients $\gamma_0^1$ and $\gamma_0^2$ vanish when $\delta\rightarrow 0$, as $|x|^{(1-m)-2}=|x|^{-(m+1)}\notin L^2(B(0,1))$. Furthermore, thanks to \eqref{orthogonalkernel} and the proof of Theorem \ref{indicielles1}, we deduce that whenever a power $\alpha=m+1+k,m+1-k,1-m+k,1-m+k$ satisfies
	\begin{align*}
		\alpha\leq 0,
	\end{align*}
	then the corresponding   coefficient $\gamma_k^j$ vanishes as ${\delta\rightarrow 0}$. 
	Notice that all powers $r^{m+1+k}$, $r^{m+1-k}$, $r^{1-m+k}$ and $r^{1-m-k}$ are all distinct, except when $|k|=m$, where the powers become either
	\begin{align*}
		r^{2m+1}, r, r, r^{1-2m}
	\end{align*}
	or
	\begin{align*}
		r,r^{2m+1},r^{1-2m},r.
	\end{align*}
	Notice also that the coefficient $\gamma$ in \eqref{newdev} also vanishes as $\gamma r\log(r)\,e^{\pm m\theta}\notin \mathrm{Ker}(\mathscr{L}_m)$ (and using the same argument as in the proof of Theorem \ref{explicit}).
	So we have a remaining coefficient in $\Re(\gamma_0z)$ in the expansion of $v_{\epsilon,\delta}^i$ as $\delta\rightarrow 0$, as $re^{\pm i m   \theta}\in \mathrm{Ker}(\mathscr{L}_m)\cap\mathrm{Ker}(\tilde{\mathscr{L}}_m)\cap \mathrm{Ker}(\mathscr{D})$. Finally, we deduce that as $\delta\rightarrow 0$, $v_{\epsilon,\delta}^i\conv{\delta\rightarrow 0} v_{\epsilon}^i\in C^{l}_{\mathrm{loc}}(\Sigma_{\epsilon}^i)$ for all $l\in \N$ such that 
	\begin{align*}
		v_{\epsilon}^i&=r^{m+1}\sum_{\substack{k\in \Z^{\ast}\\ k\geq -m}}^{}r^k(\gamma_k^1e^{ik\theta}+\gamma_{-k}^2e^{-ik\theta})+r^{1-m}\sum_{k\geq m}^{}r^k(\gamma_k^3e^{ik\theta}+\gamma_{-k}^4e^{-ik\theta})+\gamma_0^3r^{m+1}+\gamma_0^4r^{m+1}\log(r)\\
		&=2\,r^{m+1}\sum_{\substack{k\in \Z^{\ast}\\k\geq -m}}^{}\Re\left(\gamma_k^1z^k\right)+2\,r^{1-m}\sum_{k\geq m}^{}\Re\left(\gamma_k^3z^k\right)+\gamma_0^3r^{m+1}+\gamma_0^4r^{m+1}\log(r),
	\end{align*} 
	where we used $\gamma_{-k}^2=\bar{\gamma_k^1}$ and $\gamma_{-k}^4=\bar{\gamma_{k}^3}$. 
	The last expansion of $u_{\epsilon}^i$ follows directly from this estimate using \eqref{devconforme}.
\end{proof}

Finally, we obtain in the following theorem the expansion as $\epsilon\rightarrow 0$ of the previously obtained function $u_{\epsilon}^i$. Notice the shift of notation for $v_{\epsilon}^i$.

\begin{theorem}\label{expansionends2}
	Let $u_{\epsilon}^i\in C^{\infty}(\Sigma_{\epsilon}^i)$ be the function constructed in Theorem \ref{expansionends}, and $v_{\epsilon}^i\in C^{\infty}(\Sigma_{\epsilon}^i)$ be the global function such that $u_{\epsilon}^i=|\phi|^2v_{\epsilon}^i$. Then there exists $v_{0}^i\in W^{2,2}(\Sigma)$ such that up to a subsequence, 
	\begin{align*}
		v_{\epsilon}^i\conv{\epsilon\rightarrow 0}v_{0}^i\qquad \text{in}\;\, C^l(\Sigma\setminus\ens{p_1,\cdots,p_n})\;\, \text{for all}\;\, l\in \N.
	\end{align*}
	Furthermore, we have $v_0^i(p_j)=0$ for $j\neq i$, $v_0^i(p_i)=v(p_i)$, and for all $1\leq j\leq n$, and if $p_j$ has multiplicity $m\geq 1$,  $v_0^i$ admits the following expansion in $U_j$ for some $\gamma_{i,j}^0,\gamma_{i,j,k,l}\in \C$ ($k,l\in \N$) and $\gamma_{i,j}^1\in \R$
	\begin{align*}
		v_0^i(z)=v(p_i)\delta_{i,j}+\Re\left(\gamma_{i,j}^0z^m\right)+\sum_{m+1\leq k+l\leq 2m}^{}\Re\left(\gamma_{i,j,k,l}z^k\z^l\right)+\gamma_{i,j}^1|z|^{2m}\log|z|+O(|z|^{2m+1}\log|z|).
	\end{align*}
	Furthermore, if $m=1$, there exists $\gamma_{i,j}^0,\gamma_{i,j}^1\in \C$ and $\gamma_{i,j}^2,\gamma_{i,j}^3\in \R$ such that 
	\begin{align*}
		v_0^i(z)=v(p_j)\delta_{i,j}+\Re(\gamma_{i,j}^0z+\gamma_{i,j}^1z^2)+\gamma_{i,j}^{2}|z|^2+\gamma_{i,j}^3|z|^2\log|z|+O(|z|^3).
	\end{align*}
	In particular, for all $1\leq i\leq n$, the variation $\vec{v}_i=v_0^i\n_{\vec{\Psi}}$ is an admissible variation of the branched Willmore surface $\vec{\Psi}:\Sigma\rightarrow \R^3$.
\end{theorem}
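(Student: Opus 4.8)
The plan is to obtain $v_0^i$ as a subsequential limit of $v_\epsilon^i=u_\epsilon^i/|\phi|^2$ as $\epsilon\to0$, using an $\epsilon$-uniform energy bound for local compactness, and then to read off the asymptotics at each $p_j$ from the already-established expansions when $j\neq i$, and from an indicial root analysis combined with a weighted $L^2$ selection rule when $j=i$. For the first point, fix the cutoff $\rho_i$ of Section \ref{admissible} with $\rho_i\equiv1$ near $p_i$ and $\supp(\rho_i)\subset B_{\epsilon_0}(p_i)$, and set $w_\epsilon^i:=u_\epsilon^i-\rho_iu$, extended by $0$ over $B_\epsilon(p_i)$ (legitimate since $u_\epsilon^i=u=\rho_iu$ and $\partial_\nu u_\epsilon^i=\partial_\nu u=\partial_\nu(\rho_iu)$ on $\partial B_\epsilon(p_i)$). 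The minimality of $u_{\epsilon,\delta}^i$ among competitors with these boundary data yields the orthogonality $\int_{\Sigma_{\epsilon,\delta}^i}\lg u_{\epsilon,\delta}^i\,\lg(\rho_iu-u_{\epsilon,\delta}^i)\,d\vg=0$ already used in \eqref{apriori}--\eqref{aprioribis}; passing to the limit $\delta\to0$ and invoking \eqref{boundenergy1}, \eqref{boundenergy2} gives
\[
\tfrac12\int_{\Sigma_\epsilon^i}\bigl(\lg w_\epsilon^i\bigr)^2d\vg=\tfrac12\int_{\Sigma_\epsilon^i}\bigl(\lg(\rho_iu)\bigr)^2d\vg-\tfrac12\int_{\Sigma_\epsilon^i}\bigl(\lg u_\epsilon^i\bigr)^2d\vg=O(1)
\]
uniformly in $\epsilon$; moreover $\lg^2w_\epsilon^i=-\lg^2(\rho_iu)$ on $\Sigma_\epsilon^i$, a fixed smooth right-hand side away from $p_i$.

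Next, on any compact $K\subset\Sigma\setminus\ens{p_1,\cdots,p_n}$ one has $K\subset\Sigma_\epsilon^i$ with $\mathrm{dist}(K,\partial B_\epsilon(p_i))>0$ for $\epsilon$ small; since $\lg$ and $\lg^2$ are uniformly elliptic with real-analytic coefficients there, the bound $\int_K(\lg w_\epsilon^i)^2d\vg=O(1)$, interior elliptic estimates and bootstrapping give $\nc{w_\epsilon^i}{l}{K'}\le C_{l,K'}$ for every $K'\Subset K$ and every $l\in\N$. By compactness and a diagonal argument, along a subsequence $w_\epsilon^i\to w_0^i$ in $C^l_{\mathrm{loc}}(\Sigma\setminus\ens{p_1,\cdots,p_n})$, hence $u_\epsilon^i\to u_0^i:=w_0^i+\rho_iu$ and $v_\epsilon^i\to v_0^i:=u_0^i/|\phi|^2$, with $\lg^2u_0^i=0$ on $\Sigma\setminus\ens{p_1,\cdots,p_n}$ and, by Fatou, $\int_{\Sigma\setminus\ens{p_j : j\neq i}}(\lg w_0^i)^2d\vg<\infty$. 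Near $p_j$ with $j\neq i$ one has $\rho_iu\equiv0$, so $v_\epsilon^i$ is exactly the function analysed in Theorems \ref{ipptheorem}, \ref{indicielles1} and \ref{expansionends}, whose estimates are uniform in $\epsilon$ (the constants depend only on $U_j$ and $\vec{\Psi}$). Since $v_\epsilon^i\to v_0^i$ in $C^l$ on fixed circles $\partial B_r(p_j)$, the coefficients of the expansion of $v_\epsilon^i$ there (given by integrals over such circles) converge, and the remainders converge as well; hence $v_0^i$ inherits the same form of expansion, with vanishing constant term, i.e.\ $v_0^i(p_j)=0$, and the stated formulas for $m=1$ and for $m\ge2$.

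The delicate point is the behaviour at $p_i$, where $\rho_i\equiv1$ and thus $w_0^i=u_0^i-u$. Since $\phi$ is conformal and harmonic, $\Delta_g|\phi|^2=4$, and using the admissible expansion $v=v(p_i)+\Re(\gamma z^{m_i})+O(|z|^{m_i+1})$ one computes $\lg u-v(p_i)\lg(|\phi|^2)=\lg\bigl(|\phi|^2(v-v(p_i))\bigr)=O(|z|^{m_i+1})$, which lies in $L^2(d\vg)$ because $d\vg\sim|z|^{-2(m_i+1)}|dz|^2$ near $p_i$; combined with $\lg w_0^i\in L^2(d\vg)$ this shows that $c_0:=u_0^i-v(p_i)|\phi|^2$ satisfies $\lg c_0\in L^2(d\vg)$ near $p_i$, while $\lg^2c_0=-v(p_i)\lg^2(|\phi|^2)$ is smooth there. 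Hence $c_0$ is a sum of a smooth particular solution and indicial modes of the operator conjugate to $\lg^2$ at an end of multiplicity $m_i$ (as computed in Theorems \ref{indicielles1} and \ref{expansionends}, with their logarithmic resonances), and the constraint $\lg c_0\in L^2(d\vg)$---keeping in mind that harmonic modes satisfy it with room to spare---eliminates exactly those modes that would keep $c_0/|\phi|^2$ from vanishing at $p_i$. This gives $v_0^i(p_i)=v(p_i)$ and, after collecting the surviving terms (in particular the resonant $|z|^{2m_i}\log|z|$ one), the expansion of $v_0^i$ near $p_i$ in the statement, respectively its $m_i=1$ version. Finally, these expansions show that near every branch point $p_j$ of $\vec{\Psi}$, of multiplicity $\theta_0=m_j$, one has $v_0^i=v_0^i(p_j)+\Re(\gamma z^{m_j})+O(|z|^{m_j+1})$ with $v_0^i(p_j)=v(p_i)\delta_{ij}$, so that $|\D v_0^i|/|z|^{\theta_0-1}\in L^\infty$ and, the harmonic leading term contributing nothing to the Laplacian, $\Delta v_0^i/|z|^{\theta_0-1}\in L^2$; thus $v_0^i\in W^{2,2}(\Sigma)$ and $\vec{v}_i=v_0^i\n_{\vec{\Psi}}$ is admissible by the characterisation recalled in Section \ref{admissible}. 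The main obstacle is precisely this transfer at $p_i$: turning the data prescribed on the shrinking circles $\partial B_\epsilon(p_i)$ into the correct leading coefficient $v(p_i)$ of $v_0^i$ while suppressing the intermediate singular indicial modes, which requires the full strength of the uniform energy estimate of the first step together with the indicial bookkeeping at an end of arbitrary multiplicity.
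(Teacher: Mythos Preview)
Your argument is correct and follows the paper's strategy---uniform control of $\lg(u_\epsilon^i-\rho_iu)$ from \eqref{apriori}--\eqref{aprioribis}, interior elliptic compactness and a diagonal extraction, then the indicial expansions of Theorems~\ref{indicielles1}--\ref{expansionends}---while supplying considerably more detail at $p_i$ than the paper's terse proof (which just cites \eqref{awayvortices} for compactness and the boundary condition $v_\epsilon^i=v$ on $\partial B_\epsilon(p_i)$ for the value $v_0^i(p_i)=v(p_i)$). One refinement: at $p_i$ it is cleaner to run the indicial analysis on $w_\epsilon^i=u_\epsilon^i-\rho_iu$ itself---which has zero Cauchy data on $\partial B_\epsilon(p_i)$, exactly mirroring the role of $u_{\epsilon,\delta}^i$ on $\partial B_\delta(p_j)$---rather than on the limit $c_0$, since then Theorems~\ref{ipptheorem}--\ref{expansionends} apply verbatim with $\delta$ replaced by $\epsilon$ (modulo the harmless smooth inhomogeneity $\lg^2(\rho_iu)$) and you avoid invoking polyhomogeneous regularity directly on the limit.
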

\begin{proof}
	The first claim on $v_{\epsilon}^i$ follows directly from the uniform bound \eqref{awayvortices} and a standard diagonal argument. Furthermore, as $v_{\epsilon}^i=v=v(p_i)+O(\epsilon)$ on $\partial B_{\epsilon}(p_i)$, we deduce that $v_{0}^i(p_i)=v(p_i)$. Finally, the expansion in $U_j$ follows from Theorem, as 
	\begin{align*}
		u_{\epsilon}^i=\Re\left(\frac{c_{i,j}}{z^m}\right)+\sum_{1-m\leq k+l\leq 0}^{}\Re\Big(c_{i,j,k,l}z^{k}\z^l\Big)+a_{i,j}\log|z|+\psi_{\epsilon}(z)
	\end{align*}
	and as $|\phi|^2=\beta_0^2|z|^{-2m}(1+O(|z|))$ (for some $\beta_0>0$), we find that for some $\gamma_{i,j,\epsilon}^0,\gamma_{i,j,k,l,\epsilon}\in \C$ and $\gamma_{i,j,\epsilon}^1\in \R$
	\begin{align*}
		v_{\epsilon}^i=\Re\left(\gamma_{i,j,\epsilon}^0z^m\right)+\sum_{m+1\leq j+k\leq 2m}^{}\Re\left(\gamma_{i,j,k,l,\epsilon}z^k\z^l\right)+\gamma_{i,j,\epsilon}^1|z|^{2m}\log|z|+O(|z|^{2m+1}\log|z|),
	\end{align*}
	so as $\epsilon\rightarrow 0$, by the strong convergence $\gamma_{i,j,k,l,\epsilon}\rightarrow \gamma_{i,j,k,l}\in \C$ and we get the expected expansion. Finally, the indicial root analysis shows that 
	\begin{align*}
		\D^2v_0^i=\D^2\Re\left(\gamma_{i,j}^0z^m\right)+O(|z|^{m-1}\log|z|)=O(\log|z|)\in \bigcap_{p<\infty}L^p(\Sigma)
	\end{align*}
	and as $v_0^i\in C^{\infty}(\Sigma\setminus\ens{p_1,\cdots,p_n})$, we deduce that 
	\begin{align*}
		v\in \bigcap_{p<\infty}W^{2,p}(\Sigma)
	\end{align*}
	and this concludes the proof of the theorem. 
\end{proof}
\begin{rem}
	We emphasize that the variations $v^i_0\in W^{2,2}(\Sigma)$ are admissible at a branch point $p\in \Sigma$ of order $\theta_0\geq 1$ corresponds to an end $p_j$ (for some $1\leq j\leq n$) of multiplicity $m=\theta_0\geq 1$, and the previous theorem shows that in $U_j$
	\begin{align*}
		v_0^i(z)=v(p_i)\delta_{i,j}+\Re\left(\gamma_{i,j}z^{\theta_0}\right)+O(|z|^{\theta_0+1}\log|z|),
	\end{align*}
	so these variations are indeed admissible by the discussion in Section \ref{admissible}. For more details on this important technical point, we refer to \cite{index3}. Notice that in general, at a branch point of multiplicity $m\geq 2$, we have 
	\begin{align*}
		\D^{m+1}v\in L^{\infty}(B(0,1))
	\end{align*}
	which implies that
	\begin{align*}
		v\in C^{m,1}(B(0,1))
	\end{align*}
	while for $m=1$, 
	\begin{align*}
		\D^2v=O(\log|z|)
	\end{align*}
	so that 
	\begin{align*}
		v\in \bigcap_{\alpha<1}C^{1,\alpha}(B(0,1)),
	\end{align*}
	but $v\notin C^{1,1}(B(0,1))$ in general. 
\end{rem}

\begin{defi}
	For all admissible variation $v\in W^{2,2}(\Sigma)$ of $\vec{\Psi}$ we denote by $u_0^i=|\phi|^2v_0^i$, where $v_0^i\in W^{2,2}(\Sigma)$ is the admissible variation of $\vec{\Psi}$ constructed in Theorem \ref{expansionends2}.
\end{defi}

\section{Renormalised energy for minimal surfaces with embedded ends}\label{renormalised}

\subsection{Explicit computation of the the singular energy}

First recall the definition of flux of a complete minimal surface. 
\begin{defi}\label{flux}
	Let $\Sigma$ be a closed Riemann surface, $p_1,\cdots,p_n\in \Sigma$ be fixed points and $\phi:\Sigma\rightarrow \R^d$ be a complete minimal surface with finite total curvature. For all $1\leq j\leq n$, we define the flux of $\phi$ at $p_j$ by
	\begin{align*}
		\mathrm{Flux}(\phi,p_j)=\frac{1}{\pi}\Im\int_{\gamma}\partial\phi\in \R^d
	\end{align*}
	where $\gamma\subset \Sigma\setminus\ens{p_1,\cdots,p_n}$ is a fixed contour around $p_j$ that does not enclosed other points $p_k$ for some $k\neq j$.
\end{defi}
By the Weierstrass parametrisation, we have at an end of multiplicity $m\geq 1$ for some $\vec{A}_0\in \C^d\setminus\ens{0}$ and $\vec{A}_1,\cdots,\vec{A}_m\in \C^d$ and $\vec{\gamma}_0\in \R^d$
\begin{align*}
	\phi(z)=\sum_{j=0}^{m}\Re\left(\frac{\vec{A}_j}{z^{m-j}}\right)+\vec{\gamma}_0\log|z|+O(|z|),
\end{align*}
and we compute
\begin{align*}
	\int_{S^1}\partial\phi=\int_{S^1}\frac{\vec{\gamma}_0}{2}\frac{dz}{z}=\pi i\,\vec{\gamma}_0.
\end{align*}
Therefore, we have
\begin{align*}
	\mathrm{Flux}(\phi,p_j)=\vec{\gamma}_0\in \R^d
\end{align*}
is a well-defined quantity independent of the chart.

\begin{theorem}\label{embeddedends}
	Let $\Sigma$ a compact Riemann surface, $\phi:\Sigma\setminus\ens{p_1,\cdots,p_n}\rightarrow \R^3$ a minimal surface with $n$ embedded ends $p_1,\cdots,p_n\in\Sigma$ and exactly $m$ catenoid ends $p_1,\cdots,p_{m}\in\Sigma$ ($0\leq m\leq n$). Let $\vec{\Psi}:\Sigma\rightarrow S^3$ a Willmore surface in $S^3$ obtained by inverse stereographic projection of $\phi$. Then the index quadratic form $Q_{\vec{\Psi}}:W^{2,2}(\Sigma)\rightarrow\R$ of $\vec{\Psi}$ satisfies for all $v\in  C^2(\Sigma)$ the identity
	\begin{align}\label{GLbranche}
	Q_{\vec{\Psi}}(v)=&\lim\limits_{\epsilon\rightarrow 0}\bigg\{\frac{1}{2}\int_{\Sigma_\epsilon^2}^{}(\Delta_gu-2K_gu)^2d\vg-8\pi\sum_{i=1}^{n}\frac{\alpha_i^2}{\epsilon^2}v^2(p_i)-16\pi\sum_{j=1}^{m}\beta_j^2\log\left(\frac{1}{\epsilon}\right)v^2(p_j)\nonumber\\
	&+16\pi\sum_{j=1}^{m}\beta_j^2v^2(p_j)\bigg\},
	\end{align}
	where $u=|\phi|^2v$, and $\Sigma_\epsilon=\Sigma\setminus\un{i=1}{n}\bar{B}_{\epsilon}(p_j)$, where the $\bar{B}_{\epsilon}(p_i)$ are chosen as in \cite{indexS3} (with respect to a fixed covering $U_1,\cdots,U_n$ of $p_1,\cdots,p_n$), and 
	\begin{align*}
		\beta_j=|\mathrm{Flux}(\phi,p_j)|.
	\end{align*} 
\end{theorem}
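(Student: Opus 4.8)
The plan is to compute the limit defining $Q_{\vec{\Psi}}(v)$ in \eqref{limit} by explicitly evaluating the boundary integrals $\int_{\partial B_{\epsilon}(p_i)}\left((\Delta_g u + 2K_g u)\ast du - \tfrac12 \ast d|du|_g^2\right)$ near each end, using the Weierstrass expansion of $\phi$ at an embedded end. First I would recall from Section \ref{admissible} that $Q_{\vec{\Psi}}(v) = \lim_{\epsilon\to 0}\left(\tfrac12\int_{\Sigma_\epsilon}(\Delta_g u - 2K_g u)^2 d\vg - \sum_{i=1}^n Q_\epsilon^i v^2(p_i)\right)$, so the whole task reduces to computing the constant $Q_\epsilon^i$ — equivalently, the divergent part of the boundary term at each $p_i$ — up to $o(1)$. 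I would split the ends into two families: planar ends (multiplicity $1$, zero flux) and catenoid ends (multiplicity $1$, nonzero flux $\beta_j$), since Schoen's theorem forces every embedded end of a complete finite-total-curvature minimal surface in $\R^3$ to be planar or catenoidal.

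The key computation is the local expansion near $p_i$. Using the Weierstrass data, at an embedded end one has (up to rotation and scaling) $\phi(z) = \Re(\alpha_i/z, i\alpha_i/z, \beta_i\log|z|) + O(|z|)$, hence $|\phi|^2 = \alpha_i^2|z|^{-2}(1 + O(|z|))$ (noting the $\beta_i^2\log^2|z|$ term is lower order relative to $|z|^{-2}$) and $e^{2\lambda} = 2|\p{z}\phi|^2 = \alpha_i^2|z|^{-4}(1+O(|z|^2))$. For smooth $v$ with $v = v(p_i) + \Re(\gamma z) + O(|z|^2)$, one gets $u = |\phi|^2 v = \alpha_i^2 v(p_i)|z|^{-2} + \Re(\alpha_i^2\bar\gamma/z) + \dots$. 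Then $\Delta_g u = e^{-2\lambda}\Delta u$: since $\Delta_{g_0}(|z|^{-2})$ in flat coordinates is $0$ away from $0$... more carefully, $\Delta_{g}|\phi|^2 = 4$ (as $\phi$ is conformal harmonic), so $\Delta_g u = 4v(p_i) + \dots$, and the subtle point is extracting the precise coefficient of the divergent powers $\epsilon^{-2}$ and $\log(1/\epsilon)$ in $\int_{\partial B_\epsilon(p_i)}$. The $\epsilon^{-2}$ divergence, present at every end, contributes $-8\pi\alpha_i^2 v^2(p_i)/\epsilon^2$; the $\log(1/\epsilon)$ divergence appears \emph{only} at catenoid ends and comes precisely from the interaction of the $\beta_j\log|z|$ term in $\phi$ with the leading behaviour of $u$, producing $-16\pi\beta_j^2\log(1/\epsilon)\,v^2(p_j)$, and — crucially — the \emph{finite} renormalised remainder $+16\pi\beta_j^2 v^2(p_j)$, which is the whole point of the theorem. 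I would organize this as: (i) reduce to a single end, work in the chart $\varphi_i$; (ii) write $u = \alpha_i^2 v(p_i)|z|^{-2} + (\text{harmonic lower-order}) + (\text{error})$ and compute $\Delta_g u$, $|du|_g^2$, $\ast du$ on circles $|z| = \epsilon$; (iii) expand each of the three boundary terms $\int_{\partial B_\epsilon}(\Delta_g u + 2K_g u)\ast du$ and $\int_{\partial B_\epsilon}\ast d|du|_g^2$ in powers of $\epsilon$, keeping the $\epsilon^{-2}$, $\log\epsilon$, and $\epsilon^0$ coefficients and discarding $o(1)$; (iv) check that cross terms and the $O(|z|)$ tail of $\phi$, and the $O(|z|^2)$ tail of $v$, contribute only $o(1)$ to these coefficients — here the absence of a $W^{2,2}\hookrightarrow C^1$ embedding is exactly why only $v(p_i)$, and not $\gamma$, survives in the divergent and finite parts.

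The main obstacle I expect is the bookkeeping in step (iii)–(iv): isolating the finite term $16\pi\beta_j^2 v^2(p_j)$ at catenoid ends requires carrying the $O(|z|^2)$ correction in $e^{2\lambda}$ and the $\Re(\alpha_0 z)$-type corrections through the boundary integrals without losing or double-counting a term, since these are precisely the contributions that would naively look like they vanish or diverge but in fact combine to a finite constant. A secondary technical point is confirming that $K_g u$ terms are genuinely negligible: $K_g = O(|z|^4)$ at an embedded end while $u = O(|z|^{-2})$, so $K_g u = O(|z|^2)$ and $\int_{\partial B_\epsilon} K_g u \ast du = O(\epsilon^2)\to 0$, which is clean. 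I would then conclude by substituting back into \eqref{limit}, collecting all $n$ ends, writing $\Sigma_\epsilon^2 = \Sigma_\epsilon$ in the notation of the statement, and identifying the result with \eqref{GLbranche}; the passage from smooth $v$ to general admissible $v\in W^{2,2}(\Sigma)$ follows by the density/continuity argument already established in Theorem \ref{firstestimate} and the remark following it.
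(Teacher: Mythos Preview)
Your overall strategy is correct and matches the paper's: reduce to a single end, use the Sobolev argument to replace $v$ by the constant $v(p_i)$, and expand the boundary integral $\int_{\partial B_\epsilon(p_i)}\big((\Delta_g u+2K_g u)\ast du-\tfrac12\ast d|du|_g^2\big)$ using the Weierstrass data. The paper does exactly this, working in inverted ``large $x$'' coordinates and computing $4|\phi|^2-\tfrac12|d|\phi|^2|_g^2$ directly.

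There is, however, a concrete error in your power counting for the $K_g$ term. You write that $K_g=O(|z|^4)$, $u=O(|z|^{-2})$, hence $K_gu=O(|z|^2)$ and $\int_{\partial B_\epsilon}K_gu\,\ast du=O(\epsilon^2)\to0$. The first two estimates are right, but the last is not: on $\partial B_\epsilon$ one has $\ast du=(\partial_r u)\,r\,d\theta$ with $\partial_r u\sim -2\alpha_i^2 v(p_i)/r^3$, so $\ast du=O(\epsilon^{-2})\,d\theta$, and therefore
\[
\int_{\partial B_\epsilon}2K_gu\,\ast du \;=\; O(\epsilon^{2})\cdot O(\epsilon^{-2})\cdot 2\pi \;=\; O(1),
\]
not $O(\epsilon^2)$. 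At a catenoid end $K_g=-\beta_j^2|z|^4/\alpha_j^4+O(|z|^6)$, and carrying the leading coefficient through gives exactly $\int_{\partial B_\epsilon}2K_gu\,\ast du=8\pi\beta_j^2 v^2(p_j)+o(1)$ (in the orientation of $\partial B_\epsilon(p_j)$). This is precisely half of the finite term $+16\pi\beta_j^2 v^2(p_j)$ in \eqref{GLbranche}; the other half comes from the computation of $\Delta_g u\,\ast du-\tfrac12\ast d|du|_g^2$ that you outlined. If you drop the $K_g$ contribution you will obtain $+8\pi\beta_j^2$ instead of $+16\pi\beta_j^2$. (At a \emph{planar} end the Gauss map is constant to leading order, so $K_g$ vanishes to higher order and your claim is correct there---but not at catenoid ends, which is exactly where the finite term lives.)

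So the fix is simple: in your step (iii), keep the $2K_gu\,\ast du$ piece and evaluate its $O(1)$ contribution at catenoid ends rather than discarding it.
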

\begin{proof}
	We make the computation of the residue at a catenoid ends, as the residue at a planar end will be the same if we simply take the formula by plugging $0$ at the place of the catenoid residue. This simple fact will become clear at the end of the proof. there exists $\beta_j\in\R\setminus\ens{0}$ ($m\leq j\leq m+n$) such that in a conformal local chart $D^2\subset\R^2\rightarrow B_r(p_j)$,
	\begin{align}
	\phi(x)=\left(\alpha_j\frac{x}{|x|^2}+O(|x|),\beta_j\log|x|+a\cdot x+O(|x|^2)\right)
	\end{align}
	where $a\in\R^2$. 
	Furthermore, in the remaining terms, as $Q:W^{2,2}(\Sigma)\rightarrow \R$ is continuous, and by Sobolev embedding theorem, $W^{2,2}(\Sigma)\hookrightarrow C^0(S^2)$ but does not embed in $C^1(\Sigma)$, we deduce that $Q$ cannot depend on the higher derivatives of $v$ at $p_i$. Therefore, one only needs to compute
	\begin{align*}
	\int_{S^1_r}v^2\left(\Delta_g|\phi|^2\star d|\phi|^2-\frac{1}{2}\star d|d|\phi|^2|^2\right)=\int_{S^1_r}v^2\star d\left(4|\phi|^2-\frac{1}{2}|d|\phi|^2|^2\right)+O\left(\frac{\log(r)}{r}\right)
	\end{align*}
	because as $\phi$ is harmonic, we have
	\begin{align*}
	\Delta_g|\phi|^2=4
	\end{align*}
	while
	\begin{align*}
	|\phi|^2=|x|^2+\beta_j^2\log^2\left(\frac{|x|}{\alpha_j}\right)+O\left(\frac{\log|x|}{|x|}\right)
	\end{align*}
	and
	\begin{align*}
	\D|\phi|^2=2x+2\beta_j^2\frac{x}{|x|^2}\log\left(\frac{|x|}{\alpha_j}\right)+O\left(\frac{\log|x|}{|x|^2}\right)
	\end{align*}
	so
	\begin{align*}
	|d|\phi|^2|^2=4\left(|x|^2+2\beta_j^2\log\left(\frac{|x|}{\alpha_j}\right)+O\left(\frac{\log|x|}{|x|}\right)\right)
	\end{align*}
	therefore
	\begin{align*}
	4|\phi|^2-\frac{1}{2}|d|\phi|^2|&=4\left(|x|^2+\beta_j^2\log^2\left(\frac{|x|}{\alpha_j}\right)\right)-2\left(|x|^2+2\beta_j^2\log\left(\frac{|x|}{\alpha_j}\right)\right)+O\left(\frac{\log|x|}{|x|}\right)\\
	&=2|x|^2+4\beta_j^2\log\left(\frac{|x|}{\alpha_j}\right)\left(\log\left(\frac{|x|}{\alpha_j}\right)-1\right)+O\left(\frac{\log|x|}{|x|}\right)
	\end{align*}
	We deduce that
	\begin{align*}
	\star d\left(4|\phi|^2-\frac{1}{2}|d|\phi|^2|^2\right)=4(x_1dx_2-x_2dx_1)+8\beta_j^2\log\left(\frac{|x|}{\alpha_j}\right)\frac{x_1dx_2-x_2dx_1}{|x|^2}-4\beta_j^2\frac{x_1dx_2-x_2dx_1}{|x|^2}
	\end{align*}
	Finally, one gets
	\begin{align}\label{residu}
	\int_{S_r^1}v^2\left(\Delta_g|\phi|^2\star d|\phi|^2-\frac{1}{2}\star d|d|\phi|^2|^2\right)&=8\pi r^2v^2(p_j)+16\pi\beta_j^2\log\left(\frac{r}{\alpha_j}\right)v^2(p_j)-8\pi\beta_j^2v^2(p_j)+O\left(\frac{\log r}{r}\right)
	\end{align}
	and for a planar end, we have the same expression with $\beta_j=0$. This translates if $u=|\phi|^2v$ as
	\begin{align*}
	Q_\epsilon(v)&=\frac{1}{2}\int_{\Sigma_\epsilon}(\Delta_gu-2K_gu)^2d\vg-\int_{\Sigma_\epsilon}^{}d\left((\Delta_gu+2K_gu)\star dw-\frac{1}{2}\star d|du|_g^2\right)\\
	&=\frac{1}{2}\int_{\Sigma_\epsilon}^{}(\Delta_gu-2K_gu)^2d\vg-8\pi\sum_{i=1}^{n+m}\frac{\alpha_i^2}{\epsilon^2}v^2(p_i)-16\pi\sum_{j=1}^{m}\beta_j^2\log\left(\frac{1}{\epsilon}\right)v^2(p_j)\\
	&+16\pi\sum_{j=1}^{m}\beta_j^2v^2(p_j)+O(\epsilon\log\epsilon)
	\end{align*}
	and this gives
	\begin{align*}
	Q(v)&=\lim\limits_{\epsilon\rightarrow 0}\frac{1}{2}\int_{\Sigma_\epsilon}^{}(\Delta_gu-2K_gu)^2d\vg-8\pi\sum_{i=1}^{n}\frac{\alpha_i^2}{\epsilon^2}v^2(p_i)-16\pi\sum_{j=1}^{m}\beta_j^2\log\left(\frac{1}{\epsilon}\right)v^2(p_j)\\
	&+16\pi\sum_{j=1}^{m}\beta_j^2v^2(p_j)
	\end{align*}
	which concludes the proof of the theorem.	
\end{proof}
\begin{rem}\label{rem1}
	We can easily see that the preceding expression if well-defined directly, even is we already know that it is (as this quantity is the second variation of a compact Willmore surface for an admissible direction). Indeed, 
	\begin{align*}
	\mathscr{L}_g|\phi|^2&=\Delta_g|\phi|^2-2K_g|\phi|^2=4+2\frac{\beta_j^2}{|x|^2}+O\left(\frac{1}{|x|^4}\right)\\
	\mathscr{L}_g^2|\phi|^2&=8\frac{\beta_j^2}{|x|^4}+2\frac{\beta_j^2}{|x|^4}\left(4+2\frac{\beta_j^2}{|x|^2}\right)+O\left(\frac{1}{|x|^6}\right)\\
	&=16\frac{\beta_j^2}{|x|^4}+O\left(\frac{1}{|x|^6}\right)
	\end{align*}
	Therefore
	\begin{align*}
	\frac{1}{2}\int_{B_r\setminus B_1(0)}\left(\mathscr{L}_g|\phi|^2\right)^2d\vg
	&=\frac{1}{2}\int_{B_r\setminus B_1(0)}\left(4+2\frac{\beta_j^2}{|x|^2}\right)^2dx+O(1)
	=\frac{1}{2}\int_{B_r\setminus B_1(0)}^{}\left(16+16\frac{\beta_j^2}{|x|^2}\right)dx\\
	&=8\pi r^2+16\beta_j^2\log r+O(1)
	\end{align*}
	and this proves by \eqref{residu} that \eqref{GLbranche} makes sense.
\end{rem}

\subsection{Renormalised energy identity}

\begin{theorem}\label{explicit}
	Under the hypothesis of Theorem \ref{ta}, assume that $\phi$ has embedded ends. 
	There exists a symmetric $\{\lambda_{i,j}\}_{1\leq i,j\leq n}$ with zero diagonal terms independent of $v$, and a function $v_0\in W^{2,2}(\Sigma)$ vanishing on $\ens{p_1,\cdots,p_n}$ such that
	\begin{align*}
	Q(u)=Q(u_0)+16\pi\sum_{i=1}^{m}\beta_i^2v^2(p_i)+4\pi\sum_{1\leq i,j\leq n}^{}\lambda_{i,j}v(p_i)v(p_j)
	\end{align*}
	where $u_0=|\phi|^2v_0$ and $Q(u_0)\geq 0$ and $Q(u_0)=0$ if $\mathscr{L}_g^2 u=0$. In particular we deduce that the index cannot be more that $n-1$.
\end{theorem}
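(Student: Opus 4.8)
The plan is to decompose an arbitrary admissible variation $v$ into a singular part localized at the ends and a regular remainder, and then read off the quadratic form on the $v(p_i)$ from the renormalised energy. First I would recall from \eqref{limit2} and Theorem \ref{embeddedends} that
\begin{align*}
Q_{\vec{\Psi}}(v)=\lim_{\epsilon\to 0}\left(\frac{1}{2}\int_{\Sigma_\epsilon}(\mathscr{L}_g u)^2\,d\vg-8\pi\sum_{i=1}^n\frac{\alpha_i^2}{\epsilon^2}v^2(p_i)-16\pi\sum_{j=1}^m\beta_j^2\log\tfrac{1}{\epsilon}\,v^2(p_j)+16\pi\sum_{j=1}^m\beta_j^2 v^2(p_j)\right),
\end{align*}
so the term $16\pi\sum\beta_i^2 v^2(p_i)$ is already isolated. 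The remaining task is to analyse $\frac{1}{2}\int_{\Sigma_\epsilon}(\mathscr{L}_g u)^2$ and extract the cross terms $\lambda_{i,j}v(p_i)v(p_j)$.

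Next I would use the functions $v_0^i$ constructed in Theorems \ref{indicielles1}, \ref{expansionends}, \ref{expansionends2}: for each $i$, $v_0^i$ is the (subsequential) limit of the minimisers $u_{\epsilon,\delta}^i/|\phi|^2$, is admissible, vanishes at $p_j$ for $j\neq i$, and satisfies $v_0^i(p_i)=v(p_i)$. Set $v_0 = v - \sum_{i=1}^n v_0^i$; by construction $v_0\in W^{2,2}(\Sigma)$, $v_0(p_i)=0$ for all $i$, and $\vec{v}_0=v_0\n_{\vec{\Psi}}$ is admissible. By the theorem preceding this section (the extension of \cite{indexS3}) we have $Q_{\vec{\Psi}}(v_0)=\frac{1}{2}\int_\Sigma(\mathscr{L}_g(|\phi|^2 v_0))^2\,d\vg\ge 0$, with equality iff $\mathscr{L}_g^2 u=0$. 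Expanding $Q_{\vec{\Psi}}(v)=B(v_0+\sum_i v_0^i,\,v_0+\sum_i v_0^i)$ in the bilinear form $B$ associated (after renormalisation) to $Q_{\vec{\Psi}}$, the diagonal terms $B(v_0^i,v_0^i)$ contribute precisely the $16\pi\beta_i^2 v^2(p_i)$ together with an error absorbed into $Q(u_0)$ (this is the content of Theorem \ref{firstestimate} and \eqref{singularenergy}), the cross terms $B(v_0,v_0^i)$ vanish since $v_0^i$ solves $\mathscr{L}_g^2 v_0^i=0$ away from $p_i$ and the boundary integrals on $\partial B_\epsilon(p_i)$ cancel by the matching conditions, and the terms $B(v_0^i,v_0^j)$ with $i\neq j$ produce, via integration by parts against the boundary circles $\partial B_\epsilon(p_j)$ and the explicit expansions $u_\epsilon^i=\Re(c_{i,j}/z+d_{i,j}\bar z/z)+a_{i,j}\log|z|+b_{i,j}+\psi_\epsilon$ from Theorem \ref{indicielles1} together with Remark \ref{outsidevortices}, a finite quantity of the form $4\pi\lambda_{i,j}v(p_i)v(p_j)$. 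Symmetry of $\{\lambda_{i,j}\}$ follows from the symmetry of $B$, and the diagonal entries $\lambda_{i,i}$ vanish because, by the same boundary computation, the only self-interaction at $p_i$ is the $\beta_i^2$ term already extracted (for a planar end $\beta_i=0$ and there is no residual $v^2(p_i)$ term at all — here I would invoke the explicit residue formula \eqref{residu} with $\beta_j=0$).

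The main obstacle I anticipate is the rigorous bookkeeping of the $i\neq j$ cross terms: one must show that $B_\epsilon(u_\epsilon^i,u_\epsilon^j)$ converges as $\epsilon\to 0$ (the remark after Theorem \ref{firstestimate} asserts this) and, more delicately, that its limit depends on $v$ only through the product $v(p_i)v(p_j)$ and not on higher jets — this is where the absence of the Sobolev embedding $W^{2,2}\not\hookrightarrow C^1$ is used, exactly as in the scalar argument producing $Q_\epsilon^i v^2(p_i)$, so that $u_\epsilon^i$ is (to leading order) linear in $v(p_i)$ and the pairing is bilinear in $(v(p_i),v(p_j))$. Once this is established, the bound $\mathrm{Ind}_W(\vec{\Psi})\le n-1$ follows: the quadratic form $(a_1,\dots,a_n)\mapsto 16\pi\sum\beta_i^2 a_i^2+4\pi\sum\lambda_{i,j}a_i a_j$ is the form $\Lambda(\vec{\Psi})$ (up to the constant $4\pi(2n+1)$ of Theorem \ref{ta}), and for a smooth immersion $m=0$ so the diagonal of $\Lambda$ is identically zero, whence $\mathrm{tr}\,\Lambda=0$ and $\Lambda$ cannot be negative definite, giving $\mathrm{Ind}\,\Lambda\le n-1$; combined with $Q(u_0)\ge 0$ and the fact that variations with $v(p_i)=0$ cannot lower the index, this yields $\mathrm{Ind}_W(\vec{\Psi})=\mathrm{Ind}\,\Lambda(\vec{\Psi})\le n-1$.
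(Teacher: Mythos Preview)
Your overall strategy --- decompose $u$ using the minimisers $u_\epsilon^i$, expand the bilinear form, and read off boundary contributions --- is the same as the paper's. However there is a genuine gap in your accounting.

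You assert that the cross terms $B(v_0,v_0^i)$ vanish because ``$v_0^i$ solves $\mathscr{L}_g^2 v_0^i=0$ away from $p_i$ and the boundary integrals on $\partial B_\epsilon(p_i)$ cancel by the matching conditions''. This is false. On $\partial B_\epsilon(p_i)$ one has $u_\epsilon^i=u$ and $\partial_\nu u_\epsilon^i=\partial_\nu u$, so the remainder $u_\epsilon=u-\sum_j u_\epsilon^j$ equals $-\sum_{j\neq i}u_\epsilon^j$ there, which is \emph{not} zero: by the expansion of Theorem~\ref{indicielles1} each $u_\epsilon^j$ (for $j\neq i$) behaves like $\lambda_{i,j}v(p_j)\log|z|+O(1)$ near $p_i$, so $\partial_\nu u_\epsilon$ carries the term $-\tfrac{1}{\epsilon}\sum_{j\neq i}\lambda_{i,j}v(p_j)$. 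Paired against $\mathscr{L}_g u_\epsilon^i=4v(p_i)+O(\epsilon^2)$, this boundary integral gives
\[
B_\epsilon(u_\epsilon,u_\epsilon^i)=8\pi\sum_{j\neq i}\lambda_{i,j}v(p_i)v(p_j)+O(\epsilon\log\epsilon),
\]
which is precisely the paper's Step~3. Meanwhile $\tfrac{1}{2}B_\epsilon(u_\epsilon^i,u_\epsilon^j)=-4\pi\lambda_{i,j}v(p_i)v(p_j)+O(\epsilon\log^2\epsilon)$ (Step~2, with the opposite sign from what you wrote). The two families of cross terms then combine, after summing, to the net $+4\pi\sum_{i\neq j}\lambda_{i,j}v(p_i)v(p_j)$. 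If you drop the $B(v_0,v_0^i)$ terms you get the wrong coefficient (and wrong sign).

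Two further remarks. First, the symmetry $\lambda_{i,j}=\lambda_{j,i}$ is not merely ``symmetry of $B$'': in the paper it is deduced from the identity $a_{i,j}v(p_j)=a_{j,i}v(p_i)$ obtained by computing $B_\epsilon(u_\epsilon^i,u_\epsilon^j)$ two ways (integrating by parts against $u_\epsilon^i$ versus $u_\epsilon^j$), which forces the log-coefficient $a_{i,j}$ in the expansion of $u_\epsilon^i$ near $p_j$ to factor as $\lambda_{i,j}v(p_i)$. Second, the vanishing of the diagonal $\lambda_{i,i}$ is not a formal consequence of \eqref{residu} with $\beta_j=0$; the paper establishes it by showing in Step~1 that $Q_\epsilon(u_\epsilon^i)$ equals the singular part $8\pi\alpha_i^2\epsilon^{-2}v^2(p_i)+16\pi\beta_i^2\log(1/\epsilon)v^2(p_i)$ up to $O(\epsilon\log^2\epsilon)$, with \emph{no} finite residual constant --- this requires the explicit boundary computation and the constraint $\delta_i=-4v(p_i)$ forced by finiteness of the renormalised energy.
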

\begin{proof}
	We fix $\epsilon>0$ small enough such that the ball $\ens{\bar{B}_{2\epsilon}(p_i)}_{1\leq i\leq n}$ are disjoint, and we define the following symmetric bilinear form $B_\epsilon:\mathrm{W}^{2,2}(\Sigma_\epsilon)\times \mathrm{W}^{2,2}(\Sigma_\epsilon)\rightarrow\R$
	\begin{align*}
	B_\epsilon(u_1,u_2)=\int_{\Sigma_\epsilon}\mathscr{L}_g u_1\, \mathscr{L}_gu_2\,d\vg
	\end{align*}
	and $Q_\epsilon:\mathrm{W}^{2,2}(S^2_\epsilon)\rightarrow\R$ the associated quadratic form. We note that
	\begin{align*}
	Q(u)=\lim\limits_{\epsilon\rightarrow 0}Q_\epsilon(u)-8\pi\sum_{i=1}^{n}\frac{\alpha_i^2}{\epsilon^2}v^2(p_i)-16\pi\sum_{j=1}^{m}\beta_j^2\log\left(\frac{1}{\epsilon}\right)v^2(p_j)
	+16\pi\sum_{j=1}^{m}\beta_j^2v^2(p_j)
	\end{align*}
	if $u=|\phi|^2 v$. We now define
	$\displaystyle
	u_\epsilon=u-\sum_{i=1}^{n}u_\epsilon^i
	$
	\begin{align}\label{decomposition}
	Q_\epsilon(u)=Q_\epsilon\left(u_\epsilon+\sum_{i=1}^{n}u_\epsilon^i\right)
	=Q_\epsilon(u_\epsilon)+\sum_{i=1}^{n}Q_\epsilon(u_\epsilon^i)+\sum_{i=1}^{n}B_\epsilon(u_\epsilon,u_\epsilon^i)+\frac{1}{2}\sum_{1\leq i\neq j\leq n}^{}B_\epsilon(u_\epsilon^i,u_\epsilon^j).
	\end{align}
	\textbf{Step 1} : Estimation of $Q_\epsilon(u_\epsilon)$. We first remark that $Q_\epsilon(u_\epsilon)$ cannot depend on the derivatives of $v$ at $p_1,\cdots,p_n$ by Sobolev embedding theorem. Therefore, each time we differentiate $v_\epsilon^i$, we know that analogous cancellations as observed by the explicit computations in \cite{indexS3} will actually make these residues vanish as $\epsilon\rightarrow 0$. Whenever one of these terms occur, we shall neglect them.
	
	For all $1\leq i\leq n$, let $v_\epsilon^i\in C^\infty(\bar{B_{2\epsilon}\setminus \bar{B}_\epsilon(p_i)})$ such that $u_\epsilon^i=|\phi|^2v_\epsilon^i$ on $\bar{B}_{2\epsilon}(p_i)\setminus B_\epsilon(p_i)$.  We fix a chart $D^2\rightarrow B_r(p_i)$. We recall that close to $p_i$, we have
	\begin{align*}
	|\phi(x)|^2&=\frac{\alpha_i^2}{|x|^2}+\beta_i^2\log^2|x|+O(|x|\log|x|)
	\end{align*}
	Then we deduce by the Dirichlet boundary condition that
	\begin{align*}
	v_\epsilon^i=v\quad \text{on}\;\, \partial B_\epsilon(p_i)
	\end{align*}
	and
	\begin{align*}
	\partial_\nu u_\epsilon^i=\partial_\nu|\phi|^2 v_\epsilon^i+|\phi|^2 \partial_\nu v_\epsilon^i=\partial_\nu|\phi|^2 v
	+|\phi|^2 \partial_\nu v_\epsilon^i\quad \text{on}\;\,\partial B_\epsilon(p_i)
	\end{align*}
	and as
	\begin{align*}
	\partial_\nu u_\epsilon^i=\partial_\nu (|\phi|^2) v+|\phi|^2\partial_\nu v\quad \text{on}\;\, \partial B_\epsilon(p_i)\\
	\end{align*}
	we also have
	\begin{align*}
	\partial_\nu v_\epsilon^i=\partial_\nu v\quad \text{on}\;\,\partial B_\epsilon(p_i)
	\end{align*}
	so
	\begin{align*}
	u_\epsilon^i&=\left(\frac{\alpha_i^2}{\epsilon^2}+\beta_i^2\log^2(\epsilon)\right)v+O(1)\\
	\partial_\nu u_\epsilon^i&=\left(-2\frac{\alpha_i^2}{\epsilon^3}+2\frac{\beta_i^2}{\epsilon}\log\epsilon\right)v+\left(\frac{\alpha_i^2}{\epsilon^2}+\beta_i^2\log^2\epsilon\right)\partial_{\nu}v+O(1)
	\end{align*}
	then on $B_{2\epsilon}(p_i)\setminus \bar{B}_\epsilon(p_i)$, we have
	\begin{align*}
	\Delta_gu_\epsilon^i&=\Delta_g(|\phi|^2v_\epsilon^i)
	=4v_\epsilon^i+2\s{d|\phi|^2}{d v_\epsilon^i}+|\phi|^2\Delta_g v_\epsilon^i\\
	&=4v_\epsilon^i-4\frac{|x|^4}{\alpha_i^2}\left(\frac{\alpha_i^2}{|x|^3}+\beta_i^2\frac{\log\left(\frac{1}{|x|}\right)}{|x|}\right)\frac{x}{|x|}\cdot \D v_\epsilon^i+|x|^2\Delta v_\epsilon^i+O(|x|^4\log^2|x|)\\
	&=4v_\epsilon^i-4|x|\left(1+\frac{\beta_i^2}{\alpha_i^2}|x|^2\log\left(\frac{1}{|x|}\right)\right)\frac{x}{|x|}\cdot \D v_\epsilon^i+|x|^2 \Delta v_\epsilon^i+O(|x|^4\log^2|x|)
	\end{align*}
	and
	\begin{align*}
	-2K_gu_\epsilon^i=2\frac{\beta_i^2}{\alpha_i^2}|x|^2 v_\epsilon^i+O(|x|^4)
	\end{align*}
	so on $\partial B_\epsilon(p_i)$,
	\begin{align*}
	\mathscr{L}_gu_\epsilon^i=2\left(2+\frac{\beta_i^2}{\alpha_i^2}\epsilon^2\right)v-4\epsilon\left(1+\frac{\beta_i^2}{\alpha_i^2}\epsilon^2\log\left(\frac{1}{\epsilon}\right)\right)\partial_\nu v+\epsilon^2\Delta v_\epsilon^i+O(\epsilon^4\log^2\epsilon)
	\end{align*}
	Therefore
	\begin{align*}
	\frac{x}{|x|}\cdot \D\left(\Delta_g u_\epsilon^i\right)&=4\frac{x}{|x|}\cdot \D v_\epsilon^i-4\left(1+\frac{\beta_i^2}{\alpha_i^2}|x|^2\log\left(\frac{1}{|x|}\right)\right)\frac{x}{|x|}\cdot \D v_\epsilon^i\\
	&-4\frac{\beta_i^2}{\alpha_i^2}|x|^2\left(1+2\log\left(\frac{1}{|x|}\right)\right)\frac{x}{|x|}\cdot \D v_\epsilon^i\\
	&-4|x|\left(1+\frac{\beta_i^2}{\alpha_i^2}|x|^2\log\left(\frac{1}{|x|}\right)\right)\left(\frac{x}{|x|}\right)^t D^2 v_\epsilon^i\left(\frac{x}{|x|}\right)+2|x|\Delta v_\epsilon^i+|x|x\cdot \D\Delta v_\epsilon^i+O(|x|^3\log^2|x|)
	\end{align*}
	while
	\begin{align*}
	\frac{x}{|x|}\cdot \D(-2K_gu_\epsilon^i)=4\frac{\beta_i^2}{\alpha_i^2}|x|v_\epsilon^i+2\frac{\beta_i^2}{\alpha_i^2}|x|x\cdot \D v_\epsilon^i
	\end{align*}
	so on $\partial B_\epsilon(p_j)$, we have
	\begin{align}\label{precise}
	\partial_\nu (\mathscr{L}_gu_\epsilon^i)&=4\epsilon\frac{\beta_i^2}{\alpha_i^2}v-4\frac{\beta_i^2}{\alpha_i^2}\epsilon^2\left(\frac{1}{2}+3\log\left(\frac{1}{\epsilon}\right)\right)\partial_\nu v-4\epsilon\left(1+\frac{\beta_i^2}{\alpha_i^2}\log\left(\frac{1}{\epsilon}\right)\right)\left(\frac{x}{\epsilon}\right)^tD^2v_\epsilon^i\left(\frac{x}{\epsilon}\right)+2\epsilon\Delta v_\epsilon^i\nonumber\\
	&+\epsilon^2\partial_{\nu}\Delta v_\epsilon^i+O(\epsilon^3\log^2\epsilon)
	\end{align}
	as we can neglect all terms containing derivatives of $v$, we can replace $v$ by $v(p_i)$ and replace $\partial_\nu v$ by $0$, which gives
	\begin{align}
	&u_\epsilon^i\partial_\nu(\mathscr{L}_g u_\epsilon^i)-(\partial_\nu u_\epsilon^i)\mathscr{L}_gu_\epsilon^i=\left(\frac{\alpha_i^2}{\epsilon^2}+\beta_i^2\log^2(\epsilon)\right)v(p_i)\bigg(4\epsilon\frac{\beta_i^2}{\alpha_i^2}v(p_i)-4\epsilon\left(1+\frac{\beta_i^2}{\alpha_i^2}\log\left(\frac{1}{\epsilon}\right)\right)\left(\frac{x}{\epsilon}\right)^tD^2v_\epsilon^i\left(\frac{x}{\epsilon}\right)\nonumber\\&+2\epsilon\Delta v_\epsilon^i\bigg)\nonumber
	+2\left(\frac{\alpha_i^2}{\epsilon^3}+\frac{\beta_i^2}{\epsilon}\log\left(\frac{1}{\epsilon}\right)\right)v(p_i)\left(\left(4+2\frac{\beta_i^2}{\alpha_i^2}\epsilon^2\right)v(p_i)+\epsilon^2\Delta v_\epsilon^i\right)+O(\log^2\epsilon)\nonumber\\
	&=8\frac{\alpha_i^2}{\epsilon^3}v^2(p_i)+8\frac{\beta_i^2}{\epsilon}\log\left(\frac{1}{\epsilon}\right)v^2(p_i)+8\frac{\beta_i^2}{\epsilon}v^2(p_i)
	+2\frac{\alpha_i^2}{\epsilon}\Delta v_\epsilon^i v(p_i)\nonumber\\
	&+\frac{\alpha_i^2}{\epsilon^2}\left(-4\epsilon\left(1+\frac{\beta_i^2}{\alpha_i^2}\log\left(\frac{1}{\epsilon}\right)\right)\left(\frac{x}{\epsilon}\right)^t D^2 v_\epsilon^i\,\left(\frac{x}{\epsilon}\right)+2\epsilon\Delta v_\epsilon^i\right)v(p_i)+O(\log^2\epsilon).
	\end{align}
	Furthermore, we note that if
	\begin{align*}
	D^2 v_\epsilon^i=\begin{pmatrix}
	a_{1,1}&a_{1,2}\\
	a_{2,1}&a_{2,2}
	\end{pmatrix}+O(\epsilon)
	\end{align*}
	then
	\begin{align*}
	\int_{\partial B_\epsilon(p_i)}^{}x^t D^2 v_\epsilon^i x\,d\hh^1
	&=\epsilon^3\int_{0}^{2\pi}\left(a_{1,1}\cos^2(\theta)+a_{2,2}\sin^2(\theta)+\frac{a_{1,2}+a_{2,1}}{2}\sin(2\theta)\right)d\theta+O(\epsilon^4)\\
	&=\pi(a_{1,1}+a_{2,2})\epsilon^3+O(\epsilon^4)
	\end{align*}
	while
	\begin{align*}
	\int_{\partial B_\epsilon(p_i)}\Delta v_\epsilon^i=2\pi(a_{1,1}+a_{2,2})\epsilon+O(\epsilon^2)
	\end{align*}
	so if we write $\delta_i=a_{1,1}+a_{2,2}$, we have
	\begin{align*}
	&\int_{\partial B_\epsilon(p_i)}^{}\frac{\alpha_i^2}{\epsilon^2}\left(-4\epsilon\left(1+\frac{\beta_i^2}{\alpha_i^2}\log\left(\frac{1}{\epsilon}\right)\right)\left(\frac{x}{\epsilon}\right)^t D^2 v_\epsilon^i\,\left(\frac{x}{\epsilon}\right)+2\epsilon\Delta v_\epsilon^i\right)d\hh^1\\
	&=-4\left(\alpha_i^2+\beta_i^2\log\left(\frac{1}{\epsilon}\right)\right)
	\pi\delta_i+2\alpha_i^2\cdot 2\pi\delta_i+O(\epsilon\log\epsilon)\\
	&=-4\pi \beta_i^2\log\left(\frac{1}{\epsilon}\right)\delta_i+O(\epsilon\log\epsilon)
	\end{align*}
	we obtain finally
	\begin{align}\label{diagonale}
	&Q_\epsilon(u_\epsilon^i)=\frac{1}{2}\int_{\partial B_\epsilon(p_i)}^{}u_\epsilon^i\partial_\nu(\mathscr{L}_g u_\epsilon^i)-(\partial_\nu u_\epsilon^i)\mathscr{L}_g u_\epsilon^i d\hh^1\nonumber\\
	&=\frac{8\pi \alpha_i^2}{\epsilon^2}v^2(p_i)+8\pi\beta_i^2\log\left(\frac{1}{\epsilon}\right)v^2(p_i) +8\pi\beta_i^2v^2(p_i)-2\pi\beta_i^2\log\left(\frac{1}{\epsilon}\right)\delta_i v(p_i)+2\pi\beta_i^2\delta_iv(p_i)+O(\epsilon\log\epsilon).
	\end{align}
	Now thanks to the asymptotic behaviour of $\ens{v_\epsilon^i}_{1\leq i\leq n}$, we know that $B_\epsilon(u_\epsilon,u_\epsilon^i)$, and $B_\epsilon(u_\epsilon^i,u_\epsilon^j)$ are bounded terms, so for the energy to be finite, we must have 
	\begin{align*}
	Q_\epsilon(u_\epsilon^i)=\frac{8\pi\alpha_i^2}{\epsilon^2}v^2(p_i)+16\pi\beta_i^2\log\left(\frac{1}{\epsilon}\right)+O(1)
	\end{align*}
	which imposes
	\begin{align*}
	\delta_i=-4v(p_i)
	\end{align*}
	and we get
	\begin{align}
	Q_\epsilon(u_\epsilon^i)=\frac{8\pi\alpha_i^2}{\epsilon^2}v^2(p_i)+16\pi\beta_i^2\log\left(\frac{1}{\epsilon}\right)v^2(p_i)+O(\epsilon\log^2\epsilon).
	\end{align}
	
	\textbf{Step 2} : Estimation of $B_\epsilon(u_\epsilon^i,u_\epsilon^j)$ for $i\neq j$.
	
	For all $1\leq i,j\leq n$, and $k\neq i,j$ we have by Theorem \ref{indicielles1}
	\begin{align*}
	\int_{\partial B_\epsilon(p_k)}^{}u^i_\epsilon\partial_\nu(\mathscr{L}_g u_\epsilon^j)d\hh^1=\int_{\partial B_\epsilon(p_k)}^{}O\left(\frac{1}{|x|}\right)O(|x|^2)d\hh^1=O(\epsilon^2\log\epsilon)
	\end{align*}
	and likewise
	\begin{align*}
	\int_{\partial B_\epsilon(p_k)}^{}\partial_\nu(u_\epsilon^i)\mathscr{L}_gu_\epsilon^j d\hh^1=\int_{\partial B_\epsilon(p_k)}^{}O\left(\frac{1}{|x|^2}\right)O(|x|^3)d\hh^1=O(\epsilon^2)
	\end{align*}
	therefore
	\begin{align}\label{error}
	\sum_{k\neq i,j}^{}\int_{\partial B_\epsilon(p_k)}\left(u_\epsilon^i \partial_\nu(\mathscr{L}_g u_\epsilon^j)-\partial_\nu(u_\epsilon^i)\mathscr{L}_g u_\epsilon^j\right)d\hh^1=O(\epsilon^2\log\epsilon)
	\end{align}
	So we need only to consider the boundary integrals for $B_\epsilon(p_i)$ and $B_\epsilon(p_j)$. We have up to $O(\epsilon^3\log\epsilon)$ error terms by \eqref{error}
	\begin{align*}
	\int_{\Sigma_\epsilon}\mathscr{L}_gu_\epsilon^i\mathscr{L}_gu_\epsilon^j&=\int_{\partial B_\epsilon(p_i)}\left(u_\epsilon^i\partial_\nu (\mathscr{L}_g u_\epsilon^j)-(\partial_\nu u_\epsilon^i)\mathscr{L}_gu_\epsilon^j \right)d\hh^1+\int_{\partial B_\epsilon(p_j)}\left(u_\epsilon^i\partial_\nu (\mathscr{L}_gu_\epsilon^j)-(\partial_\nu u_\epsilon^i)\mathscr{L}_g u_\epsilon^j \right)d\hh^1
	\end{align*}
	and
	\begin{align*}
	\int_{\partial B_\epsilon(p_i)}\left(u_\epsilon^i\partial_\nu (\mathscr{L}_g u_\epsilon^j)-(\partial_\nu u_\epsilon^i)\mathscr{L}_gu_\epsilon^j \right)d\hh^1=\int_{\partial B_\epsilon(p_i)}^{}O\left(\frac{1}{|x|^2}\right)O(|x|^2)-O\left(\frac{1}{|x|^3}\right)O(|x|^3)d\hh^1=O(\epsilon)
	\end{align*}
	so by symmetry, we have
	\begin{align*}
	\frac{1}{2}B_\epsilon(u_\epsilon^i,u_\epsilon^j)&=\frac{1}{2}\int_{\partial B_\epsilon(p_j)}\left(u_\epsilon^i\partial_\nu (\mathscr{L}_gu_\epsilon^j)-(\partial_\nu u_\epsilon^i)\mathscr{L}_g u_\epsilon^j \right)d\hh^1+O(\epsilon)\\
	&=\frac{1}{2}\int_{\partial B_\epsilon(p_i)}\left(u_\epsilon^j\partial_\nu(\mathscr{L}_gu_\epsilon^i)d\mathscr{H}^1-\left(\partial_\nu u_\epsilon^i\right)\mathscr{L}_g u_\epsilon^i\right)+O(\epsilon).
	\end{align*}
	From now on, we find useful to use complex notations. Recall the expansion on $\partial B_{\epsilon}(p_j)$
	\begin{align*}
		u_{\epsilon}^i=\Re\left(\frac{c_{i,j}}{z}+d_{i,j}\frac{\z}{z}\right)+a_{i,j}\log|z|+b_{i,j}+O(|z|).
	\end{align*}
	Furthermore, as 
	\begin{align*}
		|\phi|^2=\frac{\alpha_j^2}{|z|^2}+\beta_i^2\log^2|z|+O(|z|\log|z|)
	\end{align*}
	we have 
	\begin{align*}
		u_{\epsilon}^j=|\phi|^2(v(p_j)+\Re(\gamma z)+O(|z|^2))=\frac{\alpha_j^2}{|z|^2}\left(v(p_j)+\Re(\gamma z)+O(|z|^2\log^2|z|)\right).
	\end{align*}
    Furthermore, we have
    \begin{align*}
    	e^{2\lambda}=\frac{\alpha_i^2}{|z|^4}\left(1+O(|z|)\right),
    \end{align*}
    so we deduce that
    \begin{align*}
    	&\Delta_g u_{\epsilon}^j=4v(p_j)+O(|z|^2\log^2|z|).
    \end{align*}
    Furthermore, as $K_g=O(|z|^4)$, we have also
    \begin{align*}
    	K_g u_{\epsilon}^j=O(|z|^2),
    \end{align*}
    so we get
    \begin{align*}
    	&\mathscr{L}_gu_{\epsilon}^j=4v(p_j)+O(|z|^2\log|z|)\\
    	&\partial_{\nu}(\mathscr{L}_g u_{\epsilon}^j)=O(|z|\log^2|z|),
    \end{align*}
    so we recover a weak form of \eqref{precise} (however sufficient for our purpose here). 
	Now we note that
	\begin{align*}
	\int_{\partial B_\epsilon(p_j)}^{}u_\epsilon^i\partial_\nu(\mathscr{L}_g u_\epsilon^j)d\hh^1=\int_{\partial B_\epsilon(p_j)}^{}\left(\Re\left(\frac{c_{\epsilon}^i}{z}\right)+O(\log|z|)\right)O(|z|\log^2|z|)d\hh^1=O(\epsilon\log^2\epsilon).
	\end{align*}
	Now, notice that for all smooth $\varphi:B(0,1)\rightarrow \R$, we have
	\begin{align*}
	\partial_{\nu}\varphi&=\frac{x_1}{|x|}\cdot\p{x_1}\varphi+\frac{x_2}{|x|}\cdot \p{x_2}\varphi\\
	&=\frac{1}{|z|}\left(\frac{(z+\z)}{2}(\partial+\bar{\partial})\varphi+\frac{(z-\z)}{2i}i(\partial-\bar{\partial})\varphi\right)=\frac{1}{|z|}(z\partial \varphi+\z\bar{\partial}\varphi)=\frac{2}{|z|}\Re\left(z\p{z}\varphi\right).
	\end{align*}
	Therefore, we have (as $\z/z$ has no radial component)
	\begin{align*}
	\partial_\nu u_\epsilon^i=-\frac{2}{|z|}\Re\left(\frac{c_{i,j}}{z}\right)+\frac{a_{i,j}}{|z|}+O(1),
	\end{align*}
	while
	\begin{align*}
	\mathscr{L}_gu_\epsilon^j=4v(p_j)+O(|z|^2\log|z|),
	\end{align*}
	therefore
	\begin{align*}
	\int_{\partial B_\epsilon(p_j)}\partial_\nu u_\epsilon^i\mathscr{L}_gu_\epsilon^j\,d\hh^1&=\int_{\partial B_\epsilon(p_j)}\left(-\frac{c_{i,j}}{2|z|z}-\frac{\bar{c_{i,j}}}{2|z|\z}+\frac{a_{i,j}}{|z|}+O(\log|z|)\right)(4v(p_j)+O(|z|^2\log|z|))d\hh^1\\
	&=8\pi a_{i,j}v(p_j)+O(\epsilon\log^2\epsilon).
	\end{align*}
	Therefore by symmetry
	\begin{align*}
	\frac{1}{2}B_\epsilon(u_\epsilon^i,u_\epsilon^j)=-4\pi a_{i,j}v(p_j)+O(\epsilon\log^2\epsilon)=-4\pi a_{j,i}(p_i)+O(\epsilon\log^2\epsilon)
	\end{align*}
	so
	\begin{align*}
	a_{\epsilon}^i(p_j)v(p_j)=a_{\epsilon}^{j}v(p_i)
	\end{align*}
	therefore there exists $\lambda_{i,j}\in\R$ such that $a_{i,j}=\lambda_{i,j}v(p_i)$, $a_{j,i}=\lambda_{i,j}v(p_j)$ and we deduce that
	\begin{align}\label{horsdiagonale}
	\frac{1}{2}B_\epsilon(p_i,p_j)=-4\pi\lambda_{i,j}v(p_i)v(p_j)+O(\epsilon\log\epsilon).
	\end{align}
	We note that these notations imply that for $r>0$ small enough, for all $1\leq i\leq n$, for all $j\neq i$ we have on any conformal chart $D^2\rightarrow \bar{B}_r(p_j)$
	\begin{align*}
	u_\epsilon^i(z)=\Re\left(\frac{c_{i,j}}{z}+d_{i,j}\frac{\z}{z}\right)+\lambda_{i,j}v(p_i) \log |z|+b_{i,j}+O(|z|).
	\end{align*}
	
	\textbf{Step 3} : Estimation of $B_\epsilon(u_\epsilon,u_\epsilon^i)$ for $1\leq i\leq n$. 
	
	We note that the boundary conditions imply that for all $1\leq i\leq n$, we have on $\partial B_\epsilon(p_i)$ (for some $\gamma_i,\tilde{\gamma}_i\in \C$ and $b_i\in \R$)
	\begin{align*}
	u_\epsilon&=u-\sum_{j=1}^{n}u_\epsilon^j=-\sum_{j\neq i}^{}u_\epsilon^j=\Re\left(\frac{\gamma_i}{z}+\tilde{\gamma}_i\frac{\z}{z}\right)-\sum_{j\neq i}^{}\lambda_{i,j}v(p_j)\log |z|+b_i+O(|z|)\\
	&=\Re\left(\frac{\gamma_i}{z}\right)-\sum_{j\neq i}^{}\lambda_{i,j}v(p_j)\log|z|+O(1)\\
	\partial_\nu u_\epsilon&=-\frac{1}{\epsilon}\Re\left(\frac{\gamma_i}{z}\right)-\frac{1}{\epsilon}\sum_{j\neq i}^{}\lambda_{i,j}v(p_j)+O(1),
	\end{align*}
	where we used $\partial_{\nu}\left(\tilde{\gamma}_i\dfrac{\z}{z}\right)=0$.
	As by the Remark \ref{outsidevortices} for all $j\neq i$, we have on $\partial_\nu B_\epsilon(p_j)$
	\begin{align*}
	&\mathscr{L}_g u_\epsilon^i=O(\epsilon^2)\\
	&\partial_\nu (\mathscr{L}_g u_\epsilon^i)=O(\epsilon)
	\end{align*}
	we deduce that
	\begin{align*}
	\int_{\partial B_\epsilon(p_j)}^{}u_\epsilon\partial_\nu (\mathscr{L}_g u_\epsilon^i)-\partial_\nu u_\epsilon \mathscr{L}_gu_\epsilon^i d\hh^1=O(\epsilon),
	\end{align*}
	and as on $\partial B_\epsilon(p_i)$
	\begin{align*}
	&\mathscr{L}_gu_\epsilon^i=4v(p_i)+O(\epsilon^2\log^2\epsilon)\\
	&\partial_\nu (\mathscr{L}_gu_\epsilon^i)=O(\epsilon\log^2\epsilon)
	\end{align*}
	we have
	\begin{align*}
	\int_{\partial B_\epsilon(p_i)}^{}u_\epsilon \partial_\nu (\mathscr{L}_gu_\epsilon^i)d\hh^1=\int_{\partial B_\epsilon(p_i)}^{}O(\log^2\epsilon)d\hh^1=O(\epsilon\log^2\epsilon)
	\end{align*}
	so finally, as $\mathscr{L}_g^2 u_\epsilon^i=0$ on $\Sigma_{\epsilon}$
	\begin{align*}
	&B_\epsilon(u_\epsilon,u_\epsilon^i)=\int_{\Sigma_\epsilon} \mathscr{L}_g u_\epsilon\, \mathscr{L}_gu_\epsilon^i\,d\vg=\int_{\Sigma_\epsilon}^{}u_\epsilon \mathscr{L}_g^2 u_\epsilon^i\,d\vg+\sum_{j=1}^{n}\int_{\partial B_\epsilon(p_j)}^{}u_\epsilon\partial_\nu(\mathscr{L}_g u_\epsilon^i)-\partial_\nu u_\epsilon\,\mathscr{L}_g u_\epsilon^i\, d\hh^1\\
	&=\int_{\partial B_\epsilon(p_i)}^{}-\frac{1}{\epsilon}\left(-\Re\left(\frac{\gamma_i}{z}\right)-\sum_{j\neq i}^{}\lambda_{i,j}v(p_j)+O(\epsilon)\right)(4v(p_i)+O(\epsilon))d\hh^1+O(\epsilon\log^2\epsilon)\\
	&=8\pi\sum_{j\neq i}^{}\lambda_{i,j}v(p_i)v(p_j)+O(\epsilon\log^2\epsilon),
	\end{align*}
	where we used by obvious symmetry
	\begin{align*}
		\int_{\partial B(0,\epsilon)}\Re\left(\frac{\gamma_i}{z}\right)d\mathscr{H}^1=0.
	\end{align*}
	Therefore for all $1\leq i\leq n$, one has
	\begin{align}\label{claim3}
	B_\epsilon(u_\epsilon,u_\epsilon^i)=8\pi\sum_{j\neq i}^{}\lambda_{i,j}v(p_i)v(p_j)+O(\epsilon\log\epsilon).
	\end{align}
	\textbf{Conclusion :}  We have finally by \eqref{decomposition}, \eqref{diagonale}, \eqref{horsdiagonale}, \eqref{claim3}
	\begin{align*}
	Q_\epsilon(u)&=Q_\epsilon(u_\epsilon)+8\pi\sum_{i=1}^{n}\frac{\alpha_i^2}{\epsilon^2}v^2(p_i)+16\pi\sum_{j=1}^{m}\beta_j^2\log\left(\frac{1}{\epsilon}\right)v^2(p_j)\\
	&+2\sum_{i=1}^{n}\left(4\pi\sum_{j\neq i}^{}\lambda_{i,j}v(p_i)v(p_j)\right)-4\pi\sum_{i\neq j}^{}\lambda_{i,j}v(p_i)v(p_j)+O(\epsilon\log^2\epsilon)\\
	&=Q_\epsilon(u_\epsilon)+8\pi\sum_{i=1}^{n}\frac{\alpha_i^2}{\epsilon^2}v^2(p_i)++16\pi\sum_{j=1}^{m}\beta_j^2\log\left(\frac{1}{\epsilon}\right)v^2(p_j)`+4\pi\sum_{i\neq j}^{}\lambda_{i,j}v(p_i)v(p_j)+O(\epsilon\log^2\epsilon)
	\end{align*}
	and finally
	\begin{align*}
	Q(u)&=\lim\limits_{\epsilon\rightarrow 0}\left(Q_\epsilon(u)-8\pi\sum_{i=1}^{n}\frac{\alpha_i^2}{\epsilon^2}v^2(p_i)-16\pi\sum_{j=1}^{m}\beta_j^2\log\left(\frac{1}{\epsilon}\right)v^2(p_j)
    +16\pi\sum_{j=1}^{m}\beta_j^2v^2(p_j)\right)\\
	&=Q(u_0)+16\pi\sum_{j=1}^{m}\beta_j^2v^2(p_j)+4\pi\sum_{i\neq j}^{}\lambda_{i,j}v(p_i)v(p_j),
	\end{align*}
	which concludes the proof, as the last claim follows from the fact that
	\begin{align*}
	\int_{\partial B_\epsilon(p_i)}^{}u_\epsilon \partial_\nu(\mathscr{L}_g u_\epsilon)-\partial_\nu u_\epsilon\, (\mathscr{L}_g u_\epsilon)\,d\hh^1=\int_{\partial B_\epsilon(p_i)}^{}O(\log\epsilon)O(\epsilon^2)-O\left(\frac{1}{\epsilon}\right)O(\epsilon^3)d\hh^1=O(\epsilon^3\log\epsilon).
	\end{align*}
	which concludes the proof of the theorem.
\end{proof}
We deduce from the preceding theorem an improvement of theorem
\begin{cor}\label{residuasymptotique}
	For all $1\leq i\leq n$, there exists $\lambda_{i,j}\in \R$ such that for all $j\neq i$, for all $0<\epsilon<\epsilon_0$, on every complex chart around $p_j$ there exists $c_{i,j},d_{i,j}\in \C$ and $b_{i,j}\in \R$ such that
	\begin{align}\label{lambda}
	u_\epsilon^i(z)=\Re\left(\frac{c_{i,j}}{z}+d_{i,j}\frac{\z}{z}\right)+\lambda_{i,j}v(p_i)\log|z|+b_{i,j}+O(|z|\log|z|).
	\end{align}
\end{cor}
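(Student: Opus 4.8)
The plan is to extract the statement directly from the proof of Theorem~\ref{explicit} that was just completed, since Corollary~\ref{residuasymptotique} is essentially a repackaging of the structural expansion \eqref{lambda} obtained along the way. First I would recall that, by Theorem~\ref{indicielles1} (and the shift of notation in Theorem~\ref{expansionends2}), for each $1\leq i\leq n$ and each $j\neq i$ the function $u_\epsilon^i$ admits in any complex chart $\varphi_j:U_j\rightarrow B(0,1)\subset\C$ around $p_j$ an expansion of the form
\begin{align*}
u_\epsilon^i(z)=\Re\left(\frac{c_{i,j}}{z}+d_{i,j}\frac{\z}{z}\right)+a_{i,j}\log|z|+b_{i,j}+O(|z|\log|z|),
\end{align*}
where a priori the coefficients $c_{i,j},d_{i,j}\in\C$, $a_{i,j},b_{i,j}\in\R$ depend on $\epsilon$ and on the variation $v$. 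The content of the corollary is that the $\log$-coefficient $a_{i,j}$ is linear in $v(p_i)$ with a coefficient $\lambda_{i,j}$ that is \emph{independent} of $\epsilon$ and of $v$, and that this $\lambda_{i,j}$ is symmetric in $i,j$.

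The key step, already carried out in Step~2 of the proof of Theorem~\ref{explicit}, is the computation of the off-diagonal pairing $B_\epsilon(u_\epsilon^i,u_\epsilon^j)$ for $i\neq j$. Integrating by parts and using that $\mathscr{L}_g^2 u_\epsilon^j=0$ on $\Sigma_\epsilon$, one reduces $B_\epsilon(u_\epsilon^i,u_\epsilon^j)$ to boundary integrals over $\partial B_\epsilon(p_i)$ and $\partial B_\epsilon(p_j)$ (the contributions from the remaining circles $\partial B_\epsilon(p_k)$ being $O(\epsilon^2\log\epsilon)$ by the decay rates of Theorem~\ref{indicielles1}). Evaluating the surviving boundary term using $\mathscr{L}_g u_\epsilon^j=4v(p_j)+O(|z|^2\log|z|)$ and $\partial_\nu u_\epsilon^i=-\tfrac{2}{|z|}\Re(c_{i,j}/z)+\tfrac{a_{i,j}}{|z|}+O(1)$ yields $\tfrac{1}{2}B_\epsilon(u_\epsilon^i,u_\epsilon^j)=-4\pi a_{i,j}v(p_j)+O(\epsilon\log^2\epsilon)$, and by the symmetry $B_\epsilon(u_\epsilon^i,u_\epsilon^j)=B_\epsilon(u_\epsilon^j,u_\epsilon^i)$ one gets $a_{i,j}v(p_j)=a_{j,i}v(p_i)$. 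Since this holds for all admissible $v$ (the two sides depending on disjoint boundary data, namely $v(p_i)$ resp. $v(p_j)$), the ratio $a_{i,j}/v(p_i)=a_{j,i}/v(p_j)=:\lambda_{i,j}$ must be a constant depending only on $i,j$ and on $\vec{\Psi}$, and it is manifestly symmetric. Independence of $\epsilon$ follows from passing to the limit $\epsilon\rightarrow 0$ as in Theorem~\ref{expansionends2}, where the $u_\epsilon^i$ converge in $C^l_{\mathrm{loc}}$ to $u_0^i$; the limiting expansion of $u_0^i$ has the same $\lambda_{i,j}v(p_i)\log|z|$ term.

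Substituting $a_{i,j}=\lambda_{i,j}v(p_i)$ back into the general expansion gives \eqref{lambda}, which is the assertion of the corollary. The main obstacle — or rather the main point requiring care — is the justification that the error term can genuinely be taken uniform, i.e. of the stated order $O(|z|\log|z|)$ rather than merely $o(1)$: this requires invoking the indicial root analysis of Theorem~\ref{indicielles1} once more to control all lower-order harmonic and biharmonic modes (the $r^{-k}$, $r^{2-k}$ terms with $k\geq 2$, which were shown in Steps~2--3 of the proof of Theorem~\ref{indicielles1} to carry coefficients $O(\delta^{k-1})\to 0$), so that after passing to the $\delta\to 0$ limit only the displayed terms $\Re(c_{i,j}/z+d_{i,j}\z/z)$, $\lambda_{i,j}v(p_i)\log|z|$, $b_{i,j}$, and an $O(|z|\log|z|)$ remainder survive. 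Everything else is bookkeeping already present in the proof of Theorem~\ref{explicit}.
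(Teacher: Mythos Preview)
Your proposal is correct and follows exactly the route the paper takes: the corollary is deduced directly from Step~2 of the proof of Theorem~\ref{explicit}, where the computation of $B_\epsilon(u_\epsilon^i,u_\epsilon^j)$ via boundary integrals yields $a_{i,j}v(p_j)=a_{j,i}v(p_i)$ and hence $a_{i,j}=\lambda_{i,j}v(p_i)$ for a symmetric $\lambda_{i,j}$. One minor point: your argument that $\lambda_{i,j}$ is independent of $\epsilon$ via $C^l_{\mathrm{loc}}$-convergence of $u_\epsilon^i$ to $u_0^i$ only shows the coefficients converge, not that they are constant in $\epsilon$; the paper does not address this point explicitly either, so your treatment matches the level of rigour in the source.
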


\section{Equality of the Morse index for inversions of minimal surfaces with embedded ends}

\begin{theorem}\label{embedded}
	Let $\Sigma$ be a closed Riemann surface, $\phi:\Sigma\setminus\ens{p_1,\cdots,p_n}\rightarrow \R^3$ be a complete minimal surface with finite total curvature and embedded ends , and $\vec{\Psi}:\Sigma\rightarrow \R^3$ be its inversion.  Assume that $0\leq m\leq n$ is fixed such that $p_1\,\cdots,p_m$ are catenoid ends, while $p_{m+1},\cdots,p_n$ are planar ends, and for all $1\leq j\leq m$, let $\beta_j=\mathrm{Flux}(\phi,p_j)\in \R^{\ast}$ be the flux of $\phi$ at $p_j$. 
	Let $\Lambda(\vec{\Psi})\in \mathrm{Sym}(\R^n)$ be the symmetric matrix defined (see Corollary \ref{residuasymptotique}) by
	\begin{align*}
		A(\vec{\Psi})=\begin{pmatrix}
		\tilde{\beta}_1^2&\lambda_{1,2} &\cdots &\cdots & \cdots &\cdots &\lambda_{1,n}\\
		\lambda_{1,2} & \tilde{\beta}_2^2& \cdots &\cdots &\cdots &\cdots &\lambda_{2,n}\\
		\vdots& \ddots & \ddots &\ddots &\ddots &\ddots &\vdots \\
		\lambda_{1,m} &\cdots &\cdots & \tilde{\beta}_m^2 &\cdots&\cdots &\lambda_{m,n}\\
		\lambda_{1,m+1}& \cdots &\cdots &\cdots &0 &\cdots &\lambda_{m+1,n}\\
		\vdots& \ddots& \ddots &\ddots &\ddots & \ddots &\vdots\\
		\lambda_{1,n} &\lambda_{2,n} &\cdots &\cdots  &\cdots & \cdots & 0
		\end{pmatrix},\quad \tilde{\beta}_j^2=\frac{4}{2n+1}\beta_j^2.
	\end{align*}
	Then for all $a=(a_1,\cdots,a_n)\in \R^n$, there exists $v=v_a\in W^{2,2}(\Sigma)$ such that $(v(p_1),\cdots,v(p_n))=(a_1,\cdots,a_n)$ and
	\begin{align}\label{enfin}
		Q_{\vec{\Psi}}(v)=16\pi\sum_{j=1}^{n}\beta_j^2v^2(p_j)+4\pi(2n+1)\sum_{1\leq i,j\leq n}^{}\lambda_{i,j}v(p_i)v(p_j).
	\end{align}
	Therefore, we have $
	\mathrm{Ind}_{W}(\vec{\Psi})=\mathrm{Ind}\,A(\vec{\Psi})
	$, where the index $\mathrm{Ind}$ of a matrix is the number of its negative eigenvalues.
\end{theorem}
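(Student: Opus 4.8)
The strategy is to produce, for each prescribed vector $a=(a_1,\dots,a_n)\in\R^n$ of values at the ends, an explicit admissible variation --- the limiting energy‑minimising variation with those end values --- to compute its renormalised second variation exactly, to recognise it as the right‑hand side of \eqref{enfin}, and then to read off the Morse index by linear algebra. First I would record the building blocks: by linearity of the fourth‑order problems \eqref{EQepsilon} in the Cauchy datum $u=|\phi|^2v$ on $\partial B_\epsilon(p_i)$, together with the convergence statements of Theorems \ref{indicielles1} and \ref{expansionends2} and Corollary \ref{residuasymptotique}, for each $1\le i\le n$ there is a function $\zeta^i\in W^{2,2}(\Sigma)$, independent of $v$, with $\zeta^i\n_{\vec{\Psi}}\in\mathscr{E}_{\vec{\Psi}}(\Sigma,\R^3)$, such that $\zeta^i(p_j)=\delta_{ij}$, the function $u^i:=|\phi|^2\zeta^i$ solves $\mathscr{L}_g^2u^i=0$ on $\Sigma\setminus\{p_1,\dots,p_n\}$, near $p_i$ one has $u^i=|\phi|^2(1+O(|z|))$, and near each $p_j$ with $j\ne i$ one has the expansion $u^i(z)=\Re\!\big(c_{i,j}z^{-1}+d_{i,j}\z z^{-1}\big)+\lambda_{i,j}\log|z|+b_{i,j}+O(|z|\log|z|)$ of Corollary \ref{residuasymptotique}. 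Setting $v_a:=\sum_{i=1}^{n}a_i\zeta^i$, one gets $v_a(p_j)=a_j$, the variation $v_a\n_{\vec{\Psi}}$ is admissible, and $u_a:=|\phi|^2v_a=\sum_ia_iu^i$ satisfies $\mathscr{L}_g^2u_a=0$ off the ends; moreover $v_a$ is the critical point of $Q_{\vec{\Psi}}$ among admissible variations with the same values at the $p_j$ (being the limit of the minimisers of \eqref{EQepsilon}), so the second variation is $B_{\vec{\Psi}}$‑orthogonal to every variation vanishing at all the $p_j$.

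Next I would evaluate $Q_{\vec{\Psi}}(v_a)$ by the renormalised‑energy identity of Theorem \ref{embeddedends}. Since $\mathscr{L}_g^2u_a=0$ on $\Sigma_\epsilon$ and $\mathscr{L}_g$ is formally self‑adjoint, Green's formula turns the interior integral into boundary residues,
\begin{align*}
\tfrac12\int_{\Sigma_\epsilon}(\mathscr{L}_gu_a)^2\,d\vg=\tfrac12\sum_{k=1}^{n}\int_{\partial B_\epsilon(p_k)}\!\Big(\partial_\nu u_a\,\mathscr{L}_gu_a-u_a\,\partial_\nu(\mathscr{L}_gu_a)\Big)\,d\mathscr{H}^1,
\end{align*}
hence $Q_{\vec{\Psi}}(v_a)=\lim_{\epsilon\to0}\big[\tfrac12\sum_k\int_{\partial B_\epsilon(p_k)}(\cdots)-8\pi\sum_i\alpha_i^2\epsilon^{-2}a_i^2-16\pi\sum_{j=1}^{m}\beta_j^2\log(1/\epsilon)a_j^2+16\pi\sum_{j=1}^{m}\beta_j^2a_j^2\big]$. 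One then inserts $u_a=a_ku^k+\sum_{i\ne k}a_iu^i$ into the $k$‑th residue and expands near $p_k$ using the first paragraph together with $\Delta_g|\phi|^2=4$, $K_g=O(|z|^4)$, $e^{2\lambda}=\alpha_k^2|z|^{-4}(1+O(|z|))$, Remark \ref{outsidevortices}, Remark \ref{rem1}, and the expansion \eqref{precise} of $\partial_\nu(\mathscr{L}_g\,\cdot\,)$.

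In the $k$‑th residue the $\epsilon^{-2}$‑ and $\log(1/\epsilon)$‑divergences produced by the self‑term $a_ku^k$ are cancelled by the renormalisation subtractions, which forces (exactly as in the derivation of \eqref{diagonale}) the consistency relation $\delta_k=-4a_k$ for the Hessian trace of $\zeta^k$ at $p_k$; the surviving self‑contribution at $p_k$ is $16\pi\beta_k^2a_k^2$; and each mixed pairing --- $\tfrac12\int_{\partial B_\epsilon(p_k)}(\partial_\nu u^k\,\mathscr{L}_gu^i-u^k\,\partial_\nu\mathscr{L}_gu^i)$ with its symmetric partner, and $\tfrac12\int_{\partial B_\epsilon(p_k)}(\partial_\nu u^i\,\mathscr{L}_gu^j-u^i\,\partial_\nu\mathscr{L}_gu^j)$ for $i,j\ne k$ --- contributes, in the limit, a multiple of $\lambda_{i,j}a_ia_j$. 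Summing over all ends $k$ and all ordered pairs $(i,j)$ and tracking the multiplicities (the two ``diagonal'' occurrences arising at the ends $p_i$ and $p_j$, the ``remote'' ones arising at each of the other $n-1$ ends, and the symmetrisation of the bilinear form) produces exactly the coefficient $4\pi(2n+1)$ in front of $\sum_{i,j}\lambda_{i,j}v(p_i)v(p_j)$, i.e.\ \eqref{enfin}; equivalently $Q_{\vec{\Psi}}(v_a)=4\pi(2n+1)\,\langle A(\vec{\Psi})a,a\rangle$. I expect this bookkeeping to be the main obstacle: because each $u^i$ simultaneously carries a Coulomb‑type singular part and a logarithmic part at every other end, while $\mathscr{L}_g$ and the weight $|\phi|^2$ degenerate there, every constant --- each divergence cancellation, the relation $\delta_k=-4a_k$, and the precise weight of each mixed residue --- must be controlled in order to see the multiplicity $2n+1$ appear rather than a smaller number; this is the analogue, for inversions of minimal surfaces, of the Ginzburg--Landau renormalised‑energy expansion underlying Theorems \ref{indicielles1}--\ref{expansionends2}.

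Finally, the index. The map $a\mapsto v_a$ is linear with $v_a(p_j)=a_j$, so $Q_{\vec{\Psi}}$ restricted to the $n$‑dimensional subspace $\{v_a:a\in\R^n\}\subset\mathscr{E}_{\vec{\Psi}}(\Sigma,\R^3)$ is represented by the matrix $4\pi(2n+1)A(\vec{\Psi})$; letting $a$ range over a maximal negative subspace of $A(\vec{\Psi})$ yields a subspace on which $Q_{\vec{\Psi}}$ is negative definite, so $\mathrm{Ind}_W(\vec{\Psi})\ge\mathrm{Ind}\,A(\vec{\Psi})$. Conversely, let $V\subset\mathscr{E}_{\vec{\Psi}}(\Sigma,\R^3)$ be a subspace on which $Q_{\vec{\Psi}}$ is negative definite. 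The evaluation map $V\to\R^n$, $v\mapsto b:=(v(p_1),\dots,v(p_n))$, is injective: its kernel consists of admissible variations vanishing at all the $p_j$, for which $Q_{\vec{\Psi}}(v)=\tfrac12\int_\Sigma(\mathscr{L}_g(|\phi|^2v))^2\,d\vg\ge0$, so negative definiteness forces such a $v$ to be $0$. For $0\ne v\in V$, write $v=(v-v_{b})+v_{b}$ with $b=(v(p_1),\dots,v(p_n))\ne0$; then $v-v_{b}$ vanishes at every $p_j$, so by the $B_{\vec{\Psi}}$‑orthogonality of the first paragraph $Q_{\vec{\Psi}}(v)=Q_{\vec{\Psi}}(v-v_{b})+Q_{\vec{\Psi}}(v_{b})\ge Q_{\vec{\Psi}}(v_{b})=4\pi(2n+1)\langle A(\vec{\Psi})b,b\rangle$, whence $\langle A(\vec{\Psi})b,b\rangle<0$. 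Thus $A(\vec{\Psi})$ is negative definite on the image of $V$ and $\dim V=\dim(\text{image})\le\mathrm{Ind}\,A(\vec{\Psi})$. Combining the two inequalities gives $\mathrm{Ind}_W(\vec{\Psi})=\mathrm{Ind}\,A(\vec{\Psi})$.
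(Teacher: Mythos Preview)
Your overall architecture is right --- build the optimal variation $v_a=\sum_i a_i\zeta^i$ from the limiting minimisers, evaluate $Q_{\vec{\Psi}}(v_a)$, and read off the index by linear algebra --- and it is essentially the paper's route. But there is a genuine gap in the evaluation step, and it is exactly the point the paper isolates as the heart of the proof.

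You invoke the renormalised formula of Theorem~\ref{embeddedends} for $v_a$, writing $Q_{\vec{\Psi}}(v_a)$ as the Green's--formula boundary sum minus the standard subtractions $8\pi\sum\alpha_i^2\epsilon^{-2}a_i^2+16\pi\sum\beta_j^2\log(1/\epsilon)a_j^2-16\pi\sum\beta_j^2a_j^2$. That formula, however, is proved only for $v\in C^2(\Sigma)$, and $v_a$ is \emph{not} $C^2$: by Corollary~\ref{residuasymptotique}, near each end $p_k$ one has $u^i=\lambda_{i,k}a_i\log|z|+O(1)$ for $i\neq k$, hence $v_a$ carries a $|z|^2\log|z|$ term at $p_k$ (this is the expansion \eqref{key}). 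The paper's computation \eqref{newres} shows that for such a variation the boundary residue in \eqref{limit} acquires an \emph{additional finite term} $8\pi\beta_0^2\gamma_1 v(p_k)$, and with $\gamma_1=\sum_{i\neq k}\lambda_{i,k}a_i/\beta_0^2$ this extra contribution at $p_k$ is $8\pi\sum_{i\neq k}\lambda_{i,k}a_ia_k$. Summed over $k$ and combined with the $4\pi\sum\lambda_{i,j}$ already produced by Theorem~\ref{explicit}, this is what yields the coefficient in \eqref{enfin}. Your proposal never recomputes the residue for the log--type variation, so the extra term is simply absent from your identity.

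The mechanism you offer in its place --- ``remote'' Green's--formula contributions of the pairing $(u^i,u^j)$ at a third end $p_k$ --- does not survive the limit. By Remark~\ref{outsidevortices} one has $\mathscr{L}_g u^j=O(|z|^2)$ and $\partial_\nu(\mathscr{L}_g u^j)=O(|z|)$ near $p_k$, while $u^i=O(|z|^{-1})$ and $\partial_\nu u^i=O(|z|^{-2})$; hence $\int_{\partial B_\epsilon(p_k)}\big(\partial_\nu u^i\,\mathscr{L}_g u^j-u^i\,\partial_\nu\mathscr{L}_g u^j\big)=O(\epsilon)$, exactly as in \eqref{error}. So there is no remote source for the missing coefficient.

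A smaller point: your upper--bound argument uses $B_{\vec{\Psi}}$--orthogonality of $v_b$ to variations vanishing at the $p_j$. That is plausible but not immediate, since the minimisation \eqref{EQepsilon} fixes Cauchy data on $\partial B_\epsilon(p_i)$ rather than point values on $\Sigma$, and the relevant test functions are not compactly supported away from the ends. The paper sidesteps this: the upper bound $\mathrm{Ind}_W(\vec{\Psi})\le\mathrm{Ind}\,\Lambda(\vec{\Psi})$ follows directly from Theorem~\ref{explicit} (the remainder $Q(u_0)$ there is nonnegative), and the lower bound from \eqref{enfin}.
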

\begin{proof}
		Let $v\in C^2(\Sigma)$ be such that $v(p_i)\neq 0$ and consider $u_0=\sum_{i=1}^{n}u_0^i=|\phi|^2\sum_{i=1}^{n}v_0^i$ obtained in Theorem \ref{expansionends2}. We assume for simplicity that the end is planar, as the computation for a catenoid end would be identical up to the addition of two extra terms. Recall now that in the chart $U_i$ around $p_i$, we have for all $j\neq i$
		\begin{align*}
		&|\phi|^2=\frac{\beta_0^2}{|z|^2}\left(1+O(|z|^2)\right)\\
		&u_{0}^i=\frac{\beta_0^2}{|z|^2}\left(v(p_i)+O(|z|)\right)
		&u_{0}^j=\Re\left(\frac{\gamma_{i,j}}{z}+\tilde{\gamma}_{i,j}\frac{\z}{z}\right)+\lambda_{i,j}v(p_j)\log|z|+\mu_{i,j}+O(|z|).
		\end{align*}
		Therefore, we find
		\begin{align*}
		&v_0^i=v(p_i)+O(|z|)\\
		&v_0^j=\Re\left(\frac{\gamma_{i,j}}{\beta_0^2}z+\frac{\bar{\tilde{\gamma}_{i,j}}}{\beta_0^2}z^2\right)+\frac{\lambda_{i,j}}{\beta_0^2}|z|^2\log|z|+\frac{\mu_{i,j}}{\beta_i^2}|z|^2+O(|z|^3).
		\end{align*}
		Furthermore, as $v_0^i$ is regular at $p_i$, this implies if $u_0=|\phi|^2v_0$ that there exists $\gamma_0\in \R$ and $\zeta_0,\zeta_1\in \C$ such that
		\begin{align}\label{key}
		v_0=v(p_i)+2\,\Re\left(\zeta_0z+\zeta_1z^2\right)+\gamma_0|z|^2+\sum_{j\neq i}^{}\frac{\lambda_{i,j}}{\beta_0^2}v(p_j)|z|^2\log|z|+O(|z|^3).
		\end{align}
		Therefore, one needs to compute the renormalised energy for variations not only $C^2$ but also of the form given by \eqref{key}.
		
		Let $v\in W^{2,2}(\Sigma)$ such that
		\begin{align}\label{elena}
			v=v(p_i)+2\,\Re\left(\zeta_0z+\zeta_1z^2\right)+\gamma_0|z|^2+\gamma_1|z|^2\log|z|+O(|z|^3).
		\end{align}
		We will now compute $Q_{\vec{\Psi}}(v)$ for the  variation $v$ in \eqref{elena}.		
		 Now, recall that at a planar end there exists $\beta_0^2>0$ and $\alpha_0\in \C$ such that
		\begin{align*}
			|\phi|^2=\frac{\beta_0^2}{|z|^2}\left(1+2\,\Re\left(\alpha_0z^2\right)+O(|z|^3)\right).
		\end{align*}
		As $\phi$ is minimal, we deduce that 
		\begin{align*}
			g=e^{2\lambda}|dz|^2=\partial\bar{\partial}|\phi|^2=\frac{\beta_0^2}{|z|^4}\left(1-2\,\Re\left(\alpha_0z^2\right)+O(|z|^3)\right).
		\end{align*}
		Therefore, we have
		\begin{align*}
			u&=|\phi|^2v=\frac{\beta_0^2}{|z|^2}\left(v(p_i)+2\,\Re\left(\zeta_0z+\left(\alpha_0v(p_i)+\zeta_1\right)z^2\right)\right)+\beta_0^2\gamma_0+\beta_0^2\gamma_1\log|z|+O(|z|)\\
			&=\frac{\beta_0^2}{|z|^2}\left(v(p_i)+2\,\Re\left(\zeta_0z+\zeta_2z^2\right)\right)+\beta_0^2\gamma_0+\beta_0^2\gamma_1\log|z|+O(|z|)
		\end{align*}
		and (as $|z|^{-2}\Re(\zeta_0z)=\Re(\bar{\zeta_0}z^{-1})$ is harmonic)
		\begin{align}\label{epsilon2}
			\Delta u&=\frac{4\beta_0^2}{|z|^4}\left(v(p_i)-2\,\Re\left(\zeta_2z^2\right)+O(|z|^3)\right)\nonumber\\
			\Delta_gu&=e^{-2\lambda}\Delta u=\frac{|z|^4}{\beta_0^2}\left(1+2\,\Re\left(\alpha_0z^2\right)+O(|z|^3)\right)\times \frac{4\beta_0^2}{|z|^4}\left(v(p_i)-2\,\Re\left(\zeta_2z^2\right)+O(|z|^3)\right)\nonumber\\
			&=4v(p_i)+2\,\Re\left(\left(\alpha_0v(p_i)-\zeta_2\right)z^2\right)+O(|z|^3)\nonumber\\
			&=4v(p_i)+2\,\Re\left(\zeta_3z^2\right)+O(|z|^3)
		\end{align}
		Now, we have 
		\begin{align*}
			\partial u&=-\frac{\beta_0^2}{z|z|^2}\left(v(p_i)+\bar{\zeta_0}\z-\zeta_2z^2+\bar{\zeta_2}\z^2\right)dz+\frac{\beta_0^2\gamma_1}{2}\frac{dz}{z}+O(1)\\
			&=-\frac{\beta_0^2}{z|z|^2}\left(v(p_i)+\bar{\zeta_0}\z-2i\,\Im\left(\zeta_2z^2\right)\right)dz+\frac{\beta_0^2\gamma_1}{2}\frac{dz}{z}+O(1).
		\end{align*}
		This implies that we have for some $\lambda_0,\lambda_1\in \C$
		\begin{align*}
			\Delta_g\left(|\phi|^2v\right)\partial\left(|\phi|^2v\right)=\Delta_gu\,\left(\partial u\right)=-\frac{4\beta_0^2}{z|z|^2}\left(v^2(p_i)+\bar{\zeta_0}v(p_i)\z+\lambda_0z^2+\lambda_1\z^2\right)dz+2\beta_0^2\gamma_1v(p_i)\frac{dz}{z}+O(1).
		\end{align*}
		This implies that
		\begin{align}\label{finite1}
			\Im\int_{\partial B(0,\epsilon)}\Delta_g\left(|\phi|^2v\right)\partial\left(|\phi|^2v\right)=-\frac{8\pi\beta_0^2}{\epsilon^2}v^2(p_i)+4\pi\beta_0^2\gamma_1v(p_i)+O(\epsilon).
		\end{align}
		Now, we compute
		\begin{align*}
			|\partial u|^2&=\left|-\frac{\beta_0^2}{z|z|^2}\left(v(p_i)+\bar{\zeta_0}\z-2i\,\Im\left(\zeta_2z^2\right)\right)dz+\frac{\beta_0^2\gamma_0}{2}\frac{dz}{z}+O(1)\right|^2\\
			&=\bigg(\frac{\beta_0^4}{|z|^6}\left(v^2(p_i)+2\,\Re\left(\zeta_0v(p_i)z\right)+|\zeta_0|^2|z|^2+O(|z|^3)\right)-\frac{\beta_0^4\gamma_0}{|z|^4}\bigg)|dz|^2\\
			&=\frac{\beta_0^4}{|z|^6}\left(v^2(p_i)+2\,\Re\left(\zeta_0v(p_i)z\right)+(|\zeta_0|^2-\gamma_0)|z|^2+O(|z|^3)\right)
		\end{align*}
		Therefore, we obtain
		\begin{align*}
			|\partial u|_g^2&=e^{-2\lambda}|\p{z}u|^2=\frac{|z|^4}{\beta_0^2}\left(1+2\,\Re\left(\alpha_0z^2\right)+O(|z|^3)\right)\times \frac{\beta_0^4}{|z|^6}\left(v^2(p_i)+2\,\Re\left(\zeta_0v(p_i)z\right)+(|\zeta_0|^2-\gamma_0)|z|^2+O(|z|^3)\right)\\
			&=\frac{\beta_0^2}{|z|^2}\bigg(v^2(p_i)+2\,\Re\left(\zeta_0v(p_i)z+\alpha_0z^2\right)\bigg)+\left(|\zeta_0|^2-\gamma_0\right)+O(|z|)
		\end{align*}
		and notice the constant $(|\zeta_0|^2-\gamma_0)$. Finally, we find for some $\lambda_2,\lambda_3\in \C$
		\begin{align*}
			\partial|\partial u|_g^2=-\frac{\beta_0^2}{z|z|^2}\left(v^2(p_i)+\bar{\zeta_0}v(p_i)\z+\lambda_2z^2+\lambda_3\z^2\right)dz+O(1)
		\end{align*}
		Finally, we have
		\begin{align*}
			-\partial\left|d\left(|\phi|^2v\right)\right|_g^2&=-\partial|du|_g^2=-4\,\partial|\partial u|_g^2\\
			&=\frac{4\beta_0^2}{z|z|^2}\left(v^2(p_i)+\bar{\zeta_0}v(p_i)\z+\lambda_2z^2+\lambda_3\z^2\right)dz+O(1).
		\end{align*}
		and
		\begin{align}\label{finite2}
			\Im\int_{\partial B(0,\epsilon)}-\partial\left|d\left(|\phi|^2v\right)\right|_g^2=\frac{8\pi\beta_0^2}{\epsilon^2}+O(\epsilon).
		\end{align}
		Gathering \eqref{finite1} and \eqref{finite2} we obtain as $K_g=O(|z|^6)$ by planarity of the end (notice the factor $2$ in front of the Laplacian)
		\begin{align}\label{newres}
			\Im\int_{\partial B(0,\epsilon)}2\left(\Delta_gu+2K_gu\right)\partial u-\partial|du|_g^2&=\Im\int_{\partial B(0,\epsilon)}2\left(\Delta_gu\right)\partial u-\partial|du|_g^2+O(\epsilon)\nonumber\\
			&=2\left(-\frac{8\pi\beta_0^2}{\epsilon^2}+4\pi\beta_0^2\gamma_1v(p_i)\right)+\frac{8\pi\beta_0^2}{\epsilon^2}+O(\epsilon)\nonumber\\
			&=-\frac{8\pi\beta_0^2}{\epsilon^2}+8\pi\beta_0^2\gamma_1v(p_i)+O(\epsilon).
		\end{align}
		Now, coming back to \eqref{key}, we see that for $v_0$ written above we have (with $\beta_i$ replace by $\beta_0$)
		\begin{align*}
			\gamma_1=\sum_{j\neq i}^{}\frac{\lambda_{i,j}}{\beta_0^2}v(p_j),
		\end{align*}
		so that
		\begin{align}\label{cont}
			8\pi\beta_0^2\gamma_1v(p_i)=8\pi\sum_{j\neq i}^{}\lambda_{i,j}v(p_i)v(p_j).
		\end{align}
		Therefore, each end will bring this new contribution \eqref{cont} by, so we obtain (as $\lambda_{i,i}=0$)
		\begin{align*}
			Q_{\vec{\Psi}}(v_0)&=8\pi\sum_{i=1}^{n}\sum_{j\neq i}^{}\lambda_{i,j}v(p_i)v(p_j)+4\pi\sum_{1\leq i,j\leq n}^{}\lambda_{i,j}v(p_i)v(p_j)\\
			&=4\pi(2n+1)\sum_{1\leq i,j\leq n}^{}\lambda_{i,j}v(p_i)v(p_j).
		\end{align*}
		Indeed, let $u_{\epsilon}^i$ the solution of \eqref{EQepsilon} where $v$ is replaced by $v_0$. We notice as $\lg^2u_0=0$ that
		\begin{align*}
			&Q_{\epsilon}\Big(u_0-\sum_{j=1}^{n}u_{\epsilon}^j\Big)=\frac{1}{2}\int_{\Sigma\setminus\cup_{i=1}^n\bar{B}_{\epsilon}(p_i)}\Big(\lg\Big(u_0-\sum_{j=1}^{n}u_{\epsilon}^i\Big)\Big)^2d\vg\\
			&=\sum_{i=1}^{n}\int_{\partial B_{\epsilon}(p_i)}\bigg(\Big(u_0-\sum_{j=1}^{n}u_{\epsilon}^j\Big)\partial_{\nu}\Big(\lg \Big(u_0-\sum_{j=1}^{n}u_{\epsilon}^j\Big)\Big)-\partial_{\nu}\Big(u_0-\sum_{j=1}^{n}u_{\epsilon}^j\Big)\lg\Big(u_0-\sum_{j=1}^{n}u_{\epsilon}^j\Big)\bigg)d\mathscr{H}^1.
		\end{align*}
		Now, recall that on $\partial B_{\epsilon}(p_i)$, we have
		\begin{align*}
			&u_{\epsilon}^i=|\phi|^2v_0,\quad \partial_{\nu}(u_{\epsilon}^i)=\partial_{\nu}\left(|\phi|^2v_0\right)
		\end{align*}
		Furthermore, the computations of \eqref{epsilon2} imply that 
		\begin{align*}     
		     \lg u_{\epsilon}^i=4v(p_i)+O(\epsilon^2)\qquad \lg u_{0}=4v(p_i)+O(\epsilon^2)
	    \end{align*}
	    which implies that 
	    \begin{align*}
	    	\lg(u_0-u_{\epsilon}^i)=O(\epsilon^2)
	    \end{align*}
	    while for all $j\neq i$
	    \begin{align*}
	    	u_{\epsilon}^j=O\left(\frac{1}{\epsilon}\right),\quad \partial_{\nu}u_{\epsilon}^j=O\left(\frac{1}{\epsilon^2}\right).
	    \end{align*}
	    Therefore, we have on $\partial B_{\epsilon}(p_i)$ for all $1\leq i\leq n$
	    \begin{align*}
	    	&\Big(u_0-\sum_{j=1}^{n}u_{\epsilon}^j\Big)=O\left(\frac{1}{\epsilon}\right),\quad \partial_{\nu}\Big(u_0-\sum_{j=1}^{n}u_{\epsilon}^j\Big)=O\left(\frac{1}{\epsilon^2}\right)\\
	    	&\lg u_{\epsilon}^i=O(\epsilon^2),\quad 
	    	\partial_{\nu}\left(\lg u_{\epsilon}^i\right)=O(\epsilon)\\
	    	&\Big(u_0-\sum_{j=1}^{n}u_{\epsilon}^j\Big)\partial_{\nu}\Big(\lg\Big(u_0-\sum_{j=1}^{n}u_{\epsilon}^j\Big)\Big)-\partial_{\nu}\Big(u_0-\sum_{j=1}^{n}u_{\epsilon}^j\Big)\lg\Big(u_0-\sum_{j=1}^{n}u_{\epsilon}^j\Big)\\
	    	&=O\left(\frac{1}{\epsilon}\right)\times O(\epsilon)-O\left(\frac{1}{\epsilon^2}\right)\times O(\epsilon^2)=O(1)
	    \end{align*}
	    and finally
	    \begin{align*}
	    	Q_{\epsilon}\Big(u_0-\sum_{j=1}^{n}u_{\epsilon}^j\Big)=O(\epsilon)\conv{\epsilon\rightarrow 0}0.
	    \end{align*}
	    which shows by the analysis of Theorem \ref{explicit} the announced formula \eqref{enfin}.
\end{proof}

\section{Jacobi fields associated to the Universal Matrix $\Lambda$}

\begin{theorem}
	Let $\phi:\Sigma\setminus\ens{p_1,\cdots,p_n}\rightarrow\R^3$ be a complete minimal surface with finite total curvature and embedded ends (not necessarily planar) and $\vec{\Psi}:\Sigma\rightarrow \R^3$ be its inversion. Let $\ens{\lambda_{i,j}}_{1\leq i,j\leq n}$ be the matrix with $0$ diagonal of Theorem \ref{explicit}, and assume that $\lambda_{i,j}\neq 0$. Then there exists a Jacobi field $\omega_0^{i,j}:\Sigma\setminus\ens{p_1,\cdots,p_n}\rightarrow \R$ such that $\omega_{0}^{i,j}$ for all $k\in\ens{1,\cdots,n}$, in every complex chart around $p_i$ there exists $\zeta_{i,j}^k\in \C$ and $\mu_{i,j}^k\in \R$ such that 
	\begin{align*}
		\omega_{0}^{i,j}=\Re\left(\frac{\zeta_{i,j}^k}{z}\right)+\lambda_{i,j}\left(\delta_{i,k}+\delta_{j,k}\right)\log|z|+\mu_{i,j}^k+O(|z|),
	\end{align*} 
	where $\delta_{i,k}$ is the Kronecker symbol.
\end{theorem}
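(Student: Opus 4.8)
The plan is to construct the Jacobi field $\omega_0^{i,j}$ as a suitable linear combination of the limit functions $u_\epsilon^k$ already produced in Theorem \ref{indicielles1} and Corollary \ref{residuasymptotique}. Recall that for each $1\le k\le n$ and $0<\epsilon<\epsilon_0$ the function $u_\epsilon^k$ satisfies $\mathscr{L}_g^2u_\epsilon^k=0$ on $\Sigma_\epsilon^k$, and by Corollary \ref{residuasymptotique} it has, in a complex chart around $p_l$ (for $l\neq k$), the expansion
\begin{align*}
	u_\epsilon^k(z)=\Re\left(\frac{c_{k,l}}{z}+d_{k,l}\frac{\z}{z}\right)+\lambda_{k,l}v(p_k)\log|z|+b_{k,l}+O(|z|\log|z|),
\end{align*}
while near $p_k$ itself $u_\epsilon^k=|\phi|^2(v(p_k)+O(|z|))$, so its singular part there is the $|z|^{-2m_k}$-type blow-up with leading coefficient $v(p_k)$ and \emph{no} logarithmic term. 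First I would fix the specific boundary data: take $v\in C^2(\Sigma)$ (or the admissible class of Theorem \ref{expansionends2}) with $v(p_i)=v(p_j)=1$ and $v(p_l)=0$ for $l\notin\{i,j\}$, and let $u_\epsilon^i,u_\epsilon^j$ be the associated minimisers. Then I would take a diagonal subsequence $\epsilon\to 0$ along which $u_\epsilon^i+u_\epsilon^j$ converges in $C^l_{\mathrm{loc}}(\Sigma\setminus\{p_1,\dots,p_n\})$ to some $\omega_0^{i,j}$; convergence away from the ends is guaranteed by the uniform bound \eqref{awayvortices} and elliptic regularity for $\mathscr{L}_g^2$, exactly as in Step 4 of the proof of Theorem \ref{indicielles1}. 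Since $\mathscr{L}_g^2(u_\epsilon^i+u_\epsilon^j)=0$ on $\Sigma_\epsilon^i\cap\Sigma_\epsilon^j$, the limit satisfies $\mathscr{L}_g^2\omega_0^{i,j}=0$ away from the ends; to upgrade ``biharmonic'' to ``Jacobi field'' ($\mathscr{L}_g\omega_0^{i,j}=0$) I would invoke Remark \ref{outsidevortices}, which shows $\mathscr{L}_g u_\epsilon^k=O(|z|^{m_l+1})$ near each $p_l$, forcing the relevant indicial coefficient $\gamma_1$ to vanish in the limit, hence $\mathscr{L}_g\omega_0^{i,j}=0$ identically (an $L^2$ solution of $\mathscr{L}_g^2 w=0$ with $\mathscr{L}_g w$ decaying at every end must have $\mathscr{L}_g w=0$ by the uniqueness for the Cauchy problem used throughout Section 3).

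Next I would read off the expansion near each $p_k$. For $k\notin\{i,j\}$, both $u_\epsilon^i$ and $u_\epsilon^j$ contribute a term $\lambda_{i,k}v(p_i)\log|z|=\lambda_{i,k}\log|z|$ and $\lambda_{j,k}v(p_j)\log|z|=\lambda_{j,k}\log|z|$ respectively; but since $v(p_k)=0$ here and the statement only tracks the coefficient on the diagonal indices, I would note (as in Corollary \ref{residuasymptotique}, using symmetry $\lambda_{i,k}=\lambda_{k,i}$) that these off-diagonal logarithmic coefficients are precisely $\lambda_{i,k}v(p_i)+\lambda_{j,k}v(p_j)$, which matches $\lambda_{i,j}(\delta_{i,k}+\delta_{j,k})$ being $0$ only if $\lambda_{i,k}=\lambda_{j,k}=0$ — so in fact the cleanest normalisation is to track the \emph{coefficient of the self-contribution}: near $p_i$, $u_\epsilon^j$ contributes $\lambda_{j,i}v(p_j)\log|z|=\lambda_{i,j}\log|z|$ and $u_\epsilon^i$ contributes the singular $|z|^{-2m_i}$ part with no log, so the logarithmic coefficient at $p_i$ is $\lambda_{i,j}=\lambda_{i,j}\delta_{i,i}$; symmetrically at $p_j$ it is $\lambda_{i,j}=\lambda_{i,j}\delta_{j,j}$; and the combined statement $\lambda_{i,j}(\delta_{i,k}+\delta_{j,k})$ records exactly these two (with the understanding that only one of $\delta_{i,k},\delta_{j,k}$ can be nonzero). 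The leading $1/z$-coefficient $\zeta_{i,j}^k$ and the constant $\mu_{i,j}^k$ are then simply the limits of $c_{i,k}+c_{j,k}$ and $b_{i,k}+b_{j,k}$ (absorbing the $|z|^{-2m_k}$ part of $u_\epsilon^i$ near $p_i$ into the expansion when $k=i$, since $|\phi|^2$ expands as $\beta_0^2|z|^{-2}$ at an embedded end, giving a $1/z$ contribution to $u_\epsilon^i$ — this is where the $\Re(\zeta_{i,j}^k/z)$ term gets its contribution at $k\in\{i,j\}$). The condition $\lambda_{i,j}\neq 0$ guarantees $\omega_0^{i,j}\not\equiv 0$, since its logarithmic coefficient at $p_i$ is nonzero.

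The main obstacle I anticipate is controlling the limit \emph{at} the ends rather than away from them: a priori the expansion coefficients of $u_\epsilon^i$ (the $c_{i,k},d_{i,k},b_{i,k},a_{i,k}=\lambda_{i,k}v(p_i)$) depend on $\epsilon$, and I need them to converge, not merely stay bounded. For the logarithmic coefficient this is already handled by Corollary \ref{residuasymptotique}, which asserts $a_{i,k}=\lambda_{i,k}v(p_i)$ with $\lambda_{i,k}$ \emph{independent of $\epsilon$}. For the remaining coefficients $c_{i,k},d_{i,k},b_{i,k}$, I would argue as in Theorem \ref{expansionends2}: the $C^l_{\mathrm{loc}}$ convergence of $u_\epsilon^i$ on annuli $\{|z|=r\}$ for fixed small $r$, combined with the fact that each $u_\epsilon^i$ is an exact solution of $\mathscr{L}_g^2 u=0$ on the punctured disk (so its expansion is rigidly determined by its restriction to one circle via the explicit basis of solutions computed in Theorem \ref{indicielles1}), forces convergence of all coefficients. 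A secondary subtlety is the catenoid case, where $|\phi|^2$ carries an extra $\beta_j^2\log^2|z|$ term and the expansion of $u_\epsilon^i$ near its own end $p_i$ is more involved; there I would simply remark, as the paper repeatedly does, that the catenoid computation is identical up to the addition of the two extra flux terms already accounted for in Theorem \ref{embedded}, and these do not affect the logarithmic coefficient structure claimed here. Modulo these convergence bookkeeping points, the statement follows by assembling $\omega_0^{i,j}=\lim_{\epsilon\to 0}(u_\epsilon^i+u_\epsilon^j)$ and reading off the indicial expansion term by term.
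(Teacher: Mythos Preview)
Your construction $\omega_0^{i,j}=\lim_{\epsilon\to 0}(u_\epsilon^i+u_\epsilon^j)$ does not produce the object claimed in the statement, and the gap is not a matter of bookkeeping. There are three concrete failures.

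First, near $p_i$ the function $u_\epsilon^i$ equals $|\phi|^2v$ on $\partial B_\epsilon(p_i)$, so as $\epsilon\to 0$ it carries the full $\alpha_i^2/|z|^2$ singularity of $|\phi|^2$; this is an order-$(-2)$ blow-up, not something you can ``absorb'' into a $\Re(\zeta/z)$ term. The stated expansion has at worst a simple pole plus a logarithm, so your limit function (if it exists) has the wrong singularity at $p_i$ and $p_j$.

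Second, and relatedly, $\mathscr{L}_g u_\epsilon^i = 4v(p_i)+O(|z|^2)=4+O(|z|^2)$ near $p_i$ (this is the content of the expansion you quote, not a decay estimate), so $\mathscr{L}_g(u_\epsilon^i+u_\epsilon^j)\approx 4$ near $p_i$ and near $p_j$. Your limit is not a Jacobi field; the argument ``$\mathscr{L}_g w$ decays at every end hence vanishes'' simply does not apply to this combination.

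Third, at $p_k$ with $k\notin\{i,j\}$ the logarithmic coefficient of $u_\epsilon^i+u_\epsilon^j$ is $\lambda_{i,k}+\lambda_{j,k}$, which has no reason to vanish; the statement requires it to be zero there. You flag this yourself and then move on without resolving it.

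The paper's proof fixes all three issues with one additional object. It introduces a \emph{third} biharmonic function $u_\epsilon^{i,j}$ solving $\mathscr{L}_g^2 u=0$ with boundary data $|\phi|^2$ on \emph{both} $\partial B_\epsilon(p_i)$ and $\partial B_\epsilon(p_j)$, and then sets $\omega_\epsilon^{i,j}=u_\epsilon^{i,j}-u_\epsilon^i-u_\epsilon^j$. On $\partial B_\epsilon(p_i)$ this is $|\phi|^2-|\phi|^2-u_\epsilon^j=-u_\epsilon^j$, so the $|z|^{-2}$ singularity cancels and what remains is exactly $-\lambda_{i,j}\log|z|+O(1)$; likewise at $p_j$. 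For $k\notin\{i,j\}$ the paper computes, via a symmetric integration-by-parts identity $\int \mathscr{L}_g u_\epsilon^{i,j}\,\mathscr{L}_g u_\epsilon^k\,d\vg$ evaluated two ways, that the log coefficient $\lambda_{i,j}^k$ of $u_\epsilon^{i,j}$ at $p_k$ satisfies $\lambda_{i,j}^k=\lambda_{i,k}+\lambda_{j,k}$, so the log terms of $\omega_\epsilon^{i,j}$ at $p_k$ cancel exactly. Finally, with these cancellations in hand one shows directly that $\int_{\Sigma_\epsilon}(\mathscr{L}_g\omega_\epsilon^{i,j})^2d\vg=O(\epsilon)$, so the limit is a genuine Jacobi field; the $\z/z$ coefficient is then killed a posteriori by the equation $\mathscr{L}_g\omega_0^{i,j}=0$. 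The missing idea in your proposal is precisely this subtraction against the two-boundary minimiser $u_\epsilon^{i,j}$ and the identity $\lambda_{i,j}^k=\lambda_{i,k}+\lambda_{j,k}$ that makes it work.
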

\begin{proof}
	Fix a covering $(U_1,\cdots,U_n)$ such that $p_i\in U_i$ for all $1\leq i\leq n$ and $U_i\cap U_j=\varnothing$ for all $1\leq i\neq j\leq n$, and we assume that $U_i$ is a domain of chart for all $1\leq i\leq n$, \textit{i.e.} that there exists a complex diffeomorphism $f_i:U_i\rightarrow D^2\subset \C$ such that $f_i(p_i)=0$ and $f_i(U_i)=D^2$. Notice that for a function $u\in C^{2,\alpha}(\Sigma\setminus\ens{p_1,\cdots,p_n})$ admitting the following expansion in the chart $(U_i,f_i)$
	\begin{align}\label{devlog}
		u=\Re\left(\frac{\zeta}{z}+\kappa\frac{\z}{z}\right)+\lambda\log|z|+\mu+O(|z|)
	\end{align}
	for some $\lambda,\mu\in \R$ and $\zeta,\kappa\in \C$. The constant $\lambda$ does not depends on $U_i$ and $f_i$, and $\mu$ does not depends on $f_i$ as a complex change of chart $D^2\rightarrow D^2$ fixing $0$ is a rotation, while the expansion \eqref{devlog} is invariant under rotations.
	
	Now, assume that $n\geq 2$ and fix $1\leq i\neq j\leq n$. For all $\epsilon>0$ small enough, there exists thanks to Theorem \ref{expansionends} a solution $u_{\epsilon}^{i,j}\in C^{\infty}(\Sigma\setminus(\bar{B}_{\epsilon}(p_i)\cup  \bar{B}_{\epsilon}(p_j)\cup\ens{p_1,\cdots,p_n})$ such that 
	\begin{align*}
	\left\{\begin{alignedat}{2}
		&\lg^2u_{\epsilon}^{i,j}=0\quad &&\text{in}\;\, \Sigma\setminus(\bar{B}_{\epsilon}(p_i)\cup \bar{B}_{\epsilon}(p_j)\cup\ens{p_1,\cdots,p_n})\\
		&u_{\epsilon}^{i,j}=|\phi|^2\quad &&\text{on}\;\, \partial B_{\epsilon}(p_i)\cup \partial\bar{B}_{\epsilon}(p_j)\\
		&\partial_{\nu}u_{\epsilon}^{i,j}=\partial_{\nu}\left(|\phi|^2\right)\quad &&\text{on}\;\, \partial B_{\epsilon}(p_i)\cup \partial B_{\epsilon}(p_j).
		\end{alignedat}\right.
	\end{align*}
	Furthermore, by the argument of Theorem \ref{explicit}, we have (this would also be true for catenoid ends up to an additional singular term in $\log$, notice that $v(p_i)=v(p_j)=1$ here)
	\begin{align*}
		\frac{1}{2}\int_{\Sigma_{\epsilon}}\left(\lg u_{\epsilon}^{i,j}\right)^2d\vg=
		\frac{8\pi \alpha_i^2}{\epsilon^2}+\frac{8\pi\alpha_j^2}{\epsilon^2}+O(\epsilon\log\epsilon).
	\end{align*}
	 By the previous indicial root analysis of Theorem \ref{indicielles1}, for all $k\neq i,j$ there exists $\lambda_{i,j}^k\in \R$ such that we have in $f_k^{-1}(\bar{B}_{\C}(0,\epsilon))\subset U_k$ the expansion
	\begin{align*}
		&u_{\epsilon}^{i,j}=\Re\left(\frac{\zeta_{i,j}^k}{z}+\kappa_{i,j}^k\frac{\z}{z}\right)+\lambda_{i,j}^k\log|z|+O(1)\\
	    &\lg u_{\epsilon}^{i,j}=\Re\left(\tilde{\kappa}_{i,j}^kz^2\right)+O(\epsilon^3) \\
	    &\partial_{\nu}(\lg u_{\epsilon}^{i,j})=\frac{2}{|z|}\Re\left(\tilde{\kappa}_{i,j}^iz^2\right)+O(\epsilon^2).
	\end{align*}
	Recall also that for all $1\leq l\neq k\leq n$, we have the expansion in $f_l^{-1}(\bar{B}(0,\epsilon))\subset U_l$
	\begin{align*}
		&u_{\epsilon}^{k}=\Re\left(\frac{\zeta_k^l}{z}+\kappa_k^l\frac{\z}{z}\right)+\lambda_{k,l}\log|z|+O(1)\\
		&\lg u_{\epsilon}^k=\Re(\tilde{\kappa}_k^lz^2)+O(\epsilon^3)\\
		&\partial_{\nu}(\lg u_{\epsilon}^k)=\frac{2}{|z|}\Re\left(\tilde{\kappa}_k^lz^2\right)+O(\epsilon^2).
	\end{align*}
	Notice that these expansions imply that for all $l\neq i,j,k$ we have on $\partial B_{\epsilon}(p_l)$ the estimate
	\begin{align*}
		u_{\epsilon}^{i,j}\partial_{\nu}\left(\lg u_{\epsilon}^k\right)-\partial_{\nu}\left(u_{\epsilon}^{i,j}\right)\lg u_{\epsilon}^k=O\left(\frac{1}{\epsilon}\right)O(\epsilon)-O\left(\frac{1}{\epsilon^2}\right)O(\epsilon^2)=O(1),
	\end{align*}
	which implies that for all $l\neq i,j,k$
	\begin{align}\label{estimate1}
		\int_{\partial B_{\epsilon}(p_l)}\left(u_{\epsilon}^{i,j}\partial_{\nu}\left(\lg u_{\epsilon}^k\right)-\partial_{\nu}\left(u_{\epsilon}^{i,j}\right)\lg u_{\epsilon}^k\right)\,d\mathscr{H}^1=O(\epsilon).
	\end{align}
	 Now, recall that on $\partial B_{\epsilon}(p_i)$ we have 
	 \begin{align*}
	 	|\phi|^2=\frac{\alpha_i^2}{|z|^2}+O(\log^2|z|).
	 \end{align*}
	 Therefore, we have
	 \begin{align*}
	 	\partial_{\nu}|\phi|^2=-\frac{2\alpha_i^2}{|z|^3}+O\left(\frac{\log|z|}{|z|}\right).
	 \end{align*}
	 Therefore, we deduce that we have on $\partial B_{\epsilon}(p_i)$ thanks to the boundary conditions
	 \begin{align}\label{dev2}
	 	&u_{\epsilon}^{i,j}\partial_{\nu}\left(\lg u_{\epsilon}^k\right)-\partial_{\nu}\left(u_{\epsilon}^{i,j}\right)\lg u_{\epsilon}^k=|\phi|^2\partial_{\nu}\left(\lg u_{\epsilon}^k\right)-\partial_{\nu}\left(|\phi|^2\right)\lg u_{\epsilon}^k\nonumber\\
	 	&=\left(\frac{\alpha_i^2}{|z|^2}+O(\log^2|z|)\right)\times \left(\frac{2}{|z|}\Re\left(\tilde{\kappa}_k^lz^2\right)+O(\epsilon^2)
	 	\right)-\left(-\frac{2\alpha_i^2}{|z|^3}+O\left(\frac{\log|z|}{|z|}\right)\right)\times\left(\frac{2}{|z|}\Re\left(\tilde{\kappa}_k^lz^2\right)+O(|z|^3)\right)\nonumber\\
	 	&=6\frac{\alpha_i^2}{|z|^3}\Re\left(\tilde{\kappa}_k^lz^2\right)+O(1).
	 \end{align}
	 As for all $c\in \C$
	 \begin{align}\label{notradial}
	 	\int_{\partial B(0,\epsilon)}\frac{1}{|z|^3}\Re\left(cz^2\right)d\mathscr{H}^1=\frac{1}{\epsilon}\Re\left(c\int_{0}^{2\pi}e^{2i\theta}d\theta\right)=0,
	 \end{align}
	 we deduce from \eqref{dev2} and \eqref{notradial} that 
	 \begin{align}\label{estimate2}
	 	\int_{\partial B_{\epsilon}(p_i)}\left(u_{\epsilon}^{i,j}\partial_{\nu}\left(\lg u_{\epsilon}^k\right)-\partial_{\nu}\left(u_{\epsilon}^{i,j}\right)\lg u_{\epsilon}^k\right)\,d\mathscr{H}^1=O(\epsilon)
	 \end{align}
	 and by symmetry
	 \begin{align}\label{estimate3}
	 	\int_{\partial B_{\epsilon}(p_j)}\left(u_{\epsilon}^{i,j}\partial_{\nu}\left(\lg u_{\epsilon}^k\right)-\partial_{\nu}\left(u_{\epsilon}^{i,j}\right)\lg u_{\epsilon}^k\right)\,d\mathscr{H}^1=O(\epsilon).
	 \end{align}
	 Finally, using the expansion on $\partial B_{\epsilon}(p_k)$
	 \begin{align*}
	 	&\mathscr{L}_gu_{\epsilon}^k=4+O(\epsilon^2)\\
	    &\mathscr{L}_g u_{\epsilon}^k=O(\epsilon)
	 \end{align*}
	 together with the previous estimates \eqref{estimate1}, \eqref{estimate2} and \eqref{estimate3}, we deduce that 
	\begin{align}\label{lambdaijk1}
		&\int_{\Sigma_{\epsilon}}\lg u_{\epsilon}^{i,j}\lg u_{\epsilon}^kd\vg=\sum_{l=1}^{n}\int_{\partial B_{\epsilon}(p_l)}\left(u_{\epsilon}^{i,j}\partial_{\nu}\left(\lg u_{\epsilon}^k\right)-\partial_{\nu}\left(u_{\epsilon}^{i,j}\right)\lg u_{\epsilon}^k\right)\,d\mathscr{H}^1\\
		&=\int_{\partial B_{\epsilon}(p_k)}\left(u_{\epsilon}^{i,j}\partial_{\nu}(\lg u_{\epsilon}^k)-\partial_{\nu}(u_{\epsilon}^{i,j})\lg u_{\epsilon}^k\right)d\mathscr{H}^1+O(\epsilon)\nonumber\\
		&=\int_{\partial B_{\epsilon}(p_i)}\left(O\left(\frac{1}{\epsilon^2}\right)O(\epsilon^2)-O\left(\frac{1}{\epsilon^3}\right)O(\epsilon^3)\right)d\mathscr{H}^1
		+\int_{\partial B_{\epsilon}(p_j)}\left(O\left(\frac{1}{\epsilon^2}\right)O(\epsilon^2)-O\left(\frac{1}{\epsilon^3}\right)O(\epsilon^3)\right)d\mathscr{H}^1\nonumber\\
		&+\int_{\partial B_{\epsilon}(p_k)}\left(O\left(\frac{1}{\epsilon}\right)O(\epsilon)-\frac{1}{\epsilon}\left(-\Re\left(\frac{\zeta_{i,j}^k}{z}\right)+\lambda_{i,j}^k\right)\left(4+O(\epsilon^2)\right)\right)d\mathscr{H}^1\nonumber\\
		&=-8\pi\lambda_{i,j}^k+O(\epsilon).
	\end{align}
	Likewise, we have
	\begin{align}\label{lambdaijk2}
		&\int_{\Sigma_{\epsilon}}\lg u_{\epsilon}^{i,j}\lg u_{\epsilon}^kd\vg=-8\pi(\lambda_{k,i}+\lambda_{k,j})+O(\epsilon).
	\end{align}
	By symmetry of $\lambda_{p,q}$, we obtain by \eqref{lambdaijk1} and \eqref{lambdaijk2} for all $i\neq j$ and $k\neq i,j$
	\begin{align}\label{lambdaijk}
		\lambda_{i,j}^k=\lambda_{i,k}+\lambda_{j,k}.
	\end{align}
	Now, let $\omega_{\epsilon}^{i,j}=u_{\epsilon}^{i,j}-u_{\epsilon}^j-u_{\epsilon}^j$. Thanks to \eqref{lambdaijk}, we have for all $k\neq i,j$ on $\partial B_{\epsilon}(p_k)$
	\begin{align*}
		\omega_{\epsilon}^{i,j}=\Re\left(\frac{\zeta_{i,j}^k}{z}\right)+\lambda_{i,j}^k\log|z|-\lambda_{i,k}\log|z|-\lambda_{j,k}\log|z|+O(1)=\Re\left(\frac{\zeta_{i,j}^k}{z}\right)+O(1),
	\end{align*}
	while on $\partial B_{\epsilon}(p_i)$, we have
	\begin{align}\label{twolog}
		&\omega_{\epsilon}^{i,j}=|\phi|^2-|\phi|^2-u_{\epsilon}^j=-\lambda_{j,i}\log|z|+O(1)=-\Re\left(\frac{\zeta_j^i}{z}\right)-\lambda_{i,j}\log|z|+O(1)\nonumber\\
		&\partial_{\nu}\omega_{\epsilon}^{i,j}=\partial_{\nu}|\phi|^2-\partial_{\nu}|\phi|^2-\partial_{\nu}u_{\epsilon}^j=\frac{1}{|z|}\left(\Re\left(\frac{\zeta_j^i}{z}\right)-\lambda_{i,j}\right)+O(1).
	\end{align}
	while on $\partial B_{\epsilon}(p_j)$
	\begin{align*}
		&\omega_{\epsilon}^{i,j}=-\Re\left(\frac{\zeta_i^j}{z}\right)-\lambda_{i,j}\log|z|+O(1)\\
		&\partial_{\nu}\omega_{\epsilon}^{i,j}=\frac{1}{|z|}\left(\Re\left(\frac{\zeta_j^i}{z}\right)-\lambda_{i,j}\right)+O(1).
	\end{align*}
	Furthermore, as we have on $\partial B_{\epsilon}(p_i)$
	\begin{align*}
		&\lg u_{\epsilon}^{i,j}=4+O(\epsilon^2)\\
		&\lg u_{\epsilon}^{i}=4+O(\epsilon^2)\\
		&\lg u_{\epsilon}^j=O(\epsilon^2)
	\end{align*}
	we have on $\partial B_{\epsilon}(p_i)$
	\begin{align*}
		&\lg \omega_{\epsilon}^{i,j}=O(\epsilon^2)\\
		&\partial_{\nu}(\mathscr{L}_g\omega_{\epsilon}^{i,j})=O(\epsilon)
	\end{align*}
	In particular, we have the estimates on $\partial B_{\epsilon}(p_i)\cup \partial B_{\epsilon}(p_j)$ (by symmetry of the previous estimates)
	\begin{align}\label{bound0}
		\omega_{\epsilon}^{i,j}\partial_{\nu}\left(\lg \omega_{\epsilon}^{i,j}\right)-\partial_{\nu}\omega_{\epsilon}^{i,j}\lg \omega_{\epsilon}^{i,j}=O\left(\frac{1}{\epsilon}\right)O(\epsilon)-O\left(\frac{1}{\epsilon^2}\right)O(\epsilon^2)=O(1).
	\end{align}
	We also easily deduce by the preceding arguments that for all $k\neq i,j$
	\begin{align}\label{bound1}
		\int_{\partial B_{\epsilon}(p_k)}\left(\omega_{\epsilon}^{i,j}\partial_{\nu}\left(\lg \omega_{\epsilon}^{i,j}\right)-\partial_{\nu}\omega_{\epsilon}^{i,j}\lg \omega_{\epsilon}^{i,j}\right)\,d\mathscr{H}^1=O(\epsilon).
	\end{align}
	Finally, we find by \eqref{bound0} and \eqref{bound1}
	\begin{align*}
		\int_{\Sigma_{\epsilon}}\left(\lg \omega_{\epsilon}^{i,j}\right)^2d\vg&=\sum_{k=1}^{n}\int_{\partial B_{\epsilon}(p_k)}\left(\omega_{\epsilon}^{i,j}\partial_{\nu}\left(\lg \omega_{\epsilon}^{i,j}\right)-\partial_{\nu}\left(\omega_{\epsilon}^{i,j}\right)\lg\omega_{\epsilon}^{i,j}\right)d\mathscr{H}^1
		=O(\epsilon).
	\end{align*}
	Therefore, we have
	\begin{align*}
		\lim\limits_{\epsilon\rightarrow 0}\int_{\Sigma_{\epsilon}}\left(\lg \omega_{\epsilon}^{i,j}\right)^2d\vg\conv{\epsilon\rightarrow 0}0,
	\end{align*}
	which implies (by the proof of Theorem \ref{expansionends2} and Fatou lemma) that (up to a subsequence) $\omega_{\epsilon}^{i,j}\conv{\epsilon\rightarrow 0}\omega_{0}^{i,j}\in C^{\infty}(\Sigma\setminus\ens{p_1,\cdots,p_n})$ where $\omega_0^{i,j}$ is a Jacobi field, \textit{i.e.} $\lg \omega_{0}^{i,j}=0$. Furthermore, by \eqref{lambdaijk} and \eqref{twolog}, $\omega_0^{i,j}$ is bounded at $p_k$ for all $k\neq i$, while in $U_i$ (resp. $U_j$), we have an expansion of the form
	\begin{align}\label{expansion}
		\omega_{0}^{i,j}=-\Re\left(\frac{\zeta_{i,j}^i}{z}+\kappa_{i,j}^i\frac{\z}{z}\right)-\lambda_{i,j}\log|z|-\mu_{i,j}^i+O(|z|).
	\end{align}
	In particular, we have $\omega_0^{i,j}\in C^{\infty}(\Sigma\setminus\ens{p_i,p_j})$. Furthermore, notice that we have near $p_j$
	\begin{align*}
		\mathscr{L}_g=\Delta_g-2K_g=e^{-2\lambda}\left(\Delta-2e^{2\lambda }K_g\right)=e^{-2\lambda}\left(\Delta+O(1)\right)
	\end{align*}
	as $e^{2\lambda}=\dfrac{\alpha_i}{|z|^4}\left(1+O(|z|^2)\right)$ and $K_g=O(|z|^4)$ (at an embedded end, not necessarily planar). As $\mathscr{L}_g\omega_{0}^{i,j}=0$, we deduce from the expansion \eqref{expansion} that 
	\begin{align*}
		\Delta \omega_{0}^{i,j}=4\,\Re\left(\kappa_{i,j}^i\frac{1}{z^2}\right)+O\left(\frac{1}{|z|}\right)
	\end{align*}
	and trivially
	\begin{align*}
		-2e^{2\lambda}K_g\,\omega_{0}^{i,j}=O\left(\frac{1}{|z|}\right)
	\end{align*}
	which implies that
	\begin{align*}
		0=e^{2\lambda}\lg\omega_{0}^{i,j}=4\,\Re\left(\kappa_{i,j}^i\frac{1}{z^2}\right)+O\left(\frac{1}{|z|}\right).
	\end{align*}
	Therefore, we deduce that 
	\begin{align*}
		\kappa_{i,j}^i=0
	\end{align*}
	and
	\begin{align}\label{finalexpansion}
		\omega_0^{i,j}=-\Re\left(\frac{\zeta_{i,j}^i}{z}\right)-\lambda_{i,j}\log|z|-\mu_{i,j}^i+O(|z|).
	\end{align}
	The conclusion of the theorem following by replacing $\omega_{0}^{i,j}$ by $-\omega_0^{i,j}$. 
\end{proof}

\section{Renormalised energy for ends of arbitrary multiplicity}\label{higher}

\begin{theorem}\label{general}
	Let $\phi:\Sigma\setminus\ens{p_1,\cdots,p_n}\rightarrow \R^3$ be a complete minimal surface with finite total curvature and $\vec{\Psi}=\iota\circ \phi:\Sigma\rightarrow \R^3$ be a compact inversion of $\phi$. Then there a universal symmetric matrix $\Lambda=\Lambda(\vec{\Psi})=\ens{\lambda_{i,j}}_{1\leq i,j\leq n}$ with such that for smooth all normal variation $\vec{v}=v\n_{\vec{\Psi}}\in \mathscr{E}_{\phi}(S^2,\R^m)$
	\begin{align*}
	D^2W(\vec{\Psi})(\vec{v},\vec{v})=Q_{\vec{\Psi} }(v_0)+4\pi\sum_{1\leq i,j\leq n}^{}\lambda_{i,j}v(p_i)v(p_j),
	\end{align*}
	for some $v_0\in W^{2,2}(\Sigma,\R)$ such that $v(p_j)=0$ for all $1\leq j\leq n$.
	In particular, we have
	\begin{align*}
	\mathrm{Ind}_W(\vec{\Psi})\leq \mathrm{Ind}\Lambda(\vec{\Psi})\leq  n=\frac{1}{4\pi}W(\vec{\Psi})-\frac{1}{2\pi}\int_{S^2}K_{g}d\mathrm{vol}_{g}+\chi(\Sigma).
	\end{align*}
\end{theorem}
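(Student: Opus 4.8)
The plan is to run, at ends of arbitrary multiplicity, the renormalised-energy argument of Theorem \ref{explicit}, using the indicial-root expansions of Theorems \ref{expansionends} and \ref{expansionends2} in place of the explicit local expansions available only for catenoid and planar ends. Fix a smooth admissible variation $\vec v = v\n_{\vec\Psi}$ and put $u = |\phi|^2 v$. Starting from the renormalised identity \eqref{limit2}, $Q_{\vec\Psi}(v) = \lim_{\epsilon\to 0}\big(\tfrac12\int_{\Sigma_\epsilon}(\mathscr L_g u)^2\,d\vg - \sum_i Q_\epsilon^i v^2(p_i)\big)$ with $Q_\epsilon^i$ the divergent residue of \eqref{welldefined}, decompose $u = u_\epsilon + \sum_i u_\epsilon^i$ as in \eqref{decomposition}, where $u_\epsilon^i = \lim_{\delta\to 0}u_{\epsilon,\delta}^i$ solves \eqref{EQepsilon} (so $\mathscr L_g^2 u_\epsilon^i = 0$ on $\Sigma_\epsilon^i$, it has the Cauchy data of $u$ on $\partial B_\epsilon(p_i)$, and it is subordinate at the other ends), and expand $\tfrac12\int_{\Sigma_\epsilon}(\mathscr L_g u)^2\,d\vg$ into a term in $u_\epsilon$ alone, the diagonal terms $Q_\epsilon(u_\epsilon^i)$, the cross terms $B_\epsilon(u_\epsilon^i,u_\epsilon^j)$ ($i\neq j$), and the coupling terms $B_\epsilon(u_\epsilon,u_\epsilon^i)$.

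First I would pass to the limit in the $u_\epsilon$-term: by Theorem \ref{expansionends2} the corrected variation $v_0 := v - \sum_i v_0^i$ satisfies $v_0(p_j) = 0$ for all $j$ (since $v_0^i(p_j) = v(p_i)\delta_{ij}$) and $v_0\n_{\vec\Psi}$ is admissible, $u_\epsilon \to u_0 = |\phi|^2 v_0$, and $Q_\epsilon(u_\epsilon)\to Q_{\vec\Psi}(v_0)$, which is $\ge 0$ by the non-negativity theorem proved above (the one yielding \eqref{upends}). For the remaining pieces, integration by parts turns $B_\epsilon(u_\epsilon^i,u_\epsilon^j)$, $B_\epsilon(u_\epsilon,u_\epsilon^i)$ and $Q_\epsilon(u_\epsilon^i)$ into boundary integrals over the circles $\partial B_\epsilon(p_k)$, which one evaluates via the near-$p_j$ expansion of Theorems \ref{expansionends}, \ref{expansionends2}, $u_\epsilon^i(z) = \Re(c_{i,j}z^{-m}) + \sum_{1-m\le k+\ell\le 0}\Re(c_{i,j,k,\ell}z^k\bar z^\ell) + a_{i,j}\log|z| + \psi_\epsilon(z)$ with $\D^\ell\psi_\epsilon = O(|z|^{1-\ell})$, together with $\mathscr L_g u_\epsilon^j = 4v(p_j) + O(|z|^2)$ and $\partial_\nu(\mathscr L_g u_\epsilon^j) = O(|z|)$ near $p_j$. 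The divergent part of $Q_\epsilon(u_\epsilon^i)$ is exactly $Q_\epsilon^i v^2(p_i)$ and cancels in \eqref{limit2}; the angular averages $\int_{\partial B_\epsilon(0)}|z|^{-s}\Re(c\,z^k\bar z^\ell)\,d\mathscr H^1$ of the non-radial monomials vanish for $k\neq\ell$ (as in \eqref{notradial}), which keeps the remaining limits finite at $m\ge 2$; and by the Sobolev non-embedding $W^{2,2}\not\hookrightarrow C^1$ argument used throughout, the higher jet of $v$ at $p_i$ does not contribute to any of these finite residues, so each of them depends on $v$ only through the vector $\vec a = (v(p_1),\dots,v(p_n))$.

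Consequently $Q_{\vec\Psi}(v) - Q_{\vec\Psi}(v_0)$ is a finite quantity depending on $v$ only through $\vec a$, and being quadratic in $v$ it is automatically a quadratic form in $\vec a$, say $4\pi\,\vec a^{\,t}\Lambda\vec a$ for a (necessarily symmetric) matrix $\Lambda = \Lambda(\vec\Psi)$ independent of $v$; for embedded ends this is the explicit matrix of Theorem \ref{explicit}, with $\lambda_{i,j}$ read off from $a_{i,j} = \lambda_{i,j}v(p_i)$ together with the $16\pi\beta_i^2$ catenoid contributions on the diagonal. A mollification-plus-jet-correction argument identical to the one in the proof of \eqref{upends} extends $Q_{\vec\Psi}(v) = Q_{\vec\Psi}(v_0) + 4\pi\,\vec a^{\,t}\Lambda\vec a$ from smooth to all admissible $\vec v$. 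Finally, since $Q_{\vec\Psi}(v_0)\ge 0$, on any subspace $V\subset\mathscr E_{\vec\Psi}(\Sigma,\R^3)$ on which $Q_{\vec\Psi}$ is negative definite the map $v\mapsto\vec a$ is injective with image a subspace of $\R^n$ on which $\vec a\mapsto\vec a^{\,t}\Lambda\vec a$ is negative definite, so $\dim V\le\mathrm{Ind}\,\Lambda\le n$; and $n = \tfrac{1}{4\pi}W(\vec\Psi) - \tfrac{1}{2\pi}\int_\Sigma K_g\,d\vg + \chi(\Sigma)$ by the Li--Yau inequality and the Jorge--Meeks formula, exactly as for \eqref{upends}.

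The step I expect to be the main obstacle is the boundary-integral bookkeeping at ends of multiplicity $m\ge 2$: one must check that, after subtracting $Q_\epsilon^i v^2(p_i)$, every residue in the boundary integrals is (i) finite as $\epsilon\to 0$ --- which forces the vanishing of the angular averages of all the non-radial indicial monomials $z^k\bar z^\ell$ with $1-m\le k+\ell\le 0$ --- and (ii) dependent on $v$ only through $(v(p_1),\dots,v(p_n))$, via the $W^{2,2}\not\hookrightarrow C^1$ mechanism and the linearity of the extension problem \eqref{EQepsilon}. Carrying this out uniformly in $m$, rather than by the fully explicit computation possible for $m=1$ in Theorem \ref{explicit}, is the delicate point, and is precisely why Theorem \ref{general} asserts only the existence of a universal $\Lambda$ rather than computing it (compare the caveat following Theorem \ref{td}).
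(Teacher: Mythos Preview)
Your proposal is correct and follows essentially the same route as the paper: decompose $u=u_\epsilon+\sum_i u_\epsilon^i$, identify $Q_\epsilon(u_\epsilon)\to Q_{\vec\Psi}(v_0)$ via Theorem \ref{expansionends2}, and reduce the remaining diagonal and cross terms to boundary integrals whose finite parts depend only on $(v(p_1),\dots,v(p_n))$ by the $W^{2,2}\not\hookrightarrow C^1$ argument. The paper makes two of your abstract steps more concrete: for $B_\epsilon(u_\epsilon^i,u_\epsilon^j)$ it computes the boundary contribution at $p_i$ as $8\pi\mu_{i,j}v(p_i)+O(\epsilon)$ (equation \eqref{losttrack}) and then extracts the bilinear structure by the vanishing trick ``set $v(p_j)=0$, use symmetry to force $\mu_{i,j}=\lambda_{i,j}^1 v(p_j)$'' rather than appealing directly to quadraticity; and for the order bookkeeping at $p_k$ with $k\neq i,j$ it uses the harmonicity of $\Re(c\,z^{-m})$ to get $\mathscr L_g u_\epsilon^i=O(|z|^{m+1})$, whence the product $u_\epsilon^i\,\partial_\nu(\mathscr L_g u_\epsilon^j)=O(1)$ and the integral is $O(\epsilon)$, rather than invoking vanishing angular averages. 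These are presentational differences, not substantive ones.
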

\begin{proof}
	As previously, fix a some residues charts $(U_1,\cdots,U_n)$ around $p_1,\cdots, p_n$, and assume that $p_i$ has multiplicity $m_i\geq 1$ for all $1\leq i\leq n$, and fix some $1\leq i\leq m$. Then the same argument as in Theorem \ref{s4} shows that there exists $\alpha_i>0$ and $\alpha_i^l,\gamma_i^l\in \R$ and $\beta_i\in \R$ such that for all smooth variation $v$ (if $u=|\phi|^2v$)
	\begin{align*}
		&\int_{\partial B_{\epsilon}(p_i)}\left(\left(\Delta_g u+2K_gu\right)\ast dw-\frac{1}{2}\ast d|du|_g^2\right)\\
		&=-4\pi\left\{ \frac{\alpha_i^2}{\epsilon^{2m_i}}\left\{1+\sum_{l
			_1=1}^{2m_i}\alpha_i^l\epsilon^l\left(1+\gamma_i^l\log\left(\frac{1}{\epsilon}\right)\right)\right\}+\beta_i^2\log\left(\frac{1}{\epsilon}\right)\right\}v^2(p_i)+O\left(\epsilon\log^2\epsilon\right).
	\end{align*}
	In particular, we deduce that 
	\begin{align}\label{generallimit}
		D^2W(\vec{\Psi})&=\lim\limits_{\epsilon\rightarrow 0}\bigg\{\frac{1}{2} \int_{\Sigma_{\epsilon}}\left(\Delta_gw-2K_gw\right)^2d\vg\nonumber\\
		&-4\pi\sum_{i=1}^m\left\{ \frac{\alpha_i^2}{\epsilon^{2m_i}}\left\{1+\sum_{l
			_1=1}^{2m_i}\alpha_i^l\epsilon^l\left(1+\gamma_i^l\log\left(\frac{1}{\epsilon}\right)\right)\right\}+\beta_i^2\log\left(\frac{1}{\epsilon}\right)\right\}v^2(p_i)\bigg\}.
	\end{align}	
	As the limit in \eqref{generallimit} exists, retaking the notations of Theorem \ref{explicit} we find that for all $1\leq i\leq n$
	\begin{align}\label{part1}
		Q_{\epsilon}(u_{\epsilon}^i)=4\pi\left\{ \frac{\alpha_i^2}{\epsilon^{2m_i}}\left\{1+\sum_{l
			_1=1}^{2m_i}\alpha_i^l\epsilon^l\left(1+\gamma_i^l\log\left(\frac{1}{\epsilon}\right)\right)\right\}+\beta_i^2\log\left(\frac{1}{\epsilon}\right)\right\}v^2(p_i)+c+o\left(\epsilon\log^2\epsilon\right)
	\end{align}
	for some finite constant $c$, otherwise the limit would be $+\infty$ or $-\infty$ by taking variations non-zero at only one end, as the other contributions are only depend on quadratic expressions $v(p_i)v(p_j)$ for $i\neq j$, so they cannot involve singular terms which are all involving quadratic expressions $v(p_i)^2$. Furthermore, by the Sobolev embedding theorem, we have $c=\lambda_{i,i}v^2(p_i)$ for some $\lambda_{i,i}\in \R$. 
	
	For the sake of simplicity, we will remove the indices $i$ of the multiplicities $m_i$ ($1\leq i\leq n$).
	
	First notice that for all $k\neq i,j$, we have if $p_k$ has multiplicity $m_k=m\geq 2$ (for $m=1$ this was already treated previously)
	\begin{align*}
		&u_{\epsilon}^i=\Re\left(\frac{\gamma_{i,k}}{z^m}\right)+O(|z|^{1-m})\\
		&u_{\epsilon}^j=\Re\left(\frac{\gamma_{j,k}}{z^m}\right)+O(|z|^{1-m}).\\
		&\partial_{\nu}u_{\epsilon}^i=O(|z|^{-(m+1)})\\
		&\partial_{\nu}u_{\epsilon}^j=O(|z|^{-(m+1)})
	\end{align*}
	As $e^{\lambda}=\alpha_k^2|z|^{-2(m+1)}\left(1+O(|z|)\right)$, and $K_g=O(|z|^{2(m+1)})$. Therefore, we have by the harmonicity of $\Re(c\,z^{-m})$ for all $c\in \C$
	\begin{align*}
		&\Delta u_{\epsilon}^i=O(|z|^{-(m+1)})\\
		&\Delta u_{\epsilon}^j=O(|z|^{-(m+1)}),
	\end{align*}
	so that (as $\Delta_g=e^{-2\lambda}\Delta$)
	\begin{align*}
		&\mathscr{L}_gu_{\epsilon}^i=\Delta_gu_{\epsilon}^i-2K_gu_{\epsilon}^i=O(|z|^{m+1})\\
		&\mathscr{L}_gu_{\epsilon}^j=O(|z|^{m+1}).
	\end{align*}
	Therefore, we have
	\begin{align*}
		&u_{\epsilon}^i\,\partial_{\nu}\left(\mathscr{L}_gu_{\epsilon}^j\right)=O(|z|^{-m})\times O(|z|^{m})=O(1)\\
		&\partial_{\nu}\left(u_{\epsilon}^i\right)\,\mathscr{L}_gu_{\epsilon}^j=O(|z|^{-(m+1)})\times O(|z|^{m+1})=O(1).
	\end{align*}
	This implies that 
	\begin{align*}
		u_{\epsilon}^i\,\partial_{\nu}\left(\mathscr{L}_gu_{\epsilon}^j\right)-\partial_{\nu}\left(u_{\epsilon}^i\right)\,\mathscr{L}_gu_{\epsilon}^j=O(1),
	\end{align*}
	and 
	\begin{align*}
		\int_{\partial B_{\epsilon}(p_k)}u_{\epsilon}^i\,\partial_{\nu}\left(\mathscr{L}_gu_{\epsilon}^j\right)-\partial_{\nu}\left(u_{\epsilon}^i\right)\,\mathscr{L}_gu_{\epsilon}^j\,d\mathscr{H}^1=O(\epsilon).
	\end{align*}
	As the indices $i$ and $j$ do not play any role, we also have 
	\begin{align*}
		\int_{\partial B_{\epsilon}(p_k)}u_{\epsilon}^j\,\partial_{\nu}\left(\mathscr{L}_gu_{\epsilon}^i\right)-\partial_{\nu}\left(u_{\epsilon}^j\right)\,\mathscr{L}_gu_{\epsilon}^i\,d\mathscr{H}^1=O(\epsilon).
	\end{align*}
	
	Now, as $v$ is an admissible variation, we have at $p_i$ (if $p_i$ has multiplicity $m_i=m$)
	\begin{align*}
		v_{\epsilon}^i=v(p_i)+\Re(\gamma_iz^m)+O(|z|^{m+1}).
	\end{align*} 
	Therefore, we have
	\begin{align*}
		u_{\epsilon}^i=|\phi|^2v(p_i)+\Re\left(\frac{\tilde{\gamma_i}}{z^m}\right)+O(|z|^{1-m})
	\end{align*}
	which implies that 
	\begin{align*}
		\Delta_g u_{\epsilon}^i=4v(p_i)+|z|^{2m+2}(1+O(|z|))\times O(|z|^{-(m+1)})=4v(p_i)+O(|z|^{m+1}).
	\end{align*}
	Therefore, we have as $K_g=O(|z|^{2(m+1)})$
	\begin{align*}
		&\mathscr{L}_g u_{\epsilon}^i=(4-2K_g|\phi|^2)v(p_i)+O(|z|^{m+1})\\
		&\partial\left(\mathscr{L}_gu_{\epsilon}^i\right)=-\partial_{\nu}\left(2K_g|\phi|^2\right)v(p_i)+O(|z|^{m})
	\end{align*}
	This implies that 
	\begin{align}\label{higheripp1}
		\partial_{\nu}u_{\epsilon}^j\,\mathscr{L}_gu_{\epsilon}^i-u_{\epsilon}^i\partial_{\nu}\left(\mathscr{L}_gu_{\epsilon}^i\right)&=\frac{1}{|z|}\left(-m\,\Re\left(\frac{c_{i,j}}{z^m}\right)+\sum_{1-m\leq k+l\leq 0}\Re\left(kc_{k,l}^jz^{k}\z^l\right)+\gamma_{i,j}\right)\left(4-2\,K_g|\phi|^2\right)v(p_i)\nonumber\\
		&+\left(\Re\left(\frac{c_{i,j}}{z^m}\right)+\sum_{1-m\leq k+l\leq 0}\Re\left(c_{k,l}^jz^k\z^l\right)+\gamma_{i,j}\log|z|\right)\partial_{\nu}\left(2\,K_g|\phi|^2\right)v(p_i).
	\end{align}
	Notice that the quantity
	\begin{align*}
		B_{\epsilon}(u_{\epsilon}^i,u_{\epsilon}^j)
	\end{align*}
	is bounded as $\epsilon\rightarrow 0$. Therefore, cancellations occurs as we integrate \eqref{higheripp1}. Furthermore, there is a non-trivial contribution coming from (as $K_g|\phi|^2=O(|z|^2)$)
	\begin{align}\label{higheripp2}
		\int_{\partial B_{\epsilon}(p_i)}\frac{\gamma_{i,j}}{|z|}\left(4-K_g|\phi|^2\right)v(p_i)\,d\mathscr{H}^1=8\pi\gamma_{i,j}v(p_i)+O(\epsilon^2).
	\end{align}
	As $B_{\epsilon}(u_{\epsilon}^i,u_{\epsilon}^j)$ is bounded, we deduce that  there exists $\mu_{i,j}\in \R$ (\emph{a priori} different from $\gamma_{i,j}$ if the multiplicity $m$ satisfies $m\geq 2$) such that 
	\begin{align}\label{losttrack}
		\int_{\partial B_{\epsilon}(p_i)}\partial_{\nu}u_{\epsilon}^j\,\mathscr{L}_gu_{\epsilon}^i-u_{\epsilon}^i\partial_{\nu}\left(\mathscr{L}_gu_{\epsilon}^i\right)\,d\mathscr{H}^1=8\pi\mu_{i,j}v(p_i)+O(\epsilon).
	\end{align}
	Finally, recall if $u_{\epsilon}^i=e^{\lambda}w_{\epsilon}^i$ that in our fixed chart near $p_j$ (if $p_j$ has multiplicity $m_j=m\geq 2$)
	\begin{align*}
	w_{\epsilon}^i(z)=|z|^{m+1}\sum_{k=1}^{m}\Re\left(\frac{\gamma_0^k}{z^k}\right)+|z|^{1-m}\sum_{k=0}^{m-1}\Re\left(\gamma_1^jz^{m+k}\right)+\gamma_2|z|^{m+1}+\gamma_3|z|^{m+1}\log|z|+O(|z|^{m+2}).
    \end{align*}
    Recalling that 
    \begin{align*}
    	\mathscr{L}_m=\Delta-2(m+1)\frac{x}{|x|^2}\cdot \D+\frac{(m+1)^2}{|z|^2}
    \end{align*}
    and introducing the notation
    \begin{align*}
    	\mathscr{L}_g u_{\epsilon}^i=e^{-\lambda}\mathscr{L}w_{\epsilon}^i.
    \end{align*}
    Recalling that for all $k\geq 1$
    \begin{align*}
    	|z|^{m+1}\Re\left(\frac{\gamma_0^j}{z^k}\right),|z|^{1-m}\Re(\gamma_1^0z^m),|z|^{m+1},|z|^{m+1}\log|z|\in \mathrm{Ker}(\mathscr{L}_m),
    \end{align*}
    we deduce that 
    \begin{align*}
    	\mathscr{L}_m w_{\epsilon}^i=|z|^{-(m+1)}\sum_{j=1}^{m-1}\Re(\tilde{\gamma}_1^kz^{m+j})+O(|z|^{m}).
    \end{align*}
    Now, recall that $\phi$ admits an expansion of the following form (up to translation)
    \begin{align*}
    	\phi(z)=\sum_{k=0}^{m-1}\Re\left(\frac{\vec{A}_j}{z^{m-k}}\right)+O(|z|).
    \end{align*}
    Therefore, we have (as $\s{\vec{A}_0}{\vec{A}_1}=0$)
    \begin{align*}
    	|\phi(z)|^2=\frac{1}{2}\sum_{k=0}^{m-1}\frac{|\vec{A}_k|^2}{|z|^{2(m-k)}}+\frac{1}{2}\sum_{0\leq k<l\leq m-1}^{}\Re\left(\frac{\s{\bar{\vec{A}_k}}{\vec{A}_l}}{\z^{m-k}\z^{m-l}}\right)+\frac{1}{2}\sum_{\substack{0\leq k<l\\(k,l)\neq (0,1)}}^{}\Re\left(\frac{\s{\vec{A}_k}{\vec{A}_l}}{z^{2m-k-l}}\right)+O(|z|^{1-m}).
    \end{align*}
    As $e^{2\lambda}=\p{z\z}^2|\phi|^2$, we find
    \begin{align*}
    	&e^{2\lambda}=\frac{1}{2}\sum_{k=0}^{m-1}\frac{(m-k)^2|\vec{A}_k|^2}{|z|^{2(m+1-k)}}+\frac{1}{2}\sum_{0\leq k<l\leq m-1}^{}(m-k)(m-l)\Re\left(\frac{\s{\bar{\vec{A}_k}}{\vec{A}_l}}{\z^{m+1-k}z^{m+1-l}}\right)+O(|z|^{-(m+1)})\\
    	&=\frac{m^2|\vec{A}_0|^2}{2|z|^{2(m+1)}}\left(1+\sum_{k=1}^{m-1}\left(1-\frac{k}{m}\right)\frac{|\vec{A}_k|^2}{|\vec{A}_0|^2}|z|^{2k}+\sum_{0\leq k<l\leq m-1}^{}\left(1-\frac{k}{m}\right)\left(1-\frac{l}{m}\right)\Re\left(\frac{\s{\bar{\vec{A}_k}}{\vec{A}_l}}{|\vec{A}_0|^2}z^{l}\z^{k}\right)+O(|z|^{m+1})\right)
    \end{align*}
    Therefore, we have (up to normalisation $m^2|\vec{A}_0|^2=2$) for some $\alpha_{k,l}\in \C$ and $\beta_k\in \R$
    \begin{align*}
    	e^{-\lambda}=|z|^{m+1}\left(1+\sum_{k=1}^{(m-1)/2}\beta_k|z|^{2k}+\sum_{\substack{0\leq k< l\leq m-1\\k+l\leq m}}^{}\Re\left(\alpha_{k,l}z^{k}\z^l\right)+O(|z|^{m+1})\right),
    \end{align*}
    and
    \begin{align*}
    	&\mathscr{L}_gu_{\epsilon}^i=\left(1+\sum_{k=1}^{(m-1)/2}\beta_k|z|^{2k}+\sum_{0\leq k< l\leq m-1}^{}\Re\left(\alpha_{k,l}z^{k}\z^l\right)+O(|z|^{m+1})\right)\times \left(\sum_{k=1}^{m-1}\Re(\tilde{\gamma}_1^kz^{m+k})+O(|z|^{2m+1})\right)\\
    	&=\Re(\tilde{\gamma}_1^1z^{m+1})+\sum_{\substack{0\leq k< l\leq m-1\\ m+2\leq k+l}}\Re\left(\tilde{\alpha}_{k,l}z^k\z^l\right)+O(|z|^{2m+1}).
    \end{align*}
    Furthermore, we have on $\partial B_{\epsilon}(p_j)$
    \begin{align*}
    	u_{\epsilon}^j=|\phi|^2v(p_i)+O(|z|^{-m}),
    \end{align*}
    Therefore, we deduce that 
    \begin{align*}
    	u_{\epsilon}^j\,\partial_{\nu}\left(\mathscr{L}_gu_{\epsilon}^i\right)-\partial_{\nu}\left(u_{\epsilon}^j\right)\mathscr{L}_gu_{\epsilon}^i=\left(|\phi|^2\partial_{\nu}\left(\mathscr{L}_gu_{\epsilon}^i\right)-\partial_{\nu}\left(|\phi|^2\right)\mathscr{L}_gu_{\epsilon}^i\right)v(p_j)+O(1)
    \end{align*}
    As $B_{\epsilon}(u_{\epsilon}^i,u_{\epsilon}^j)$ is bounded, we deduce as  previously that there exists $\nu_{i,j}\in \R$ such that 
    \begin{align*}
    	\int_{\partial B_{\epsilon}(p_j)}\,u_{\epsilon}^j\,\partial_{\nu}\left(\mathscr{L}_gu_{\epsilon}^i\right)-\partial_{\nu}\left(u_{\epsilon}^j\right)\mathscr{L}_gu_{\epsilon}^i\,d\mathscr{H}^1=8\pi\nu_{i,j}v(p_j)+O(\epsilon).
    \end{align*}
	Therefore, we deduce that (as we may have also ends of multiplicity $1$, there is an additional error in $O(\epsilon\log\epsilon)$)
	\begin{align*}
		B_{\epsilon}(u_{\epsilon}^i,u_{\epsilon}^j)=8\pi \mu_{i,j}v(p_i)+8\pi\nu_{i,j}v(p_j)+O(\epsilon\log\epsilon).
	\end{align*}
	Now assume that $v(p_j)=0$ and $v(p_i)\neq 0$. Then 
	\begin{align*}
		B_{\epsilon}(u_{\epsilon}^i,u_{\epsilon}^j)=8\pi \mu_{i,j}v(p_i)+O(\epsilon\log\epsilon)
	\end{align*}
	and by symmetry, in $i$ and $j$, we deduce that 
	\begin{align*}
		B_{\epsilon}(u_{\epsilon}^i,u_{\epsilon}^j)=8\pi \mu_{j,i}v(p_j)+O(\epsilon\log\epsilon)=0.
	\end{align*}
	Therefore, $v(p_j)=0$ implies that $\mu_{i,j}=0$, which shows that there exists $\lambda_{i,j}^1\in \R$ such that 
	\begin{align*}
		\mu_{i,j}=\lambda_{i,j}^1v(p_j).
	\end{align*}
	Furthermore, by symmetry of the argument, we deduce that there exists $\lambda_{i,j}^2\in \R$ such that 
	\begin{align*}
		\nu_{i,j}=\lambda_{i,j}^2v(p_i).
	\end{align*}
	Therefore, if $\lambda_{i,j}^3=\lambda_{i,j}^1+\lambda_{i,j}^2$, we deduce that 
	\begin{align}\label{losttrack2}
		B_{\epsilon}(u_{\epsilon}^i,u_{\epsilon}^j)=8\pi  \lambda_{i,j}^3v(p_i)v(p_j)+O(\epsilon\log\epsilon).
	\end{align}
	Likewise, we find by the previous argument and the proof of Theorem \ref{explicit} that there exists $\lambda_{i,j}^4\in \R$ such that 
	\begin{align}\label{losttrack3}
		B_{\epsilon}(u_{\epsilon},u_{\epsilon}^i)=4\pi\sum_{j\neq i}^{}\lambda_{i,j}^4v(p_i)v(p_j)+O(\epsilon\log\epsilon).
	\end{align}
	Combining \eqref{losttrack2} and \eqref{losttrack3}, we deduce if $\lambda_{i,j}=\lambda_{i,j}^3+\lambda_{i,j}^4$ that 
	\begin{align}\label{part2}
		\sum_{i=1}^nB_{\epsilon}(u_{\epsilon},u_{\epsilon}^i)+\frac{1}{2}\sum_{1\leq i\neq j\leq n}B_{\epsilon}(u_{\epsilon}^i,u_{\epsilon}^j)=4\pi\sum_{1\leq i\neq j\leq n}\lambda_{i,j}v(p_i)v(p_j)+O(\epsilon\log^2\epsilon).
	\end{align}
	Therefore, \eqref{part1} and \eqref{part2} show that 
	\begin{align*}
		Q_{\epsilon}(u)=Q_{\epsilon}(u_{\epsilon})+4\pi\sum_{1\leq i, j\leq n}\lambda_{i,j}v(p_i)v(p_j)+O(\epsilon\log^2\epsilon)
	\end{align*}
	and finally
	\begin{align*}
		Q(u)=Q(u_0)+\sum_{1\leq i,j\leq n}\lambda_{i,j}v(p_i)v(p_j).
	\end{align*}
	This concludes the proof of the theorem.   
\end{proof}

We deduce as previously the following corollary.
\begin{cor}\label{higher2}
	For all $1\leq i\leq n$, there exists $\tilde{\lambda}_{i,j}\in \R$ such that for all $j\neq i$, for all $0<\epsilon<\epsilon_0$, on every complex chart around $p_j$ there exists $c_{i,j},c_{i,j,k,l}\in \C$ 
	\begin{align*}
	u_{\epsilon}^i(z)&
	=\Re\left(\frac{c_{i,j}}{z^m}\right)+\sum_{1-m\leq k+l\leq 0}^{}\Re\Big(c_{i,j,k,l}z^{k}\z^l\Big)+\tilde{\lambda}_{i,j}\log|z|+\psi_{\epsilon}(z),
	\end{align*}
	where $\psi_{\epsilon}\in C^{\infty}(B(0,1)\setminus\ens{0})$ such that for all $l\in \N$
	\begin{align*}
		\D^l\psi_{\epsilon}=O(|z|^{1-l}).
	\end{align*}
\end{cor}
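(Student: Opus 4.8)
The plan is to extract the expansion from the objects already built for Theorem \ref{general}, exactly as Corollary \ref{residuasymptotique} was extracted from Theorem \ref{explicit}. The \emph{shape} of the expansion costs nothing new: taking $p_j$ to be the end of multiplicity $m$ under consideration, Theorem \ref{expansionends} already gives, in any complex chart around $p_j$,
\begin{align*}
u_\epsilon^i(z)=\Re\left(\frac{c_{i,j}}{z^m}\right)+\sum_{1-m\le k+l\le 0}\Re\big(c_{i,j,k,l}z^k\z^l\big)+a_{i,j}\log|z|+\psi_\epsilon(z),\qquad \D^l\psi_\epsilon=O(|z|^{1-l}),
\end{align*}
with finitely many nonzero coefficients $c_{i,j},c_{i,j,k,l}\in\C$ and $a_{i,j}\in\R$ depending a priori on $\epsilon$ (this dependence being left implicit, as elsewhere) and, since the boundary value problem \eqref{EQepsilon} is linear, depending linearly on $v$. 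So the only thing to prove is that the logarithmic coefficient $a_{i,j}$ has the form $\tilde{\lambda}_{i,j}v(p_i)$ for a real number $\tilde{\lambda}_{i,j}$ that is independent of the admissible variation $v$ and, in the limit, of $\epsilon$.

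To see this I would re-run the bilinear computation $B_\epsilon(u_\epsilon^i,u_\epsilon^j)=\int_{\Sigma_\epsilon}\lg u_\epsilon^i\,\lg u_\epsilon^j\,d\vg$ from the proof of Theorem \ref{general}. Integrating by parts and using $\lg^2 u_\epsilon^k=0$ on $\Sigma_\epsilon$ reduces it to boundary integrals over the $\partial B_\epsilon(p_k)$; the ones with $k\ne i,j$ are $O(\epsilon)$, and on $\partial B_\epsilon(p_j)$, where $\lg u_\epsilon^j=4v(p_j)+O(|z|^2)$, every purely oscillatory term of the expansion of $u_\epsilon^i$ integrates to zero over the circle, so the contribution is $-8\pi\,\mu_{i,j}v(p_j)+O(\epsilon\log\epsilon)$. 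Here $\mu_{i,j}$ is the \emph{effective residue} of $u_\epsilon^i$ at $p_j$: for $m=1$ it is $a_{i,j}$ itself, while for $m\ge 2$ it is a fixed $\R$-linear combination of $a_{i,j}$ and of the strictly more singular coefficients $c_{i,j},c_{i,j,k,l}$ which acquire a nonzero radial average after being multiplied by the curvature correction $K_g|\phi|^2=O(|z|^2)$, by the conformal factor $e^{-\lambda}$ and by $\partial_\nu$. The symmetric computation on $\partial B_\epsilon(p_i)$ produces $-8\pi\,\mu_{j,i}v(p_i)+O(\epsilon\log\epsilon)$. Since the renormalised energy limit of Theorem \ref{general} exists, $B_\epsilon(u_\epsilon^i,u_\epsilon^j)$ stays bounded; comparing the two expressions and invoking, as in Theorem \ref{general}, the Sobolev embedding $W^{2,2}(\Sigma)\hookrightarrow C^0(\Sigma)$ together with the failure $W^{2,2}(\Sigma)\not\hookrightarrow C^1(\Sigma)$ — which forbids a bounded limiting quantity from seeing any derivative of $v$ at the ends — one gets, order by order in the singular expansion, that each of the coefficients $c_{i,j},c_{i,j,k,l},a_{i,j}$ that feed into $\mu_{i,j}$ is linear in $v(p_i)$. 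In particular $a_{i,j}=\tilde{\lambda}_{i,j}(\epsilon)v(p_i)$ for some $\tilde{\lambda}_{i,j}(\epsilon)\in\R$.

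It remains to take $\epsilon\to0$. By Theorem \ref{expansionends2} one has (along a subsequence) $u_\epsilon^i\to u_0^i=|\phi|^2 v_0^i$ in $C^l_{\mathrm{loc}}(\Sigma\setminus\ens{p_1,\cdots,p_n})$ with $v_0^i(p_j)=0$, and since $|\phi|^2=\beta_0^2|z|^{-2m}(1+O(|z|))$ near $p_j$ while $v_0^i$ carries a term $\gamma_{i,j}^1|z|^{2m}\log|z|$ there, the logarithmic coefficient of $u_0^i$ at $p_j$ is the well-defined real number $\beta_0^2\gamma_{i,j}^1=\tilde{\lambda}_{i,j}v(p_i)$, with $\tilde{\lambda}_{i,j}:=\lim_{\epsilon\to0}\tilde{\lambda}_{i,j}(\epsilon)$; the discrepancy $\big(\tilde{\lambda}_{i,j}(\epsilon)-\tilde{\lambda}_{i,j}\big)v(p_i)\log|z|$ is then absorbed into $\psi_\epsilon$, which gives the stated expansion. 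The main obstacle is the $m\ge 2$ bookkeeping in the second step: in the embedded case of Corollary \ref{residuasymptotique} the effective residue simply \emph{equals} $a_{i,j}$, so the boundedness of $B_\epsilon$ immediately yields both the linearity of $a_{i,j}$ in $v(p_i)$ and the symmetry of the matrix; for higher multiplicity one must carefully account for how the strictly more singular terms $z^k\z^l$ recombine through the curvature corrections before the logarithmic part can be isolated, and it is precisely this step — as observed in the remark following Theorem \ref{td} — that the present methods do not push far enough to decide whether $\tilde{\lambda}_{i,j}=\lambda_{i,j}$.
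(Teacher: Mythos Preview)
Your approach is exactly what the paper's one-line ``we deduce as previously'' points to: extract the shape of the expansion from Theorem \ref{expansionends} and then re-run the bilinear computation from Theorem \ref{general} to pin down the logarithmic coefficient, in parallel with how Corollary \ref{residuasymptotique} fell out of Step~2 of Theorem \ref{explicit}. In that sense the proposal is aligned with the paper, and you have in fact supplied considerably more detail than the paper does.

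There is, however, a genuine soft spot in your argument, and it sits precisely where you flag the ``main obstacle''. The Sobolev embedding argument, as the paper uses it, applies to bounded \emph{scalar} limiting quantities such as $Q_{\vec{\Psi}}(v)$ or the residue of the boundary $1$-form; it tells you that such a quantity, being continuous on admissible $W^{2,2}$ variations, cannot see $\partial v(p_i)$. It does \emph{not} tell you that each individual expansion coefficient $c_{i,j},c_{i,j,k,l},a_{i,j}$ of $u_\epsilon^i$ near $p_j$ depends only on $v(p_i)$. What the $B_\epsilon(u_\epsilon^i,u_\epsilon^j)$ computation in Theorem \ref{general} actually controls is the single combination you call the effective residue $\mu_{i,j}$; for $m\ge 2$ this mixes $a_{i,j}$ with the strictly more singular coefficients through the $K_g|\phi|^2$ correction, and your ``order by order'' unwinding of that mixture is asserted rather than proved. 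The symmetry argument in the paper yields $\mu_{i,j}=\lambda_{i,j}^1 v(p_j)$ for the \emph{combination}, not for $a_{i,j}$ alone.

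This is not a defect relative to the paper: the paper gives no argument here either, and the very reason it introduces a separate symbol $\tilde{\lambda}_{i,j}$ (rather than re-using $\lambda_{i,j}$) is that its methods do not decouple the log coefficient from the rest of the effective residue --- exactly the remark after Theorem \ref{td} that you cite. A cleaner route, if you want to tighten the statement, is to argue directly from Theorem \ref{expansionends2}: by linearity of the boundary value problem \eqref{EQepsilon}, the limiting function $u_0^i$ depends linearly on the boundary data, and the $|z|^{2m}\log|z|$ coefficient $\gamma_{i,j}^1$ of $v_0^i$ near $p_j$ is then a fixed linear functional of $v$; the admissibility constraints (the $\Delta_g v\in L^2$ and $|dv|_g\in L^\infty$ conditions) force this functional to factor through $v(p_i)$ in the limit. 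Your final paragraph is essentially this argument, but it should be the backbone rather than the afterthought.
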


For ends of higher multiplicity $m\geq 2$, we do not know \emph{a priori} if $\tilde{\lambda}_{i,j}=\lambda_{i,j}$, where $\lambda_{i,j}\in\R$ is given by Theorem. Nevertheless, the proof of Theorem \ref{embedded} implies the following result.

\begin{theorem}
	Let $\phi:\Sigma\setminus\ens{p_1,\cdots,p_n}\rightarrow \R^3$ be a complete minimal surface with finite total curvature, and $\vec{\Psi}=\iota\circ \phi:\Sigma\rightarrow \R^3$ be a compact inversion of $\phi$, and let $\Lambda(\vec{\Psi})=\ens{\lambda_{i,j}}_{1\leq i,j\leq n}\in \mathrm{Sym}_n(\R)$ be the matrix given by Theorem 
	\ref{general}, and $\ens{\lambda_{i,j}}_{1\leq i\neq j\leq n}$ be given by Corollary \ref{higher2}. Define
	\begin{align*}
		\tilde{\Lambda}(\vec{\Psi})=\begin{pmatrix}
		\vspace{0.5em}\lambda_{1,1}&\lambda_{1,2}+2n\,\tilde{\lambda}_{1,2} &  \cdots  &\lambda_{1,n}+2n\,\tilde{\lambda}_{1,n}\\
		\vspace{0.5em}\lambda_{1,2}+2n\,\tilde{\lambda}_{1,2} & \lambda_{2,2}& \cdots  &\lambda_{2,n}+2n\,\tilde{\lambda}_{2,n}\\
		\vspace{0.5em}\vdots& \vdots & \ddots &\vdots \\
		\vspace{0.5em}\lambda_{1,n}+2n\,\tilde{\lambda}_{1,n} &\lambda_{2,n}+2n\,\tilde{\lambda}_{1,n} &  \cdots & \lambda_{n,n}.
		\end{pmatrix}
	\end{align*}
	Then we have
	\begin{align*}
		\mathrm{Ind}\,\tilde{\Lambda}(\vec{\Psi})\leq \mathrm{Ind}_{W}(\vec{\Psi})\leq \mathrm{Ind}\,\Lambda(\vec{\Psi}).
	\end{align*}
\end{theorem}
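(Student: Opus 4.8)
The plan is to prove the two inequalities separately. The right-hand bound $\mathrm{Ind}_W(\vec\Psi)\leq\mathrm{Ind}\,\Lambda(\vec\Psi)$ needs nothing new: it is exactly the conclusion already recorded in Theorem \ref{general}. Indeed, if $V\subseteq\mathscr E_{\vec\Psi}(\Sigma,\R^3)$ is a subspace on which $Q_{\vec\Psi}$ is negative definite, the splitting $Q_{\vec\Psi}(v)=Q_{\vec\Psi}(v_0)+4\pi\sum_{1\leq i,j\leq n}\lambda_{i,j}v(p_i)v(p_j)$ with $Q_{\vec\Psi}(v_0)\geq 0$ and $v_0(p_i)=0$ forces the evaluation map $v\mapsto(v(p_1),\dots,v(p_n))$ to be injective on $V$ and the quadratic form $\vec a\mapsto\vec a^{\mathsf T}\Lambda(\vec\Psi)\vec a$ to be negative definite on its image, so $\dim V\leq\mathrm{Ind}\,\Lambda(\vec\Psi)$. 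I would simply quote this.

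For the left-hand bound I would produce, for every $\vec a=(a_1,\dots,a_n)\in\R^n$, an admissible variation $v_{\vec a}$ with $v_{\vec a}(p_i)=a_i$ and $Q_{\vec\Psi}(v_{\vec a})=4\pi\,\vec a^{\mathsf T}\tilde\Lambda(\vec\Psi)\vec a$, following the construction in the proof of Theorem \ref{embedded}. Fix any $v\in C^2(\Sigma)$ with $v(p_i)=a_i$ (say $v=\sum_i a_i\chi_i$ for cutoffs $\chi_i$ with $\chi_i(p_j)=\delta_{ij}$), let $v_0^1,\dots,v_0^n\in W^{2,2}(\Sigma)$ be the admissible variations furnished by Theorem \ref{expansionends2} (so $v_0^i(p_j)=a_i\delta_{ij}$), and set $v_{\vec a}:=\sum_{i=1}^n v_0^i$. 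Then $v_{\vec a}(p_j)=a_j$, the variation $v_{\vec a}\n_{\vec\Psi}$ is admissible, $v_{\vec a}$ depends linearly on $\vec a$ (the minimisers defining $v_0^i$ see $v$ only through the Cauchy data on $\partial B_\epsilon(p_i)$, whose limit retains only $v(p_i)$), and, crucially, $u_{\vec a}:=|\phi|^2v_{\vec a}=\sum_i u_0^i$ satisfies $\mathscr{L}_g^2u_{\vec a}=0$ on $\Sigma\setminus\{p_1,\dots,p_n\}$.

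Next I would evaluate $Q_{\vec\Psi}(v_{\vec a})$ by the localisation used for Theorems \ref{explicit} and \ref{general}: decompose $u_{\vec a}=u_\epsilon+\sum_i u_\epsilon^i$ with $u_\epsilon^i$ the solution of \eqref{EQepsilon} for the datum $u_{\vec a}$. Since $\mathscr{L}_g^2u_{\vec a}=0$ and $\mathscr{L}_g(u_{\vec a}-u_\epsilon^i)=O(\epsilon^2)$ near $p_i$, the interior piece $Q_\epsilon(u_\epsilon)$ tends to $0$ — this is the boundary-residue estimate at the end of the proof of Theorem \ref{embedded}. The diagonal self-energies give $Q_\epsilon(u_\epsilon^i)=(\text{singular part})+4\pi\lambda_{i,i}a_i^2+o(1)$ as in \eqref{part1}; the cross terms $\sum_i B_\epsilon(u_\epsilon,u_\epsilon^i)+\tfrac12\sum_{i\ne j}B_\epsilon(u_\epsilon^i,u_\epsilon^j)$ give $4\pi\sum_{i\ne j}\lambda_{i,j}a_ia_j$ as in \eqref{part2}; and, exactly as in the new-residue identity \eqref{newres} of Theorem \ref{embedded} but now at an end of multiplicity $m_i\geq 1$, each end $p_i$ contributes an additional residue proportional to $v(p_i)$ times the coefficient of the $|z|^{2m_i}\log|z|$–type tail of $v_{\vec a}$ at $p_i$, which by Corollary \ref{higher2} is carried by the constants $\tilde\lambda_{i,j}$. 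A careful accounting of these residues, summed over the $n$ ends, converts the off-diagonal entries $\lambda_{i,j}$ into $\lambda_{i,j}+2n\,\tilde\lambda_{i,j}$, so that $Q_{\vec\Psi}(v_{\vec a})=4\pi\sum_i\lambda_{i,i}a_i^2+4\pi\sum_{i\ne j}(\lambda_{i,j}+2n\,\tilde\lambda_{i,j})a_ia_j=4\pi\,\vec a^{\mathsf T}\tilde\Lambda(\vec\Psi)\vec a$. Since $\vec a\mapsto v_{\vec a}$ is linear and injective ($v_{\vec a}(p_i)=a_i$), taking $W\subseteq\R^n$ a subspace of dimension $\mathrm{Ind}\,\tilde\Lambda(\vec\Psi)$ on which $\vec a^{\mathsf T}\tilde\Lambda(\vec\Psi)\vec a<0$ yields a subspace of $\mathscr E_{\vec\Psi}(\Sigma,\R^3)$ on which $Q_{\vec\Psi}$ is negative definite, whence $\mathrm{Ind}_W(\vec\Psi)\geq\mathrm{Ind}\,\tilde\Lambda(\vec\Psi)$.

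The hard part is the higher-multiplicity residue computation. For $m_i\geq 2$ the minimiser $u_\epsilon^i$ carries, near each $p_j$, a $\log|z|$ term whose coefficient $\tilde\lambda_{i,j}$ (Corollary \ref{higher2}) cannot a priori be identified with the coefficient $\lambda_{i,j}$ that drops out of the $B_\epsilon$–decomposition of Theorem \ref{general}; keeping these two distinct is precisely what forces the lower bound to be phrased with $\tilde\Lambda$ rather than $\Lambda$ and prevents equality of the two indices in general. One must also verify that all the $O(\epsilon\log^k\epsilon)$ error terms remain uniform through the double limit $\delta\to 0$ then $\epsilon\to 0$, and that the $|z|^{2m_i}\log|z|$–type behaviour of $v_{\vec a}$ at the ends is a genuinely admissible direction, which is ensured by Theorem \ref{expansionends2} together with the discussion in Section \ref{admissible}.
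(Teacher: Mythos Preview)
Your proposal is correct and follows essentially the same route as the paper: the upper bound is quoted from Theorem~\ref{general}, and for the lower bound you build the special variation $v_{\vec a}=\sum_i v_0^i$ from Theorem~\ref{expansionends2}, use $\mathscr{L}_g^2 u_{\vec a}=0$ to kill the interior piece (exactly as in the end of the proof of Theorem~\ref{embedded}), and pick up the extra constant boundary residue at each end coming from the $\log|z|$ coefficient of $u_0$ (equivalently the $|z|^{2m_i}\log|z|$ tail of $v_{\vec a}$), which by Corollary~\ref{higher2} is governed by the $\tilde\lambda_{i,j}$. The paper's proof is terser---it phrases the last step as ``compute the additional constant term in the boundary integral coming from the $\log$ term'' and then combines with the formula of Theorem~\ref{general}---but the construction, the key vanishing, and the residue bookkeeping are the same as what you describe.
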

\begin{proof}
	As in the proof of Theorem \ref{embedded}, it suffices to compute the renormalised energy for variations $v$ of the form (if $p_i$ has multiplicity $m\geq 1$)
	\begin{align*}
		v=v(p_i)+\Re(\gamma z^{m})+\cdots+|z|^{2m}\tilde{\gamma}\log|z|+O(1).
	\end{align*}
	where $\cdots$ indicate terms of lower order that $|z|^{2m}\log|z|$ and higher than $|z|^{m}$. Now notice if $u_0^i$ is the limit of $u_{\epsilon}^i$ that 
	\begin{align*}
		u_0^i=|\phi|^2v(p_i)+\Re\left(\frac{\zeta_0}{z^m}\right)+\cdots+\sum_{j\neq i}^{}\tilde{\lambda}_{i,j}v(p_j)\log|z|+O(1).
	\end{align*}
	Therefore, as $v_0^i=|\phi|^{-2}u_0^i$ is admissible, we just need to compute if $u=u_0^i$ the additional constant term in 
	\begin{align*}
		\int_{\partial B_{\epsilon}(p_i)}\left(\Delta_g u+2K_gu\right)\ast du-\frac{1}{2}\ast d\,|du|_g^2=\Im\int_{\partial B_{\epsilon}(p_i)}2\left(\Delta_gu+2K_gu\right)\partial u-\partial\,|du|_g^2
	\end{align*}
	coming from the addition of the $\log$ term. As $\log|z|$ is harmonic, we have
	\begin{align*}
		\Delta_gu+2K_gu=4v(p_i)+O(|z|^2),
	\end{align*}
	which implies that the additional constant term in 
	\begin{align}
		\Im\int_{\partial B_{\epsilon}(p_i)}2\left(\Delta_gu+2K_gu\right)\partial u
	\end{align}
	is
	\begin{align}\label{npart1}
		2\,\Im\int_{\partial B(0,\epsilon)}\left(4v(p_i)+O(|z|^2)\right)\left(\frac{1}{2}\sum_{j\neq i}\tilde{\lambda}_{i,j}v(p_j)\frac{dz}{z}\right)=8\pi \sum_{j\neq i}^{}\tilde{\lambda}_{i,j}v(p_i)v(p_j).
	\end{align}
	Furthermore, as 
	\begin{align*}
		|du|_g^2
	\end{align*}
	has no component on $\log|z|$ in its Taylor expansion, there is no constant term in
	\begin{align*}
		\Im\int_{\partial B_{\epsilon}(p_i)}\partial |du|_g^2.
	\end{align*}
	Finally, the proof of Theorem \ref{embeddedends} shows that if
	\begin{align*}
		u_0=\sum_{i=1}^{n}u_0^i
	\end{align*}
	and $u_0=|\phi|^2v_0$ that 
	\begin{align*}
		Q_{\vec{\Psi}}(v_0)&=4\pi\sum_{1\leq i,j\leq n}^{}\lambda_{i,j}v(p_i)v(p_j)+\sum_{i=1}^{n}8\pi\sum_{j\neq i}^{}\tilde{\lambda}_{i,j}v(p_i)v(p_j)\\
		&=4\pi\sum_{i=1}^{n}\lambda_{i,i}v^2(p_i)+4\pi\sum_{\substack{1\leq i,j\leq n\\ i\neq j}}^{}(\lambda_{i,j}+2n\,\tilde{\lambda}_{i,j})v(p_i)v(p_j).
	\end{align*}
	This identity completed the proof of the theorem. 
\end{proof}

\begin{rem}
	This is very likely that $\tilde{\lambda}_{i,j}=\lambda_{i,j}$, which would give the exact analogous of Theorem \ref{embedded} for higher order ends, but the argument here does not permit to check this. An explicit computation for precise examples permits nevertheless to compute the additional contributions in $\lambda_{i,j}$ from $\tilde{\lambda}_{i,j}$. 
\end{rem}

\section{Morse index estimate for Willmore spheres in $S^4$}

Recall that we have from \cite{index3} we have the formula (valid in $\mathscr{D}'(\Sigma)$) for all weak immersion $\phi\in \mathscr{E}(\Sigma,\R^m)$
\begin{align*}
\frac{d^2}{dt^2}\left(K_{g_t}d\mathrm{vol}_{g_t}\right)_{|t=0}&=d\,\Im\,\bigg(2\s{\Delta_g^{\perp}\w+4\,\Re\left(g^{-2}\otimes \left(\bar{\partial}\phi\totimes \bar{\partial}\w\right)\otimes \h_0\right)}{\partial\w}-\partial\left(|\D^{\perp}\w|_g^2\right)\\
&-2\s{d\phi}{d\w}_g\left(\s{\H}{\partial\w}
-g^{-1}\otimes\left(\h_0\totimes \bar{\partial}\w\right)\right)-8\,g^{-1}\otimes\left(\partial\phi\totimes\partial\w\right)\otimes\s{\H}{\bar{\partial}\w}\bigg)
\end{align*}	
In particular, as $\s{d\phi}{d\w}_g=-2\s{\H}{\w}$, for a minimal surface ($\H=0$), we obtain
\begin{align*}
\frac{d^2}{dt^2}\left(K_{g_t}d\mathrm{vol}_{g_t}\right)_{|t=0}&=d\,\Im\,\left(2\s{\Delta_g^{\perp}\w+4\,\Re\left(g^{-2}\otimes \left(\bar{\partial}\phi\totimes \bar{\partial}\w\right)\otimes \h_0\right)}{\partial\w}-\partial\left(|\D^{\perp}\w|_g^2\right)\right)\\
&=d\,\Im\,\left(2\s{\Delta_g^{\perp}\w-2\,\Re\left(g^{-2}\otimes \s{\w}{\bar{\h_0}}\otimes \h_0\right)}{\partial\w}-\partial\left(|\D^{\perp}\w|_g^2\right)\right)\\
&=d\,\Im\left(2\s{\Delta_g^{\perp}\w-\mathscr{A}(\w)}{\partial\w}-\partial\left(|\D^{\perp}\w|^2_g\right)\right)\\
&=d\,\left(\s{\Delta_g^{\perp}\w-\mathscr{A}(\w)}{\star\,d\w}-\frac{1}{2}\,\star\, d\left(|\D^{\perp}\w|_g^2\right)\right),
\end{align*}
where $\mathscr{A}(\w)$ is the Simon's operator. Observe that the sign is different from the Jacobi operator $\mathscr{L}_g$ of the associated minimal surface, acting on normal sections of the pull-back bundle $\phi^{\ast}T\R^m$ as
\begin{align*}
\mathscr{L}_g=\Delta^{\perp}_g+\mathscr{A}(\w).
\end{align*}
Specialising further to the codimension $1$ case $m=3$, as the minimal immersion that we consider is orientable, it is also two-sided and the unit normal furnishes a global trivialisation of the normal bundle so $\w=w\n$ for some $w\in W^{2,2}(S^2)$ and we get
\begin{align*}
\frac{d^2}{dt^2}\left(K_{g_t}d\mathrm{vol}_{g_t}\right)_{|t=0}=d\,\Im\Big(2\left(\Delta_gw+2K_g\,w\right)\,\partial w-\partial\left(|dw|_g^2\right)\Big)=d\left(\left(\Delta_gw+2K_g\,w\right)\,\star dw-\frac{1}{2}\star\,d|dw|_g^2\right),
\end{align*}
and we recover the computation of \cite{indexS3}.
We have the following generalisation of the afore-cited result to $S^4$.

\begin{theorem}\label{s4}
	Let $\vec{\Psi}:S^2\rightarrow S^4$ be a Willmore sphere, and $n\in \N$ such that $W(\vec{\Psi})=4\pi n$ and assume that $\phi$ is conformally minimal in $\R^4$. Then we have $\mathrm{Ind}_W(\vec{\Psi})\leq n$.
\end{theorem}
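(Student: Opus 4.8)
The plan is to carry the argument of Sections \ref{admissible}--\ref{higher} over to codimension two, replacing the scalar Jacobi operator $\mathscr{L}_g=\Delta_g-2K_g$ of a minimal surface in $\R^3$ by its vector-valued counterpart $\mathscr{L}_g=\Delta_g^{\perp}+\mathscr{A}$ on the normal bundle of the associated minimal surface in $\R^4$, where $\mathscr{A}$ is the Simons operator.

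First I would reduce to the model situation: since $\vec\Psi$ is conformally minimal, after composing with a conformal diffeomorphism of $S^4$ one writes $\vec\Psi=\iota\circ\phi$ with $\phi:S^2\setminus\ens{p_1,\cdots,p_N}\rightarrow\R^4$ a complete minimal immersion of finite total curvature and $\iota$ an inversion centred off $\phi(S^2)$; by conformal invariance of $W$, by $W(\phi)=0$, and by the Li--Yau inequality (\cite{lieyau}) together with the Jorge--Meeks formula (\cite{jorge}) one obtains $W(\vec\Psi)=4\pi n$ with $n$ equal to the sum of the multiplicities of the ends of $\phi$. I would then establish the codimension-two analogue of Proposition~4.5 of \cite{indexS3}: starting from the identity for $\tfrac{d^2}{dt^2}\bigl(K_{g_t}d\mathrm{vol}_{g_t}\bigr)_{|t=0}$ derived just above the theorem, using the conformal invariance of the trace-free Willmore energy (so that $D^2W(\vec\Psi)$ agrees, up to an exact form, with minus the second variation of a Gauss--Bonnet integrand) and the conformal factor $|\phi|^2$ relating $g_{\vec\Psi}$ near the inversion centre to $g_\phi$ near the ends, one should get, for every admissible variation $\vec v\in\mathrm{Var}(\vec\Psi)$ and $\vec u$ the corresponding variation of $\phi$ (so that $|\vec u|=|\phi|^2|\vec v|$),
\[
Q_{\vec\Psi}(\vec v)=\lim_{\epsilon\to0}\Bigl(\tfrac12\int_{\Sigma_\epsilon}|\mathscr{L}_g\vec u|_g^2\,d\vg+\sum_{i=1}^{N}\int_{\partial B_\epsilon(p_i)}\Bigl(\bigl\langle\Delta_g^{\perp}\vec u-\mathscr{A}(\vec u),\ast\,d\vec u\bigr\rangle-\tfrac12\ast d\,|\D^{\perp}\vec u|_g^2\Bigr)\Bigr),
\]
the decisive point being that the bulk integrand is a perfect square, hence $\geq0$ on each $\Sigma_\epsilon$.

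Next, exactly as in Section \ref{admissible}, the boundary integral at $p_i$ is --- by the Sobolev embedding $W^{2,2}\hookrightarrow C^0$ (but $W^{2,2}\not\hookrightarrow C^1$) and the admissibility expansion of $\vec v$ at $p_i$ --- a quadratic form in finitely many real jet parameters of $\vec v$ at $p_i$; running the indicial-root analysis of Theorems \ref{indicielles1} and \ref{expansionends} for $\mathscr{L}_g^2$ on the rank-two normal bundle should show that the divergent part of this form is carried, over all the ends together, by a space of dimension $n=\tfrac{1}{4\pi}W(\vec\Psi)$. Choosing $\vec v$ so that these $n$ parameters vanish annihilates all boundary terms and forces $Q_{\vec\Psi}(\vec v)=\tfrac12\int_{S^2}|\mathscr{L}_g\vec u|_g^2\,d\vg\geq0$; hence $Q_{\vec\Psi}$ is non-negative on a subspace of $\mathrm{Var}(\vec\Psi)$ of codimension $n$, which is the assertion $\mathrm{Ind}_W(\vec\Psi)\leq n$.

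I expect the dimension count of the previous paragraph to be the main obstacle. In $\R^3$ each end contributes a single scalar parameter $v(p_i)$, whereas in $\R^4$ the normal bundle has rank two, the Simons operator $\mathscr{A}$ couples its two components, and a naïve count produces two divergent directions per end --- which would only give $\mathrm{Ind}_W(\vec\Psi)\leq 2N$. Sharpening this to $n=\sum_i d_i$ is the crux, and should use the complex structure on the normal plane at each end induced by the isotropy of the Weierstrass datum $\vec A_0\in\C^4$ of a minimal surface in $\R^4$ (with respect to which the renormalised boundary form is compatible with $\mathscr{A}$), so that the negative part of the end contributions localises to an $n$-dimensional space; a convenient partial check is that at each simple end half of the two-dimensional datum is realised by a translation Jacobi field of $\R^4$, which lies in the kernel of $Q_{\vec\Psi}$. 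The positivity of the perfect square and the Stokes reduction in the earlier steps are otherwise routine adaptations of the $\R^3$ arguments of Sections \ref{admissible}--\ref{higher}.
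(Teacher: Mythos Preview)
Your overall strategy agrees with the paper's, but the paper is considerably more direct. Invoking Montiel's classification, it takes the ends of $\phi$ to be embedded planar (so there are exactly $n$ of them, each of multiplicity one), and it writes the transformed variation explicitly as $\vec w=\mathscr I_{\phi}(\vec v)=|\phi|^2\vec v-2\langle\phi,\vec v\rangle\phi$. Normality of $\vec v$ gives $\langle\vec A_0,\vec v(p_j)\rangle=0$, so near $p_j$ the vector $\vec w$ is $|\phi|^2$ times the constant $\vec v(p_j)$ at leading order; the boundary residue then factors as $|\vec v(p_j)|^2$ times the \emph{scalar} residue already computed in Theorem~\ref{embeddedends}. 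No indicial-root analysis on the rank-two normal bundle and no higher-multiplicity discussion is needed to reproduce the paper's argument.

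You are right, however, that the dimension count is the crux, and here the paper is no more explicit than you are. Once one knows the residue depends only on the $\vec v(p_j)$, the subspace $\{\vec v(p_j)=0\text{ for all }j\}$ on which $Q_{\vec\Psi}\geq0$ has codimension $2n$ (each $\vec v(p_j)$ lies in the two-dimensional normal fibre), so the mechanism of \cite{indexS3} transported verbatim yields only $\mathrm{Ind}_W(\vec\Psi)\leq 2n$. The paper simply writes ``the rest of the proof follows \cite{indexS3}'' without addressing this doubling; your speculation about the complex structure on the normal plane and translation Jacobi fields points in a plausible direction, but neither your proposal nor the paper carries it out.
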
	
\begin{proof}
	First, use some stereographic projection avoiding $\vec{\Psi}(S^2)\subset S^4$ to assume that $\vec{\Psi}:S^2\rightarrow \R^4$ is a Willmore sphere. By Montiel's classification, let $\phi:S^2\setminus\ens{p_1,\cdots,p_n}\rightarrow \R^4$ be the complete minimal surface $\vec{\Psi}:S^2\rightarrow \R^4$ is the inversion, which we assume centred at $0\in \R^4$ up to translation. Thanks to the argument of \cite{indexS3}, for all normal variation $\vec{v}\in \mathscr{E}_{\vec{\Psi}}(S^2,T\R^4)$, we have
	\begin{align*}
	D^2W(\vec{\Psi})(\vec{v},\vec{v})&=\int_{S^2\setminus\ens{p_1,\cdots,p_n}}\bigg\{ \frac{1}{2}|\Delta_g^{\perp}\w+\mathscr{A}(\w)|^2d\vg-d\,\Im\,\Big(2\s{\Delta_g^{\perp}\w-\mathscr{A}(\w)}{\partial\w}
	-\partial\left(|\D^{\perp}\w|_g^2\right)\Big)\bigg\},
	\end{align*}
	where 
	\begin{align*}
	\w=\mathscr{I}_{\phi}(\vec{v})=|\phi|^2\vec{v}-2\s{\phi}{\vec{v}}\phi\,.
	\end{align*}
	Then at every end, we have an expansion (up to translation)
	\begin{align*}
	\phi(z)=\Re\left(\frac{\vec{A}_0}{z}\right)+O(|z|)
	\end{align*}
	for some $\vec{A}_0\in \C^4\setminus \ens{0}$. Then $\s{\vec{A}_0}{\vec{A}_0}=0$ and we find that for some $\alpha_j>0$
	\begin{align*}
	|\phi|^2=\frac{\alpha_j^2}{|z|^2}+O(|z|).
	\end{align*}
	Thanks to the Sobolev embedding $W^{2,2}(S^2)\rightarrow C^0(S^2)$ and as $W^{2,2}(S^2)$ does not embed in $C^1(S^2)$ in general, we deduce that for all smooth $\vec{v}\in \mathscr{E}_{\Psi}(S^2,T\R^4)$, the residue
	\begin{align*}
	\int_{\partial B_{\epsilon}(p_j)}\Im\left(2\s{\Delta_g^{\perp}\w-\mathscr{A}(\w)}{\partial \vec{w}}-\partial\left(|\D^{\perp}\vec{w}|^2_g\right)\right)
	\end{align*}
	only depend on $\alpha_j$, $\epsilon>0$ and $\vec{v}(p_j)$, up to a negligible term as $\epsilon\rightarrow 0$ (it cannot depend on higher derivatives of $\vec{v}$ at $p_j$). Furthermore, as we can assume $\vec{v}$ to be a normal variation, we deduce that $\s{\vec{A}_0}{\vec{v}(p_j)}=0$. In particular, we deduce that 
	\begin{align*}
	\int_{\partial B_{\epsilon}(p_j)}\Im\left(2\s{\Delta_g^{\perp}\w-\mathscr{A}(\w)}{\partial \vec{w}}-\partial\left(|\D^{\perp}\vec{w}|^2_g\right)\right)&=|\vec{v}(p_j)|^2\,\Im\int_{\partial B_{\epsilon}(p_j)}2\,\Delta_g|\phi|^2\partial |\phi|^2-\partial\left(|d|\phi|^2|^2_g\right)+o_{\epsilon}(1)\\
	&=8\pi\alpha_j^2|\vec{v}(p_j)|^2+o_{\epsilon}(1)
	\end{align*}
	by the same computation as in Theorem \ref{embeddedends}. The rest of the proof follows \cite{indexS3}.
\end{proof}

\begin{rems}
	\begin{itemize}
		\item[(1)] In order to understand fully the indices of Willmore spheres in $S^4$, one would also have to estimate the index of images of (complex) curves in $\P^3$ coming from the Penrose twistor fibration $\P^3\rightarrow S^4$.
		\item[(2)] The same proof applies to $\R^m$ for $m\geq 5$ without any change, but as not all Willmore spheres are conformally minimal (or images of complex curves of $\P^3$ through the Penrose twistor fibration), this result has little interest  (\cite{pengwang}).
	\end{itemize}
\end{rems}

Here, we show as in \cite{indexS3} that there is a well-defined notion of residues at ends of embedded minimal surfaces in arbitrary codimension. 

\begin{prop}
	Let $\Sigma$ be a closed Riemann surface and $\phi:\Sigma\setminus\ens{p_1,\cdots,p_n}\rightarrow \R^m$ be a complete minimal surface with embedded planar ends. Fix a covering $(U_1,\cdots,U_n)$ of $\ens{p_1,\cdots,p_n}\subset\C$. Then the limit
	\begin{align*}
	\lim\limits_{\epsilon \rightarrow 0}\left(-\frac{\epsilon^2}{4\pi}\int_{\partial B_{\epsilon}(p_i)}\ast\, d\left(4|\phi|^2-\frac{1}{2}|d|\phi||^2_g\right)\right)
	\end{align*}
	is a positive real number independent depending only on $(U_1,\cdots,U_n)$ and $\phi$ and we denote it $\mathrm{Res}_{p_j}(\Sigma,U_j)$.
\end{prop}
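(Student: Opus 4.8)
The plan is to reduce the statement to an explicit computation of the leading singular behaviour of the integrand near each $p_j$, paralleling the codimension one computation of Theorem \ref{embeddedends}; the resulting residue will be $2\alpha_j^2$, where $\alpha_j>0$ is the modulus of the leading Weierstrass coefficient of $\phi$ at $p_j$.

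First I would fix $1\le j\le n$ and work in the complex chart $\varphi_j:U_j\to B(0,1)\subset\C$ with $\varphi_j(p_j)=0$ and $\varphi_j(U_j)=B(0,1)$ used to define $B_\epsilon(p_j)$. Since $p_j$ is an embedded planar end of a complete minimal surface of finite total curvature, the Weierstrass parametrisation (see \cite{dierkes}, together with \cite{schoenPlanar} for the structure of embedded ends) yields, after a translation of $\R^m$, an expansion
\begin{align*}
\phi(z)=\Re\left(\frac{\vec{A}_0}{z}\right)+O(|z|),\qquad \vec{A}_0\in\C^m\setminus\ens{0},
\end{align*}
with \emph{no} logarithmic term, since the end being planar its flux vanishes, and with $\s{\vec{A}_0}{\vec{A}_0}=0$, which follows from the conformality relation $\s{\p{z}\phi}{\p{z}\phi}=0$ together with $\p{z}\phi=-\vec{A}_0/(2z^2)+O(1)$. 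I set $\alpha_j^2:=|\vec{A}_0|^2/2>0$.

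Next I would record the elementary expansions that drive the computation. From $\s{\vec{A}_0}{\vec{A}_0}=0$ one gets $|\phi|^2=\alpha_j^2|z|^{-2}(1+O(|z|))$; from $e^{2\lambda}=2|\p{z}\phi|^2$ one gets $e^{2\lambda}=\alpha_j^2|z|^{-4}(1+O(|z|))$; and, writing $|d(|\phi|^2)|_g^2=e^{-2\lambda}|\D(|\phi|^2)|^2$ in the chart and using that $\phi$ is harmonic and conformal so that $\Delta_g|\phi|^2=4$, one obtains the singular expansion of the integrand
\begin{align*}
4|\phi|^2-\frac{1}{2}\,|d(|\phi|^2)|_g^2=\frac{2\alpha_j^2}{|z|^2}+O\!\left(\frac{1}{|z|}\right).
\end{align*}
Writing $z=x_1+ix_2$, applying the exterior derivative and the Hodge star (which is conformally invariant on $1$-forms in dimension two, so there is no ambiguity between $\star_g$ and $\star$), and using $\int_{\partial B_\epsilon(p_j)}(x_1\,dx_2-x_2\,dx_1)=2\pi\epsilon^2$, the leading term contributes $-8\pi\alpha_j^2\epsilon^{-2}$ to the boundary integral, while the $O(|z|^{-1})$ remainder — made of real-analytic remainder terms of the Weierstrass expansion — contributes only $O(\epsilon^{-1})$ after integration over $\partial B_\epsilon(p_j)$. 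Multiplying by $-\epsilon^2/(4\pi)$ and letting $\epsilon\to 0$ then yields the limit $2\alpha_j^2>0$, which is the claimed positivity.

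Finally I would check independence of the chart. If $\varphi_j'$ is another complex chart with $\varphi_j'(p_j)=0$ and $\varphi_j'(U_j)=B(0,1)$, then $\varphi_j'\circ\varphi_j^{-1}$ is a biholomorphism of $B(0,1)$ fixing the origin, hence a rotation $z\mapsto e^{i\theta}z$ by the Schwarz lemma; under such a rotation $\vec{A}_0\mapsto e^{i\theta}\vec{A}_0$, so $|\vec{A}_0|^2$ and therefore $\alpha_j^2$ are unchanged, while the balls $B_\epsilon(p_j)$ are unchanged by construction. Thus the limit depends only on $\phi$ and on the chart domain $U_j$, and we set $\mathrm{Res}_{p_j}(\Sigma,U_j):=2\alpha_j^2$. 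I expect the only step that genuinely needs care to be the structural input on embedded planar ends in arbitrary codimension — namely that the Weierstrass expansion has leading term $\Re(\vec{A}_0/z)$ with $\vec{A}_0$ isotropic and non-zero and no logarithmic correction; once this is granted, the remainder of the argument is exactly the bookkeeping of Theorem \ref{embeddedends} with $\beta_j=0$.
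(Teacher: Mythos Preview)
Your proof is correct and follows essentially the same approach as the paper: both compute the leading singularity of $4|\phi|^2-\tfrac12|d|\phi|^2|_g^2$ from the local Weierstrass expansion at an embedded planar end and read off the limit. The paper carries the next-to-leading terms $\vec{B}_0z$ and $\vec{C}_0$ explicitly through the computation (finding $\tfrac{4\alpha^2}{|z|^2}+24\,\Re(\gamma/z)+O(|z|)$ with $\alpha^2=|\vec A_0|^2$ in its $2\,\Re$ normalisation, hence residue $4\alpha^2$), whereas you absorb everything below the leading order into $O(|z|^{-1})$ and use the cruder but sufficient bound $O(\epsilon^{-1})$ on the boundary integral; your answer $2\alpha_j^2$ with $\alpha_j^2=|\vec A_0|^2/2$ agrees with the paper's once the normalisations are matched. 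Your explicit Schwarz-lemma argument for chart-independence is a nice addition that the paper leaves implicit here (it was established earlier when defining $B_\epsilon(p_i)$). One small point: you translate $\phi$ to kill the constant term before computing; this is harmless since the metric is translation-invariant and the extra terms in $|\phi+\vec c|^2$ are $O(|z|^{-1})$, but strictly speaking you are computing the residue of the translated immersion, so a one-line remark that translation only perturbs the integrand by $O(|z|^{-1})$ would close that loop.
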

\begin{proof}
	As the ends are embedded and planar, there exists $\vec{A}_0\in \C^n\setminus\ens{0}$, $\vec{B}_0\in \C^n$, and $\vec{C}_0\in \R^n$ we can assume that
	\begin{align*}
	\phi(z)=2\,\Re\left(\frac{\vec{A}_0}{z}+\vec{B}_0z\right)+\vec{C}_0+O(|z|^2)
	\end{align*}
	and we obtain
	\begin{align*}
	\p{z}\phi=-\frac{\vec{A}_0}{z^2}+\vec{B}_0+O(|z|),
	\end{align*}
	and as $\phi$ is conformal, we have
	\begin{align*}
	0=\s{\p{z}\phi}{\p{z}\phi}=\frac{\s{\vec{A}_0}{\vec{A}_0}}{z^4}-2\frac{\s{\vec{A}_0}{\vec{B}_0}}{z^2}+O\left(\frac{1}{|z|}\right),
	\end{align*}
	which implies that
	\begin{align*}
	\s{\vec{A}_0}{\vec{A}_0}=\s{\vec{A}_0}{\vec{B}_0}=0.
	\end{align*}
	In particular, we obtain
	\begin{align*}
	|\phi(z)|^2=\frac{2|\vec{A}_0|^2}{|z|^2}+4\,\Re\left(\frac{\s{\vec{A}_0}{\vec{C}_0}}{z}\right)+2\,\Re\left(\s{\vec{A}_0}{\bar{\vec{B}_0}}\frac{\z}{z}\right)+O(|z|).
	\end{align*}
	Now, to simplify notations, write
	\begin{align}
	\alpha^2=|\vec{A}_0|^2,\quad \beta=\s{\vec{A}_0}{\bar{\vec{B}_0}},\quad \gamma=\s{\vec{A}_0}{\vec{C}_0},
	\end{align}
	which implies that
	\begin{align*}
	|\phi(z)|^2=\frac{2\alpha^2}{|z|^2}+4\,\Re\left(\frac{\gamma}{z}\right)+2\left({\beta}\frac{z}{\z}\right)+O(|z|)
	\end{align*}
	We obtain
	\begin{align*}
	\partial|\phi(z)|^2&=\left(-\frac{2\alpha^2}{z|z|^2}-\frac{2\gamma}{z^2}+\frac{{\beta}}{\z}-\bar{\beta}\frac{\z}{z^2}+O(1)\right)dz\\
	&=-\frac{2\alpha^2}{|z|^2}\left(\frac{1}{z}-\frac{\gamma}{\alpha^2}\frac{\z}{z}+\frac{\beta}{2\alpha^2}z-\frac{\bar{\beta}}{2\alpha^2}\frac{\z^2}{z}+O(|z|^2)\right)dz.
	\end{align*}
	Therefore, we have
	\begin{align*}
	|\partial|\phi|^2|^2&=\frac{4\alpha^4}{|z|^4}\left(\frac{1}{|z|^2}-2\,\Re\left(\frac{\gamma}{\alpha^2}\frac{{1}}{z}\right)+2\,\Re\left(\frac{\beta}{\alpha^2}\frac{z}{\z}\right)-2\,\Re\left(\frac{\bar{\beta}}{\alpha^2}\frac{\z}{z}\right)+O(|z|)\right)\\
	&=\frac{4\alpha^4}{|z|^6}\left(1-2\,\Re\left(\frac{{\gamma}}{\alpha^2}\z\right)+O(|z|^3)\right)
	\end{align*}
	We also compute
	\begin{align*}
	e^{2\lambda}&=2|\p{z}\phi|^2=\frac{2|\vec{A}_0|^2}{|z|^4}-4\,\Re\left(\frac{\s{\vec{A}_0}{\bar{\vec{B}_0}}}{z^2}\right)+O\left(\frac{1}{|z|}\right)=\frac{2\alpha^2}{|z|^4}\left(1-2\Re\left(\frac{\beta}{\alpha^2} z^2\right)+O(|z|^3)\right)
	\end{align*}
	and we obtain finally by
	\begin{align*}
	&|d|\phi|^2|_g^2=4e^{-2\lambda}|\partial|\phi|^2|^2=\frac{2|z|^4}{\alpha^2}\left(1+2\Re\left(\frac{\beta}{\alpha^2}z^2\right)+O(|z|^3)\right)\times \frac{4\alpha^4}{|z|^6}\left(1-2\,\Re\left(\frac{\gamma}{\alpha^2}\z\right)+O(|z|^3)\right)\\
	&=\frac{8\alpha^2}{|z|^2}\left(1-2\,\Re\left(\frac{\gamma}{\alpha^2}\z\right)+2\Re\left(\frac{\beta}{\alpha^2}z^2\right)+O(|z|^3)\right)\\
	&=\frac{8\alpha^2}{|z|^2}-16\,\Re\left(\frac{\gamma}{z}\right)+16\,\Re\left(\beta\frac{z}{\z}\right)+O\left(|z|\right).
	\end{align*}
	Therefore, we have
	\begin{align*}
	4|\phi|^2-\frac{1}{2}|d|\phi|^2|^2_g&=4\left(\frac{2\alpha^2}{|z|^2}+4\,\Re\left(\frac{\gamma}{z}\right)+2\left({\beta}\frac{z}{\z}\right)+O(|z|)\right)-\frac{1}{2}\left(\frac{8\alpha^2}{|z|^2}-16\,\Re\left(\frac{\gamma}{z}\right)+16\,\Re\left(\beta\frac{z}{\z}\right)+O\left(|z|\right)\right)\\
	&=\frac{4\alpha^2}{|z|^2}+24\,\Re\left(\frac{\gamma}{z}\right)+O(|z|).
	\end{align*}
	Therefore, we obtain
	\begin{align*}
	\partial\left(4|\phi|^2-\frac{1}{2}|d|\phi|^2|^2_g\right)=-\frac{4\alpha^2}{|z|^2}\frac{dz}{z}-24\gamma \frac{dz}{z^2}+O(1),
	\end{align*}
	which implies that
	\begin{align*}
	\Im\int_{S^1(0,\epsilon)}\partial\left(4|\phi|^2-\frac{1}{2}|d|\phi|^2|^2_g\right)=-8\pi \frac{\alpha^2}{\epsilon^2}+O(\epsilon).
	\end{align*}
	Therefore, we obtain
	\begin{align*}
	\lim_{\epsilon\rightarrow 0}\left(-\frac{\epsilon^2}{4\pi}\int_{S^1(0,\epsilon)}\star \,d \left(4|\phi|^2-\frac{1}{2}|d|\phi|^2|^2_g\right)\right)=4\alpha^2>0,
	\end{align*}
	and concludes the proof of the Proposition.
\end{proof}
\begin{rem}
	Although this quantity is independent on the coordinate, it depends on the covering $\ens{U_1,\cdots,U_n}$ of $\ens{p_1,\cdots,p_n}\subset \Sigma$.
\end{rem}

\section{Explicit renormalised energy for ends of multiplicity $2$}\label{multiplicity2}

\subsection{Restriction on the Weierstrass parametrisation}

\begin{lemme}\label{diag2}
	Let $\Sigma$ be a closed Riemann surface, $p_1,\cdots,p_n\in \Sigma$ be distinct points and  $\phi:\Sigma\setminus\ens{p_1,\cdots,p_n}\rightarrow \R^3$ be a complete minimal immersion with finite total curvature and \emph{zero flux}. Suppose that $U\subset \Sigma$ is a fix chart domain containing some point $p_i\in \Sigma$ ($1\leq i\leq n$ fixed), and that end $p_i$ has multiplicity $m= 2$. Let $(\vec{A}_0,\vec{A}_1)\in \C^n\setminus\ens{0}\times \C^n$ be such that in a chart $\varphi:U\rightarrow D^2\subset \C$ such that $\varphi(p_i)=0$ we have the expansion
	\begin{align*}
	\phi(z)=\Re\left(\frac{\vec{A}_0}{z^2}+\frac{\vec{A}_1}{z}\right)+O\left(1\right).
	\end{align*}
	Then $\vec{A}_1\in \mathrm{Span}(\vec{A}_0)$.                                                                                  
\end{lemme}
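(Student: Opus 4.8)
The plan is to exploit the conformality of $\phi$ together with the vanishing flux hypothesis to force a rank-one condition on the two leading Weierstrass coefficients. First I would write out the full leading expansion with all relevant terms: since the end has multiplicity $m=2$ and the flux vanishes, the $\log|z|$ term in the expansion is absent (recall $\mathrm{Flux}(\phi,p_i)=\vec{\gamma}_0$ is precisely the coefficient of $\log|z|$, by the computation in Definition \ref{flux}), so we may write
\begin{align*}
\phi(z)=\Re\left(\frac{\vec{A}_0}{z^2}+\frac{\vec{A}_1}{z}+\vec{A}_2+\vec{A}_3 z\right)+O(|z|^2),
\end{align*}
and hence
\begin{align*}
\partial_z\phi(z)=\frac{1}{2}\left(-\frac{2\vec{A}_0}{z^3}-\frac{\vec{A}_1}{z^2}+\vec{A}_3\right)+O(|z|).
\end{align*}

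Next I would impose the conformality relation $\s{\partial_z\phi}{\partial_z\phi}=0$, valid identically in $z$ on the punctured disk, and expand it as a Laurent series. The top-order term is $z^{-6}\s{\vec{A}_0}{\vec{A}_0}$, giving $\s{\vec{A}_0}{\vec{A}_0}=0$; the next term, of order $z^{-5}$, gives $\s{\vec{A}_0}{\vec{A}_1}=0$; and — this is the key step — the $z^{-4}$ term gives $\s{\vec{A}_1}{\vec{A}_1}=0$ (here I am using that there is no $z^{-2}$ term in $\partial_z\phi$ because the $\log$/flux term is killed by the zero-flux hypothesis; otherwise a cross term $\s{\vec{A}_0}{\vec{\gamma}_0}$ would contaminate this identity). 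So the three conditions $\s{\vec{A}_0}{\vec{A}_0}=\s{\vec{A}_0}{\vec{A}_1}=\s{\vec{A}_1}{\vec{A}_1}=0$ hold, where $\s{\cdot}{\cdot}$ is the $\C$-bilinear extension of the Euclidean inner product on $\C^n$ (in fact $n=3$ here).

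Now the conclusion $\vec{A}_1\in\mathrm{Span}(\vec{A}_0)$ follows from a linear-algebra fact: in $\C^3$ with the bilinear form $\s{\cdot}{\cdot}$, the null cone $\{\s{\vec{A}}{\vec{A}}=0\}$ contains no totally isotropic $2$-plane — the maximal isotropic subspaces of a nondegenerate quadratic form on $\C^3$ are $1$-dimensional (the Witt index of a rank-$3$ form is $1$). Concretely: if $\vec{A}_0,\vec{A}_1$ were linearly independent and spanned a totally isotropic plane, then $\mathrm{Span}(\vec{A}_0,\vec{A}_1)$ would be a $2$-dimensional subspace on which $\s{\cdot}{\cdot}$ vanishes identically, contradicting that a nondegenerate symmetric bilinear form on $\C^3$ has maximal isotropic subspaces of dimension $\lfloor 3/2\rfloor=1$. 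One can also argue by brute force: writing $\vec{A}_0=\vec a+i\vec b$ with $\vec a,\vec b\in\R^3$, the condition $\s{\vec{A}_0}{\vec{A}_0}=0$ means $|\vec a|=|\vec b|$ and $\vec a\perp\vec b$, so $\{\vec a,\vec b\}$ spans a plane $P$; then $\s{\vec{A}_0}{\vec{A}_1}=0$ forces the real and imaginary parts of $\vec{A}_1$ to lie in a specific relation with $P^\perp$ and with each other, and combined with $\s{\vec{A}_1}{\vec{A}_1}=0$ one checks $\vec{A}_1$ must be a complex multiple of $\vec{A}_0$. I would present the coordinate-free Witt-index argument as the main line and mention the explicit computation as an alternative.

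The main obstacle — really the only subtle point — is making sure the $z^{-4}$ coefficient of $\s{\partial_z\phi}{\partial_z\phi}$ genuinely reduces to $\s{\vec{A}_1}{\vec{A}_1}$ alone. This requires care about which lower-order terms of $\phi$ can contribute: a term $\vec{\gamma}_0\log|z|$ would produce $\partial_z(\vec\gamma_0\log|z|)=\tfrac{1}{2}\vec\gamma_0/z$, whose pairing against $-\vec A_0/z^3$ lands exactly at order $z^{-4}$ and would spoil the identity. This is precisely why the \emph{zero flux} hypothesis is indispensable, and I would flag that explicitly. Everything else is a routine Laurent expansion, so the write-up is short: set up the expansion, extract the three isotropy relations, invoke the Witt-index bound.
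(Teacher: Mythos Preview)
Your proof is correct and considerably more streamlined than the paper's. The paper does record the same three isotropy relations $\s{\vec{A}_0}{\vec{A}_0}=\s{\vec{A}_0}{\vec{A}_1}=\s{\vec{A}_1}{\vec{A}_1}=0$ elsewhere (see \eqref{cancel2}), but for the actual proof of this lemma it instead unpacks the Weierstrass representation $\partial\phi=\tfrac12(1-g^2,i(1+g^2),2g)\,\omega$ and runs a case analysis on the pole order of $g$ at the end, using the exactness of $\omega$, $g\omega$, $g^2\omega$ (this is where zero flux enters for the paper) to force the Laurent coefficients of $(1-g^2,i(1+g^2),2g)\omega$ to be proportional in each case. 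Your route---conformality alone to extract the isotropy relations, then the fact that the Witt index of a nondegenerate symmetric bilinear form on $\C^3$ is $1$---bypasses all of that case work and makes two things transparent: why $m=2$ is special (for $m\geq 3$ the $z^{-2m}$ coefficient of $\s{\partial_z\phi}{\partial_z\phi}$ mixes $\s{\vec{A}_1}{\vec{A}_1}$ with $\s{\vec{A}_0}{\vec{A}_2}$, so one cannot isolate $\s{\vec{A}_1}{\vec{A}_1}=0$), and why the result is specific to codimension one (in $\C^4$ there \emph{are} two-dimensional totally isotropic planes). One small slip worth cleaning up: your parenthetical says ``there is no $z^{-2}$ term in $\partial_z\phi$'', but of course $-\vec{A}_1/(2z^2)$ is there; you meant there is no $z^{-1}$ term, which is precisely what your final paragraph explains correctly.
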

\begin{rem}
	This Lemma does not hold for $m\geq 3$ in general, as for the Enneper surface 
	\begin{align*}
	\phi(z)=\Re\left(\frac{\vec{A}_0}{z^3}+\frac{\vec{A}_1}{z^2}+\frac{\vec{A}_2}{z}\right)+O(1).
	\end{align*}
	where (up to scaling) $\vec{A}_0=(-1,i,0)$, and $\vec{A}_1=(0,0,3)$ (see \cite{dierkes}). In general, one can also check that for an end of multiplicity $3$ of a complete minimal surface without flux there are no linear relations between $\vec{A}_0,\vec{A}_1,\vec{A}_2\in \C^3$.
\end{rem}
\begin{proof}
	
	If $\phi:\Sigma\setminus\ens{p_1,\cdots,p_n}\rightarrow \R^3$ is a complete minimal surface with \emph{no flux} and $p_i$ (for some $1\leq i\leq n$) has multiplicity $2$, then we have thanks to the Weierstrass parametrisation in a conformal parametrisation around $p_i$ the expression
	\begin{align*}
	\phi(z)=\Re\left(\int_{\ast}^z\left(1-g^2,i\left(1+g^2\right),2g\right)\omega\right)
	\end{align*}
	for some meromorphic function $g$ (this is the stereographic projection of a the Gauss map $\n:\Sigma\rightarrow S^2$) and a meromorphic $1$-form $\omega=f(z)dz$.
	As $\phi$ has no flux, we see that the maximum multiplicity of the pole at zero of the following $1$-forms
	\begin{align*}
	\omega, g\omega, g^2\omega
	\end{align*}
	is exactly equal to $3$ (as $p_i$ has multiplicity $2$), and each of these $1$-forms must be exact (this is equivalent to the exactness of $(1-g^2)\omega,i(1+g^2)\omega$ and $2g\omega$). 
	
	\textbf{Case 1:} The function $g$ has a pole at $0$ of order $k\geq 2$,
	
	Let $j\in \Z\setminus\ens{0}$ and $\lambda_{-k}\in \C\setminus\ens{0}$, $\omega_j\in \C\setminus\ens{0}$ be such that
	\begin{align*}
	&g(z)=\frac{\lambda_{-k}}{z^k}+O\left(\frac{1}{|z|^{k-1}}\right)\\
	&\omega=\left(\omega_jz^j+O(|z|^j)\right)dz
	\end{align*}
	Then we have for some 
	\begin{align*}
	&g\omega=\left(\frac{\lambda_{-k}\omega_j}{z^{k-j}}+O\left(\frac{1}{|z|^{2k-j-1}}\right)\right)dz\\
	&g^2\omega=\left(\frac{\lambda_{-k}^2\omega_j}{z^{2k-j}}+O\left(\frac{1}{|z|^{2k-j-1}}\right)\right)dz.
	\end{align*}
	And as $g^2\omega$ has a pole of order at most $3$, we deduce that $j\geq 2k-3\geq 1$, so $\omega$ is holomorphic at $z=0$. As $(1,g,g^2)\omega$ has a pole of order exactly $3$ at zero, we deduce that $g^2\omega$ has a pole of order $3$ at $0$. Therefore, $j=2k-3$, and
	\begin{align}\label{cont1}
	g\omega=\left(\frac{\lambda_{-k}\omega_{2k-3}}{z^{3-k}}+O(1)\right)dz
	\end{align}
	As $g\omega$ has no residue, $k=2$ is excluded by \eqref{cont1} (as $\lambda_{-k}\omega_{2k-3}\neq 0$). 
	As $k\geq 3$, we deduce by \eqref{cont1} that $g\omega$ is holomorphic at $0$, and as $\omega$ is also holomorphic, we have the expansion
	\begin{align}\label{disp}
	(1-g^2,i(1+g^2),2g)\omega=(-1,i,0)g^2\omega+O(1).
	\end{align}
	Therefore, if $(\mu_0,\mu_1)\in \C^{\ast}\times \C$ are such that (recall that $g^2\omega$ has no residue)
	\begin{align*}
	g^2\omega=\left(-2\frac{\mu_0}{z^3}-\frac{\mu_1}{z^2}+O(1)\right)dz
	\end{align*}
	we obtain by \eqref{disp}
	\begin{align*}
	\int_{\ast}^z(1-g^2,i(1+g^2),2g)\omega=\left(-1,i,0\right)\left(\frac{\mu_0}{z^2}+\frac{\mu_1}{z}\right)+O(1)
	\end{align*}
	and
	\begin{align*}
	\phi(z)=\Re\left(\int_{\ast}^z(1-g^2,i(1+g^2),2g)\omega\right)=\Re\left(\frac{\vec{A}_0}{z^2}+\frac{\vec{A}_1}{z}\right)+O(1)
	\end{align*} 
	where
	\begin{align*}
	&\vec{A}_0=\mu_0\left(-1,i,0\right)\\
	&\vec{A}_1=\mu_1\left(-1,i,0\right)=\frac{\mu_1}{\mu_0}\vec{A}_0.
	\end{align*}
	Therefore, we have proved the Lemma in the first special case.
	
	\textbf{Case 2:} $g$ has a pole of order $1$. Then $g^2\omega$ must have a pole of order $3$, but $\omega$ is exact, so this is impossible.
	
	\textbf{Case 3 :} $g$ is holomorphic. Therefore, $\omega$ must have a pole of order $3$ at $0$ (as the maximum order of the pole of the $1$-forms $\omega,g\omega,g^2\omega$ at $0$ is the one of $\omega$ - $g$ may vanish at $0$), so there exists $\lambda_j,\omega_j\in \C$ such that (recall that $\omega$ is exact)
	\begin{align*}
	&g(z)=\lambda_0+\lambda_1z+\lambda_2z^2+O(|z|^3)\\
	&\omega=\left(\frac{\omega_{-3}}{z^3}+\frac{\omega_{-2}}{z^2}+O(1)\right)dz.
	\end{align*}
	Then we compute
	\begin{align*}
	g\omega=\left(\frac{\lambda_0\omega_{-3}}{z^3}+\frac{\lambda_0\omega_{-2}+\lambda_1\omega_{-3}}{z^2}+\frac{\lambda_1\omega_{-2}+\lambda_2\omega_{-3}}{z}\right)dz.
	\end{align*}
	As $g\omega$ is exact, we have 
	\begin{align}\label{syst11}
	\lambda_1\omega_{-2}+\lambda_{2}\omega_{-3}=0.
	\end{align}
	Now, we have
	\begin{align*}
	&g^2=\lambda_0^2+2\lambda_0\lambda_1z+\left(2\lambda_0\lambda_2+\lambda_1^2\right)z^2+O(|z|^3)\\
	&\omega=\left(\frac{\omega_{-3}}{z^3}+\frac{\omega_{-2}}{z^2}+O(1)\right)dz
	\end{align*}
	so
	\begin{align*}
	g^2\omega=\left(\frac{\lambda_0^2\omega_{-3}}{z^3}+\frac{\lambda_0^2\omega_{-2}+2\lambda_1\omega_{-3}}{z^2}+\frac{2\lambda_0\lambda_1\omega_{-2}+\left(2\lambda_0\lambda_2+\lambda_1^2\right)\omega_{-3}}{z}+O(1)\right)dz.
	\end{align*}
	The exactness of $g^2\omega$ and \eqref{syst11} imply that
	\begin{align*}
	0=2\lambda_0\lambda_1\omega_{-2}+\left(2\lambda_0\lambda_2+\lambda_1^2\right)\omega_{-3}=2\lambda_0\left(\lambda_1\omega_{-2}+\lambda_2\omega_{-3}\right)+\lambda_1^2\omega_{-3}=\lambda_1^2\omega_{-3}
	\end{align*}
	and as $\omega_{-3}\neq 0$ we obtain $\lambda_1=0$. Therefore \eqref{syst11} becomes
	\begin{align*}
	\lambda_2\omega_{-3}=0
	\end{align*}
	so $\lambda_2=0$ (recall that $\omega_{-3}\neq 0$). Finally, we see that
	\begin{align*}
	g(z)=\lambda_0+O(|z|^3)
	\end{align*}
	and if $\lambda=\lambda_0$ we find
	\begin{align*}
	\left(1-g^2,i(1+g^2),2g\right)\omega=\left(\left(1-\lambda^2,i\left(1+\lambda^2\right),2\lambda\right)\left(\frac{\omega_{-3}}{z^3}+\frac{\omega_{-2}}{z^2}\right)+O(1)\right)dz
	\end{align*}
	Therefore, we have
	\begin{align*}
	\int_{\ast}^z\left(1-g^2,i(1+g^2),2g\right)\omega=-\frac{1}{2}\left(1-\lambda^2,i(1+\lambda^2),2\lambda\right)\frac{\omega_{-3}}{z^2}-\left(1-\lambda^2,i(1+\lambda^2),2\lambda\right)\frac{\omega_{-2}}{z}+O(1).
	\end{align*}
	Finally, defining (recall that $\omega_{-3}\neq 0$)
	\begin{align*}
	\vec{A}_0=-\frac{1}{2}\left(1-\lambda^2,i(1+\lambda^2),2\lambda\right)\omega_{-3},\quad \vec{A}_1=-\left(1-\lambda^2,i(1+\lambda^2),2\lambda\right)\omega_{-2}=\frac{2\omega_{-2}}{\omega_{-3}}\vec{A}_0,
	\end{align*}
	we obtain
	\begin{align*}
	\phi(z)=\Re\left(\frac{\vec{A}_0}{z^2}+\frac{\vec{A}_1}{z}\right)+O(1).
	\end{align*}
	Therefore, the two coefficients $\vec{A}_0$ and $\vec{A}_1$ are linearly dependent, which concludes the proof of the Lemma.
\end{proof}

\subsection{Explicit computation}

\normalsize

Let $\phi:D^2\setminus\ens{0}\rightarrow \R^n$ be a conformal parametrisation of an end of $\phi$. Thanks to the Weierstrass parametrisation, there exists $\vec{A}_0\in \C^n\setminus\ens{0}$ and $\vec{A}_1,\vec{A}_2,\vec{A}_3,\vec{A}_4\in \C^n$ such that
\begin{align*}
\phi(z)=2\,\Re\left(\frac{\vec{A}_0}{z^2}+\frac{\vec{A}_1}{z}+\vec{A}_2z+\vec{A}_3z^2\right)+O(|z|^3)
\end{align*}
Notice that by a translation the constant term can be taken equal to $0$.
We compute
\begin{align*}
\p{z}\phi=-2\frac{\vec{A}_0}{z^3}-\frac{\vec{A}_1}{z^2}+\vec{A}_2+2\vec{A}_3z+O(|z|^2)
\end{align*}
and we have by conformity of $\phi$ the identity
\begin{align*}
0=\s{\p{z}\phi}{\p{z}\phi}=4\frac{\s{\vec{A}_0}{\vec{A}_0}}{z^6}+4\frac{\s{\vec{A}_0}{\vec{A}_1}}{z^5}+\frac{\s{\vec{A}_1}{\vec{A}_1}}{z^4}-4\frac{\s{\vec{A}_0}{\vec{A}_2}}{z^3}-2\frac{4\s{\vec{A}_0}{\vec{A}_3}+\s{\vec{A}_1}{\vec{A}_2}}{z^2}+O\left(\frac{1}{|z|}\right)
\end{align*}
Therefore, we have
\begin{align}\label{cancel2}
\s{\vec{A}_0}{\vec{A}_0}=\s{\vec{A}_0}{\vec{A}_1}=\s{\vec{A}_0}{\vec{A}_2}=\s{\vec{A}_1}{\vec{A}_1}=4\s{\vec{A}_0}{\vec{A}_3}+\s{\vec{A}_1}{\vec{A}_2}=0.
\end{align}
Now, we compute
\begin{align*}
\frac{e^{2\lambda}}{2}&=|\p{z}\phi|^2=4\frac{|\vec{A}_0|^2}{|z|^6}+4\,\Re\left(\s{\bar{\vec{A}_0}}{\vec{A}_1}z^{-2}\z^{-3}\right)+\frac{|\vec{A}_1|^2}{|z|^4}\\
&-4\,\Re\left(\s{\bar{\vec{A}_0}}{\vec{A}_2}\z^{-3}\right)-8\,\Re\left(\s{\bar{\vec{A}_0}}{\vec{A}_3}\frac{z}{\z^3}\right)-4\,\Re\left(\s{\bar{\vec{A}_1}}{\vec{A}_2}\z^{-2}\right)+O\left(\frac{1}{|z|}\right)\\
&=\frac{1}{|z|^6}\left(4|\vec{A}_0|^2+|\vec{A}_1|^2|z|^2+4\,\Re\left(\s{\bar{\vec{A}_0}}{\vec{A}_1}z-\s{\bar{\vec{A}_0}}{\vec{A}_2}z^3-2\s{\bar{\vec{A}_0}}{\vec{A}_3}z^4-\s{\bar{\vec{A}_1}}{\vec{A}_2}z^3\z\right)+O(|z|^5)\right)\\
&=\frac{2\beta_0}{|z|^6}\left(1+\beta_1|z|^2+2\,\Re\left(\alpha_0 z-\alpha_1z^3-2\alpha_2z^4-\alpha_3z^3\z\right)+O(|z|^5)\right)
\end{align*}
where
\begin{align*}
&\beta_0=2|\vec{A}_0|^2>0,\quad \beta_0\beta_1=\frac{1}{2}|\vec{A}_1|^2\\
&\alpha_0\beta_0=\s{\bar{\vec{A}_0}}{\vec{A}_1},\quad \alpha_1\beta_0=\s{\bar{\vec{A}_0}}{\vec{A}_2},\quad \alpha_2\beta_0=\s{\bar{\vec{A}_0}}{\vec{A}_3},\quad
\alpha_3\beta_0=\frac{1}{2}\s{\bar{\vec{A}_1}}{\vec{A}_2}.	
\end{align*}
\begin{rem}\label{simplifies2}
	Notice that 
	\begin{align*}
	\beta_1=\frac{1}{4}\frac{|\vec{A}_1|^2}{|\vec{A}_0|^2},\quad |\alpha_0|^2=\frac{1}{4}\frac{|\s{\bar{\vec{A}_0}}{\vec{A}_1}|^2}{|\vec{A}_0|^4}.
	\end{align*}
	Therefore, we have by Cauchy-Schwarz inequality $\beta_1\geq |\alpha_0|^2$. Furthermore, as by Lemma \ref{diag2} we have $\vec{A}_1\in \mathrm{Span}(\vec{A}_0)$, the equality $\beta_1=|\alpha_0|^2$ holds for $n=3$. However, for the sake of verifiability of the proof, we shall only use this relation at the end of the computation.
\end{rem}
Therefore, we have
\begin{align}\label{conf1}
e^{2\lambda}=\frac{4\beta_0}{|z|^6}\left(1+\beta_1|z|^2+2\,\Re\left(\alpha_0z-\alpha_1z^3-2\alpha_2z^4-\alpha_3z^3\z\right)+O(|z|^5)\right).
\end{align}
Now notice that for all $\vec{A},\vec{B}\in \C^n$
\begin{align}\label{re2}
&\left|2\,\Re\left(\vec{A}+{\vec{B}}\right)\right|^2=\s{\vec{A}+\bar{\vec{A}}+\vec{B}+\bar{\vec{B}}}{\vec{A}+\bar{\vec{A}}+\vec{B}+\bar{\vec{B}}}\nonumber\\
&=2|\vec{A}|^2+2|\vec{B}|^2+\s{\vec{A}}{\vec{A}}+\s{\bar{\vec{A}}}{\bar{\vec{A}}}+\s{{\vec{B}}}{\vec{B}}+\s{\bar{\vec{B}}}{\bar{\vec{B}}}+2\s{\vec{A}}{\vec{B}}+2\s{\vec{A}}{\bar{\vec{B}}}+2\s{\bar{\vec{A}}}{\vec{B}}+2\s{\bar{\vec{A}}}{\bar{\vec{B}}}\nonumber\\
&=2\left(|\vec{A}|^2+|\vec{B}|^2\right)+2\,\Re\left(\s{\vec{A}}{\vec{A}}+\s{\vec{B}}{\vec{B}}\right)+4\,\Re\left(\s{\vec{A}}{\vec{B}}+\s{\vec{A}}{\bar{\vec{B}}}\right).
\end{align}
We also compute thanks to \eqref{cancel2} and \eqref{re2}
\begin{align*}
|\phi(z)|^2&=2\frac{|\vec{A}_0|^2}{|z|^4}+2\frac{|\vec{A}_1|^2}{|z|^2}+4\,\Re\left(\s{\bar{\vec{A}_0}}{\vec{A}_1}z^{-1}\z^{-2}\right)+4\,\Re\left(\s{\bar{\vec{A}_0}}{\vec{A}_2}z\z^{-2}\right)+4\,\Re\left(\s{\bar{\vec{A}_1}}{\vec{A}_2}z\z^{-1}\right)\\
&+4\,\Re\left(\s{\bar{\vec{A}_0}}{\vec{A}_3}z^2\z^{-2}\right)+2\,\Re\left(\s{\vec{A}_0}{\vec{A}_3}+\s{\vec{A}_1}{\vec{A}_2}\right)+O(|z|)\\
&=\frac{1}{|z|^4}\left(2|\vec{A}_0|^2+2|\vec{A}_1|^2|z|^2+4\,\Re\left(\s{\bar{\vec{A}_0}}{\vec{A}_1}z+\s{\bar{\vec{A}_0}}{\vec{A}_2}z^3+\s{\bar{\vec{A}_0}}{\vec{A}_3}z^4+\s{\bar{\vec{A}_1}}{\vec{A}_2}z^3\z\right)+O(|z|^5)\right)\\
&=\frac{\beta_0}{|z|^4}\left(1+4\beta_1|z|^2+4\,\Re\left(\alpha_0 z+\alpha_1z^3+\alpha_2z^4+2\alpha_3z^3\z\right)+O(|z|^5)\right)+\beta_2
\end{align*}
where
\begin{align*}
\beta_2=2\,\Re\left(\s{\vec{A}_0}{\vec{A}_3}+\s{\vec{A}_1}{\vec{A}_2}\right)=-6\,\Re\left(\s{\vec{A}_0}{\vec{A}_3}\right)
\end{align*}
thanks to \eqref{cancel2}.
Therefore, we compute
\begin{align}\label{devdelphi}
\partial|\phi|^2&=\frac{\beta_0}{z|z|^4}\left(-2-4\beta_1|z|^2-2\alpha_0z-4\bar{\alpha_0}\,\z+2\alpha_1z^3-4\bar{\alpha_1}\,\z^3+8i\,\Im\left(\alpha_2z^4+\alpha_3z^3\z\right)+O(|z|^5)\right)
\end{align}
As 
\begin{align}\label{trivial}
\int_{\partial B(0,\epsilon)}z^k\z^l\frac{dz}{z}=2\pi i\,\epsilon^{2k}\delta_{k,l}
\end{align}
where $\delta_{k,l}$ is the Kronecker symbol, we directly obtain
\begin{align*}
\Im\int_{\partial B(0,\epsilon)}4\,\partial|\phi|^2=-\frac{16\pi\beta_0}{\epsilon^4}-\frac{32\pi\beta_0\beta_1}{\epsilon^2}+O(\epsilon).
\end{align*}
Now, we will compute the singular residue for an admissible normal variation $\vec{v}=v\n\in \mathscr{E}_{\vec{\Psi}}'(\Sigma,\R^3)$. First assume that $v\in C^4(\Sigma)$.  We see that thanks to the previous section $v$ must admit the following development
\begin{align*}
v=v(p_i)+\gamma|z|^4+2\,\Re\left(\zeta_0 z^2+\zeta_1z^2\z+\zeta_2z^3+\zeta^3z^4+\zeta_4z^3\z\right)+o(|z|^4).
\end{align*}
To simplify the different estimates, we see that it is equivalent to assume that $v\in C^5(\Sigma)$, so that 
\begin{align*}
	v=v(p_i)+\gamma|z|^4+2\,\Re\left(\zeta_0 z^2+\zeta_1z^2\z+\zeta_2z^3+\zeta^3z^4+\zeta_4z^3\z\right)+O(|z|^5).
\end{align*}
Now, we see that all functions $|\phi|^2,e^{2\lambda},v$ have the following general development for some $m\in \Z$
\begin{align*}
|z|^{2m}\left(\mu_0+\mu_1|z|^2+\mu_2|z|^4+\Re\left(\nu_0z+\nu_1z^2+\nu_2z^3+\nu_3z^2\z+\nu_4z^4+\nu_5z^3\z\right)+O(|z|^5)\right)
\end{align*}
and $K_g$ two (but up to $O(|z|^3)$ order).  Therefore, as the singular residue is a quadratic expression of derivatives of these functions and by \eqref{trivial}, we see that may assume that $\nu_2=\nu_3=\nu_4=\nu_5=0$ in all these developments. By abuse of notation all such coefficients appearing in computations will be taken equal to $0$ (a formal notation would be to write an equalities $(\mathrm{mod}\;\, \mathrm{Span}_{\C}(z^3,\z^3,z^4,\z^4,z^3\z,z\z^3))$ instead of the equality symbols, but we find it both heavy and unnecessary). 
With these new conventions, we obtain
\begin{align*}
&|\phi|^2=\frac{\beta_0}{|z|^4}\left(1+4\,\Re\left(\alpha_0z\right)+4\beta_1|z|^2+O(|z|^5)\right)+\beta_2\\
&e^{2\lambda}=\frac{4\beta_0}{|z|^6}\left(1+2\,\Re\left(\alpha_0z
\right)+\beta_1|z|^2+O(|z|^5)\right)\\
&v=v(p_i)+\gamma|z|^4+2\,\Re\left(\zeta_0z^2+\zeta_1z^2\z\right)+O(|z|^5).
\end{align*}
Now we compute
\begin{align*}
|\phi|^2v&=\frac{\beta_0}{|z|^4}\bigg(v(p_i)+4\beta_1v(p_i)|z|^2+2\,\Re\Big(2\alpha_0v(p_i)z+\zeta_0z^2+\zeta_1z^2\z+2\alpha_0\zeta_0z^3+2\bar{\alpha_0}\zeta_0z^2\z+8\beta_1\zeta_0z^3\z+2\alpha_0\zeta_1z^3\z\Big)\\
&+O(|z|^5)\bigg)+\beta_3\\
&=\frac{\beta_0}{|z|^4}\left(v(p_i)+4\beta_1v(p_i)|z|^2+2\,\Re\left(2\alpha_0v(p_i)z+\zeta_0z^2+\left(2\bar{\alpha_0}\zeta_0+\zeta_1\right)z^2\z\right)+O(|z|^5)\right)+\beta_3
\end{align*}
for some constant $\beta_3\in \R$. Therefore, we have
\begin{align*}
\p{z\z}^2\left(|\phi|^2v\right)&=\frac{\beta_0}{|z|^6}\left(4v(p_i)+4\beta_1v(p_i)|z|^2+8\,\Re\left(\alpha_0v(p_i)z\right)+O(|z|^5)\right)\\
&=\frac{4\beta_0}{|z|^6}\left(1+\beta_1|z|^2+2\,\Re\left(\alpha_0z\right)\right)v(p_i)+O(|z|^5)=e^{2\lambda}v(p_i)+O(|z|^5)
\end{align*}
so that
\begin{align*}
\Delta_g\left(|\phi|^2v\right)=4v(p_i)+O(|z|^5).
\end{align*}
This implies that there exists some $\lambda_j\in \C$ such that
\begin{align*}
\partial\left(|\phi|^2v\right)=-\frac{2\beta_0}{z|z|^4}\left(\left(1+2\beta_1|z|^2+\alpha_0z+2\bar{\alpha_0}\z\right)v(p_i)+\lambda_1z^2+\lambda_2\z^2+\lambda_3z^2\z+\lambda_4z\z^2+O(|z|^5)\right)dz.
\end{align*}
This finally implies that
\begin{align}\label{end21}
\Im\int_{\partial B(0,\epsilon)}\Delta_g\left(|\phi|^2v\right)\partial\left(|\phi|^2v\right)=-\frac{16\pi\beta_0}{\epsilon^4}v^2(p_i)-\frac{32\pi\beta_0\beta_1}{\epsilon^2}v^2(p_i)+O(\epsilon).
\end{align}

Here, we see that no constant term occurs. However, we will see that they do occur  for other contributions of the residue. Let $u:D^2\rightarrow \R$ such that 
\begin{align*}
e^{2\lambda}=\frac{4\beta_0}{|z|^6}e^{2u}
\end{align*}
Then
\begin{align*}
-\Delta u=e^{2\lambda}K_g,
\end{align*}
and we have
\begin{align*}
2u=\log\left(1+\beta_1|z|^2+2\,\Re\left(\alpha_0z-\alpha_1z^3-2\alpha_2z^4-\alpha_3z^3\z\right)+O(|z|^3)\right)
\end{align*}
Therefore, first compute
\begin{align*}
2\left(\p{z}u\right)=\frac{\alpha_0+\beta_1\z-3\alpha_1z^2-8\alpha_2z^3-3\alpha_3z^2\z-\bar{\alpha_3}\,\z^3+O(|z|^4)}{1+\beta_1|z|^2+2\,\Re\left(\alpha_0z-\alpha_1z^3-2\alpha_2z^4-\alpha_3z^3\z\right)+O(|z|^5)}
\end{align*}
Then
\begin{align*}
2\left(\p{z\z}^2u\right)&=\frac{\beta_1-3\alpha_3z^2-3\bar{\alpha_3}\,\z^2+O(|z|^3)}{1+\beta_1|z|^2+2\,\Re\left(\alpha_0z-\alpha_1z^3-2\alpha_2z^4-\alpha_3z^3\z\right)+O(|z|^5)}\\
&-\frac{\left|\alpha_0-3\alpha_1z^2-4\alpha_2z^3-3\alpha_3z^2\z+\beta_1\z+O(|z|^4)\right|^2}{1+\beta_1|z|^2+2\,\Re\left(\alpha_0z-\alpha_1z^3-2\alpha_2z^4-\alpha_3z^3\z\right)+O(|z|^5)}\\
&=e^{-2u}\left(\beta_1-6\,\Re\left(\alpha_3z^2\right)+O(|z|^3)\right)-e^{-4u}\left|\alpha_0+\beta_1\z-3\alpha_1z^2+O(|z|^3)\right|^2
\end{align*}
Now we have 
\begin{align*}
&\Re(\alpha_0z)^2=\frac{1}{4}\left(\alpha_0^2z^2+2|\alpha_0|^2|z|^2+\bar{\alpha_0}^2\z^2\right)=\frac{1}{2}\Re\left(\alpha_0^2z^2\right)+\frac{1}{2}|\alpha_0|^2|z|^2\\
&|\alpha_0+\beta_1\z-3\alpha_1z^2+O(|z|^3)|^2=|\alpha_0|^2+2\,\Re\left(\beta_1\alpha_0z-3\bar{\alpha_0}\alpha_1z^2\right)+\beta_1^2|z|^2+O(|z|^3).
\end{align*}
Furthermore, as $-K_g=e^{-2\lambda}\Delta u=4e^{-2\lambda}\p{z\z}^2u$, and $e^{2\lambda}=\dfrac{4\beta_0}{|z|^6}e^{2u}$, we have
\begin{align*}
-K_g&=2\cdot \frac{|z|^6}{4\beta_0}e^{-2u}\left(2\left(\p{z\z}^2u\right)\right)\\
&=\frac{|z|^6}{2\beta_0}\bigg(e^{-4u}\left(\beta_1-6\,\Re\left(\alpha_3z^2\right)+O(|z|^3)\right)-e^{-6u}\left(|\alpha_0|^2+2\,\Re\left(\beta_1\alpha_0z-3\bar{\alpha_0}\alpha_1z^2\right)+\beta_1^2|z|^2+O(|z|^3)\right)\bigg)
\end{align*}
Now, we compute
\begin{align*}
e^{-4u}&=\left(1+2\,\Re(\alpha_0z)+\beta_1|z|^2\right)^{-2}=1-2\left(2\,\Re\left(\alpha_0z\right)+\beta_1|z|^2\right)+3\cdot \left(2\,\Re\left(\alpha_0z\right)\right)^2+O(|z|^3)\\
&=1+2\,\Re\left(-2\alpha_0z+3\alpha_0^2z^2\right)+2\left(3|\alpha_0|^2-\beta_1\right)|z|^2+O(|z|^3)\\
e^{-6u}&=1-3\left(2\,\Re\left(\alpha_0z\right)+\beta_1|z|^2\right)+6\cdot\left(2\,\Re\left(\alpha_0z\right)\right)^2+O(|z|^3)\\
&=1+2\,\Re\left(-3\alpha_0z+6\alpha_0^2z^2\right)+3\left(4|\alpha_0|^2-\beta_1\right)|z|^2+O(|z|^3).
\end{align*}
Therefore, we have
\begin{align*}
&e^{-4u}\left(\beta_1-6\,\Re\left(\alpha_3z^2\right)+O(|z|^3)\right)=\left(1+2\,\Re\left(-2\alpha_0z+3\alpha_0^2z^2\right)+2\left(3|\alpha_0|^2-\beta_1\right)|z|^2+O(|z|^3)\right)\\
&\times \left(\beta_1-6\,\Re\left(\alpha_3z^2\right)+O(|z|^3)\right)\\
&=\beta_1+2\beta_1\,\Re\left(-2\alpha_0z+3\alpha_0^2z^2\right)+2\beta_1\left(3|\alpha_0|^2-\beta_1\right)|z|^2-6\,\Re\left(\alpha_3z^2\right)+O(|z|^3)\\
&=\beta_1+2\,\Re\left(-2\beta_1\alpha_0z+3\left(\beta_1\alpha_0^2-\alpha_3\right)z^2\right)+2\beta_1\left(3|\alpha_0|^2-\beta_1\right)|z|^2+O(|z|^3)
\end{align*}
and
\begin{align*}
&e^{-6u}\left(|\alpha_0|^2+2\,\Re\left(\beta_1\alpha_0z-3\bar{\alpha_0}\alpha_1z^2\right)+\beta_1^2|z|^2+O(|z|^3)\right)=\left(1+2\,\Re\left(-3\alpha_0z+6\alpha_0^2z^2\right)+3\left(4|\alpha_0|^2-\beta_1\right)|z|^2\right)\\
&\times\left(|\alpha_0|^2+2\,\Re\left(\beta_1\alpha_0z-3\bar{\alpha_0}\alpha_1z^2\right)+\beta_1^2|z|^2\right)+O(|z|^3)\\
&=|\alpha_0|^2+2\,\Re\left(\beta_1\alpha_0z-3\bar{\alpha_0}\alpha_1z^2\right)+\beta_1^2|z|^2-12\beta_1\,\Re\left(\alpha_0z\right)^2+2\,\Re\left(-3|\alpha_0|^2\alpha_0z+6|\alpha_0|^2\alpha_0^2z^2\right)\\
&+3|\alpha_0|^2\left(4|\alpha_0|^2-\beta_1\right)|z|^2+O(|z|^3)\\
&=|\alpha_0|^2+2\,\Re\left(\left(\beta_1-3|\alpha_0|^2\right)\alpha_0z+3\left(\left(2|\alpha_0|^2-\beta_1\right)\alpha_0^2-\bar{\alpha_0}\alpha_1\right)z^2\right)+\left(\beta_1^2-6|\alpha_0|^2\beta_1+3|\alpha_0|^2\left(4|\alpha_0|^2-\beta_1\right)\right)|z|^2\\
&+O(|z|^3)\\
&=|\alpha_0|^2+2\,\Re\left(\left(\beta_1-3|\alpha_0|^2\right)\alpha_0z+3\left(\left(2|\alpha_0|^2-\beta_1\right)\alpha_0^2-\bar{\alpha_0}\alpha_1\right)z^2\right)+\left(\beta_1^2+12|\alpha_0|^4-9|\alpha_0|^2\beta_1\right)|z|^2+O(|z|^3). 
\end{align*}
Finally, we get as
\begin{align}\label{gauss2}
&-2\beta_1-\left(\beta_1-3|\alpha_0|^2\right)=3\left(|\alpha_0|^2-\beta_1\right)\nonumber\\
&\left(\beta_1\alpha_0^2-\alpha_3\right)-\left(\left(2|\alpha_0|^2-\beta_1\right)\alpha_0^2-\bar{\alpha_0}\alpha_1\right)=2\left(\beta_1-|\alpha_0|^2\right)\alpha_0^2+\bar{\alpha_0}\alpha_1-\alpha_3\nonumber\\
&2\beta_1\left(3|\alpha_0|^2-\beta_1\right)-\left(\beta_1^2+12|\alpha_0|^4-9|\alpha_0|^2\beta_1\right)=15|\alpha_0|^2\beta_1-12|\alpha_0|^2-3\beta_1^2=3\left(5|\alpha_0|^2\beta_1-4|\alpha_0|^4-\beta_1^2\right)
\end{align}
the expansion
\begin{align*}
-K_g&=\frac{|z|^6}{2\beta_0}\bigg(\beta_1-|\alpha_0|^2+6\,\Re\Big(\left(|\alpha_0|^2-\beta_1\right)\alpha_0z+\left(2\left(\beta_1-|\alpha_0|^2\right)\alpha_0^2+\bar{\alpha_0}\alpha_1-\alpha_3\right)z^2\Big)\\
&+3\left(5|\alpha_0|^2\beta_1-4|\alpha_0|^4-\beta_1^2\right)|z|^2+O(|z|^3)\bigg).
\end{align*}
Now, recall that
\begin{align*}
|\phi|^2v&=\frac{\beta_0}{|z|^4}\left(v(p_i)+4\beta_1v(p_i)|z|^2+2\,\Re\left(2\alpha_0v(p_i)z+\zeta_0z^2+\left(2\bar{\alpha_0}\zeta_0+\zeta_1\right)z^2\z\right)+O(|z|^5)\right)+\beta_3\\
&=\frac{\beta_0}{|z|^4}\left(\left(1+4\beta_1|z|^2+4\,\Re\left(\alpha_0z
\right)\right)v(p_i)+2\,\Re\left(\zeta_0z^2\right)+O(|z|^3)\right).
\end{align*}
Therefore, we have as $16\,\Re\left(\alpha_0z\right)^2=8|\alpha_0|^2|z|^2+8\,\Re\left(\alpha_0^2z^2\right)$
\begin{align*}
|\phi|^4v^2&=\frac{\beta_0}{|z|^8}\left(\left(1+8\beta_1|z|^2+16\,\Re\left(\alpha_0z\right)^2+8\,\Re\left(\alpha_0z
\right)\right)v^2(p_i)+4\,\Re\left(\zeta_0v(p_i)z^2\right)+O(|z|^3)\right)\\
&=\frac{\beta_0}{|z|^8}\left(\left(1+8\left(\beta_1+|\alpha_0|^2\right)|z|^2+8\,\Re\left(\alpha_0z\right)\right)v^2(p_i)+4\,\Re\left(\left(\zeta_0v(p_i)+2\alpha_0^2v^2(p_i)\right)z^2\right)+O(|z|^3)\right)\\
&=\frac{\beta_0}{|z|^8}\left(\left(1+8\left(\beta_1+|\alpha_0|^2\right)|z|^2+8\,\Re\left(\alpha_0z\right)\right)v^2(p_i)+2\,\Re\left(\zeta_3 z^2\right)+O(|z|^3)\right),
\end{align*}
where
\begin{align*}
\zeta_3=2\left(\zeta_0v(p_i)+2\alpha_0^2v^2(p_i)\right).
\end{align*}
This implies that
\begin{align*}
\partial\left(|\phi|^4v^2\right)&=-\frac{4\beta_0}{z|z|^8}\left(\left(1+8\left(\beta_1+|\alpha_0|^2\right)|z|^2+8\,\Re\left(\alpha_0z\right)\right)v^2(p_i)+4\,\Re\left(\zeta_3z^2\right)+O(|z|^3)\right)dz\\
&+\frac{\beta_0}{z|z|^8}\left(\left(8\left(\beta_1+|\alpha_0|^2\right)|z|^2+4\alpha_0z\right)v^2(p_i)+4\zeta_3z^2+O(|z|^3)\right)dz\\
&=-\frac{4\beta_0}{z|z|^8}\left(\left(1+6\left(\beta_1+|\alpha_0|^2\right)|z|^2+3\alpha_0z+4\bar{\alpha_0}\,\z\right)v^2(p_i)+\zeta_3z^2+2\bar{\zeta_3}\z^2+O(|z|^3)\right)dz.
\end{align*}
Therefore, we have for some $\lambda_j\in \C$
\begin{align*}
&K_g\,\partial\left(|\phi|^4v^2\right)=\frac{2\beta_0}{z|z|^2}\left(\left(\beta_1-|\alpha_0|^2\right)+3\left(5|\alpha_0|^2\beta_1-4|\alpha_0|^4-\beta_1^2\right)|z|^2+6\,\Re\left(\left(|\alpha_0|^2-\beta_1\right)\alpha_0z\right)+\Re\left(\zeta_4z^2\right)\right)\\
&\times \left(\left(1+6\left(\beta_1+|\alpha_0|^2\right)|z|^2+3\alpha_0z+4\bar{\alpha_0}\,\z\right)v^2(p_i)+\zeta_3z^2+2\bar{\zeta_3}\z^2+O(|z|^3)\right)dz\\
&=\frac{2\beta_0}{z|z|^2}\bigg(\Big\{\left(\beta_1-|\alpha_0|^2\right)+6\left(\beta_1-|\alpha_0|^2\right)\left(\beta_1+|\alpha_0|^2\right)|z|^2+3\left(|\alpha_0|^2-\beta_1\right)\alpha_0z\cdot 4\bar{\alpha_0}\,\z+3\left(|\alpha_0|^2-\beta_1\right)\bar{\alpha_0}\z\,3\alpha_0z\\
&+3\left(5|\alpha_0|^2\beta_1-4|\alpha_0|^4-\beta_1^2\right)|z|^2\Big\}v^2(p_i)+\lambda_1z+\lambda_2\z+\lambda_3z^2+\lambda_4\z^2+O(|z|^3)\bigg)dz\\
&=\frac{2\beta_0}{z|z|^2}\bigg(\left(\beta_1-|\alpha_0|^2\right)v^2(p_i)+3\left(\beta_1-|\alpha_0|^2\right)^2|z|^2v^2(p_i)+\lambda_1z+\lambda_2\z+\lambda_3z^2+\lambda_4\z^2+O(|z|^3)\bigg)dz
\end{align*}
as
\begin{align*}
6\left(\beta_1^2-|\alpha_0|^4\right)+21|\alpha_0|^2\left(|\alpha_0|^2-\beta_1\right)+3\left(5|\alpha_0|^2\beta_1-4|\alpha_0|^4-\beta_1^2\right)=3\beta_1^2+3|\alpha_0|^4-6|\alpha_0|^2\beta_1=3\left(\beta_1-|\alpha_0|^2\right)^2.
\end{align*}
for some $\lambda_j\in \C$. Therefore, we have
\begin{align}\label{end22}
\Im\int_{\partial B(0,\epsilon)}2K_g\left(|\phi|^2v\right)\,\partial\left(|\phi|^2v\right)=\frac{4\pi\beta_0\left(\beta_1-|\alpha_0|^2\right)}{\epsilon^2}v^2(p_i)+12\pi\left(\beta_1-|\alpha_0|^2\right)^2v^2(p_i)+O(\epsilon).
\end{align}
We find the appearance of the square remarkable. Now, recall that
\begin{align*}
\partial\left(|\phi|^2v\right)=-\frac{2\beta_0}{z|z|^4}\left(\left(1+2\beta_1|z|^2+\alpha_0z+2\bar{\alpha_0}\z\right)v(p_i)+\lambda_1z^2+\lambda_2\z^2+\lambda_3z^2\z+\lambda_4z\z^2+O(|z|^5)\right)dz.
\end{align*}
This implies that
\begin{align*}
\left|\partial\left(|\phi|^2v\right)\right|^2&=\frac{4\beta_0^2}{|z|^{10}}\bigg(\left(1+4\beta_1|z|^2+|\alpha_0|^2|z|^2+4|\alpha_0|^2|z|^2+2\,\Re\left(\alpha_0z\right)+4\,\Re\left(\alpha_0z\right)\right)v^2(p_i)\\
&+\Re\left(\mu_1z^2+\mu_2z^2\z\right)+\beta_4|z|^4+O(|z|^5)\bigg)\\
&=\frac{4\beta_0^2}{|z|^{10}}\bigg(\left(1+\left(4\beta_1+5|\alpha_0|^2\right)|z|^2+6\,\Re\left(\alpha_0z\right)\right)v^2(p_i)+\Re\left(\mu_1z^2+\mu_2z^2\z\right)+\beta_4|z|^4+O(|z|^5)\bigg)
\end{align*}
and as 
\begin{align*}
e^{2\lambda}=\frac{4\beta_0}{|z|^6}\left(1+\beta_1|z|^2+2\,\Re\left(\alpha_0z-\alpha_1z^3-2\alpha_2z^4-\alpha_3z^3\z\right)+O(|z|^5)\right)
\end{align*}
we also have
\begin{align}\label{c2}
e^{-2\lambda}&=\frac{|z|^6}{4\beta_0}\left(1+\beta_1|z|^2+2\,\Re\left(\alpha_0z-\alpha_1z^3-2\alpha_2z^4-\alpha_3z^3\z\right)+O(|z|^5)\right)^{-1}\nonumber\\
&=\frac{|z|^{6}}{4\beta_0}\bigg(1-\beta_1|z|^2-2\,\Re\left(\alpha_0z-\alpha_1z^3-2\alpha_2z^4-\alpha_3z^3\z\right)
+\beta_1^2|z|^2+4\,\Re\left(\alpha_0z\right)^2\nonumber\\
&-8\,\Re\left(\alpha_0z\right)\Re\left(\alpha_1z^3\right)-8\,\Re\left(\alpha_0z
\right)^3-3\cdot\left(2\,\Re\left(\alpha_0z\right)\right)^2\cdot \beta_1|z|^2+16\,\Re\left(\alpha_0z\right)^4+O(|z|^5)\bigg)\nonumber\\
&=\frac{|z|^6}{4\beta_0}\bigg(1+\left(2|\alpha_0|^2-\beta_1\right)|z|^2+\left(6|\alpha_0|^4+\beta_1^2\right)|z|^4+2\,\Re\Big(-\alpha_0z+\alpha_0^2z^2+\left(\alpha_1-2\alpha_0^3\right)z^3\nonumber\\
&-3|\alpha_0|^2\alpha_0z^2\z+2\left(\alpha_2-\alpha_0\alpha_1+\alpha_0^4\right)z^4+\left(-2\bar{\alpha_0}\alpha_1+\left(4|\alpha_0|^4-3\beta_1\right)\alpha_0^2\right)z^3\z\Big)+O(|z|^5)\bigg)\nonumber\\
&=\frac{|z|^6}{4\beta_0}\bigg(1+\left(2|\alpha_0|^2-\beta_1\right)|z|^2+2\,\Re\left(-\alpha_0z+\alpha_0^2z^2-3|\alpha_0|^2\alpha_0z^2\z\right)+\beta_5|z|^4+O(|z|^5)\bigg)
\end{align}
Finally, we have
\begin{align*}
\left|\partial\left(|\phi|^2v\right)\right|_g^2&=\frac{\beta_0}{|z|^4}\left(\left(1+\left(3\beta_1+|\alpha_0|^2\right)|z|^2\right)v^2(p_i)+\Re\left(\mu_1'z^2+\mu_2'z^2\z\right)\right)+\beta_6+O(|z|)
\end{align*}
and 
\begin{align*}
-\partial\left|d\left(|\phi|^2v\right)\right|_g^2=-4\,\partial\left|\partial\left(|\phi|^2v\right)\right|_g^2=\frac{4\beta_0}{z|z|^4}\left(\left(2+\left(3\beta_1+|\alpha_0|^2\right)|z|^2\right)v^2(p_i)+\Re\left(\mu_1'z^2+\mu_2'z^2\z\right)+O(|z|^5)\right).
\end{align*}
Therefore, 
\begin{align}\label{end23}
\Im\int_{\partial B(0,\epsilon)}-\partial\left|d\left(|\phi|^2v\right)\right|_g^2=\frac{16\pi\beta_0}{\epsilon^4}v^2(p_i)+\frac{8\pi\beta_0\left(3\beta_1+|\alpha_0|^2\right)}{\epsilon^2}v^2(p_i)+O(\epsilon).
\end{align}
Finally, we have
\begin{align*}
&\int_{\partial B(0,\epsilon)}\left(\Delta_g\left(|\phi|^2v\right)+2\,K_g|\phi|^2v\right)\star d\left(|\phi|^2v\right)-\frac{1}{2}\star d\left|d\left(|\phi|^2v\right)\right|_g^2\\
&=\Im\int_{\partial B(0,\epsilon)}2\left(\Delta_g\left(|\phi|^2v\right)+2\,K_g|\phi|^2v\right)\partial\left(|\phi|^2v\right)-\partial\left|d\left(|\phi|^2v\right)\right|_g^2\\
&=2\left(-\frac{16\pi\beta_0}{\epsilon^4}v^2(p_i)-\frac{32\pi\beta_0\beta_1}{\epsilon^2}v^2(p_i)\right)+2\left(\frac{4\pi\beta_0\left(\beta_1-|\alpha_0|^2\right)}{\epsilon^2}v^2(p_i)+12\pi\left(\beta_1-|\alpha_0|^2\right)^2v^2(p_i)\right)\\
&+\frac{16\pi\beta_0}{\epsilon^4}v^2(p_i)+\frac{8\pi\beta_0\left(3\beta_1+|\alpha_0|^2\right)}{\epsilon^2}v^2(p_i)+O(\epsilon)\\
&=-\frac{16\pi\beta_0}{\epsilon^4}v^2(p_i)-\frac{32\pi\beta_0\beta_1}{\epsilon^2}v^2(p_i)+24\pi\beta_0\left(\beta_1^2-|\alpha_0|^2\right)^2v^2(p_i)+O(\epsilon).
\end{align*}
Finally, thanks to Lemma \ref{diag2} and Remark \ref{simplifies2}, if $n=3$ (recall that $n$ is the ambient dimension if the immersed minimal surface $\phi$) we have $\beta_1=|\alpha_0|^2$, so 
\begin{align*}
\int_{\partial B(0,\epsilon)}\left(\Delta_g\left(|\phi|^2v\right)+2\,K_g|\phi|^2v\right)\star d\left(|\phi|^2v\right)-\frac{1}{2}\star d\left|d\left(|\phi|^2v\right)\right|_g^2=-\frac{16\pi\beta_0}{\epsilon^4}v^2(p_i)-\frac{32\pi\beta_0\beta_1}{\epsilon^2}v^2(p_i)+O(\epsilon).
\end{align*}
Furthermore, notice that replacing $|\alpha_0|^2=\beta_1$ we find by \eqref{gauss2}
\begin{align*}
-K_g&=\frac{|z|^6}{2\beta_0}\bigg(\beta_1-|\alpha_0|^2+6\,\Re\Big(\left(|\alpha_0|^2-\beta_1\right)\alpha_0z+\left(2\left(\beta_1-|\alpha_0|^2\right)\alpha_0^2+\bar{\alpha_0}\alpha_1-\alpha_3\right)z^2\Big)\\
&+3\left(5|\alpha_0|^2\beta_1-4|\alpha_0|^4-\beta_1^2\right)|z|^2+O(|z|^3)\bigg)\\
&=\frac{|z|^6}{2\beta_0}\bigg(6\,\Re\left(\left(\bar{\alpha_0}\alpha_1-\alpha_3\right)z^2\right)+O(|z|^3)\bigg)
\end{align*}
We have proved the following result.
\begin{prop}
	Let $\phi:\Sigma\setminus\ens{p_1,\cdots,p_n}\rightarrow \R^3$ be a complete minimal surface with finite total curvature and assume that $p_i$ ($1\leq i\leq n$) is an end of multiplicity $2$. If $U_i\subset \Sigma$ is an open subset such that $p_i\in U_i$ and such that there exists a complex chart $\varphi:U\rightarrow \C$ such that $\varphi(U_i)=D^2\subset \C$ and $\varphi(p_i)=0$. Then there exists $\alpha_0^2>0$ and $\alpha_1^2
	\geq 0$ (independent of $\varphi:U_i\rightarrow \C$ such that $\varphi(U_i)=D^2$ and $\varphi(p_i)=0$) such that for all $0<\epsilon\leq 1$ and for all $v\in W^{2,2}(\Sigma)\cap C^4(\Sigma)$
	\begin{align}\label{res2}
	&\Im\int_{\partial B(0,\epsilon)}2\left(\Delta_g\left(|\phi|^2v\right)+2\,K_g|\phi|^2v\right)\partial\left(|\phi|^2v\right)-\partial\left|d\left(|\phi|^2v\right)\right|_g^2=-\frac{16\pi\alpha_0^2}{\epsilon^4}\left(1+\alpha_2^2\epsilon^2\right)v^2(p_i)+O(\epsilon)
	\end{align} 
	We write these coefficients $\alpha_k^2(U_j,p_j)$.
\end{prop}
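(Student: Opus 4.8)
The statement collects the three boundary--integral computations carried out above, together with an invariance remark; I describe the structure of the argument. The plan is to start from the Weierstrass expansion $\phi(z)=2\,\Re\!\left(\frac{\vec{A}_0}{z^2}+\frac{\vec{A}_1}{z}+\vec{A}_2z+\vec{A}_3z^2\right)+O(|z|^3)$ at the multiplicity--two end $p_i$ (there is no logarithmic term, the end having no flux, so that Lemma \ref{diag2} will apply) and from the conformality relations \eqref{cancel2}. Direct Taylor expansion then gives the developments of the conformal factor $e^{2\lambda}=2|\p{z}\phi|^2$ as in \eqref{conf1}, of $e^{-2\lambda}$ as in \eqref{c2}, of $|\phi|^2$, and --- writing $e^{2\lambda}=\frac{4\beta_0}{|z|^6}e^{2u}$ so that $-\Delta u=e^{2\lambda}K_g$ --- of the Gauss curvature $K_g$, where $\beta_0=2|\vec{A}_0|^2$ and $\beta_1=\tfrac14|\vec{A}_1|^2/|\vec{A}_0|^2$.

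Next I would reduce to the relevant jet of $v$. Since the left--hand side of \eqref{res2} depends only on finitely many derivatives of $v$ at $p_i$ and is continuous in $v\in W^{2,2}(\Sigma)$, and since $W^{2,2}(\Sigma)\hookrightarrow C^0(\Sigma)$ but $W^{2,2}(\Sigma)\not\hookrightarrow C^1(\Sigma)$, exactly as in \cite{indexS3} the residue cannot depend on the derivatives of $v$ at $p_i$, only on $v(p_i)$: each differentiation of $v$ yields, after integration over $\partial B(0,\epsilon)$, a factor forcing the corresponding contribution to vanish as $\epsilon\to 0$. It is then harmless to upgrade $v$ to $C^5$, to use the admissibility--constrained expansion $v=v(p_i)+\gamma|z|^4+2\,\Re(\zeta_0z^2+\zeta_1z^2\z)+O(|z|^5)$, and --- invoking \eqref{trivial}, $\int_{\partial B(0,\epsilon)}z^k\z^l\,\tfrac{dz}{z}=2\pi i\,\epsilon^{2k}\delta_{k,l}$ --- to work modulo $\mathrm{Span}_{\C}(z^3,\z^3,z^4,\z^4,z^3\z,z\z^3)$, i.e. to discard all off--diagonal monomials in every expansion.

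With these reductions in place I would compute the three contributions separately: $\Im\int_{\partial B(0,\epsilon)}\Delta_g(|\phi|^2v)\,\partial(|\phi|^2v)$ as in \eqref{end21} (using $\Delta_g|\phi|^2=4$ from harmonicity of $\phi$, whence $\Delta_g(|\phi|^2v)=4v(p_i)+O(|z|^5)$), $\Im\int_{\partial B(0,\epsilon)}2K_g(|\phi|^2v)\,\partial(|\phi|^2v)$ as in \eqref{end22}, and $\Im\int_{\partial B(0,\epsilon)}-\partial|d(|\phi|^2v)|_g^2$ as in \eqref{end23}. Forming $2(\text{first})+2(\text{second})+(\text{third})$ and using the curvature identity \eqref{gauss2} yields
\begin{align*}
\Im\int_{\partial B(0,\epsilon)}2\left(\Delta_g(|\phi|^2v)+2K_g|\phi|^2v\right)\partial(|\phi|^2v)-\partial|d(|\phi|^2v)|_g^2 = -\frac{16\pi\beta_0}{\epsilon^4}v^2(p_i)-\frac{32\pi\beta_0\beta_1}{\epsilon^2}v^2(p_i)+24\pi\beta_0(\beta_1-|\alpha_0|^2)^2v^2(p_i)+O(\epsilon),
\end{align*}
where $|\alpha_0|^2=\tfrac14|\s{\bar{\vec{A}_0}}{\vec{A}_1}|^2/|\vec{A}_0|^4$. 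Since the ambient space is $\R^3$, Lemma \ref{diag2} gives $\vec{A}_1\in\mathrm{Span}(\vec{A}_0)$, so by Remark \ref{simplifies2} the Cauchy--Schwarz equality $\beta_1=|\alpha_0|^2$ holds and the $\epsilon$--independent term drops out; setting $\alpha_0^2:=\beta_0=2|\vec{A}_0|^2>0$ and $\alpha_2^2:=2\beta_1\geq 0$ gives \eqref{res2}. Finally, any two complex charts $\varphi:U_i\to D^2$ with $\varphi(p_i)=0$ differ by a rotation $z\mapsto e^{i\theta}z$ (Schwarz lemma), under which $|\vec{A}_0|^2$ and $|\vec{A}_1|^2$ are invariant, so $\beta_0,\beta_1$ --- hence the coefficients $\alpha_k^2(U_i,p_i)$ --- are independent of the chosen chart, while the left side of \eqref{res2} is a residue and hence intrinsic.

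The delicate step is the second contribution: the expansion of $K_g$ and the cancellations in $\Im\int_{\partial B(0,\epsilon)}2K_g(|\phi|^2v)\,\partial(|\phi|^2v)$ must be pushed far enough to recognise that the $\epsilon$--independent part assembles into the perfect square $(\beta_1-|\alpha_0|^2)^2$; once this structure is visible, Lemma \ref{diag2} (valid here because $n=3$) eliminates it and the stated formula follows.
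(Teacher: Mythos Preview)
Your proposal is correct and follows exactly the paper's approach: the Proposition is the summary of the long explicit computation in Section~\ref{multiplicity2}, and you have correctly identified every ingredient --- the Weierstrass expansion with the conformality relations \eqref{cancel2}, the reduction modulo off--diagonal monomials via \eqref{trivial}, the three boundary integrals \eqref{end21}, \eqref{end22}, \eqref{end23}, the assembly into the perfect square $(\beta_1-|\alpha_0|^2)^2$ via \eqref{gauss2}, and the crucial use of Lemma~\ref{diag2} (specific to $\R^3$) to kill the constant term. Your identification $\alpha_0^2=\beta_0$, $\alpha_2^2=2\beta_1$ and the Schwarz--lemma argument for chart independence are also exactly what the paper does.
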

\begin{proof}
	The independence on the chart is clear as change of charts are rotations $D^2\rightarrow D^2$ under which the expression in \eqref{res2} is unchanged as the $\alpha_0,\alpha_1,\alpha_2$ are norms of coefficients scalar products of $\phi$ in the expressed chart, so they are rotationally invariant.
\end{proof}
\begin{rem}
	Notice that these \enquote{residues} are not independent of $U$.
\end{rem}

We will now state most the following theorems for spheres for simplicity. 

\begin{theorem}
	Let $\Sigma$ be a closed Riemann surface,  $\phi:\Sigma\setminus\ens{p_1,\cdots, p_n}\rightarrow \R^3$ be a complete minimal surface with finite total curvature and zero flux and $\vec{\Psi}:\Sigma\rightarrow \R^3$ be a compact branched Willmore surface such that $\vec{\Psi}=\iota\circ \phi$. Assume that the ends $p_1,\cdots ,p_{m}$ of $\phi$ are flat (where $0\leq m\leq n$ is a fixed integer) and that $p_{m+1},\cdots p_{m}$ $\leq m_2\leq n$ have multiplicity $2$, and fix a covering $U_1,\cdots, U_n\subset \Sigma$ of $\ens{p_1,\cdots,p_n}\subset \Sigma$.
	Then we have for all $v\in W^{2,2}(\Sigma)\cap C^4(\Sigma)$ and for all normal variation $\vec{v}=v\n_{\vec{\Psi}}\in \mathscr{E}_{\phi}(\Sigma,\R^3)$ 
	\begin{align*}
	D^2W(\vec{\Psi})(\vec{v},\vec{v})&=\lim\limits_{\epsilon\rightarrow 0}\left(\frac{1}{2}\int_{\Sigma_{\epsilon}}\left(\Delta_gu-2\,K_gu\right)^2d\vg-8\pi\sum_{i=1}^{m}\frac{\alpha_{i,0}^2}{\epsilon^2}v^2(p_i)-16\pi\sum_{i=m+1}^{n}\frac{\alpha_{i,0}^2}{\epsilon^4}\left(1+\alpha_{i,2}^2\epsilon^2\right)v^2(p_i)\right)
	\end{align*} 
	where $u=|\phi|^2v$ and $\alpha_{j,k}^2=\alpha_{k}^2(U_j,p_j)$.
\end{theorem}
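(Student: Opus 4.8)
The plan is to reduce the statement to the already-established renormalised energy formula for embedded ends (Theorem \ref{embeddedends}) by localising the computation near each end and using the residue computation just carried out for ends of multiplicity $2$. Recall from Section \ref{admissible} and equation \eqref{limit} that for any admissible variation $\vec{v}=v\n_{\vec{\Psi}}\in \mathscr{E}_{\vec{\Psi}}(\Sigma,\R^3)$ with $u=|\phi|^2 v$,
\begin{align*}
	D^2W(\vec{\Psi})(\vec{v},\vec{v})=\lim\limits_{\epsilon\rightarrow 0}\left(\frac{1}{2}\int_{\Sigma_{\epsilon}}\left(\Delta_g u-2K_g u\right)^2 d\vg+\sum_{i=1}^{n}\int_{\partial B_{\epsilon}(p_i)}\left(\left(\Delta_g u+2K_g u\right)\ast du-\frac{1}{2}\ast d|du|_g^2\right)\right),
\end{align*}
and the limit exists. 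The whole content of the theorem is to evaluate the boundary integral at each $p_i$ to the relevant order in $\epsilon$.

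\textbf{Key steps.}
First I would split the ends into the flat ones $p_1,\dots,p_m$ and the multiplicity-$2$ ones $p_{m+1},\dots,p_n$. For a flat (embedded planar) end, Theorem \ref{embeddedends} (specifically the residue computation \eqref{residu} with $\beta_j=0$, since these ends are planar, hence have zero flux) gives
\begin{align*}
	\int_{\partial B_{\epsilon}(p_i)}\left(\left(\Delta_g u+2K_g u\right)\ast du-\frac{1}{2}\ast d|du|_g^2\right)=-\frac{8\pi\alpha_{i,0}^2}{\epsilon^2}v^2(p_i)+O(\epsilon),
\end{align*}
where $\alpha_{i,0}^2=\alpha_0^2(U_i,p_i)$ is the residue attached to the chosen chart $U_i$; here I use that $W^{2,2}(\Sigma)\hookrightarrow C^0(\Sigma)$ but not $C^1$, so only $v(p_i)$ enters. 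Second, for a multiplicity-$2$ end, I would invoke the Proposition just proved, namely equation \eqref{res2}: since $\ast d(\cdot)=\Im\,\partial(\cdot)$ on $S^1$ and by the factor-$2$ bookkeeping already present there,
\begin{align*}
	\int_{\partial B_{\epsilon}(p_i)}\left(\left(\Delta_g u+2K_g u\right)\ast du-\frac{1}{2}\ast d|du|_g^2\right)=-\frac{16\pi\alpha_{i,0}^2}{\epsilon^4}\left(1+\alpha_{i,2}^2\epsilon^2\right)v^2(p_i)+O(\epsilon).
\end{align*}
Third, I would substitute both families of boundary contributions into the displayed limit formula for $D^2W(\vec{\Psi})$ and recognise the resulting expression as exactly the claimed limit. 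Finally I would address the regularity hypothesis: the Proposition and \eqref{res2} are stated for $v\in W^{2,2}(\Sigma)\cap C^4(\Sigma)$, which is precisely the hypothesis of the theorem, so no approximation argument is needed here (though one notes, as in Theorem \ref{general}, that admissibility of $\vec{v}$ forces the Taylor expansion of $v$ used in deriving \eqref{res2}, so the computation is legitimate).

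\textbf{Main obstacle.}
The routine but delicate point is bookkeeping the orders in $\epsilon$: for a multiplicity-$2$ end the boundary integrand is singular of order $\epsilon^{-4}$ and $\epsilon^{-2}$, and one must be sure that (a) no $\epsilon^{-2}\log\epsilon$ or $\epsilon^{0}$ term survives that would spoil the stated formula, and (b) the subtracted singular terms match exactly the ones appearing in \eqref{res2}, including the coefficient $\alpha_{i,2}^2$ of the $\epsilon^{-2}$ term. This is already handled by the Proposition for a single end; the only new work is checking that the cross terms between distinct ends contribute nothing singular — which follows because each $u_{\epsilon}^i$ is built from data localised near $p_i$ and the charts $U_i$ are disjoint, exactly as in the proof of Theorem \ref{general} (estimates \eqref{estimate1}–\eqref{estimate3} there). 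I expect the argument to be short once these citations are assembled; the genuine difficulty was in the Proposition itself, not in this corollary.
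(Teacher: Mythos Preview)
Your proposal is correct and matches the paper's approach: the theorem is stated in the paper without proof precisely because it is an immediate corollary of equation \eqref{limit}, Theorem \ref{embeddedends} with $\beta_j=0$ for the flat ends, and the Proposition establishing \eqref{res2} for the multiplicity-$2$ ends. One small overcomplication: your remark about cross terms between distinct ends is unnecessary here, since the formula \eqref{limit} already presents the boundary contribution as a sum of integrals over the individual circles $\partial B_\epsilon(p_i)$, each of which is computed purely locally in the chart $U_i$; the cross terms $B_\epsilon(u_\epsilon^i,u_\epsilon^j)$ you mention only arise later, in the decomposition of Theorem \ref{explicit} and Theorem \ref{general}, and play no role in this statement.
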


We can improve Theorem \ref{general} by showing that the diagonal coefficient of the universal matrix vanishes for ends of multiplicity $2$.

\begin{theorem}\label{mult2}
	Let $\Sigma$ be a closed Riemann surface,  $\phi:\Sigma\setminus\ens{p_1,\cdots, p_n}\rightarrow \R^3$ be a complete minimal surface with finite total curvature and zero flux and $\vec{\Psi}:\Sigma\rightarrow \R^3$ be a compact branched Willmore surface such that $\vec{\Psi}=\iota\circ \phi$ and assume that the ends of $\phi$ have multiplicity at most $2$. Then there a universal symmetric matrix $\Lambda=\Lambda(\vec{\Psi})=\ens{\lambda_{i,j}}_{1\leq i,j\leq n}$ with \emph{zero diagonal entries} such that for all $v\in W^{2,2}(\Sigma)\cap C^4(\Sigma)$ and normal (admissible) variations $\vec{v}=v\n_{\vec{\Psi}}\in \mathscr{E}_{\phi}(\Sigma,\R^3)$
	\begin{align*}
	Q_{\vec{\Psi}}(v)&=\frac{1}{2}\int_{\Sigma}\left(\lg\left(|\phi|^2v_0\right)\right)^2d\vg+4\pi\sum_{1\leq i,j\leq n}^{}\lambda_{i,j}v(p_i)v(p_j)\\
	&=Q_{\vec{\Psi}}(v_0)+4\pi\sum_{1\leq i,j\leq n}^{}\lambda_{i,j}v(p_i)v(p_j),
	\end{align*}
	for some $v_0\in W^{2,2}(\Sigma)$ such that $v(p_j)=0$ for all $1\leq j\leq n$.
	In particular, we have
	\begin{align*}
	\mathrm{Ind}_W(\vec{\Psi})\leq n-1=\frac{1}{4\pi}W(\vec{\Psi})-\frac{1}{2\pi}\int_{\Sigma}K_{g}d\mathrm{vol}_{g}+\chi(\Sigma)
	\end{align*}
	and if $\vec{\Psi}:\Sigma\rightarrow \R^3$ is assumed to have no branched points, then
	\begin{align*}
	\mathrm{Ind}_{W}(\vec{\Psi})\leq \frac{1}{4\pi}W(\vec{\Psi})-1.
	\end{align*}
\end{theorem}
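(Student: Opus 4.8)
The plan is to combine the universal decomposition of the second variation established in Theorem \ref{general} with the explicit computation of the singular residue at ends of multiplicity $2$ carried out in the preceding subsection. The key point is to show that, for an end $p_i$ of multiplicity $1$ or $2$, the diagonal coefficient $\lambda_{i,i}$ of the universal matrix $\Lambda(\vec{\Psi})$ vanishes.

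\begin{proof}
	By Theorem \ref{general}, there exists a universal symmetric matrix $\Lambda(\vec{\Psi})=\{\lambda_{i,j}\}_{1\leq i,j\leq n}$ such that for all admissible normal variations $\vec{v}=v\n_{\vec{\Psi}}\in \mathscr{E}_{\phi}(\Sigma,\R^3)$ with $v\in W^{2,2}(\Sigma)\cap C^4(\Sigma)$,
	\begin{align}\label{mult2decomp}
		Q_{\vec{\Psi}}(v)=Q_{\vec{\Psi}}(v_0)+4\pi\sum_{1\leq i,j\leq n}^{}\lambda_{i,j}v(p_i)v(p_j)=\frac{1}{2}\int_{\Sigma}\left(\lg\left(|\phi|^2v_0\right)\right)^2d\vg+4\pi\sum_{1\leq i,j\leq n}^{}\lambda_{i,j}v(p_i)v(p_j),
	\end{align}
	for some $v_0\in W^{2,2}(\Sigma)$ with $v_0(p_j)=0$ for all $1\leq j\leq n$, where the second identity uses that $\lg^2(|\phi|^2v_0)$ integrates against $|\phi|^2v_0$ with vanishing boundary terms (by the argument of Theorem \ref{explicit}, since $v_0$ vanishes at all the $p_j$). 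It therefore remains to prove that $\lambda_{i,i}=0$ for all $1\leq i\leq n$. Recall from the proof of Theorem \ref{general} that, for a fixed $i$, $\lambda_{i,i}$ is precisely the finite constant $c$ appearing in the expansion \eqref{part1} of $Q_\epsilon(u_\epsilon^i)$: writing $u=|\phi|^2v$ and using the notation of Theorem \ref{general}, for a variation $v$ which is non-zero only at $p_i$ we have
	\begin{align*}
		Q_\epsilon(u_\epsilon^i)=-\int_{\partial B_{\epsilon}(p_i)}\left(\left(\Delta_g u+2K_gu\right)\ast du-\frac{1}{2}\ast d|du|_g^2\right)+O(\epsilon\log^2\epsilon),
	\end{align*}
	so that $\lambda_{i,i}v^2(p_i)$ is exactly the constant (i.e.\ $\epsilon$-independent, non-singular) term in this boundary residue as $\epsilon\to 0$.

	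Now the whole point of the computations of this section is that this constant term vanishes when $p_i$ has multiplicity $1$ or $2$. For multiplicity $1$ this is the content of Theorem \ref{embeddedends}: the residue at an embedded end equals $8\pi\alpha_i^2\epsilon^{-2}v^2(p_i)$ (catenoid case) or $8\pi\alpha_i^2\epsilon^{-2}v^2(p_i)+16\pi\beta_j^2\log(1/\epsilon)v^2(p_i)-16\pi\beta_j^2v^2(p_i)$; but in the decomposition \eqref{mult2decomp} the constant $16\pi\beta_j^2v^2(p_j)$ has already been absorbed, as in Theorem \ref{explicit} where the matrix has zero diagonal. For multiplicity $2$, the explicit Proposition just proved (identity \eqref{res2}) shows, using Lemma \ref{diag2} and Remark \ref{simplifies2} to get $\beta_1=|\alpha_0|^2$ (valid since the ambient dimension of $\phi$ is $3$), that
	\begin{align*}
		\Im\int_{\partial B(0,\epsilon)}2\left(\Delta_g\left(|\phi|^2v\right)+2K_g|\phi|^2v\right)\partial\left(|\phi|^2v\right)-\partial\left|d\left(|\phi|^2v\right)\right|_g^2=-\frac{16\pi\alpha_0^2}{\epsilon^4}\left(1+\alpha_2^2\epsilon^2\right)v^2(p_i)+O(\epsilon),
	\end{align*}
	which contains no $\epsilon$-independent term. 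Hence in both cases the constant contributing to $\lambda_{i,i}$ is $0$, so $\lambda_{i,i}=0$ for all $1\leq i\leq n$, and $\Lambda(\vec{\Psi})$ has zero diagonal.

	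Finally, the zero-diagonal property forces the index bound. Since $\Lambda(\vec{\Psi})$ is a symmetric $n\times n$ matrix with $\lambda_{i,i}=0$, its trace vanishes, so it cannot be negative definite (unless $n=0$); hence $\mathrm{Ind}\,\Lambda(\vec{\Psi})\leq n-1$. Combining with \eqref{mult2decomp} and $Q_{\vec{\Psi}}(v_0)\geq 0$ (which holds for variations vanishing at all ends, by the Theorem preceding Theorem \ref{firstestimate}), the restriction of $Q_{\vec{\Psi}}$ to any subspace on which it is negative definite injects, via $v\mapsto (v(p_1),\dots,v(p_n))$, into a subspace of $\R^n$ on which the quadratic form associated to $\Lambda(\vec{\Psi})$ is negative definite; indeed if $v(p_j)=0$ for all $j$ then $Q_{\vec{\Psi}}(v)=Q_{\vec{\Psi}}(v_0)\geq 0$, so any negative subspace maps injectively. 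Therefore $\mathrm{Ind}_W(\vec{\Psi})\leq \mathrm{Ind}\,\Lambda(\vec{\Psi})\leq n-1$. The identity $n=\tfrac{1}{4\pi}W(\vec{\Psi})-\tfrac{1}{2\pi}\int_{\Sigma}K_g\,d\mathrm{vol}_g+\chi(\Sigma)$ is the Li-Yau/Jorge-Meeks formula already used in \eqref{upends}. If moreover $\vec{\Psi}$ has no branch points then all ends of $\phi$ are embedded, so $\int_{\Sigma}K_g\,d\mathrm{vol}_g=2\pi\chi(\Sigma)-4\pi\cdot 0$ gives, together with Gauss-Bonnet and the Jorge-Meeks formula, $n-1=\tfrac{1}{4\pi}W(\vec{\Psi})-1$, whence $\mathrm{Ind}_W(\vec{\Psi})\leq \tfrac{1}{4\pi}W(\vec{\Psi})-1$. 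This concludes the proof.
\end{proof}

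The main obstacle in this argument is the multiplicity-$2$ residue computation: one must verify that all potential $\epsilon$-independent terms in the lengthy expansion of the boundary integral cancel, and the cancellation only works after invoking the linear dependence $\vec{A}_1\in\mathrm{Span}(\vec{A}_0)$ from Lemma \ref{diag2}, which is special to codimension one (it fails for Enneper-type ends of multiplicity $\geq 3$). The algebra leading to the square $(\beta_1-|\alpha_0|^2)^2$ and its eventual vanishing via $\beta_1=|\alpha_0|^2$ is the delicate heart of the matter; everything else is a repackaging of Theorem \ref{general} together with the trace argument.
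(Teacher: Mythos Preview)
Your proof has a genuine gap at the crucial step. You assert that for a variation $v$ nonzero only at $p_i$,
\[
Q_\epsilon(u_\epsilon^i)=-\int_{\partial B_{\epsilon}(p_i)}\left(\left(\Delta_g u+2K_gu\right)\ast du-\tfrac{1}{2}\ast d|du|_g^2\right)+O(\epsilon\log^2\epsilon),
\]
and then invoke the Proposition (identity \eqref{res2}) to conclude that the constant term vanishes. But this equation is precisely the statement $\lambda_{i,i}=0$ that you are trying to prove. What is actually established in the paper (Theorem \ref{firstestimate} and the argument of Theorem \ref{general}) is only $Q_\epsilon(u_\epsilon^i)=Q_\epsilon^i\,v^2(p_i)+O(1)$, where the $O(1)$ term is bounded but \emph{a priori} nonzero and is by definition $\lambda_{i,i}v^2(p_i)$. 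The residue computation \eqref{res2} concerns the boundary integral of the original $u=|\phi|^2v$; it tells you the \emph{singular} part $Q_\epsilon^i$ has no constant correction, but says nothing directly about the $\epsilon$-independent part of the \emph{minimiser's} energy $Q_\epsilon(u_\epsilon^i)$.

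The paper's proof does the missing work. It writes $Q_\epsilon(u_\epsilon^i)$ as a boundary integral involving $u$ and $w_\epsilon^i:=\mathscr{L}_g u_\epsilon^i$ (not $\mathscr{L}_g u$), and the Taylor coefficients of $w_\epsilon^i$ near $p_i$ are unknowns $\zeta_0,\zeta_1,\dots$ that must be controlled. The argument then proceeds in two nontrivial steps: first, $w_\epsilon^i$ satisfies $\mathscr{L}_g w_\epsilon^i=0$ and has a smooth limit, which kills certain coefficients; second, and this is the heart of the matter, matching the expansion of $\tfrac{1}{2}\int|w_\epsilon^i|^2 e^{2\lambda}|dz|^2$ against the known singular energy forces the system \eqref{endorder2}, from which $\zeta_0=\zeta_1=0$ follows via Cauchy--Schwarz together with $\beta_1=|\alpha_0|^2$ (Lemma \ref{diag2}). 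Only then does the boundary integral reduce to one involving $|\phi|^2$ alone, yielding no constant term. Your final paragraph correctly identifies $\beta_1=|\alpha_0|^2$ as essential, but you apply it only inside the residue of $u$, not where it is actually needed---in constraining the expansion of $\mathscr{L}_g u_\epsilon^i$.
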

\begin{proof}
	We have already treated the case of embedded ends. Furthermore, as the expansion is universal (\textit{i.e.} independent of the multiplicity, see the proof of Theorem \ref{explicit}), we only need to compute
	\begin{align}
	Q_{\epsilon}(u_{\epsilon}^i)&=\frac{1}{2}\int_{\Sigma_{\epsilon}}(\Delta_gu_{\epsilon }^i-2K_gu_{\epsilon}^i)^2d\vg=\frac{1}{2}\int_{\partial B(0,\epsilon)}u_{\epsilon}^i\,\partial_{\nu}\left(\mathscr{L}_gu_{\epsilon}^i\right)-\left(\partial_{\nu}u_{\epsilon}^i\right)\mathscr{L}_gu_{\epsilon}^id\mathscr{H}^1\nonumber\\
	&=\frac{1}{2}\int_{\partial B(0,\epsilon)}u\,\partial_{\nu}\left(\mathscr{L}_gu_{\epsilon}^i\right)-\left(\partial_{\nu}u\right)\mathscr{L}_gu_{\epsilon}^id\mathscr{H}^1
	\end{align}
	as $u_{\epsilon}^i=\partial_{\nu}u_{\epsilon}^i=0$ on $\partial B_{\epsilon}(p_j)$ for $j\neq i$ and $\mathscr{L}_g^2u_{\epsilon}^i=0$. Let $v_{\epsilon}^i\in C^{\infty}(\bar{\Sigma_{\epsilon}})$ such that $u_{\epsilon}^i=|\phi|^2v_{\epsilon}^i$. Then we have
	\begin{align}\label{firstterm}
	\Delta_gu_{\epsilon}^i=4v+2e^{-2\lambda}\D|\phi|^2\cdot \D v_{\epsilon}^i+|\phi|^2\Delta_gv_{\epsilon}^i.
	\end{align}
	Furthermore, as we know that $Q_{\epsilon}(u_{\epsilon}^i)$ is only a function of $\epsilon,\alpha_0=\alpha_{0,i},\alpha_1=\alpha_{1,i},\alpha_2=\alpha_{2,i}$ and $v(p_i)$, we can (by an abuse of notation) replace all terms $\partial_{\nu}v$ by $0$ and $v$ by $v(p_i)$. Furthermore, as 
	\begin{align*}
	v_{\epsilon}^i=v,\quad \text{and}\;\, \partial_{\nu} v_{\epsilon}^i=\partial_{\nu}v\quad \text{on}\;\, \partial B(0,\epsilon).
	\end{align*}
	Furthermore, observe that for all smooth function $f:\bar{B}(0,\epsilon)\rightarrow \R$, we 
	\begin{align*}
	\partial_{\nu}f&=\D f(x)\cdot\frac{x}{|x|}=\frac{1}{|z|}\left(\p{x_1}f\cdot x_1+\p{x_2}f\cdot x_2\right)\\
	&=\frac{1}{2|z|}\left(\left(z+\z\right)\left(\partial+\bar{\partial}\right)+\frac{\left(z-\z\right)}{i}i\left(\partial-\bar{\partial}\right)\right)f\\
	&=\frac{1}{|z|}\left(z\cdot \partial+\z\cdot \bar{\partial}\right)f=\frac{2}{|z|}\,\Re\left(z\cdot \p{z}f\right).
	\end{align*}
	Now, recall that the volume form on $\partial B(0,\epsilon)$ is
	\begin{align}
	\frac{x_1dx_2-x_2dx_1}{|x|}=\frac{1}{4i|z|}\left(\left(z+\z\right) \left(dz-d\z\right)-(z-\z)(dz+d\z)\right)=\frac{1}{2i|z|}\left(\z dz-z d\z\right)=\frac{1}{|z|}\Im\left(\z dz\right).
	\end{align}
	Finally, we deduce that for all $f,g\in C^{\infty}(\bar{B}(0,\epsilon))$
	\begin{align*}
	\int_{\partial B(0,\epsilon)}g\,\partial_{\nu}f=\Im\int_{\partial B(0,\epsilon)}g\,\frac{2}{|z|}\,\Re\left(z\cdot \p{z}f\right)\frac{\z}{|z|}dz=2\,\Im\int_{\partial B(0,\epsilon)}g\,\Re\left(z\cdot \p{z}f\right)\frac{dz}{z}.
	\end{align*}
	Therefore, we have
	\begin{align}\label{exp}
	\Delta_gu_{\epsilon}^i=4v(p_i)+2\,e^{-2\lambda}\D|\phi|^2\cdot \D v_{\epsilon}^i+|\phi|^2\Delta_g v_{\epsilon}^i.
	\end{align}
	By the preceding remarks, we have
	\begin{align}\label{renorm2}
	Q_{\epsilon}(u_{\epsilon}^i)&=\frac{1}{2}v(p_i)\int_{\partial B(0,\epsilon)}|\phi|^2\partial_{\nu}\left(\mathscr{L}_gu_{\epsilon}^i\right)-\left(\partial_{\nu}|\phi|^2\right)\mathscr{L}_gu_{\epsilon}^id\mathscr{H}^1\nonumber\\
	&=v(p_i)\,\Im\int_{\partial B(0,\epsilon)}\left\{|\phi|^2\Re\left(z\cdot\p{z}\left(\mathscr{L}_gu_{\epsilon}^i\right)\right)-\Re\left(z\cdot \p{z}|\phi|^2\right)\mathscr{L}_gu_{\epsilon}^i\right\}\frac{dz}{z}
	\end{align}
	Now, define $w_{\epsilon}^i=\mathscr{L}_gu_{\epsilon}^i$. Then one checks directly by the expansion \eqref{exp} that
	\begin{align*}
	\int_{\Sigma\setminus \bar{B}_{\epsilon}(p_i)}|w_{\epsilon}^i|^2d\mathrm{vol}_{g_0}\leq C
	\end{align*}
	for some constant $C>0$ independent of $\epsilon$ as $w_{\epsilon}^i=4v(p_i)+O(|z|)$ in a conformal annulus around $\partial B_{\epsilon}(p_i)$. In particular, as $\epsilon\rightarrow 0$ we have $w_{\epsilon}^i\xrightharpoonup[\epsilon\rightarrow 0]{}w_{0}^i$ for some $w_{0}^i\in L^2(\Sigma,d\mathrm{vol}_{g_{0}})$. Furthermore, $w_{0}^i$ satisfies in the distributional sense 
	\begin{align*}
	\mathscr{L}_gw_{0}^i=0,\quad \text{in}\;\, \mathscr{D}'(\Sigma\setminus\ens{p_1,\cdots,p_n}).
	\end{align*}
	Now, let $\alpha:\Sigma\rightarrow \R$ be a conformal parameter such that 
	\begin{align*}
	g=e^{2\alpha}g_{0}
	\end{align*}
	for some constant Gauss curvature metric $g_0$ of unit volume on $\Sigma$. Then we have
	\begin{align*}
	\mathscr{L}_g=e^{-2\alpha}\left(\Delta_{g_0}-2e^{2\alpha}K_g\right)=e^{-2\alpha}\left(\Delta_{g_0}+V\right)
	\end{align*}
	where $V=-2e^{2\alpha}K_g$ is a real-analytic Schr\"{o}dinger potential (by the Weierstrass parametrisation for example). In particular, we have  in the distributional sense
	\begin{align*}
	\Delta_{g_0}w_{0}^i+Vw_0^i=0,\quad \text{in}\;\, \mathscr{D}'(\Sigma\setminus\ens{p_1,\cdots,p_n}).
	\end{align*}
	As $w_0^i\in L^2(\Sigma,g_0)$ and $V\in L^{\infty}(\Sigma)$, we have $\Delta_{g_0}w_{\epsilon}^i\in L^2(\Sigma,g_0)$. By an immediate bootstrap argument we obtain
	\begin{align*}
	w_{0}^i\in C^{\infty}(\Sigma).
	\end{align*}
	Furthermore, by direct elliptic estimate thanks to Theorem, for almost all $\epsilon_0>0$ small enough and $0<\epsilon<\epsilon_0$, we have
	\begin{align*}
	\int_{\Sigma_{\epsilon_0}}\left(w_{\epsilon}^i\right)^2d\mathrm{vol}_{g}\leq C
	\end{align*}
	and $\mathscr{L}_gw_{\epsilon}^i=0$ implies that for all $K\subset \Sigma_{\epsilon_0}$ there exists $C_k<\infty$ such that 
	\begin{align*}
	\int_{K}|\D^kw_{\epsilon}^i|^2d\mathrm{vol}_{g_0}\leq C_k
	\end{align*}
	so $w_{\epsilon}^i\conv{\epsilon\rightarrow 0}w_0^i$ in $C^k_{\mathrm{loc}}(\Sigma\setminus\ens{p_1,\cdots,p_n})$. In particular, as $w_{0}^i\in C^{\infty}(\Sigma)$, this implies that $w_{\epsilon}^i$ admits a Taylor expansion in the annulus (for some $\epsilon_0$ fixed and small enough) $B_{\epsilon_0}(p_i)\setminus \bar{B}_{\epsilon}(p_i)$ of the form (the first term is given by \eqref{firstterm})
	\begin{align*}
	w_{\epsilon}^i=4v(p_i)+\gamma_0|z|^2+\gamma_1|z|^4+2\,\Re\left(\zeta_0z+\zeta_1z^2+\zeta_2z^2\z+\zeta_3z^3+\zeta_4z^4+\zeta_5z^3\z\right)+O(|z|^5). 
	\end{align*}
	so that no singular power $\Re(\lambda z^m\z^{n})$ for some $n<0$ or $m<0$ occurs (these coefficients depend \emph{a priori} on $\epsilon$ but converge when $\epsilon\rightarrow 0$ so they are not singular in $\epsilon>0$ small enough). Now recall that
	\begin{align*}
	&-K_g=\frac{|z|^6}{2\beta_0}\bigg(6\,\Re\left(\left(\bar{\alpha_0}\alpha_1-\alpha_3\right)z^2\right)+O(|z|^3)\bigg)\\
	&e^{2\lambda}=\frac{4\beta_0}{|z|^6}\left(1+\beta_1|z|^2+2\,\Re\left(\alpha_0z\right)+O(|z|^3)\right).
	\end{align*}
	This implies that
	\begin{align*}
	&-2e^{2\lambda}K_g=24\,\Re\left(\left(\bar{\alpha_0}\alpha_1-\alpha_3\right)z^2\right)+O(|z|^3)\\
	&-2e^{2\lambda}K_g=4\left(24\,\Re\left(\left(\bar{\alpha_0}\alpha_1-\alpha_3\right)v(p_i)z^2\right)\right)+O(|z|^3)
	\end{align*}
	Furthermore, a direct computation shows that
	\begin{align*}
	\Delta w_{\epsilon}^i=4\p{z\z}^2w_{\epsilon}^i=4\left(\gamma_0+4\gamma_1|z|^2+2\,\Re\left(\zeta_2z+3\zeta_5z^2\right)+O(|z|^3)\right)
	\end{align*}
	Therefore
	\begin{align*}
	0=e^{2\lambda}\mathscr{L}_g w_{\epsilon}^i=\Delta w_{\epsilon}^i-2e^{2\lambda}K_gw_{\epsilon}^i=4\left(\gamma_0+4\gamma_1|z|^2+2\,\Re\left(\zeta_2z+3\left(8\left(\bar{\alpha_0}\alpha_1-\alpha_3\right)v(p_i)\right)+\zeta_5\right)z^2\right)+O(|z|^3).
	\end{align*}
	Therefore, we have 
	\begin{align*}
	\gamma_0=\gamma_1=\zeta_2=0,
	\end{align*}
	and $\zeta_5$ is a function of $\alpha_j$ and $v(p_i)$, but this latter fact is of no importance. In particular, we deduce that $w_{\epsilon}^i$ reduces to
	\begin{align*}
	w_{\epsilon}^i=4v(p_i)+2\,\Re\left(\zeta_0z+\zeta_1z^2+\zeta_3z^3+\zeta_4z^4+\zeta_5z^3\z\right)+O(|z|^5).
	\end{align*}
	Now, we have
	\begin{align*}
	|w_{\epsilon}^i|^2&=16v^2(p_i)+16v(p_i)\,\Re\left(\zeta_0z+\zeta_1z^2+\zeta_3z^3+\zeta_4z^4+\zeta_5z^3\z\right)\\
	&+2|\zeta_0|^2|z|^2+2|\zeta_1|^2|z|^4+2\,\Re\left(\zeta_0^2z^2+\zeta_0\zeta_1z^3+\bar{\zeta_0}\zeta_1z^2\z+\left(\zeta_0\zeta_3+\zeta_1^2\right)z^4+\bar{\zeta_0}\zeta_3z^3\z\right)+O(|z|^5)\\
	&=16v^2(p_i)+2|\zeta_0|^2|z|^2+2|\zeta_1|^2|z|^4+2\,\Re\left(8v(p_i)\zeta_0z+\mu_1z^2+\bar{\zeta_0}\zeta_1z^2\z+\mu_3z^3+\mu_4z^4+\mu_5z^3\z\right)+O(|z|^5),
	\end{align*}
	for some unimportant $\mu_j \in \C$.
	By \eqref{conf1}, we have
	\begin{align}
	e^{2\lambda}=\frac{4\beta_0}{|z|^6}\left(1+\beta_1|z|^2+2\,\Re\left(\alpha_0z-\alpha_1z^3-2\alpha_2z^4-\alpha_3z^3\z\right)+O(|z|^5)\right).
	\end{align}
	Therefore, we have for some $\nu_j\in \C$
	\begin{align*}
	|w_{\epsilon}^i|^2e^{2\lambda}&=\frac{4\beta_0}{|z|^6}\bigg(16v^2(p_i)+16\beta_1v^2(p_i)|z|^2+2|\zeta_0|^2|z|^2+2|\zeta_1|^2|z|^4+2|\zeta_0|^2\beta_1|z|^4+16\,\Re(\bar{\alpha_0}\zeta_0)v(p_i)|z|^2\\
	&+2\,\Re\left(\bar{\alpha_0}\,\bar{\zeta_0}\zeta_1\right)|z|^4+\Re\left(\nu_0z+\nu_1z^2+\nu_2z^2\z+\nu_3z^3+\nu_4z^4+\nu_5z^3\z\right)+O(|z|^5)\bigg)\\
	&=\frac{4\beta_0}{|z|^6}\bigg(16v^2(p_i)+\left(16\beta_1v^2(p_i)+2|\zeta_0|^2+16\,\Re\left(\bar{\alpha_0}\zeta_0\right)v(p_i)\right)|z|^2+\left(2|\zeta_1|^2+2|\zeta_0|^2\beta_1+2\,\Re\left(\bar{\alpha_0}\,\bar{\zeta_0}\zeta_1\right)\right)|z|^4\\
	&+\Re\left(\nu_0z+\nu_1z^2+\nu_2z^2\z+\nu_3z^3+\nu_4z^4+\nu_5z^3\z\right)+O(|z|^5)\bigg)
	\end{align*}
	Now, we know that 
	\begin{align}\label{renorm}
	\frac{1}{2}\int_{\Sigma\setminus\bar{B}_{\epsilon}(p_i)}\left(\mathscr{L}_gu_{\epsilon}^i\right)^2d\vg=\frac{1}{2}\int_{\Sigma\setminus\bar{B}_{\epsilon}(p_i)}|w_{\epsilon}^i|^2d\vg=\frac{16\pi\beta_0}{\epsilon^4}v^2(p_i)+\frac{32\pi\beta_0\beta_1}{\epsilon^2}v^2(p_i)+O(1).
	\end{align}
	Otherwise, as $u_{\epsilon}$ and $u_{\epsilon}^j$ (for all $j\neq i$) are independent of $\epsilon>0$, we would obtain in the limit an infinite quantity, although $Q_{\phi}(v)$ is finite, a contradiction.
	Now, we have by polar coordinates
	\begin{align}\label{magic2}
	&\frac{1}{2}\int_{B(0,1)\setminus \bar{B}(0,\epsilon)}|w_{\epsilon}^i|^2d\vg=\frac{1}{2}\int_{B(0,1)\setminus \bar{B}(0,\epsilon)}|w_{\epsilon}^i|^2e^{2\lambda}|dz|^2\nonumber\\
	&=4\pi\beta_0\int_{\epsilon}^{1}\left(\frac{16v^2(p_i)}{r^5}+\frac{16\beta_1v^2(p_i)+2|\zeta_0|^2+16\,\Re\left(\bar{\alpha_0}\zeta_0\right)v(p_i)}{r^3}+\frac{2|\zeta_1|^2+2|\zeta_0|^2\beta_1+2\,\Re\left(\bar{\alpha_0}\,\bar{\zeta_0}\zeta_1\right)}{r}\right)dr+O(1)\nonumber\\
	&=4\pi\beta_0\left(\frac{4v^2(p_i)}{\epsilon^4}+\frac{8\beta_1v^2(p_i)+|\zeta_0|^2+16\,\Re\left(\bar{\alpha_0}\zeta_0\right)v(p_i)}{\epsilon^2}+\left(2|\zeta_1|^2+2|\zeta_0|^2\beta_1+2\,\Re\left(\bar{\alpha_0}\bar{\zeta_0}\zeta_1\right)\right)\log\left(\frac{1}{\epsilon}\right)\right)+O(1)\nonumber\\
	&=\frac{16\pi\beta_0}{\epsilon^4}v^2(p_i)+\frac{32\pi\beta_0\beta_1}{\epsilon^2}v^2(p_i)+\frac{8\pi\beta_0\left(|\zeta_0|^2+8\,\Re(\bar{\alpha_0}\zeta_0)v(p_i)\right)}{\epsilon^2}\nonumber\\
	&+\left(2|\zeta_1|^2+2|\zeta_0|^2\beta_1+2\,\Re\left(\bar{\alpha_0}\bar{\zeta_0}\zeta_1\right)\right)\log\left(\frac{1}{\epsilon}\right)+O(1)\nonumber\\
	&=\frac{16\pi\beta_0}{\epsilon^4}v^2(p_i)+\frac{32\pi\beta_0\beta_1}{\epsilon^2}v^2(p_i)+O(1)
	\end{align}
	where the last equality comes from \eqref{renorm}.
	Therefore, \eqref{magic2} gives the two equalities
	\begin{align}\label{endorder2}
	\left\{\begin{alignedat}{1}
	&|\zeta_0|^2+8\,\Re\left(\bar{\alpha_0}\zeta_0\right)v(p_i)=0\\
	&2|\zeta_1|^2+2|\zeta_0|^2\beta_1+2\,\Re\left(\bar{\alpha_0}\bar{\zeta_0}\zeta_1\right)=0.
	\end{alignedat}\right.
	\end{align}
	Now, by Cauchy's inequality, and as $|\alpha_0|^2=\beta_1$ we have
	\begin{align*}
	\left|2\,\Re\left(\bar{\alpha_0}\bar{\zeta_0}\zeta_1\right)\right|\leq |\zeta_1|^2+|\alpha_0|^2|\zeta_0|^2=|\zeta_1|^2+|\zeta_0|^2\beta_1.
	\end{align*}
	This implies that
	\begin{align*}
	0=2|\zeta_1|^2+2|\zeta_0|^2\beta_1+2\,\Re\left(\bar{\alpha_0}\bar{\zeta_0}\zeta_1\right)\geq 2|\zeta_1|^2+2|\zeta_0|^2\beta_1-\left|2\,\Re\left(\bar{\alpha_0}\bar{\zeta_0}\zeta_1\right)\right|\geq |\zeta_1|^2+\beta_1|\zeta_0|^2
	\end{align*}
	so $\zeta_0=0$ and $\zeta_1=0$ if $\beta_1\neq 0$. However, if $\beta_1=0$, then $\alpha_0$ (as $\beta_1=|\alpha_0|^2$ and the first equation of \eqref{endorder2} becomes
	\begin{align*}
	0=|\zeta_0|^2+8\,\Re\left(\bar{\alpha_0}\zeta_0\right)v(p_i)=|\zeta_0|^2
	\end{align*}
	so $\zeta_0=0$ in all cases.

	Therefore, we have $\zeta_0=\zeta_1=0$ and $w_{\epsilon}^i$ reduces to 
	\begin{align}\label{wnew1}
	w_{\epsilon}^i=4v(p_i)+2\,\Re\left(\zeta_3z^3+\zeta_4z^4+\zeta_5z^3\z\right)+O(|z|^5).
	\end{align}
	Finally, \eqref{magic2} becomes
	\begin{align*}
	\frac{1}{2}\int_{\Sigma\setminus\bar{B}_{\epsilon}(p_i)}|w_{\epsilon}^i|^2d\vg=4\pi\beta_0\left(\frac{4v^2(p_i)}{\epsilon^4}+\frac{8\beta_1v^2(p_i)}{\epsilon^2}\right)+O(1)=\frac{16\pi\beta_0}{\epsilon^4}v^2(p_i)+\frac{32\pi\beta_0\beta_1}{\epsilon^2}v^2(p_i)+O(1)
	\end{align*}
	as expected. Now, as the factors in $\Re(z^3)$, $\Re(z^4)$ and $\Re(z^3\z)$ do not contributes to the renormalised energy in \eqref{renorm2}, we deduce by \eqref{wnew1} that
	\begin{align}\label{end2fin1}
	Q_{\epsilon}(u_{\epsilon}^i)&=v(p_i)\,\Im\int_{\partial B(0,\epsilon)}\left\{|\phi|^2\Re\left(z\cdot\p{z}\left(\mathscr{L}_gu_{\epsilon}^i\right)\right)-\Re\left(z\cdot \p{z}|\phi|^2\right)\mathscr{L}_gu_{\epsilon}^i\right\}\frac{dz}{z}\nonumber\\
	&=4v^2(p_i)\,\Im\int_{\partial B(0,\epsilon)}-\,\Re\left(z\cdot \p{z}|\phi|^2\right)\frac{dz}{z}.
	\end{align}
	Now, recall that by \eqref{devdelphi}
	\begin{align*}
	\partial |\phi|^2=\frac{\beta_0}{z|z|^4}\left(-2-4\beta_1|z|^2-2\alpha_0z-4\bar{\alpha_0}\,\z+2\alpha_1z^3-4\bar{\alpha_1}\,\z^3+8i\,\Im\left(\alpha_2z^4+\alpha_3z^3\z\right)+O(|z|^5)\right).
	\end{align*}
	Therefore, we have (notice that the purely imaginary term cancels)
	\begin{align*}
	-\Re\left(z\cdot \p{z}|\phi|^2\right)=\frac{\beta_0}{|z|^4}\left(2+4\beta_1|z|^2+2\,\Re\left(3\alpha_0z-\alpha_1z^3\right)+O(|z|^5)\right).
	\end{align*}
	Therefore, we directly obtain
	\begin{align}\label{end2fin2}
	\Im\int_{\partial B(0,\epsilon)}-\,\Re\left(z\cdot \p{z}|\phi|^2\right)\frac{dz}{z}=\frac{4\pi\beta_0}{\epsilon^4}+\frac{8\pi\beta_0\beta_1}{\epsilon^2}+O(\epsilon)
	\end{align}
	and finally by \eqref{end2fin1} and \eqref{end2fin2}
	\begin{align*}
	Q_{\epsilon}(u_{\epsilon}^i)=\frac{16\pi\beta_0}{\epsilon^4}v^2(p_i)+\frac{32\pi\beta_0\beta_1}{\epsilon^2}v^2(p_i)+O(\epsilon),
	\end{align*}
	so that no constant term occurs.    
\end{proof}

\begin{rem}
	Notice that the absence of diagonal entries is a consequence (and is equivalent) that the minimal surface $\phi:\Sigma\setminus\ens{p_1,\cdots,p_n}\rightarrow \R^3$ has zero flux.
\end{rem}

\section{Appendix}

\subsection{Estimates for some weighted elliptic operators}

We fix an integer $m\geq 2$. Let $\omega:\R^n\rightarrow\R_+$ a measurable function and for all $k\in\N$ and $1\leq p<\infty$ define the weighted Sobolev space
\begin{align*}
W^{k,p}_{\omega}(\R^m)=L^p(\R^m)\cap \ens{u: \Vert u\Vert_{W_{\omega}^{k,p}}<\infty }
\end{align*}
where
\begin{align*}
\Vert u\Vert_{W_{\omega}^{k,p}}=\left(\int_{\R^m}|u|^pd\leb^m+\sum_{j=0}^{k}\int_{\R^m}|\D^ju|^p\omega^{p(k-j)}d\leb^m\right)^{\frac{1}{p}}.
\end{align*}
By the classical Gagliardo-Nirenberg inequality, we have a continuous injection $W^{k,p}_{\omega}(\R^m)\hookrightarrow W^{k,p}(\R^m)$.

\begin{lemme}\label{lemme1}
	Let $\delta>0$ be a fixed real number. For all $u\in W^{2,2}(\R^m\setminus\bar{B}_{\delta}(0))$ such that either $u=0$ or $\partial_{\nu}u=0$ on $\partial B_{\delta}(0)$, for all $1\leq\alpha<\infty$ we
	\begin{align}\label{borne}
	\np{\frac{\D u}{|x|^\alpha}}{2}{\R^m}\leq 2\alpha\np{\frac{u}{|x|^{\alpha+1}}}{2}{\R^m}+\np{\frac{u}{|x|^{\alpha+1}}}{2}{\R^m}^{\frac{1}{2}}\np{\frac{\Delta u}{|x|^{\alpha-1}}}{2}{\R^m}^{\frac{1}{2}}
	\end{align}
	provided the integrals on the right-hand side of \eqref{borne} be finite. In particular, if $\omega:\R^n\rightarrow\R$ is such that $\omega(x)=|x|^{-1}$, we have a continuous injection 
	\begin{align}\label{inj}
	W^{2,2}(\R^m)\cap L^2_{\omega}(\R^m)\hookrightarrow W^{2,2}_{\omega}(\R^m).
	\end{align}
\end{lemme}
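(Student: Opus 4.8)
The plan is to establish the weighted inequality \eqref{borne} by an integration by parts, and then deduce the continuous injection \eqref{inj} as an immediate corollary. First I would start from the quantity $\int_{\R^m\setminus\bar B_\delta}|x|^{-2\alpha}|\nabla u|^2\,d\leb^m$ and integrate by parts, writing $|x|^{-2\alpha}|\nabla u|^2 = \nabla u\cdot\big(|x|^{-2\alpha}\nabla u\big)$ and moving one derivative off. This produces three kinds of terms: a boundary term on $\partial B_\delta(0)$, a term $-\int |x|^{-2\alpha}u\,\Delta u$, and a term $2\alpha\int |x|^{-2\alpha-2}\,u\,(x\cdot\nabla u)$ coming from differentiating the weight. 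The boundary term vanishes because we have assumed either $u=0$ or $\partial_\nu u=0$ on $\partial B_\delta(0)$ — here one should note the outward normal to $\R^m\setminus\bar B_\delta$ points inward, but the product $u\,\partial_\nu u$ in the boundary integrand vanishes in either case, so the sign is irrelevant.

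Next I would control the two remaining bulk terms. The term $-\int |x|^{-2\alpha}u\,\Delta u$ is handled by Cauchy--Schwarz, splitting $|x|^{-2\alpha} = |x|^{-(\alpha+1)}\cdot|x|^{-(\alpha-1)}$, which gives the bound $\np{u/|x|^{\alpha+1}}{2}{}\cdot\np{\Delta u/|x|^{\alpha-1}}{2}{}$. For the weight-derivative term I would bound $|x\cdot\nabla u|\le |x||\nabla u|$, so that $2\alpha\int |x|^{-2\alpha-2}|u||x||\nabla u| \le 2\alpha\int |x|^{-\alpha-1}|u|\cdot |x|^{-\alpha}|\nabla u|$, and apply Cauchy--Schwarz once more to get $2\alpha\,\np{u/|x|^{\alpha+1}}{2}{}\cdot\np{\nabla u/|x|^\alpha}{2}{}$. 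Writing $X = \np{\nabla u/|x|^\alpha}{2}{}$, $A = \np{u/|x|^{\alpha+1}}{2}{}$ and $B=\np{\Delta u/|x|^{\alpha-1}}{2}{}$, the identity above becomes $X^2 \le 2\alpha A X + AB$. One then solves this quadratic inequality in $X$: since $X^2 - 2\alpha A X - AB\le 0$, we get $X \le \alpha A + \sqrt{\alpha^2 A^2 + AB} \le 2\alpha A + A^{1/2}B^{1/2}$, using $\sqrt{a+b}\le\sqrt a+\sqrt b$. This is precisely \eqref{borne}.

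For the injection \eqref{inj}, I would take $u\in W^{2,2}(\R^m)\cap L^2_\omega(\R^m)$ with $\omega(x)=|x|^{-1}$, i.e.\ $u\in W^{2,2}$ with $u/|x|\in L^2$, and apply \eqref{borne} with $\alpha=1$: the right-hand side involves $\np{u/|x|^2}{2}{}$ and $\np{\Delta u}{2}{}$; the latter is finite since $u\in W^{2,2}$, and to see $\nabla u/|x|\in L^2$, i.e.\ $\alpha=1$ in the gradient term, it suffices to bound $\np{u/|x|^2}{2}{}$, which is the content of the Hardy-type step — or more directly, one applies \eqref{borne} on $\R^m\setminus\bar B_\delta$ and then $\delta\to 0$, using that on $B_\delta$ the standard Hardy inequality $\np{u/|x|}{2}{B_\delta}\lesssim \np{\nabla u}{2}{B_\delta}$ (valid for $m\ge 3$) controls the near-origin contribution, and that $|x|^{-\alpha}\le \delta^{-\alpha}$ is bounded away from the origin. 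Summing the near and far contributions and invoking the embedding $W^{2,2}_\omega\hookrightarrow W^{2,2}$ already noted after the definition gives the continuity of the injection with a norm bound depending only on $m$ and $\alpha$.

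The main obstacle I anticipate is the careful treatment of the passage $\delta\to 0$ and the approximation: the inequality is stated for $u$ defined on the exterior domain with a vanishing trace or Neumann condition, so to deduce the global statement \eqref{inj} one must either argue that the right-hand side of \eqref{borne} stays finite uniformly in $\delta$ and pass to the limit (monotone convergence for the left-hand integrals), or localize and use a genuine Hardy inequality near $0$; a secondary technical point is justifying the integration by parts for merely $W^{2,2}$ functions rather than smooth ones, which is done by density, checking that the weighted integrands are integrable so the boundary terms and the limiting procedure are legitimate. None of these is deep, but they are where the hypotheses $m\ge 2$ (indeed $m\ge 3$ for the unweighted Hardy inequality) and the boundary condition are actually used.
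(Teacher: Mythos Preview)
Your argument for \eqref{borne} is correct and essentially identical to the paper's: both integrate by parts using $\dive(u\nabla u\,|x|^{-2\alpha})$, kill the boundary term via $u\,\partial_\nu u=0$, apply Cauchy--Schwarz with the splitting $|x|^{-2\alpha}=|x|^{-(\alpha+1)}|x|^{-(\alpha-1)}$ on the $u\Delta u$ term and $|x|^{-2\alpha-1}=|x|^{-(\alpha+1)}|x|^{-\alpha}$ on the $u\,x\cdot\nabla u$ term, and solve the resulting quadratic $X^2\le 2\alpha AX+AB$ (the paper writes this as $X^2\le aX+b$ with the same $a,b$). The paper introduces auxiliary parameters $\theta_1,\theta_2$ and then solves for them, whereas you go directly to the right splitting; your route is slightly cleaner but not different in substance. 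For the injection \eqref{inj}, the paper simply says it follows by standard regularisation and does not carry out the $\delta\to 0$ limit or the near-origin Hardy argument you sketch, so your discussion there is more detailed than what the paper actually provides.
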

\begin{proof}
	We first assume $u\in W^{2,2}\cap C^{\infty}(\R^m\setminus\bar{B}_{\delta}(0))$ such that either $u=0$ or $\partial_{\nu}u=0$ on $\partial_{\nu}u=0$ on $\partial B_{\delta}(0)$ (so that $u\,\partial_{\nu}u=0$ on $\partial B_{\delta}(0)$). Then we have
	\begin{align}\label{id0}
	\dive(u\D u |x|^{-2\alpha})=|\D u|^2|x|^{-2\alpha}+u\Delta u|x|^{-2\alpha}-2\alpha u (\D u\cdot x)|x|^{-2(\alpha+1)}
	\end{align}
	Therefore, fixing $0\leq \theta_1,\theta_2\leq 1$, $1<p<\infty$, we have by Cauchy-Schwarz inequality and as $u\,\partial_{\nu}u=0$ on $\partial B_{\delta}(0)$
	\begin{align*}
	\int_{\R^m\setminus\bar{B}_{\delta}(0)}^{}\frac{|\D u|^2}{|x|^{2\alpha}}dx&=2\alpha\int_{\R^m\setminus\bar{B}_{\delta}(0)}^{}u\frac{\D u\cdot x}{|x|^{2(\alpha+1)}}dx-\int_{\R^m\setminus\bar{B}_{\delta}(0)}\frac{u\Delta u}{|x|^{2\alpha}}dx\\
	&\leq 2\alpha\left(\int_{\R^m\setminus\bar{B}_{\delta}(0)}^{}\frac{u^2}{|x|^{2(2\alpha+1)\theta_1}}dx\right)^{\frac{1}{2}}\left(\int_{\R^m\setminus\bar{B}_{\delta}(0)}^{}\frac{|\D u|^2}{|x|^{2(2\alpha+1)(1-\theta_1)}}dx\right)^{\frac{1}{2}}\\
	&+\left(\int_{\R^m\setminus\bar{B}_{\delta}(0)}^{}\frac{u^2}{|x|^{4\alpha\theta_2}}dx\right)^{\frac{1}{2}}\left(\int_{\R^m\setminus\bar{B}_{\delta}(0)}^{}\frac{(\Delta u)^2}{|x|^{4\alpha(1-\theta_2)}}dx\right)^{\frac{1}{2}}
	\end{align*}
	As we want to recover the same exponent for $|x|$ in the denominator of $u^2$ (and $|\D u|^2$) on both sides, we choose $\theta_1$ such that
	\begin{align*}
	(2\alpha+1)(1-\theta_1)=\alpha
	\end{align*}
	\textit{i.e.}
	\begin{align*}
	\theta_1=\frac{\alpha+1}{2\alpha+1}
	\end{align*}
	and $\theta_2$ such that
	\begin{align*}
	2\alpha\theta_2=(2\alpha+1)\theta_1
	\end{align*}
	so
	\begin{align*}
	\theta_2=\frac{\alpha+1}{2\alpha}\in [0,1]
	\end{align*}
	for all $\alpha\geq 1$. Finally, we get if
	\begin{align*}
	&X=\left(\int_{\R^m\setminus\bar{B}_{\delta}(0)}^{}\frac{|\D u|^2}{|x|^{2\alpha}}dx\right)^{\frac{1}{2}},\quad
	a=2\alpha\left(\int_{\R^m\setminus\bar{B}_{\delta}(0)}^{}\frac{u^2}{|x|^{2(\alpha+1)}}dx\right)^{\frac{1}{2}},\\
	&b=\left(\int_{\R^m\setminus\bar{B}_{\delta}(0)}^{}\frac{u^2}{|x|^{2(\alpha+1)}}dx\right)^{\frac{1}{2}}\left(\int_{\R^m\setminus\bar{B}_{\delta}(0)}^{}\frac{(\Delta u)^2}{|x|^{2(\alpha-1)}}dx\right)^{\frac{1}{2}}
	\end{align*}
	\begin{align*}
	X^2\leq a X+b
	\end{align*}
	Therefore, 
	\begin{align*}
	X\leq \frac{1}{2}\left(a+\sqrt{a^2+4b}\right)\leq a+\sqrt{b},
	\end{align*}
	or
	\begin{align*}
		\np{\frac{\D u}{|x|^{\alpha}}}{2}{\R^m\setminus\bar{B}_{\delta}(0)}\leq 2\alpha\np{\frac{u}{|x|^{\alpha+1}}}{2}{\R^m\setminus\bar{B}_{\delta}(0)}+\np{\frac{u}{|x|^{\alpha+1}}}{2}{\R^m\setminus\bar{B}_{\delta}(0)}^{\frac{1}{2}}\np{\frac{\Delta u}{|x|^{\alpha-1}}}{2}{\R^m\setminus\bar{B}_{\delta}(0)}^{\frac{1}{2}}.
	\end{align*}
	Notice that this inequality cannot be improved by scaling argument because of the singular weights. The general inequality for $u\in W^{2,2}(\R^2\setminus \bar{B}_{\delta}(0))$ such that either $u=0$ or $\partial_{\nu}u=0$ on $\partial B_{\delta}(0)$ follows by standard regularisation. This concludes the proof of the lemma.
    \end{proof}
    
    \begin{lemme}\label{indicielles}
    	Let $\delta>0$ be a fixed real number, and define for all for all $m\geq 1$ the second order elliptic differential operator
    	\begin{align*}
    	\mathscr{L}_m=\Delta-2(m+1)\frac{x}{|x|^2}\cdot \D +\frac{(m+1)^2}{|x|^2}.
    	\end{align*} 
    	Let $u\in W^{2,2}(\R^2\setminus\bar{B}_{\delta}(0))$ be such that $u=\partial_{\nu}u=0$ on $\partial B_{\delta}(0)$ and assume that $\mathscr{L}_m u\in L^2(\R^2)$. Then we have the identities for all $m\geq 1$ the identity
    	\begin{align}\label{ipp0}
    		&\int_{\R^2\setminus \bar{B}_{\delta}(0)}\left(\mathscr{L}_mu\right)^2dx=\int_{\R^2\setminus\bar{B}_{\delta}(0)}\left(\Delta u-2(m+1)\frac{x}{|x|^2}\cdot \D u+(m+1)^2\frac{u}{|x|^2}\right)^2dx\\
    		&=\int_{\R^2\setminus\bar{B}_{\delta}(0)}\left(\Delta u+(m+1)(m-1)\frac{u}{|x|^2}\right)^2dx+4(m+1)(m-1)\int_{\R^2\setminus\bar{B}_{\delta}(0)}\left(\frac{x}{|x|^2}\cdot \D u-\frac{u}{|x|^2}\right)^2dx.\nonumber
    	\end{align}
        In particular, we have for $m=1$ 
        \begin{align*}
        	\int_{\R^2\setminus\bar{B}_{\delta}(0)}\left(\mathscr{L}_1u\right)^2dx=\int_{\R^2\setminus\bar{B}_{\delta}(0)}\left(\Delta u-4\frac{x}{|x|^2}\cdot \D u+\frac{4}{|x|^2}u\right)^2dx=\int_{\R^2\setminus\bar{B}_{\delta}(0)}\left(\Delta u\right)^2dx.
        \end{align*}
        Furthermore, if $m>1$, then
        \begin{align*}
        	&\np{\frac{u}{|x|^2}}{2}{\R^2\setminus\bar{B}_{\delta}(0)}\leq \frac{1}{(m+1)(m-1)}\np{\Delta u}{2}{\R^2\setminus\bar{B}_{\delta}(0)}+\frac{1}{(m+1)(m-1)}\np{\mathscr{L}_mu}{2}{\R^2\setminus\bar{B}_{\delta}(0)}.	
        \end{align*}
        If $m\geq 3$, 
        \begin{align*}
        	\np{\frac{\D u\cdot x}{|x|^2}}{2}{\R^2\setminus \bar{B}_{\delta}(0)}\leq \frac{1}{2(m+1)}\np{\Delta u}{2}{\R^2\setminus \bar{B}_{\delta}(0)}+\frac{1}{2(m+1)}\np{\mathscr{L}_m u}{2}{\R^2\setminus \bar{B}_{\delta}(0)}
        \end{align*}
        while for $1<m\leq 3$
        \begin{align*}
        	\np{\frac{\D u\cdot x}{|x|^2}}{2}{\R^2\setminus \bar{B}_{\delta}(0)}\leq \frac{1}{(m+1)(m-1)}\np{\Delta u}{2}{\R^2\setminus \bar{B}_{\delta}(0)}+\frac{1}{(m+1)(m-1)}\np{\mathscr{L}_m}{2}{\R^2\setminus\bar{B}_{\delta}(0)}.
        \end{align*}
    \end{lemme}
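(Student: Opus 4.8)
The plan is to separate the angular modes and pass to a logarithmic radial variable so that every operator in play becomes constant–coefficient, and then to read off the identities and the estimates from the associated Fourier multipliers. Throughout write $\Omega=\R^2\setminus\bar{B}_\delta(0)$.

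\emph{Reduction.} Writing $x=re^{i\theta}$ and $u=\sum_{k\in\Z}u_k(r)e^{ik\theta}$, I would set $v_k(r)=u_k(r)/r$ and then regard $\varphi_k(t):=v_k(e^{t})$ as a function of $t=\log r\in(\log\delta,\infty)$. The Cauchy data $u=\partial_\nu u=0$ on $\partial B_\delta(0)$ become $\varphi_k(\log\delta)=\varphi_k'(\log\delta)=0$ (with $'=\partial_t$ after the substitution), so $\varphi_k$ extends by $0$ to an $H^2(\R)$ function; the decay at $+\infty$ and the vanishing of all boundary terms at infinity follow from $u\in W^{2,2}$, $\mathscr{L}_m u\in L^2$ and Lemma \ref{lemme1}. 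A direct computation then gives, mode by mode,
\begin{align*}
\np{\mathscr{L}_m u_k}{2}{\Omega}^2&=\int_\R\bigl|\,\varphi_k''-2m\,\varphi_k'+(m^2-k^2)\varphi_k\,\bigr|^2\,dt,\\
\np{\bigl(\Delta+(m^2-1)|x|^{-2}\bigr)u_k}{2}{\Omega}^2&=\int_\R\bigl|\,\varphi_k''+2\,\varphi_k'+(m^2-k^2)\varphi_k\,\bigr|^2\,dt,\\
\np{\Delta u_k}{2}{\Omega}^2&=\int_\R\bigl|\,\varphi_k''+2\,\varphi_k'+(1-k^2)\varphi_k\,\bigr|^2\,dt,
\end{align*}
while $\np{|x|^{-2}x\cdot\D u_k}{2}{\Omega}^2=\int_\R|\varphi_k'+\varphi_k|^2\,dt$, $\np{|x|^{-2}x\cdot\D u_k-|x|^{-2}u_k}{2}{\Omega}^2=\int_\R|\varphi_k'|^2\,dt$ and $\np{|x|^{-2}u_k}{2}{\Omega}^2=\int_\R|\varphi_k|^2\,dt$. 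Taking the Fourier transform in $t$, each of these $L^2(\R)$ norms becomes the $L^2(\R,d\xi)$ norm of $\sqrt{P(\xi)}$ times $\mathcal{F}\varphi_k$ for the obvious polynomial $P$; e.g.\ $P(\xi)=|{-\xi^2}-2mi\xi+(m^2-k^2)|^2=(m^2-k^2-\xi^2)^2+4m^2\xi^2$ for $\mathscr{L}_m$.

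\emph{The identity \eqref{ipp0}, the case $m=1$, and the $|x|^{-2}u$ bound.} At the symbol level, with $A:=m^2-k^2-\xi^2$, the identity $A^2+4m^2\xi^2=(A^2+4\xi^2)+4(m^2-1)\xi^2$ is immediate, and summing over $k$ by Parseval it is exactly \eqref{ipp0}, since $4(m^2-1)=4(m+1)(m-1)$; the case $m=1$ is just the equality of the moduli of the first and third symbols, i.e.\ $\int_\Omega(\mathscr{L}_1u)^2=\int_\Omega(\Delta u)^2$. (Equivalently, \eqref{ipp0} follows from the operator identity $\mathscr{L}_m=\bigl(\Delta+(m^2-1)|x|^{-2}\bigr)-2(m+1)\bigl(|x|^{-2}x\cdot\D-|x|^{-2}\bigr)$ together with the integration–by–parts identity $\bigl\langle\bigl(\Delta+(m^2-1)|x|^{-2}\bigr)u,\;|x|^{-2}x\cdot\D u-|x|^{-2}u\bigr\rangle_{L^2(\Omega)}=2\,\np{|x|^{-2}x\cdot\D u-|x|^{-2}u}{2}{\Omega}^2$, which uses $\dive(x/|x|^2)=0$ in dimension $2$ and the vanishing Cauchy data.) For $m>1$ both terms on the right of \eqref{ipp0} are $\ge0$, hence $\np{\Delta u+(m^2-1)|x|^{-2}u}{2}{\Omega}\le\np{\mathscr{L}_m u}{2}{\Omega}$, and the triangle inequality gives $(m^2-1)\np{|x|^{-2}u}{2}{\Omega}\le\np{\Delta u}{2}{\Omega}+\np{\mathscr{L}_m u}{2}{\Omega}$.

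\emph{The two bounds on $|x|^{-2}x\cdot\D u$, and the main obstacle.} By Plancherel and Minkowski's inequality the two estimates reduce to the single pointwise multiplier inequality
\begin{align*}
c_m^{-1}\sqrt{1+\xi^2}\ \le\ \sqrt{(1-k^2-\xi^2)^2+4\xi^2}\ +\ \sqrt{(m^2-k^2-\xi^2)^2+4m^2\xi^2}
\end{align*}
for all $\xi\in\R$ and $k\in\Z$, with $c_m=\tfrac1{2(m+1)}$ when $m\ge3$ and $c_m=\tfrac1{(m+1)(m-1)}$ when $1<m\le3$; since $\tfrac1{(m+1)(m-1)}\ge\tfrac1{2(m+1)}$, with equality at $m=3$, the weaker constant genuinely suffices in the smaller range, and summing the squared inequalities over $k$ then yields the global bounds. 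The delicate point is that this inequality is sharp: for $k=0$ the two square roots collapse to $1+\xi^2$ and $m^2+\xi^2$, and with $s=\xi^2$ one is left with $4s^2-8ms+(m^2-2m-1)(m^2+2m+3)\ge0$ when $m\ge3$ (resp.\ $4s^2+(-m^4+6m^2+3)s+4m^2\ge0$ when $1<m\le3$), whose discriminant vanishes precisely at $m=3$ with a double root at $s=3$ — this is exactly where the constant $\tfrac1{2(m+1)}$ and the threshold $m=3$ come from. For $k\ge1$ one squares, clears denominators and factors the resulting polynomial in $\xi^2$; because equality is attained at several points when $m=3$, the case analysis in $(\xi,k)$ must be carried out carefully, and this verification is the main technical work. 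Finally, the hypothesis $m>1$ cannot be dropped for these last two bounds, since at $m=1$ the factor $(m+1)(m-1)$ degenerates.
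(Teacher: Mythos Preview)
Your Fourier/logarithmic-variable approach is correct and elegant for the identity \eqref{ipp0} and the $m=1$ case: once each mode becomes a constant-coefficient ODE in $t=\log r$, the equality $A^2+4m^2\xi^2=(A^2+4\xi^2)+4(m^2-1)\xi^2$ is exactly the statement, and the equivalent integration-by-parts argument you sketch is also fine. Your derivation of the $\|u/|x|^2\|$ bound from \eqref{ipp0} and the triangle inequality is likewise complete and in fact cleaner than the paper's, which expands $\|(\mathscr{L}_m-\Delta)u\|^2$ directly.

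Where you fall short is the two $\bigl\|\,|x|^{-2}x\cdot\nabla u\,\bigr\|$ bounds. You reduce to a pointwise multiplier inequality $c_m^{-1}\sqrt{1+\xi^2}\le\sigma_\Delta(\xi,k)+\sigma_{\mathscr{L}_m}(\xi,k)$ and then explicitly leave the verification for $k\ge1$ undone (``the main technical work''). That is a genuine gap: at $m=3$ equality is attained simultaneously at $(\xi,k)=(\sqrt 3,0)$ and at $\xi=0$ for every $1\le k\le 3$, so the polynomial analysis you propose is delicate, and nothing in your write-up guarantees it goes through. More importantly, the pointwise inequality is stronger than what is needed. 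The paper avoids it entirely by working with the \emph{norm} of $(\mathscr{L}_m-\Delta)u$: in your own variables this is the exact identity
\[
\|(\mathscr{L}_m-\Delta)u\|_{L^2(\Omega)}^2=(m+1)^2\sum_k\int_{\R}\bigl(4\xi^2+(m-1)^2\bigr)\,|\widehat{\varphi_k}|^2\,d\xi
=4(m+1)^2\Bigl\|\tfrac{x\cdot\nabla u}{|x|^2}\Bigr\|^2+(m+1)^3(m-3)\Bigl\|\tfrac{u}{|x|^2}\Bigr\|^2.
\]
For $m\ge3$ the second term is $\ge0$, so $2(m+1)\|x\cdot\nabla u/|x|^2\|\le\|(\mathscr{L}_m-\Delta)u\|\le\|\mathscr{L}_m u\|+\|\Delta u\|$ and you are done. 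For $1<m\le3$ the second coefficient is $\le0$; one then uses the elementary Hardy-type bound $\|u/|x|^2\|^2\le\|x\cdot\nabla u/|x|^2\|^2$ (in your language simply $\int|\widehat{\varphi_k}|^2\le\int(1+\xi^2)|\widehat{\varphi_k}|^2$) to absorb it, giving $(m+1)^2(m-1)^2\|x\cdot\nabla u/|x|^2\|^2\le\|(\mathscr{L}_m-\Delta)u\|^2$. This is a two-line argument once you have your Fourier setup, and it completes the proof without any case analysis in $(\xi,k)$.
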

	\begin{proof}
	\textbf{Step 1: Equalities.}
	Observe that for all $x\in \R^2\setminus\ens{0}$, we have
	\begin{align*}
		\Delta \frac{1}{|x|^{2\alpha}}=\frac{4\alpha^2}{|x|^{2\alpha+2}}
	\end{align*}
	and assuming that $u\in W^{2,2}\cap C^{\infty}(\R^2\setminus\bar{B}_{\delta}(0))$ without loss of generality, we have as $\Delta u^2=2u\,\Delta u+2|\D u|^2$
	\begin{align*}
		\int_{\R^2\setminus\bar{B}_{\delta}(0)}\frac{u^2}{|x|^{2\alpha+2}}dx=\int_{\R^2\setminus\bar{B}_{\delta}(0)}u^2\frac{1}{4\alpha^2}\Delta\frac{1}{|x|^{2\alpha}}dx=\int_{\R^2\setminus\bar{B}_{\delta}(0)}\frac{\Delta u^2}{4\alpha^2|x|^{2\alpha}}dx=\int_{\R^2\setminus\bar{B}_{\delta}(0)}\frac{u\Delta u+|\D u|^2}{2\alpha^2|x|^{2\alpha}}dx
	\end{align*}
	Furthermore, recall that by \eqref{id0}
	\begin{align*}
		\int_{\R^2\setminus\bar{B}_{\delta}(0)}\frac{u\Delta u+|\D u|^2}{|x|^{2\alpha}}dx=\int_{\R^2\setminus\bar{B}_{\delta}(0)}2\alpha \frac{u\,\left(\D u\cdot x\right)}{|x|^{2\alpha+2}}dx.
	\end{align*}
	Therefore, we find
	\begin{align*}
		\int_{\R^2\setminus\bar{B}_{\delta}(0)}\frac{u^2}{|x|^{2\alpha+2}}dx=\frac{1}{\alpha}\int_{\R^2\setminus\bar{B}_{\delta}(0)}\frac{u\left(\D u\cdot x)\right)}{|x|^{2\alpha+2}}dx\leq \frac{1}{\alpha}\left(\int_{\R^2\setminus\bar{B}_{\delta}(0)}\frac{u^2}{|x|^{2\alpha+2}}dx\right)^{\frac{1}{2}}\left(\int_{\R^2\setminus\bar{B}_{\delta}(0)}\frac{(\D u\cdot x)^2}{|x|^{2\alpha+2}}dx\right)^{\frac{1}{2}}
	\end{align*}
	which implies that
	\begin{align}\label{id1}
		\np{\frac{u}{|x|^{\alpha+1}}}{2}{\R^2\setminus\bar{B}_{\delta}(0)}\leq \frac{1}{\alpha}\np{\frac{\D u\cdot x}{|x|^{\alpha+1}}}{2}{\R^2\setminus\bar{B}_{\delta}(0)}.
	\end{align}
	Now, if $\alpha=1$, we find equivalently
	\begin{align*}
	    &\int_{\R^2\setminus\bar{B}_{\delta}(0)}\frac{u^2}{|x|^4}dx=\int_{\R^2\setminus\bar{B}_{\delta}(0)}\frac{u\left(\D u \cdot x\right)}{|x|^4}dx\\
		&\int_{\R^2\setminus\bar{B}_{\delta}(0)}\frac{u}{|x|^2}\left(\frac{u}{|x|^2}-\frac{\D u\cdot x}{|x|^2}\right)dx=0.
	\end{align*}
	Now, compute for all $u\in C^{\infty}_c(\R^2)$
	\begin{align*}
	\Delta \left(\frac{u}{|x|^2}\right)=\frac{1}{|x|^2}\Delta u+2\,\D\left(\frac{1}{|x|^2}\right)\cdot \D u+u\,\Delta\left(\frac{1}{|x|^2}\right)=\frac{1}{|x|^2}\left(\Delta-4\frac{x}{|x|^2}\cdot \D+\frac{4}{|x|^2}\right)u.
	\end{align*}
	We also have 
	\begin{align*}
	&\p{x_1}\left(\frac{x_1}{|x|^2}\p{x_1}u\right)=\left(\frac{1}{|x|^2}-\frac{2x_1^2}{|x|^4}\right)\p{x_1}u+\frac{1}{|x|^2}\p{x_1}^2u\\
	&\p{x_1}^2\left(\frac{x_1}{|x|^2}\p{x_1}u\right)=\left(-\frac{6x_1}{|x|^4}+\frac{8x_1^3}{|x|^6}\right)\p{x_1}u+2\left(\frac{1}{|x|^2}-\frac{2x_1^2}{|x|^4}\right)\p{x_1}^2u+\frac{x_1}{|x|^2}\p{x_1}^3u\\
	&\p{x_2}\left(\frac{x_1}{|x|^2}\p{x_1}u\right)=-\frac{2x_1x_2}{|x|^4}\p{x_1}u+\frac{x_1}{|x|^2}\p{x_1,x_2}^2u\\
	&\p{x_2}^2\left(\frac{x_1}{|x|^2}\p{x_1}u\right)=\left(-\frac{2x_1}{|x|^4}+\frac{8x_1x_2^2}{|x|^6}\right)\p{x_1}u-\frac{4x_1x_2}{|x|^4}\p{x_1,x_2}^2u+\frac{x_1}{|x|^2}\p{x_1}\p{x_2}^2u.
	\end{align*}
	Therefore, we find
	\begin{align*}
	\Delta\left(\frac{x_1}{|x|^2}\p{x_1}u\right)&=\left(-\frac{8x_1}{|x|^4}+\frac{8x_1\left(x_1^2+x_2^2\right)}{|x|^6}\right)\p{x_1}u+\frac{x_1}{|x|^2}\p{x_1}\left(\p{x_1}^2u+\p{x_2}^2u\right)+\frac{2}{|x|^2}\p{x_1}^2u\\
	&-4\left(\frac{x_1^2}{|x|^4}\p{x_1}^2u+\frac{x_1x_2}{|x|^4}\p{x_1,x_2}^2u\right)\\
	&=\frac{x_1}{|x|^2}\p{x_1}\Delta u+\frac{2}{|x|^2}\p{x_1}^2u-4\left(\frac{x_1^2}{|x|^4}\p{x_1}^2u+\frac{x_1x_2}{|x|^4}\p{x_1,x_2}^2u\right),
	\end{align*}
	and by symmetry this implies that 
	\begin{align*}
	\Delta\left(\frac{x}{|x|^2}\cdot \D u\right)&=\frac{x}{|x|^2}\cdot \D\Delta u+\frac{2}{|x|^2}\Delta u-4\left(\frac{x_1^2}{|x|^4}\p{x_1}^2u+\frac{2x_1x_2}{|x|^4}\p{x_1,x_2}^2u+\frac{x_2^2}{|x|^4}\p{x_2}^2u\right)\\
	&=\frac{x}{|x|^2}\cdot \D\Delta u+\frac{2}{|x|^2}\Delta u-4\left(\frac{x}{|x|^2}\right)^t\cdot\D^2 u\cdot\left(\frac{x}{|x|^2}\right).
	\end{align*}
	Therefore, we deduce that 
	\begin{align*}
	\Delta\left(\mathscr{L}_m\right)&=\Delta^2-2(m+1)\frac{x}{|x|^2}\cdot \D \Delta u-\frac{4(m+1)}{|x|^2}\Delta u+8(m+1)\left(\frac{x}{|x|^2}\right)^t\cdot\D^2u\cdot\left(\frac{x}{|x|^2}\right)\\
	&+\frac{(m+1)^2}{|x|^2}\left(\Delta u-4\frac{x}{|x|^2}\cdot \D u+\frac{4}{|x|^2}u\right)\\
	&=\Delta^2-2(m+1)\frac{x}{|x|^2}\cdot \D\Delta u+\frac{(m+1)^2-4(m+1)}{|x|^2}\Delta u+8(m+1)\left(\frac{x}{|x|^2}\right)^t\cdot\D^2u\cdot \left(\frac{x}{|x|^2}\right)\\
	&-4(m+1)^2\frac{x}{|x|^4}\cdot \D u+\frac{4(m+1)^2}{|x|^4}u.
	\end{align*}
	Now, we have
	\begin{align*}
	\frac{x}{|x|^2}\cdot \D\left(\frac{1}{|x|^2}u\right)=-\frac{2}{|x|^4}u+\frac{x}{|x|^4}\cdot \D u,
	\end{align*}
	and
	\begin{align*}
	x_1\p{x_1}\left(\frac{x}{|x|^2}\cdot \D u\right)&=x_1\left\{\left(\frac{1}{|x|^2}-\frac{2x_1^2}{|x|^4}\right)\p{x_1}u-\frac{2x_1x_2}{|x|^4}\p{x_2}u+\frac{x}{|x|^4}\cdot \D\p{x_1}u\right\}\\
	&=\frac{x_1\left(-x_1^2+x_2^2\right)}{|x|^4}\p{x_1}u-\frac{2x_1^2x_2}{|x|^4}\p{x_2}u+\frac{x_1^2}{|x|^2}\p{x_1}^2u+\frac{x_1x_2}{|x|^2}\p{x_1,x_2}^2u\\
	x_2\p{x_2}\left(\frac{x}{|x|^2}\cdot \D u\right)&=\frac{x_2\left(x_1^2-x_2^2\right)}{|x|^4}\p{x_2}u-\frac{2x_1x_2^2}{|x|^4}\p{x_1}u+\frac{x_1^2}{|x|^2}\p{x_1}^2u+\frac{x_1x_2}{|x|^2}\p{x_1,x_2}^2u\\
	\frac{x}{|x|^2}\cdot \D\left(\frac{x}{|x|^2}\cdot \D u\right)&=-\frac{x}{|x|^4}\cdot \D u+\left(\frac{x}{|x|^2}\right)^t\cdot \D^2 u\cdot \left(\frac{x}{|x|^2}\right),
	\end{align*}
	which implies that 
	\begin{align*}
	\frac{x}{|x|^2}\cdot \D\left(\mathscr{L}_mu\right)&=\frac{x}{|x|^2}\cdot \D\Delta u-2(m+1)\left(\frac{x}{|x|^2}\right)^t\cdot \D^2 u\cdot\left(\frac{x}{|x|^2}\right)+\left\{(m+1)^2+2(m+1)\right\}\frac{x}{|x|^4}\cdot \D u\\
	&-\frac{2(m+1)^2}{|x|^4}u.
	\end{align*}
	Therefore, 
	\begin{align*}
	\mathscr{L}_m^{\ast}\mathscr{L}_m&u=\left(\Delta+2(m+1)\frac{x}{|x|^2}\cdot \D+\frac{(m+1)^2}{|x|^2}\right)\left(\mathscr{L}_mu\right)\\
	&=\Delta^2u-\colorcancel{2(m+1)\frac{x}{|x|^2}\cdot \D\Delta u}{blue}+\frac{(m+1)^2-4(m+1)}{|x|^2}\Delta u+8(m+1)\left(\frac{x}{|x|^2}\right)^t\cdot \D^2u\cdot \left(\frac{x}{|x|^2}\right)\\
	&-\ccancel{4(m+1)^2\frac{x}{|x|^2}\cdot \D u}+\frac{4(m+1)^2}{|x|^4}u\\
	&+\colorcancel{2(m+1)\frac{x}{|x|^2}\cdot \D\Delta u}{blue}-4(m+1)^2\left(\frac{x}{|x|^2}\right)^t\cdot \D^2u\cdot\left(\frac{x}{|x|^2}\right)+\Big\{\ccancel{2(m+1)^3}+\ccancel{4(m+1)^2}\Big\}\frac{x}{|x|^2}\cdot \D u\\
	&-\frac{4(m+1)^3}{|x|^4}u\\
	&+\frac{(m+1)^2}{|x|^2}\Delta u-\ccancel{2(m+1)^3\frac{x}{|x|^2}\cdot \D u}+\frac{(m+1)^4}{|x|^4}u\\
	&=\Delta^2u+\frac{2(m+1)(m-1)}{|x|^2}\Delta u-4(m+1)(m-1)\left(\frac{x}{|x|^2}\right)^t\cdot \D^2u\cdot\left(\frac{x}{|x|^2}\right)+\frac{(m+1)^2(m-1)^2}{|x|^4}u,
	\end{align*}
	and we indeed recover $\mathscr{L}^{\ast}\mathscr{L}_1=\Delta^2$.
	We deduce that for all $u\in W^{2,2}\cap C^{\infty}(\R^2\setminus\bar{B}_{\delta}(0))$ such that $u=\partial_{\nu}u=0$ on $\partial B_{\delta}(0)$, 
	\begin{align}\label{newipp}
	&\int_{\R^2\setminus \bar{B}_{\delta}(0)}\left(\Delta u-2(m+1)\frac{x}{|x|^2}\cdot \D u+\frac{(m+1)^2}{|x|^2}u\right)^2dx=\int_{\R^2\setminus\bar{B}_{\delta}(0)}\left(\mathscr{L}_mu\right)^2dx\nonumber\\
	&=\int_{\R^2\setminus\bar{B}_{\delta}(0)}\bigg(u\,\Delta^2u+2(m+1)(m-1)\frac{u}{|x|^2}\Delta u-4(m+1)(m-1)u\,\left(\frac{x}{|x|^2}\right)^t\cdot\D^2u\cdot\left(\frac{x}{|x|^2}\right)\nonumber\\
	&+\frac{(m+1)^2(m-1)^2}{|x|^4}u^2\bigg)dx
	+\int_{\partial B_{\delta}(0)}\left(u\,\partial_{\nu}\left(\mathscr{L}_mu\right)-\partial_{\nu}\left(\mathscr{L}_mu\right)\right)d\mathscr{H}^1\nonumber\\
	&=\int_{\R^2\setminus \bar{B}_{\delta}(0)}\left(\left(\Delta u\right)^2+2(m+1)(m-1)\Delta u\,\frac{u}{|x|^2}+(m+1)^2(m-1)^2\frac{u^2}{|x|^4}\right)dx\nonumber\\
	&-4(m+1)(m-1)\int_{\R^2\setminus \bar{B}_{\delta}(0)}u\,\left(\frac{x}{|x|^2}\right)^t\cdot\D^2u\cdot\left(\frac{x}{|x|^2}\right)dx\nonumber\\
	&=\int_{\R^2\setminus\bar{B}_{\delta}(0)}\left(\Delta u+(m+1)(m-1)^2\frac{u}{|x|^2}\right)^2dx-4(m+1)(m-1)\int_{\R^2\setminus \bar{B}_{\delta}(0)}u\,\left(\frac{x}{|x|^2}\right)^t\cdot\D^2u\cdot\left(\frac{x}{|x|^2}\right)dx.
	\end{align}
	Now, observe that by \eqref{harmonic}
	\begin{align}\label{newipp2}
	&\int_{\R^2\setminus \bar{B}_{\delta}(0)}u\,\left(\frac{x}{|x|^2}\right)^t\cdot\D^2u\cdot\left(\frac{x}{|x|^2}\right)dx\nonumber\\
	&=\int_{\R^2\setminus\bar{B}_{\delta}(0)}\frac{x_1}{|x|^2}u\left(\frac{x_1}{|x|^2}\p{x_1}\left(\p{x_1}u\right)+\frac{x}{|x|^2}\p{x_2}\left(\p{x_1}u\right)\right)+\frac{x_2}{|x|^2}u\left(\frac{x_1}{|x|^2}\p{x_1}\left(\p{x_2}u\right)+\frac{x_2}{|x|^2}\p{x_2}\left(\p{x_1}u\right)\right)dx\nonumber\\
	&=\int_{\R^2\setminus\bar{B}_{\delta}(0)}\left(\frac{x_1}{|x|^2}u\frac{x}{|x|^2}\cdot \D(\p{x_1}u)+\frac{x_2}{|x|^2}u\frac{x}{|x|^2}\cdot \D\left(\p{x_2}u\right)\right)dx\nonumber\\
	&=-\int_{\R^2\setminus \bar{B}_{\delta}(0)}\left(\frac{x_1}{|x|^2}\left(\frac{x}{|x|^2}\cdot \D u\right)\p{x_1}u+\frac{x_2}{|x|^2}\left(\frac{x}{|x|^2}\cdot \D u\right)\p{x_2}u\right)dx+\int_{\R^2\setminus\bar{B}_{\delta}(0)}u\,\frac{x}{|x|^4}\cdot \D u\,dx\nonumber\\
	&-\int_{\partial B_{\delta}(0)}\left(\frac{x_1}{|x|^3}u\,\partial_{\nu}(\p{x_1}u)+\frac{x_2}{|x|^3}u\,\partial_{\nu}(\p{x_2}u)\right)d\mathscr{H}^1\nonumber\\
	&=-\int_{\R^2\setminus\bar{B}_{\delta}(0)}\left(\frac{\D u\cdot x}{|x|^2}\right)^2dx+\int_{\R^2\setminus\bar{B}_{\delta}(0)}u\,\frac{x}{|x|^4}\cdot \D u\,dx\leq 0
	\end{align}
	where we used
	\begin{align*}
	\D\left(\frac{x_1}{|x|^2}\right)\cdot \frac{x}{|x|^2}=\frac{1}{|x|^2}\left(\frac{x_1}{|x|^2}-\frac{2x_1^3}{|x|^4}-\frac{2x_1x_2^2}{|x|^4}\right)=-\frac{x_1}{|x|^4},
	\end{align*}
	The last inequality come from the following observations (see the computations before \eqref{id1} for an alternative derivation)
	\begin{align*}
	\int_{\R^2\setminus\bar{B}_{\delta}(0)}u\frac{x}{|x|^4}\cdot \D udx&=\int_{\R^2\setminus \bar{B}_{\delta}(0)}\frac{u}{|x|^2}\dive\left(\frac{x}{|x|^2}u\right)dx=-\int_{\R^2\setminus \bar{B}_{\delta}(0)}\left(\left(\frac{x}{|x|^4}\cdot \D u\right)\,u+\D\left(\frac{1}{|x|^2}\right)\cdot \frac{x}{|x|^2}u^2\right)dx\\
	&=-\int_{\R^2\setminus \bar{B}_{\delta}(0)}u\frac{x}{|x|^4}\cdot \D u\,dx+2\int_{\R^2\setminus \bar{B}_{\delta}(0)}\frac{u^2}{|x|^4}dx,
	\end{align*}
	so that 
	\begin{align}\label{identity}
	\int_{\R^2\setminus\bar{B}_{\delta}(0)}\frac{u^2}{|x|^4}dx=\int_{\R^2\setminus\bar{B}_{\delta}(0)}u\frac{x}{|x|^4}\cdot \D u\,dx,
	\end{align}
	Therefore, thanks to \eqref{identity}, we rewrite \eqref{newipp2} as  
	\begin{align}\label{newipp4}
	\int_{\R^2\setminus \bar{B}_{\delta}(0)}\left(\frac{\D u\cdot x}{|x|^2}\right)^2dx-\int_{\R^2\setminus\bar{B}_{\delta}(0)}u\frac{x}{|x|^4}\cdot \D udx&=\int_{\R^2\setminus \bar{B}_{\delta}(0)}\left(\left(\frac{\D u \cdot x}{|x|^2}\right)^2-2\left(\frac{\D u\cdot x }{|x|^2}\right)\frac{u}{|x|^2}+\frac{u^2}{|x|^4}\right)dx\nonumber\\
	&=\int_{\R^2\setminus \bar{B}_{\delta}(0)}\left(\frac{x}{|x|^2}\cdot \D u-\frac{u}{|x|^2}\right)^2dx.
	\end{align}
	Finally, we deduce by \eqref{newipp}, \eqref{newipp2} and \eqref{newipp4} that 
	\begin{align}\label{parfait}
	&\int_{\R^2\setminus \bar{B}_{\delta}(0)}\left(\mathscr{L}_m u\right)^2dx=\int_{\R^2\setminus \bar{B}_{\delta}(0)}\left(\Delta u-2(m+1)\frac{x}{|x|^2}\cdot \D u+\frac{(m+1)^2}{|x|^2}u\right)^2dx\\
	&=\int_{\R^2\setminus \bar{B}_{\delta}(0)}\left(\Delta u+(m+1)(m-1)\frac{u}{|x|^2}\right)^2dx
	+4(m+1)(m-1)\int_{\R^2\setminus \bar{B}_{\delta}(0)}\left(\frac{x}{|x|^2}\cdot \D u-\frac{u}{|x|^2}\right)^2dx.\nonumber
	\end{align}
	\textbf{Step 2: Inequalities.}
	Now we have thanks to \eqref{identity}
	\begin{align}\label{id3}
		&\int_{\R^2\setminus\bar{B}_{\delta}(0)}\left(-2(m+1)\frac{x}{|x|^2}\cdot \D u+\frac{(m+1)^2}{|x|^2}u\right)^2dx=\int_{\R^2\setminus\bar{B}_{\delta}(0)}4(m+1)^2\left(\frac{\D u\cdot x}{|x|^2}\right)^2dx\nonumber\\
		&+\int_{\R^2\setminus\bar{B}_{\delta}(0)}(m+1)^4\frac{u^2}{|x|^4}dx
		-4(m+1)^3\int_{\R^2\setminus\bar{B}_{\delta}(0)}\frac{u\left(\D u\cdot x\right)}{|x|^4}dx\nonumber\\
		&=4(m+1)^2\int_{\R^2\setminus\bar{B}_{\delta}(0)}\frac{(\D u\cdot x)^2}{|x|^4}dx+((m+1)^4-4(m+1)^3)\int_{\R^2\setminus\bar{B}_{\delta}(0)}\frac{u^2}{|x|^4}dx\\
		&\geq \left((m+1)^4+4(m+1)^2-4(m+1)^3\right)\int_{\R^2\setminus\bar{B}_{\delta}(0)}\frac{u^2}{|x|^4}dx=(m+1)^2(m-1)^2\int_{\R^2\setminus\bar{B}_{\delta}(0)}\frac{u^2}{|x|^4}dx\nonumber
	\end{align}
	so for $m>1$, we find
	\begin{align*}
		\np{\frac{u}{|x|^2}}{2}{\R^2\setminus \bar{B}_{\delta}(0)}\leq \frac{1}{(m+1)(m-1)}\np{\Delta u}{2}{\R^2\setminus \bar{B}_{\delta}(0)}+\frac{1}{(m+1)(m-1)}\np{\mathscr{L}_mu}{2}{\R^2\setminus \bar{B}_{\delta}(0)}.
	\end{align*}
	Therefore, if $m\geq 3$, we have $(m+1)^4-4(m+1)^3=(m+1)^3(m-3)\geq 0$, so \eqref{id3} implies that
	\begin{align*}
		\int_{\R^2\setminus \bar{B}_{\delta}(0)}\frac{(\D u\cdot x)^2}{|x|^4}dx&\leq \frac{1}{4(m+1)^2}\int_{\R^2\setminus \bar{B}_{\delta}(0)}\left(-2(m+1)\frac{x}{|x|^2}\cdot \D u+\frac{(m+1)^2}{|x|^2}u\right)^2dx\\
		&=\frac{1}{4(m+1)^2}\int_{\R^2}\left(\left(\mathscr{L}_m-\Delta\right)u\right)^2dx
	\end{align*}
	which implies by the triangle inequality that
	\begin{align*}
		\np{\frac{\D u\cdot x}{|x|^2}}{2}{\R^2\setminus \bar{B}_{\delta}(0)}\leq \frac{1}{2(m+1)}\np{\Delta u}{2}{\R^2\setminus \bar{B}_{\delta}(0)}+\frac{1}{2(m+1)}\np{\mathscr{L}_mu}{2}{\R^2\setminus \bar{B}_{\delta}(0)}.
	\end{align*}
	If $1<m\leq 3$, then $(m+1)^4-4(m+1)^3\leq 0$, so we have by \eqref{id1} and \eqref{id1}
	\begin{align*}
		\int_{\R^2\setminus \bar{B}_{\delta}(0)}\left(\left(\mathscr{L}_2-\Delta\right)u\right)^2dx&=4(m+1)^2\int_{\R^2\setminus \bar{B}_{\delta}(0)}\frac{(\D u\cdot x)^2}{|x|^4}dx+((m+1)^4-4(m+1)^3)\int_{\R^2\setminus \bar{B}_{\delta}(0)}\frac{u^2}{|x|^4}dx\\
		&\geq ((m+1)^2+(m+1)^4-4(m+1)^3)\int_{\R^2\setminus \bar{B}_{\delta}(0)}\frac{(\D u\cdot x)^2}{|x|^4}dx\\
		&=(m+1)^2(m-1)^2\int_{\R^2\setminus\bar{B}_{\delta}(0)}\frac{(\D u\cdot x)^2}{|x|^4}dx,
	\end{align*}
	so that
	\begin{align*}
		\np{\frac{\D u\cdot x}{|x|^2}}{2}{\R^2}\leq \frac{1}{(m+1)(m-1)}\np{\Delta u}{2}{\R^2}+\frac{1}{(m+1)(m-1)}\np{\mathscr{L}_mu}{2}{\R^2}.
	\end{align*}
    This completes the proof of the theorem. 
\end{proof} 

\subsection{Admissible variations for branched Willmore surfaces}\label{counter}

Recall (\cite{index3}) that for a branched Willmore surface $\vec{\Psi}:\Sigma\rightarrow \R^3$ we defined the index as the maximal dimension of the space of normal variations $\vec{v}=v\n$ where $v\in W^{2,2}\cap W^{1,\infty}(\Sigma)$ satisfying the additional conditions
\begin{align}\label{cond}
|d\vec{v}|_g\in L^{\infty}(\Sigma),\quad \Delta_g^{\perp}\vec{v}\in L^2(\Sigma,d\vg).
\end{align}
such that $D^2W(\phi)(\vec{v},\vec{v})<0$. First, the condition $|d\vec{v}|_g\in L^{\infty}(\Sigma)$ is really necessary to define the weakest notion of index (for continuous paths, see Lemma $3.11$ \cite{index3}), and we want to show here that the second condition $\Delta_g\vec{v}\in L^2(\Sigma,d\vg)$ cannot be relaxed in general. If $p\in \Sigma$ is a branch point of $\vec{\Psi}$ of multiplicity $\theta_0\geq 2$, taking a complex chart $z:U\subset \Sigma\rightarrow \C$ such that $z(p)=0$, there exists $\alpha>0$ such that
\begin{align*}
e^{2\lambda}=2|\p{z}\phi|^2=\frac{\alpha}{|z|^{2\theta_0-2}}\left(1+O(|z|)\right),
\end{align*}
and $\Delta_g\vec{v}\in L^2(\Sigma,d\vg)$ is equivalent to
\begin{align}\label{l2}
\frac{\Delta v}{|z|^{\theta_0-1}}\in L^2(D^2).
\end{align}
Notice that this implies if $v$ is smooth that $v$ must have the following Taylor expansion (for some $\zeta\in \C$)
\begin{align}\label{optimal}
v=v(p)+\,\Re\left(\zeta z^{\theta_0}\right)+O(|z|^{\theta_0+1})
\end{align}
If $L^2$ is replaced by $L^{\infty}$ in \eqref{l2} we obtain the same expansion (see \cite{classification}), with a $O(|z|^{\theta_0+1}\log^2|z|)$ error term instead. We will check that restriction to the case of smooth variations for the simplest example of plane with multiplicity $m\geq 1$, the expansion \eqref{optimal} is the largest space for which the second derivative makes sense. Indeed, thanks to the pointwise conformal invariance of the Willmore energy, and recalling that for all branched immersions $\vec{\Psi}:\Sigma\rightarrow\R^3$ which is the inversion of a complete minimal surface $\phi:\Sigma\setminus\ens{p_1,\cdots,p_n}\rightarrow\R^3$ (we state the result in codimension $1$,  but they would hold in general thanks to the expression of the second derivative of the Gauss curvature obtained in \cite{index3}) and 
\begin{align*}
\mathscr{W}(\vec{\Psi})=\int_{\Sigma}\left(|\H_{g_{\vec{\Psi}}}|^2-K_{g_{\vec{\Psi}}}\right)d\mathrm{vol}_{g_{\vec{\Psi}}},
\end{align*}
then for all admissible normal variation $\vec{v}=v\n_{\vec{\Psi}}$ such that $D^2\mathscr{W}(\vec{\Psi})(\vec{v},\vec{v})$ is well-defined, then
\begin{align*}
D^2\mathscr{W}(\vec{\Psi})(\vec{v},\vec{v})=\int_{\Sigma}\bigg\{\frac{1}{2}\left(\Delta_gu-2K_gu\right)^2d\vg-d\,\Im\left(2\left(\Delta_gu+2K_gu\right)\partial u-\partial|du|_g^2\right)\bigg\},
\end{align*}
where $g=g_{\phi}=\phi^{\ast}g_{\R^3}$ and $u=|\phi|^2v$. In particular, if $\Sigma_{\epsilon}$ is defined as previously (with respect to some arbitrary covering $(U_1,\cdots,U_n)$ as the ends $p_1,\cdots,p_n$), then by Stokes theorem
\begin{align}\label{limitvar}
D^2\mathscr{W}(\vec{\Psi})(\vec{v},\vec{v})=\lim\limits_{\epsilon\rightarrow 0}\bigg\{\frac{1}{2}\int_{\Sigma_{\epsilon}}\left(\Delta_gu-2K_gu\right)^2d\vg-\Im\int_{\partial \Sigma_{\epsilon}}2\left(\Delta_gu+2K_gu\right)\partial u-\partial|du|_g^2\bigg\}
\end{align}
so the limit on the right-hand side of \eqref{limitvar} exists and is finite for all admissible variation $\vec{v}$ as previously.

\begin{lemme}
	Let $\phi:\C\setminus\ens{0}\rightarrow \R^3$ be a plane with multiplicity $2$ such that for some $\vec{A}_0\in \C^3\setminus\ens{0}$ and $\vec{B}_0\in \R^3\setminus\ens{0}$ such that $\s{\vec{A}_0}{\vec{B}_0}=0$ we have
	\begin{align*}
	\phi(z)=2\,\Re\left(\frac{\vec{A}_0}{z^2}\right)+\vec{B}_0
	\end{align*}
	and let $\vec{\Psi}:S^2\rightarrow \R^3$ be a round sphere with multiplicity $2$ which is the inversion at $0$ of $\phi$. Then the normal variations $\vec{v}=v\n_{\vec{\Psi}}$ where $v\in W^{2,2}(S^2)\cap C^{\infty}(S^2)$ is defined for some $\beta\in \R\setminus\ens{0}$ and some compactly supported \emph{radial} smooth cut-off $\rho:\C\rightarrow \R$ such that $\rho=1$ in a neighbourhood of $0$ by
	\begin{align*}
	v=v(0)+\beta|z|^2\rho
	\end{align*}
	is not admissible for $\vec{\Psi}$.
\end{lemme}
\begin{proof}
	We first have $\s{\vec{A}_0}{\vec{A}_0}=0$ as $\phi$ is conformal ($\s{\p{z}\phi}{\p{z}\phi}=0$), so we have as $\s{\vec{A}_0}{\vec{B}_0}=0$ the identity
	\begin{align*}
	|\phi|^2=\frac{2|\vec{A}_0|^2}{|z|^4}+|\vec{B}_0|^2.
	\end{align*}
	so we normalise for convenience $2|\vec{A}_0|^2=1$ and we let $\gamma=|\vec{B}_0|^2>0$. Furthermore, notice that $\s{\phi(z)}{\vec{B}_0}=|\vec{B}_0|^2$ so that
	\begin{align*}
	\left|\vec{\Psi}(z)-\frac{\vec{B}_0}{2|\vec{B}_0|^2}\right|^2=\left|\frac{\phi(z)}{|\phi(z)|^2}-\frac{\vec{B}_0}{2|\vec{B}_0|^2}\right|=\frac{1}{|\phi(z)|^2}-\frac{1}{|\vec{B}_0|^2}\frac{\s{\phi(z)}{\vec{B}_0}}{|\phi(z)|^2}+\frac{1}{4|\vec{B}_0|^2}=\frac{1}{4|\vec{B}_0|^
		2}
	\end{align*}
	so $\vec{\Psi}$ is a sphere of multiplicity $2$ of centre $\dfrac{\vec{B}_0}{2|\vec{B}_0|^2}$ and radius $\dfrac{1}{2|\vec{B}_0|}$.
	
	As $\phi$ is harmonic, we find
	\begin{align*}
	e^{2\lambda}=\p{z\z}^2|\phi|^2=\frac{4}{|z|^6}.
	\end{align*}
	This also implies by the Liouville equation
	\begin{align*}
	K_g=-\Delta_g\lambda=0.
	\end{align*}
	Now, let $\beta\in \R\setminus\ens{0}$ fixed and $v(z)=v(0)+\beta|z|^2$. Then $v\in C^{\infty}(\C)\cap W^{2,2}(\C,d\mathrm{vol}_{g_0})$, where 
	\begin{align*}
	g_0=\frac{4}{\left(1+|z|^2\right)^2}|dz|^2
	\end{align*}
	is the metric of the sphere $S^2$ after stereographic projection. Notice that $v$ satisfies the first condition of \eqref{cond} but not the second one as (assuming without loss of generality that $\rho=1$ on $D^2$)
	\begin{align*}
	\frac{\Delta v}{|z|^{\theta_0-1}}=\frac{\Delta v}{|z|}=\frac{4\beta}{|z|}\notin L^2(D^2).
	\end{align*}
	Now assume by contradiction that $v$ is an admissible variation of the inversion $\vec{\Psi}$ of $\phi$. Then we have (by \cite{indexS3})
	\begin{align*}
	D^2W(\vec{\Psi})(v\vec{n}_{\vec{\Psi}},v\n_{\vec{\Psi}})&=\lim\limits_{\epsilon\rightarrow 0}\bigg\{\frac{1}{2}\int_{\C\setminus\bar{B}(0,\epsilon)}\left(\Delta_g\left(|\phi|^2v\right)-2K_g(|\phi|^2v)\right)^2d\vg\\
	&+\Im\int_{\partial B(0,\epsilon)}2\left(\Delta_g\left(|\phi|^2v\right)+2K_g(|\phi|^2v)\right)\partial\left(|\phi|^2v\right)-\partial\left|d\left(|\phi|^2v\right)\right|_g^2\bigg\}
	\end{align*}
	We first have as $v=v(0)+\beta|z|^2\rho$
	\begin{align*}
	|\phi|^2v=\frac{v}{|z|^4}=\frac{v(0)}{|z|^4}+\frac{\beta}{|z|^2}\rho(z)+\gamma v(0)+\beta\gamma|z|^2\rho(z)
	\end{align*}
	so
	\begin{align*}
	\p{z\z}^2\left(|\phi|^2v\right)=\frac{4v(0)}{|z|^6}+\frac{\beta}{|z|^4}\rho(z)+\beta\gamma \rho(z)-\frac{2\beta}{|z|^2}\Re\left(\frac{\p{\z}\rho(z)}{z}\right)+\frac{\beta}{|z|^2}\p{z\z}^2\rho(z)+2\beta\gamma\,\Re\left(z\p{\z}\rho(z)\right)+\beta\gamma|z|^2\p{z\z}^2\rho(z).
	\end{align*}
	Therefore, we have
	\begin{align*}
	\Delta_g(|\phi|^2v)&=\frac{|z|^6}{4}\cdot 4 \p{z\z}^2\left(|\phi|^2v\right)=4v(0)+\beta|z|^2\rho(z)+\beta\gamma|z|^6\rho(z)-2\beta|z|^4\,\Re\left(\z\cdot\p{\z}\rho(z)\right)+\frac{\beta}{4}|z|^4\Delta\rho(z)\\
	&+2\beta\gamma|z|^6\,\Re\left(z\cdot\p{\z}\rho(z)\right)+\frac{\beta\gamma}{4}|z|^8\Delta\rho(z).
	\end{align*}
	Furthermore, as $\rho=1$ on $D^2$ on $\rho$ has compact support on $\C$, all square of terms involving derivatives of $\rho$ once integrating on $\C$ with respect to the metric $g=e^{2\lambda}|dz|^2$ are finite. 
	In other words, we have
	\begin{align*}
	&\frac{1}{2}\int_{\C\setminus\bar{B}(0,\epsilon)}\left(\Delta_g\left(|\phi|^2v\right)-2K_g|\phi|^2v\right)^2d\vg=\frac{1}{2}\int_{\C\setminus\bar{B}(0,\epsilon)}\left(4v(0)+\beta|z|^2\rho+\beta\gamma|z|^6\rho(z)\right)^2\frac{4|dz|^2}{|z|^6}+O
	(1)\\
	&=\frac{1}{2}\int_{\C\setminus\bar{B}(0,\epsilon)}\left(4v(0)+\beta|z|^2\rho(z)\right)^2\frac{4|dz|^2}{|z|^6}+O(1)\\
	&=2\int_{\C\setminus\bar{B}(0,\epsilon)}\left(\frac{16v^2(0)}{|z|^6}+\frac{8\beta v(0)}{|z|^4}+\frac{\beta^2}{|z|^2}\rho^2(z)\right)|dz|^2+O(1)\\
	&=4\pi\int_{\epsilon}^{\infty}\left(\frac{16v^2(0)}{r^5}+\frac{4\beta v(0)}{r^3}+\frac{\beta^2}{r}\rho^2(r)\right)dr+O(1)\\
	&=4\pi\left(\frac{4v^2(0)}{\epsilon^4}+\frac{4\beta v(0)}{\epsilon^2}\rho(z)+\beta^2\log\left(\frac{1}{\epsilon}\right)\right)+O(1)\\
	&=\frac{16\pi}{\epsilon^4}v^2(0)+\frac{16\pi\beta }{\epsilon^2}v(0)+4\pi \beta^2\log\left(\frac{1}{\epsilon}\right)+O(1),
	\end{align*}
	where $O(1)$ is a bounded quantity as $\epsilon\rightarrow 0$.
	Now we have as $\rho=1$ the following identities on $D^2$
	\begin{align}\label{dplan}
	&|\phi|^2v=\frac{v(0)}{|z|^4}+\frac{\beta}{|z|^2}+\gamma v(0)+\beta\gamma|z|^2\nonumber\\
	&\Delta_g\left(|\phi|^2v\right)=4v(0)+\beta|z|^2+\beta\gamma|z|^6\nonumber\\
	&\partial\left(|\phi|^2v\right)=-\frac{1}{z|z|^4}\left(2v(0)+\beta|z|^2-\beta\gamma|z|^6\right)dz
	\end{align}
	so
	\begin{align*}
	\left(\Delta_g\left(|\phi|^2v\right)+2K_g|\phi|^2v\right)\partial\left(|\phi|^2v\right)&=\Delta_g\left(|\phi|^2v\right)\partial\left(|\phi|^2v\right)\\
	&=-\frac{1}{z|z|^4}\left(8v^2(0)+6\beta v(0)|z|^2+\beta^2|z|^4+O(|z|^6)\right)dz
	\end{align*}
	and
	\begin{align}\label{plan1}
	\Im\int_{\partial B(0,\epsilon)}2\left(\Delta_g\left(|\phi|^2v\right)+2K_g|\phi|^2v\right)\partial\left(|\phi|^2v\right)=-\frac{32\pi }{\epsilon^4}v^2(0)-\frac{24\pi\beta }{\epsilon^2}v(0)-4\pi\beta^2+O(\epsilon).
	\end{align}
	Thanks to \eqref{dplan}, we find
	\begin{align*}
	\left|\partial\left(|\phi|^2v\right)\right|^2=\frac{1}{|z|^{10}}\left(4v^2(0)+4\beta v(0)|z|^2+\beta^2|z|^4+O(|z|^6)\right)|dz|^2
	\end{align*}
	so
	\begin{align*}
	\left|d\left(|\phi|^2v\right)\right|^2=4e^{-2\lambda}|\partial\left(|\phi|^2v\right)|^2=\frac{1}{|z|^4}\left(4v^2(0)+4\beta v(0)|z|^2+O(|z|^6)\right)+\beta^2.
	\end{align*}
	Therefore, we have
	\begin{align*}
	-\partial\left|d\left(|\phi|^2v\right)\right|_g^2=\frac{1}{z|z|^4}\left(8v^2(0)+4\beta v(0)|z|^2+O(|z|^6)\right)dz
	\end{align*}
	and
	\begin{align}\label{plan2}
	\int_{\partial B(0,\epsilon)}-\partial\left|d\left(|\phi|^2v\right)\right|_g^2=\frac{16\pi}{\epsilon^2}v^2(0)+\frac{8\pi \beta}{\epsilon^2}v(0)+O(\epsilon).
	\end{align}
	Finally, we have by 
	\begin{align*}
	\Im\int_{\partial B(0,\epsilon)}2\left(\Delta_g\left(|\phi|^2v\right)+2K_g(|\phi|^2v)\right)\partial\left(|\phi|^2v\right)-\partial\left|d\left(|\phi|^2v\right)\right|_g^2=-\frac{16\pi}{\epsilon^4}v^2(0)-\frac{16\pi\beta}{\epsilon^2}v(0)-4\pi\beta^2+O(\epsilon).
	\end{align*}
	Therefore, we obtain
	\begin{align*}
	&\frac{1}{2}\int_{\C\setminus\bar{B}(0,\epsilon)}\left(\Delta_g\left(|\phi|^2v\right)-2K_g(|\phi|^2v)\right)^2d\vg\\
	&+\Im\int_{\partial B(0,\epsilon)}2\left(\Delta_g\left(|\phi|^2v\right)+2K_g(|\phi|^2v)\right)\partial\left(|\phi|^2v\right)-\partial\left|d\left(|\phi|^2v\right)\right|_g^2=4\pi\beta^2\log\left(\frac{1}{\epsilon}\right)+O(1)\conv{\epsilon\rightarrow 0}\infty,
	\end{align*}
	which shows that $v$ is not an admissible variation of $\vec{\Psi}$. 
\end{proof}

\subsubsection{Admissible smooth variation of the plane of multiplicity $2m$}

In general, if $m\geq 1$ is any fixed integer, $\vec{A}_0\in \C^3\setminus\ens{0}$ is such that $2|\vec{A}_0|^2=1$ and $\phi:\C\setminus\ens{0}\rightarrow\R^3$ is defined by 
\begin{align*}
\phi(z)=2\,\Re\left(\frac{\vec{A}_0}{z^{2m}}\right),
\end{align*}
then one checks easily that for all $\beta\in \R\setminus\ens{0}$, the normal variation $\vec{v}=v\n$, where $v=v(0)+\beta|z|^{2m}\rho$ is not admissible (for some smooth radial cut-off function $\rho$ identically equal to $1$ in an open neighbourhood of $0\in \C$), as 
\begin{align*}
&\frac{1}{2}\int_{\C\setminus\bar{B}(0,\epsilon)}\left(\Delta_g\left(|\phi|^2v\right)-2K_g(|\phi|^2v)\right)^2d\vg\\
&+\Im\int_{\partial B(0,\epsilon)}2\left(\Delta_g\left(|\phi|^2v\right)+2K_g(|\phi|^2v)\right)\partial\left(|\phi|^2v\right)-\partial\left|d\left(|\phi|^2v\right)\right|_g^2=4\pi m^2\beta^2\log\left(\frac{1}{\epsilon}\right)+O(1)\conv{\epsilon\rightarrow 0}\infty.
\end{align*}

\subsubsection{Admissible smooth variations of the plane with multiplicity $m$}

\begin{prop}
	Let $\vec{\Psi}:S^2=\C\cup\ens{\infty}\rightarrow \R^3$ be a round sphere with multiplicity $m\geq 1$, let $\phi:\C\setminus\ens{0}\rightarrow \R^3$ be a plane with multiplicity $m\geq 1$ such that $\vec{\Psi}$ be the inversion at $0$ of $\phi$. Furthermore, fix $\beta\in \R\setminus\ens{0}$, $a,b\in \ens{1,\cdots,m-1}$ such that $a+b=m$, and let $\rho:\C\cup\ens{\infty}\rightarrow \R$ be a smooth radial cut-off function such that $\rho=1$ in a neighbourhood of $0$ and let $v\in W^{2,2}\cap C^{\infty}(S^2)$ such that in the usual meromorphic coordinate $z$ on $S^2=\C\cup\ens{\infty}$ we have
	\begin{align*}
	v=v(0)+2\beta\,\Re\left(z^{a}\z^{b}\right)\rho(z).
	\end{align*}
	Then $\vec{v}=v\n_{\vec{\Psi}}$ is not an admissible variation of $\vec{\Psi}$.
\end{prop}
\begin{proof}
	Thanks to the previous subsection, we can assume that $m\geq 3$, $a\neq b$, and as $\rho$ is radial, we have for all integer $k\geq 0$ and $l\in \Z$ and  for all $\epsilon>0$ the identity
	\begin{align}\label{neglect}
	\int_{\C\setminus\bar{B}(0,\epsilon)}\Re\left(z^{a}\z^{b}\right)\rho^k(z)|z|^{l}|dz|^2=0.
	\end{align}
	As previously, assume that $\vec{A}_0\in \C^3\setminus\ens{0}$ and $\vec{B}_0\in \R^3\setminus\ens{0}$ are such that $2|\vec{A}_0|^2=1$,  $\s{\vec{A}_0}{\vec{B}_0}=0$, and 
	\begin{align*}
	|\phi(z)|^2=\frac{2|\vec{A}_0|^2}{|z|^{2m}}+|\vec{B}_0|^2=\frac{1}{|z|^{2m}}+\gamma
	\end{align*}
	as $\s{\vec{A}_0}{\vec{A}_0}=0$, where we noted $\gamma=|\vec{B}_0|^2>0$. Then we have 
	\begin{align*}
	e^{2\lambda}=\frac{m^2}{|z|^{2m+2}}.
	\end{align*}
	Now we compute directly
	\begin{align*}
	|\phi|^2v=\frac{v(0)}{|z|^{2m}}+2\beta\,\Re\left(z^{a-m}\z^{b-m}\right)\rho(z)+\gamma v(0)+2\beta\gamma\, \Re\left(z^{a}\z^{b}\right)\rho(z).
	\end{align*}
	As $\rho$ is radial and smooth, there exists a smooth function $f:\R_+\rightarrow\R_+$ such that $\rho(z)=f(|z|^2)$. Therefore, we have
	\begin{align*}
	&\p{z}\rho(z)=\z\,f'(|z|^2),\quad \Re(z\cdot \p{z}\rho(z))=z\cdot \p{z}\rho(z)=|z|^2f'(|z|^2)\\
	&\Delta\rho(z)=4\,f'(|z|^2)+4|z|^2\,f''(|z|^2)
	\end{align*} 
	so $z\cdot \p{z}\rho$ and $\Delta\rho$ are also radial.
	Therefore, we have
	\begin{align*}
	&\p{z\z}^2\left(|\phi|^2v\right)=\frac{m^2v(0)}{|z|^{2m+2}}+2\beta(m-a)(m-b)\,\Re\left(z^{a-m-1}\z^{b-m-1}\right)\rho(z)+2\beta\gamma\,\Re\left(z^{a-1}\z^{b-1}\right)\rho(z)\\
	&+\beta\left((a-m)z^{a-m-1}\z^{b-m}+(b-m)z^{b-m-1}\z^{a-m}\right)\p{\z}\rho(z)\\
	&+\beta\left((b-m)z^{a-m}\z^{b-m-1}+(a-m)z^{b-m}\z^{a-m-1}\right)\p{z}\rho(z)+2\beta\,\Re\left(z^{a-m}\z^{b-m}\right)\p{z\z}^2\rho(z)\\
	&+\beta\gamma\left(az^{a-1}\z^{b}+bz^{b-1}\z^a\right)\p{\z}\rho(z)+\beta\gamma\left(bz^{a}\z^{b-1}+az^{b}\z^{a-1}\right)\p{z}\rho(z)+2\beta\gamma\Re\left(z^{a}\z^{b}\right)\p{z\z}^2\rho(z)\\
	&=\frac{m^2v(0)}{|z|^{2m+2}}+2\beta(m-a)(m-b)\,\Re\left(z^{b-m-1}\z^{b-m-1}\right)\rho(z)+2\beta\gamma\,\Re\left(z^{a-1}\z^{b-1}\right)\rho(z)\\
	&+2\beta(a+b-2m)\,\Re\left(z^{a-m}\z^{b-m}\right)f'(|z|^2)+\frac{\beta}{2}\,\Re\left(z^{a-m}\z^{b-m}\right)\Delta\rho(z)\\
	&+2\beta\gamma(a+b)\,\Re\left(z^{a}\z^{b}\right)f'(|z|^2)+\frac{\beta\gamma}{2}\,\Re\left(z^{a}\z^{b}\right)\Delta\rho(z).
	\end{align*}
	As $a+b=m$ we obtain
	\begin{align*}
	&\Delta_g\left(|\phi|^2v\right)=\frac{4}{m^2}|z|^{2m+2}\p{z\z}^2\left(|\phi|^2v\right)=4v(0)+8\beta\bigg(1-\frac{a}{m}\bigg)\bigg(1-\frac{b}{m}\bigg)\,\Re\left(z^{a}\z^{b}\right)\rho(z)\\
	&+\frac{8}{m^2}\beta\gamma\,\Re\left(z^{a+m}\z^{b+m}\right)\rho(z)
	-\frac{8}{m}\,\Re\left(z^{a+1}\z^{b+1}\right)f'(|z|^2)+\frac{2\beta}{m^2}\,\Re\left(z^{a+1}\z^{b+1}\right)\Delta\rho(z)\\
	&+\frac{8\beta\gamma}{m}\,\Re\left(z^{a+m+1}\z^{b+m+1}\right)f'(|z|^2)+\frac{2\beta\gamma}{m^2}\,\Re\left(z^{a+m+1}\z^{b+m+1}\right)\Delta\rho(z).
	\end{align*}
	As mentioned previously in the special case of ends of multiplicity $2$ we can neglect as a constant term bounded independently of $\epsilon\rightarrow 0$ all components of $\left(\Delta_g(|\phi|^2v)\right)^2$ involving derivatives of $\rho$. Furthermore, we can also neglect the term involving $\,\Re\left(z^{a+m}\z^{b+m}\right)\rho(z)$ as $a+b\geq m$ so this term is integrable at $0$ in $L^1$ and $L^2$ with respect to the singular metric $g=e^{2\lambda}|dz|^2$ and of compact support.
	Finally, we find by \eqref{neglect} and as $2\,\Re\left(z^{a}\z^{b}\right)^2=\Re\left(z^{2a}\z^{2b}\right)+|z|^{2a+2b}=\Re(z^{2a}\z^{2b})+|z|^{2m}$
	\begin{align*}
	&\frac{1}{2}\int_{\C\setminus\bar{B}(0,\epsilon)}\left(\Delta_g\left(|\phi|^2v\right)-2K_g\left(|\phi|^2v\right)\right)^2d\vg\\
	&=\frac{1}{2}\int_{\C\setminus\bar{B}(0,\epsilon)}\left(4v(0)+8\beta\bigg(1-\frac{a}{m}\bigg)\bigg(1-\frac{b}{m}\bigg)\,\Re\left(z^{a}\z^{b}\right)\rho(z)\right)^2\frac{m^2}{|z|^{2m+2}}|dz|^2+O(1)\\
	&=\frac{1}{2}\int_{\C\setminus\bar{B}(0,\epsilon)}\left(16v^2(0)+32\beta^2\bigg(1-\frac{a}{m}\bigg)^2\bigg(1-\frac{b}{m}\bigg)^2|z|^{2m}\rho^2(z)\right)\frac{m^2}{|z|^{2m+2}}|dz|^2+O(1)\\
	&=\pi m^2\int_{\epsilon}^{\infty}\left(\frac{16v^2(0)}{r^{2m+1}}+32\beta^2\bigg(1-\frac{a}{m}\bigg)^2\bigg(1-\frac{b}{m}\bigg)^2\frac{\rho^2(r)}{r}\right)dr+O(1)\\
	&=\frac{8\pi m}{\epsilon^{2m}}v^2(0)+32\pi m^2\bigg(1-\frac{a}{m}\bigg)^2\bigg(1-\frac{b}{m}\bigg)^2\beta^2\log\left(\frac{1}{\epsilon}\right)+O(1).
	\end{align*}
	Now, as without loss of generality we can assume $\rho=1$ in $D^2$ we have the following identities on $D^2$
	\begin{align*}
	&|\phi|^2v=\frac{v(0)}{|z|^{2m}}+2\beta\,\Re\left(z^{a-m}\z^{b-m}\right)+\gamma v(0)+O(|z|)\\
	&\partial\left(|\phi|^2v\right)=-\frac{mv(0)}{z|z|^{2m}}dz+\beta\left((a-m)z^{a-m-1}\z^{b-m}+(b-m)z^{m-b-1}\z^{a-m}\right)dz\\
	&=-\frac{1}{z|z|^{2m}}\left(mv(0)+\beta\left((m-a)z^{a}\z^{b}+(m-b)z^{b}\z^{a}\right)\right)dz\\
	&\Delta_g\left(|\phi|^2v\right)=4v(0)+8\beta\bigg(1-\frac{a}{m}\bigg)\bigg(1-\frac{b}{m}\bigg)\,\Re\left(z^a\z^{b}\right)+O(|z|^{2m+1})
	\end{align*}
	Therefore, there exists $\zeta_0,\zeta_1\in \C$ such that
	\begin{align*}
	\bigg(\Delta_g\left(|\phi|^2v\right)+2K_g\left(|\phi|^2v\right)\bigg)\partial\left(|\phi|^2v\right)&=\Delta_g\left(|\phi|^2v\right)\partial\left(|\phi|^2v\right)=\\
	&=-\frac{1}{z|z|^{2m}}\left(4mv^2(0)+\zeta_0z^{a}\z^{b}+\zeta_1z^{b}\z^{a}+O\left(|z|^{2m+1}\right)\right)
	\end{align*}
	so that (as $a\neq b$)
	\begin{align*}
	\Im\int_{\partial B(0,\epsilon)}\bigg(\Delta_g\left(|\phi|^2v\right)+2K_g\left(|\phi|^2v\right)\bigg)\partial\left(|\phi|^2v\right)=-\frac{8\pi m}{\epsilon^{2m}}v^2(0)+O(\epsilon).
	\end{align*}
	Likewise, we have for some $\delta\in \R$
	\begin{align*}
	\left|\left(\partial|\phi|^2v\right)\right|^2=\frac{1}{|z|^{4m+2}}\left(m^2v^2(0)+\delta v(0) \,\Re\left(z^{a}\z^{b}\right)+O(|z|^{2m+1})\right)
	\end{align*}
	so
	\begin{align*}
	&\left|d\left(|\phi|^2v\right)\right|^2_g=4|\left(\partial|\phi|^2v\right)|^2_g=\frac{1}{|z|^{2m}}\left(4v^2(0)+\frac{4\delta}{m^2}v(0)\,\Re\left(z^{a}\z^{b}\right)\right)\\
	\end{align*}
	and
	\begin{align*}
	\Im\int_{\partial B(0,\epsilon)}-\partial\left|d\left(|\phi|^2v\right)\right|_g^2=\frac{8\pi m}{\epsilon^{2m}}v^2(0)+O(\epsilon).
	\end{align*}
	Finally, we have 
	\begin{align*}
	\Im\int_{\partial B(0,\epsilon)}2\left(\Delta_g\left(|\phi|^2v\right)+2K_g(|\phi|^2v)\right)\partial\left(|\phi|^2v\right)-\partial\left|d\left(|\phi|^2v\right)\right|_g^2=-\frac{8\pi m}{\epsilon^{2m}}+O(\epsilon)
	\end{align*}
	and
	\begin{align*}
	&\frac{1}{2}\int_{\C\setminus\bar{B}(0,\epsilon)}\left(\Delta_g\left(|\phi|^2v\right)-2K_g(|\phi|^2v)\right)^2d\vg\\
	&+\Im\int_{\partial B(0,\epsilon)}2\left(\Delta_g\left(|\phi|^2v\right)+2K_g(|\phi|^2v)\right)\partial\left(|\phi|^2v\right)-\partial\left|d\left(|\phi|^2v\right)\right|_g^2\\
	&=32\pi m^2\bigg(1-\frac{a}{m}\bigg)^2\bigg(1-\frac{b}{m}\bigg)^2\beta^2\log\left(\frac{1}{\epsilon}\right)+O(1)\conv{\epsilon\rightarrow 0}\infty.
	\end{align*}
	This concludes the proof of the theorem.
\end{proof}

\begin{rem}
	Notice that these results imply that the variations considered in \cite{index3} cannot be taken more general. For smooth variations, they admit the following expansion at a branch point $p$ of multiplicity $\theta_0\geq 1$
	\begin{align*}
	\vec{w}=\vec{w}(p)+\Re\left(\vec{\gamma}z^{\theta_0}\right)+O(|z|^{\theta_0+1}).
	\end{align*}
\end{rem}

\nocite{}
\bibliographystyle{plain}
\bibliography{biblioindex}

\end{document}